\newtheorem{theorem}{Theorem}[section]
\newtheorem{corollary}[theorem]{Corollary}
\newtheorem{lemma}[theorem]{Lemma}
\newtheorem{proposition}[theorem]{Proposition}
\theoremstyle{definition}
\newtheorem{definition}[theorem]{Definition}
\newtheorem{remark}[theorem]{Remark}
\newcommand{\R}{\mathbb{R}}
\newcommand{\N}{\mathbb{N}}
\newcommand{\C}{\mathbb{C}}
\title[Maxwell equations on domains]{Strichartz estimates for Maxwell equations on domains with perfectly conducting boundary conditions}
\author{Nicolas Burq}
\address{Laboratoire de math'emathiques d'Orsay, CNRS, Universit'e Paris-Saclay,
Bat. 307, 91405 Orsay Cedex, France, and Institut Universitaire de France (IUF)}
\email{nicolas.burq@universite-paris-saclay.fr}
\author[Robert Schippa]{Robert Schippa*}
\address{Karlsruhe Institute of Technology, Englerstrasse 2, 76131 Karlsruhe, Germany
}
\email{robert.schippa@kit.edu}
\thanks{*Corresponding author}
\keywords{Maxwell equations, quasilinear wave equations, Strichartz estimates}
\subjclass[2020]{35B45, 35L03, 35Q61}
\numberwithin{equation}{section}
\begin{document}

\maketitle

\begin{abstract}
We consider Maxwell equations on a smooth domain with perfectly conducting boundary conditions in isotropic media in two and three dimensions. In the charge-free case we recover Strichartz estimates due to Blair--Smith--Sogge for wave equations on domains up to endpoints. For the proof we suitably extend Maxwell equations over the boundary, which introduces coefficients on the full space with codimension-$1$ Lipschitz singularity. This system can be diagonalized to half-wave equations amenable to the results of Blair--Smith--Sogge. In two dimensions, we improve the local well-posedness of the Maxwell system with Kerr nonlinearity via Strichartz estimates.
\end{abstract}
\section{Introduction}
We discuss dispersive properties for Maxwell equations on bounded domains $\Omega \subseteq \R^3$ with compact boundary $\partial \Omega \in C^\infty$\footnote{Certainly, the present arguments extend to $\partial \Omega \in C^N$ for $N$ large enough corresponding to a generalization of the results due to Blair--Smith--Sogge \cite{BlairSmithSogge2009} to the $C^N$-category. We are not attempting to minimize the required regularity.}.
The system relates \emph{electric} and \emph{displacement field} $(\mathcal{E},\mathcal{D}): \R \times \Omega \to \R^3 \times \R^3$ and \emph{magnetic} and \emph{magnetizing field} $(\mathcal{B},\mathcal{H}): \R \times \Omega \to \R^3 \times \R^3$. The system of equations reads
\begin{equation}
\label{eq:MaxwellDomainsIntroduction}
\left\{ \begin{array}{cllcl}
\partial_t \mathcal{D} &= \nabla \times \mathcal{H} - \mathcal{J}_e, &\qquad \nabla \cdot \mathcal{D} &=& \rho_e, \qquad (t,x') \in \R \times \Omega, \\
\partial_t \mathcal{B} &= -\nabla \times \mathcal{E}, &\qquad \nabla \cdot \mathcal{B}&=& 0, 
\end{array} \right.
\end{equation}
with initial conditions $(\mathcal{E},\mathcal{H})(0) = (\mathcal{E}_0, \mathcal{H}_0)$.
$\mathcal{J}_e: \R \times \Omega \to \R^3$ denotes the \emph{electric current}, which is regarded as source term. Throughout we denote space-time variables by $x = (t,x') \in \R \times \Omega$.
In the first part of the paper, we supplement the Maxwell system with pointwise time-independent material laws for isotropic media
\begin{equation}
\label{eq:MaterialLaws}
\mathcal{D}(t,x') = \varepsilon(x') \mathcal{E}(t,x'), \qquad \mathcal{B}(t,x') = \mu(x') \mathcal{H}(t,x')
\end{equation}
with $\varepsilon$, $\mu \in C^\infty(\Omega;\R_{>0})$ denoting \emph{permittivity} and \emph{permeability}, which satisfy the uniform ellipticity conditions
\begin{equation}
\label{eq:Ellipticity}
\exists \lambda, \Lambda > 0: \; \forall x' \in \Omega: \; \lambda \leq \varepsilon(x'), \mu(x') \leq \Lambda.
\end{equation}
For technical reasons, we further suppose that for some $N \geq 2$ we have\footnote{This constant is the regularity required for the metric such that the results of Blair--Smith--Sogge hold true. It is conceivable that $N=2$ suffices, but this is currently unclear.}
\begin{equation}
\label{eq:ContinuityBoundary}
\varepsilon, \, \partial \varepsilon, \, \ldots \partial^N \varepsilon \in C(\overline{\Omega}) \cap L^\infty(\Omega), \qquad \mu, \, \partial \mu, \, \ldots, \partial^N \mu \in C(\overline{\Omega}) \cap L^\infty(\Omega).
\end{equation}

Maxwell equations in media describe the electromagnetism of matter and are of great physical importance. We refer to the physics' literature for a detailed explanation (cf. \cite{FeynmanLeightonSands1964,LandauLifschitz1984}). We also refer occasionally to the lecture notes surveying basic results by Schnaubelt \cite{Schnaubelt2022}.

\medskip

Let $\nu \in C^\infty(\partial \Omega, \R^3)$ denote the outer unit normal. Below $[\cdot]_{x' \in \partial \Omega}$ denotes the trace of a function at the boundary. Here we consider the \emph{perfectly conducting boundary conditions}
\begin{equation}
\label{eq:PerfectConductor}
[ \mathcal{E} \times \nu ]_{x' \in \partial \Omega} = 0, \qquad [\mathcal{B} \cdot \nu ]_{x' \in \partial \Omega} = 0.
\end{equation}
The boundary conditions of the perfect electric conductor are among the physically most relevant ones (cf. \cite{SpitzPhdThesis2017,Schnaubelt2022}).
We define \emph{surface charges} $\rho_\Sigma$ and \emph{surface currents} $J_{\Sigma}$ by (cf. \cite[Eq.~(2.3)]{Schnaubelt2022}):
\begin{equation}
\label{eq:BoundaryConditionsII}
[\mathcal{D} \cdot \nu]_{x' \in \partial \Omega} = \rho_{\Sigma}, \qquad [ \mathcal{H} \times \nu]_{x' \in \partial \Omega} = J_{\Sigma}.
\end{equation}
Furthermore, we require the normal component of $\mathcal{J}_e$ to vanish at the boundary, which is physically sensible:
\begin{equation}
\label{eq:NormalComponentJ}
[\mathcal{J}_e \cdot \nu]_{x' \in \partial \Omega} = 0.
\end{equation}

\bigskip

The Maxwell equations satisfy finite speed of propagation (see \cite[Chapter~6]{SpitzPhdThesis2017}). Hence, in the interior of the domain we can use previously established results on the whole space for local-in-time results (see previous works by Dumas--Sueur \cite{DumasSueur2012} and the second author \cite{SchippaSchnaubelt2022,Schippa2021Maxwell3d}). Thus, it suffices to work close to the boundary, at which we resolve the Maxwell system in geodesic normal coordinates; see Section \ref{section:MaxwellManifolds}. At the boundary, we write the equation in geodesic normal coordinates to localize to the half-space $\R^3_{>0} = \{ x' \in \R^3 : x'_3 > 0 \}$. The cometric is given by
\begin{equation*}
g^{-1} =
\begin{pmatrix}
g^{11} & g^{12} & 0 \\
g^{21} & g^{22} & 0 \\
0 & 0 & 1
\end{pmatrix}
.
\end{equation*}
As short-hand notation, we write $\sqrt{g} := \sqrt{ \det g}$. This effectively gives rise to anisotropic permittivity $\sqrt{g} g^{-1} \varepsilon$ and permeability $\sqrt{g} g^{-1} \mu$: 
\begin{equation}
\label{eq:MaxwellGeodesicIntroduction}
\left\{ \begin{array}{clcll}
\partial_t (\sqrt{g} g^{-1} \varepsilon \mathcal{E} ) &= \nabla \times \mathcal{H}, &\qquad (\mathcal{E} \times e_3) \vert_{x'_3 = 0} &=& 0, \quad (t,x') \in \R \times \R^3_{>0}, \\
\partial_t(\sqrt{g} g^{-1} \mu \mathcal{H}) &= -\nabla \times \mathcal{E},  &\qquad (\mathcal{H} \cdot e_3) \vert_{x'_3 = 0} &=& 0
\end{array} \right.
\end{equation}
with the divergence conditions now reading
\begin{equation*}
\nabla \cdot ( \sqrt{g} g^{-1} \varepsilon \mathcal{E} ) = \sqrt{g} \rho_e, \qquad \nabla \cdot ( \sqrt{g} g^{-1} \varepsilon \mathcal{H} ) = 0.
\end{equation*}
It is important to note that the boundary conditions \eqref{eq:PerfectConductor} are respected by $g^{-1}$.

\bigskip

Note that taking time derivatives in \eqref{eq:PerfectConductor} and plugging in \eqref{eq:MaxwellDomainsIntroduction} yields compatibility conditions. In order to maintain a less technical introduction, we postpone the discussion of compatibility conditions to Section \ref{section:MaxwellManifolds} after we have localised Maxwell's equations to \eqref{eq:MaxwellGeodesicIntroduction}. The second compatibility condition simplifies under the assumption
\begin{equation}
\label{eq:VanishingBoundaryDerivativePermeability}
\partial \mu \vert_{x' \in \partial \Omega} = 0.
\end{equation}

\medskip

Spitz \cite{Spitz2019,Spitz2022} showed existence and local well-posedness in $H^3(\Omega)$ (also in the quasilinear case) provided that the compatibility conditions up to second order are satisfied. These are precisely the conditions, which are meaningful in the sense of traces. First, we tend to homogeneous solutions with $\mathcal{J}_e = 0$. Accordingly, we let 
\begin{equation*}
\begin{split}
\mathcal{H}^3(\Omega) = \{ (\mathcal{E}_0,\mathcal{H}_0) \in H^3(\Omega)^2 \; : \; &(\mathcal{E}_0,\mathcal{H}_0) \text{ satisfies homogeneous } \\
&\quad \text{ compatibility conditions up to second order } \}.
\end{split}
\end{equation*}
With the solutions existing, we can show Strichartz estimates for homogeneous solutions
\begin{equation}
 \label{eq:PreStrichartz}
\| (\mathcal{E},\mathcal{H}) \|_{L_T^p L^q} \lesssim_T \| (\mathcal{E}_0,\mathcal{H}_0) \|_{\mathcal{H}^{\gamma+\delta}(\Omega)} + \| \rho_e(0) \|_{H^{\gamma-1 + \frac{1}{p}+\delta}(\Omega)}
\end{equation}
for certain $2 \leq p,q \leq \infty$, $q<\infty$ with $\gamma$ determined by scaling, and $\delta > 0$\footnote{Note that $\delta$ is chosen small enough such that boundary conditions are not relevant for the Sobolev space $H^{\gamma - 1 + \frac{1}{p} + \delta}$.} such that
\begin{equation}
\label{eq:DerivativeScaling}
\gamma = 3 \big( \frac{1}{2} - \frac{1}{q} \big) - \frac{1}{p}, \quad \delta < \frac{3}{q}.
\end{equation}
All Strichartz estimates established in this paper are local in time. For $0<T<\infty$, we write $L_T^p L^q := L_T^p L_{x'}^q(\Omega) := L_t^p([0,T],L^q(\Omega))$ with
\begin{equation*}
\| A \|_{L_T^p L_{x'}^q} := \big( \int_0^T \big( \int_{\Omega} |A(t,x')|^q dx' \big)^{\frac{p}{q}} \big)^{\frac{1}{p}}
\end{equation*}
for $1 \leq p,q < \infty$ with the usual modification for $p=\infty$ or $q=\infty$.

\medskip

\eqref{eq:PreStrichartz} is proved in two steps: First, we show
\begin{equation*}
 \| (\mathcal{E},\mathcal{H}) \|_{L_T^p L^q(\Omega)} \lesssim \| (\mathcal{E},\mathcal{H}) \|_{L_T^\infty \mathcal{H}^{\gamma+\delta}(\Omega)} + \| \rho_e(0) \|_{H^{\gamma-1+\frac{1}{p}+\delta}(\Omega)}.
\end{equation*}
Then it suffices to prove energy estimates for homogeneous solutions for $0 \leq s \leq 3$:
\begin{equation*}
 \| (\mathcal{E},\mathcal{H}) \|_{L_T^\infty \mathcal{H}^s} \lesssim_T \| (\mathcal{E}_0, \mathcal{H}_0) \|_{\mathcal{H}^s}.
\end{equation*}
 
Linearity and boundedness allows us to extend the linear solution mapping from the subspace $\mathcal{H}^3(\Omega)$ of $H^\gamma(\Omega)$ to its closure in the $H^\gamma$-norm:
\begin{equation}
 \label{eq:Density}
\mathcal{H}^\gamma(\Omega) = \overline{\mathcal{H}^3(\Omega)}^{\| \cdot \|_{H^\gamma(\Omega)}}.
\end{equation}
 We denote Sobolev spaces (of real-valued functions) on $\Omega$ with Dirichlet boundary condition with $H_D^\gamma(\Omega)$; the Sobolev spaces with Neumann boundary conditions are denoted by $H_N^\gamma(\Omega)$. 

 Since we shall estimate the regularity of $(\mathcal{E}_0,\mathcal{H}_0)$ only in $H^\gamma$ for $\gamma < \frac{3}{2}$, the compatibility conditions involving derivatives are not relevant. This means we actually only require the Dirichlet conditions for $\mathcal{H}^\gamma$, $\gamma < \frac{3}{2}$. We shall then recover inhomogeneous estimates by Duhamel's formula. Roughly speaking, $\mathcal{H}^\gamma(\Omega)$ is the Sobolev space with relevant compatibility conditions; see Proposition \ref{prop:Characterization}.
For $\gamma < \frac{1}{2}$, this means there are no boundary conditions. For $\frac{1}{2} < \gamma < \frac{3}{2}$, we only have Dirichlet conditions. For $\frac{3}{2} < \gamma < \frac{5}{2}$, we have to take into account first order compatibility conditions, which are Neumann boundary conditions for $\mathcal{H} \times \nu$.

\medskip

On the full space, Maxwell equations with rough coefficients and also quasilinear Maxwell equations were considered in \cite{SchippaSchnaubelt2022} (the two-dimensional case) and the partially anisotropic case in three dimensions was analyzed in \cite{Schippa2021Maxwell3d}. In these works, it was pointed out how Maxwell equations (at least in the case of isotropic media) admit diagonalization to two degenerate half-wave equations and four non-degenerate half-wave equations. The contribution of the degenerate components, i.e., stationary solutions, is quantified by the charges. Here we extend Maxwell equations \eqref{eq:MaxwellGeodesicIntroduction} over the boundary via suitable reflections to carry out the diagonalization afterwards. Since the coefficients of the cometric and the permittivity and permeability are extended evenly, the extension introduces a codimension-1 Lipschitz singularity. After paradifferential decomposition, we can still carry out the diagonalization to half-wave equations similar to the more regular case covered in \cite{Schippa2021Maxwell3d} (see \cite{SchippaSchnaubelt2022} for the previously established two-dimensional case). After diagonalization, we can apply the Strichartz estimates for wave equations with structured Lipschitz singularity due to Blair--Smith--Sogge \cite{BlairSmithSogge2009}. 
We find local-in-time Strichartz estimates for inhomogeneous Maxwell equations by Duhamel's formula:
\begin{equation}
\label{eq:StrichartzEstimatesMaxwell}
\begin{split}
\| (\mathcal{E},\mathcal{H}) \|_{L^p([0,T],L^q(\Omega))} &\lesssim_T \| (\mathcal{E},\mathcal{H})(0) \|_{\mathcal{H}^{\gamma+\delta}(\Omega)} + \| \mathcal{J}_e \|_{L^1(0,T;\mathcal{H}^{\gamma+\delta}(\Omega))} \\
 &\quad + \| \rho_e(0) \|_{H^{\gamma-1 + \frac{1}{p}+\delta}(\Omega)} + \| \nabla \cdot \mathcal{J}_e \|_{L_T^1 H^{\gamma - 1 + \frac{1}{p}+\delta}(\Omega)}.
 \end{split}
\end{equation}
The use of Duhamel's formula in $\mathcal{H}^{\gamma+\delta}$ requires us to impose Dirichlet boundary conditions on $\mathcal{J}_e$.

\medskip

Maxwell equations were previously diagonalized with pseudo-differential operators to prove Strichartz estimates in \cite{SchippaSchnaubelt2022,Schippa2021Maxwell3d}. However, in the corresponding diagonalizatons, the coefficients were more regular. Presently, the coefficients are only Lipschitz regular after reflection, but still the diagonalization only loses $\delta$-regularity compared to the scalar case. We believe our arguments to be general enough to apply to other first-order systems as well. We stress that it is not enough to diagonalize the symbol, but one also has to carefully analyze the mapping properties of the conjugation matrices (see \cite{SchippaSchnaubelt2022Anisotropic}).



\medskip

We digress for a moment to recall Strichartz estimates for the wave equation on domains: Strichartz estimates for wave equations on (general) manifolds with boundary for Dirichlet as well as Neumann boundary conditions were first investigated by the first author \emph{et al.} \cite{BurqLebeauPlanchon2008,BurqPlanchon2009} and Blair--Smith--Sogge \cite{BlairSmithSogge2009} based on the seminal contribution by Smith--Sogge \cite{SmithSogge2007} regarding spectral cluster estimates. Notably, there are more refined results and counterexamples on special domains due to Ivanovici \emph{et al.} \cite{IvanoviciLebeauPlanchon2014,IvanoviciLebeau2017,IvanoviciLebeauPlanchon2021,IvanoviciLebeauPlanchon2021II}.
For exterior convex domains, Smith--Sogge \cite{SmithSogge1995} recovered the Euclidean Strichartz estimates (local-in-time) much earlier by the Melrose--Taylor parametrix.

For Maxwell equations with perfectly conducting boundary conditions, we prove the following theorem:
\begin{theorem}
\label{thm:StrichartzMaxwellDomains}
Let $\Omega \subseteq \R^3$ be a smooth domain with compact boundary and $\varepsilon$, $\mu \in C^\infty(\R^3;\R_{>0})$ satisfy \eqref{eq:Ellipticity}. Let $2 \leq p,q < \infty$, and let $(\mathcal{E},\mathcal{H}): \R \times \Omega \to \R^3 \times \R^3$ denote solutions to \eqref{eq:MaxwellDomainsIntroduction} with material laws \eqref{eq:MaterialLaws}, which satisfy the perfectly conducting boundary conditions \eqref{eq:PerfectConductor}.
Then \eqref{eq:StrichartzEstimatesMaxwell} holds with $\gamma$ and $\delta$ in \eqref{eq:DerivativeScaling} provided that
\begin{equation}
\label{eq:ParametersStrichartz}
\frac{3}{p} + \frac{2}{q} \leq 1.
\end{equation}
\end{theorem}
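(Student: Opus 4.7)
The plan is to reduce \eqref{eq:MaxwellDomainsIntroduction} to a diagonal system of half-wave equations with coefficients of codimension-one Lipschitz regularity, to which the Blair--Smith--Sogge Strichartz estimates \cite{BlairSmithSogge2009} apply. First, by finite speed of propagation and a smooth partition of unity, I split the solution into an interior piece and a boundary piece. The interior piece is controlled by the whole-space Strichartz estimates of \cite{SchippaSchnaubelt2022,Schippa2021Maxwell3d}. Near the boundary I resolve the system in geodesic normal coordinates to arrive at \eqref{eq:MaxwellGeodesicIntroduction} on the half-space $\R^3_{>0}$, where the cometric has the displayed block-diagonal form.

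Second, I extend across $\{x'_3 = 0\}$. Motivated by the boundary conditions $(\mathcal{E} \times e_3)|_{x_3' = 0} = 0$ and $(\mathcal{H} \cdot e_3)|_{x_3' = 0} = 0$, the tangential components of $\mathcal{E}$ and the normal component of $\mathcal{H}$ are extended as odd functions of $x_3'$, the complementary components as even functions, while $\varepsilon$, $\mu$, and $g^{ij}$ are extended evenly. The block structure of $g^{-1}$ and a direct check on the curl operators show that the extended fields solve \eqref{eq:MaxwellGeodesicIntroduction} globally on $\R^3$, at the cost of producing coefficients that are merely Lipschitz across the interface $\{x_3' = 0\}$.

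Third, I apply a paradifferential diagonalization to the $6 \times 6$ first-order system on $\R^3$, following the scheme of \cite{Schippa2021Maxwell3d,SchippaSchnaubelt2022}. After Littlewood--Paley localization, the principal symbol has eigenvalues $0$ (double) and $\pm c(x,\xi)$ (double) with $c(x,\xi) = |\xi|_g / \sqrt{\varepsilon \mu}$, and I construct a conjugating matrix $M(x,D)$ built from paradifferential symbols in the Lipschitz coefficients that diagonalizes the system modulo lower-order terms. The zero block corresponds to the stationary charge sector, whose $L^p L^q$ norms are controlled by $\rho_e$ and $\nabla \cdot \mathcal{J}_e$ through the divergence relations; this accounts for those terms in \eqref{eq:StrichartzEstimatesMaxwell}. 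The nondegenerate blocks are half-wave equations with codimension-one Lipschitz metric, and the Blair--Smith--Sogge estimates yield the stated scaling exponent $\gamma$ on the admissible range $3/p + 2/q \leq 1$, with the small $\delta > 0$ absorbing the Sobolev loss from the diagonalization.

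Finally, combining the resulting homogeneous Strichartz estimate with the $\mathcal{H}^s$-energy estimates of Spitz \cite{Spitz2019} for $0 \leq s \leq 3$, passing to the completion \eqref{eq:Density}, and invoking Duhamel's formula (for which the Dirichlet condition is imposed on $\mathcal{J}_e$) produces \eqref{eq:StrichartzEstimatesMaxwell}. The chief obstacle is the paradifferential step: because the reflected coefficients are only Lipschitz, $M(x,D)$ is not a classical pseudodifferential operator, and its $H^s$- and $L_T^p L^q$-mapping properties, along with the commutator and off-diagonal remainders, must be tracked carefully (in the spirit of \cite{SchippaSchnaubelt2022Anisotropic}) so that the cumulative loss fits inside the $\delta$ budget dictated by \eqref{eq:DerivativeScaling}. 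A secondary check is that the geodesic normal change of variables is compatible with the reflection so that it introduces at worst a Lipschitz, rather than discontinuous, interface in the metric.
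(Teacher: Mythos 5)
Your proposal follows essentially the same route as the paper: localization via finite speed of propagation, geodesic normal coordinates and even/odd reflection across the boundary, paradifferential diagonalization of the resulting Lipschitz-coefficient system into degenerate and half-wave blocks, Blair--Smith--Sogge estimates for the latter and divergence/charge control for the former, then energy estimates and Duhamel. The only small discrepancy is attribution: the paper proves its own $\mathcal{H}^s$ energy estimates (Section~\ref{section:EnergyEstimates}, Proposition~\ref{prop:APrioriMaxwell3}) rather than invoking Spitz's a priori bounds directly, since they need a form suitable for the later quasilinear application, but this does not affect the validity of your argument.
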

Recall that the boundary conditions are indistinguishable at low regularities. We have $H^s_D(\Omega) = H^s(\Omega)$ for $s<1/2$ and $H^s_N(\Omega) = H^s(\Omega)$ for $s<\frac{3}{2}$. Since we estimate $\mathcal{J}_e$ in Sobolev spaces with boundary conditions, we have to require
\begin{equation*}
[\mathcal{J}_e]_{x' \in \partial \Omega} = 0
\end{equation*}
for $\gamma \geq \frac{1}{2}$. Note that because $\gamma -1 + \frac{1}{p} + \delta < \frac{1}{2}$ the boundary condition of $\rho_e$ is not relevant.

\bigskip

We shall also discuss the two-dimensional case:
\begin{equation}
\label{eq:Maxwell2dIntroduction}
\left\{ \begin{array}{clcll}
\partial_t (\varepsilon \mathcal{E}) &= \nabla_{\perp} \mathcal{H} - \mathcal{J}_e, &\qquad (t,x') &\in& \R \times \Omega, \\
\partial_t (\mu \mathcal{H}) &= -(\nabla \times \mathcal{E})_3 = -(\partial_1 \mathcal{E}_2 - \partial_2 \mathcal{E}_1), &\quad \nabla \cdot (\varepsilon \mathcal{E}) &=& \rho_e
\end{array} \right.
\end{equation}
with $\nabla_\perp = (\partial_2, -\partial_1)$. Here $\Omega \subseteq \R^2$ denotes a smooth domain in $\R^2$ with compact boundary, and
$\mathcal{E}: \R \times \Omega \to \R^2$, $\mathcal{J}_e : \R \times \Omega \to \R^2$,  $\mathcal{H} : \R \times \Omega \to \R$. We let $\varepsilon,\mu \in C^\infty(\Omega)$. We require $\varepsilon : \Omega \to \R$ and $\mu: \Omega \to \R$ to satisfy
\begin{equation}
\label{eq:Ellipticity2d}
\exists \lambda, \Lambda > 0: \forall x' \in \Omega: \lambda \leq \varepsilon(x'), \mu(x') \leq \Lambda.
\end{equation}
Like above, we require uniform bounds for finitely many derivatives up to the boundary for large $N \geq 2$:
\begin{equation}
 \label{eq:BoundaryRegularity2d}
\varepsilon, \partial \varepsilon, \ldots, \partial^N \varepsilon \in C(\overline{\Omega}) \cap L^\infty(\Omega), \quad \mu, \partial \mu, \ldots, \partial^N \mu \in C(\overline{\Omega}) \cap L^\infty(\Omega).
\end{equation}

The perfectly conducting boundary condition for \eqref{eq:Maxwell2dIntroduction} is given by
\begin{equation}
\label{eq:PerfectlyConductingBoundaryCondition2d}
[ \mathcal{E} \wedge \nu ]_{x' \in \partial \Omega} = 0.
\end{equation}
In Appendix \ref{appendix:LWPHighRegularity} we shall see how Spitz's local well-posedness in three dimensions descends to two dimensions. In the following $\mathcal{H}^3(\Omega)$ denotes the Sobolev space $H^3(\Omega)$ with boundary and compatibility conditions taken into account as in the three-dimensional case. We abuse notation and define $\mathcal{H}^\gamma(\Omega)$ as closure of $\mathcal{H}^3(\Omega)$ in the $H^\gamma(\Omega)$-topology like in \eqref{eq:Density}.
 We prove the following:
 \begin{theorem}
 \label{thm:StrichartzEstimatesMaxwell2d}
 Let $\Omega \subseteq \R^2$ be a smooth domain with compact boundary, $2 \leq p,q \leq \infty$, and suppose that
\begin{equation}
\label{eq:StrichartzAdmissibility2d}
\frac{3}{p} + \frac{1}{q} \leq \frac{1}{2}, \quad q < \infty, \quad \gamma = 2 \big( \frac{1}{2} - \frac{1}{p} \big) - \frac{1}{q}, \quad 0<\delta < \frac{1}{2}.
\end{equation} 
Let $\varepsilon \in C^\infty(\Omega;\R)$, $\mu \in C^\infty(\Omega;\R)$ satisfy \eqref{eq:Ellipticity2d} and \eqref{eq:BoundaryRegularity2d}.

 Then the following estimate holds for solutions to \eqref{eq:Maxwell2dIntroduction} with initial data $(\mathcal{E}_0,\mathcal{H}_0) \in \mathcal{H}^\gamma(\Omega)$ satisfying boundary conditions \eqref{eq:PerfectlyConductingBoundaryCondition2d}:
 \begin{equation*}
 \begin{split}
 \| (\mathcal{E},\mathcal{H}) \|_{L_T^p L^q(\Omega)} &\lesssim_T \| (\mathcal{E}_0,\mathcal{H}_0) \|_{\mathcal{H}^{\gamma+\delta}} + \| \mathcal{J}_e \|_{L_T^1 \mathcal{H}^{\gamma+\delta}} \\
 &\quad + \| \rho_e(0) \|_{H^{\gamma-1+\frac{1}{p}+\delta}(\Omega)} + \| \nabla \cdot \mathcal{J}_e \|_{L_T^1 H^{\gamma-1+\frac{1}{p}+\delta}(\Omega)}.
 \end{split}
 \end{equation*}
 \end{theorem}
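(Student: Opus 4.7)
\medskip

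The plan is to adapt the three-dimensional argument from Theorem \ref{thm:StrichartzMaxwellDomains} to the two-dimensional setting. By finite speed of propagation and a smooth partition of unity, I split the solution into an interior piece, controlled by the whole-space Strichartz estimates of Schippa--Schnaubelt \cite{SchippaSchnaubelt2022}, and finitely many pieces supported near a single boundary point. On each boundary piece, I pass to geodesic normal coordinates, so that \eqref{eq:Maxwell2dIntroduction} is transformed into a Maxwell-type system on the half-space $\R^2_{>0}$ with a cometric $g^{-1}$ satisfying $g^{i3}=g^{3i}=\delta_{i3}$ (the two-dimensional analogue of the structure displayed in \eqref{eq:MaxwellGeodesicIntroduction}), anisotropic effective permittivity $\sqrt{g}g^{-1}\varepsilon$ and permeability $\sqrt{g}g^{-1}\mu$, and the boundary condition $\mathcal{E}_1|_{x_2=0}=0$.

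Next I extend the system across $\{x_2=0\}$. Guided by \eqref{eq:PerfectlyConductingBoundaryCondition2d} and the compatibility conditions, I reflect the tangential component $\mathcal{E}_1$ oddly and the normal component $\mathcal{E}_2$ as well as $\mathcal{H}$ evenly, while $\varepsilon$, $\mu$ and the components of $g^{-1}$ are extended evenly. The reflected fields then solve a Maxwell system on all of $\R^2$ with coefficients lying in the Lipschitz class with a codimension-one singularity at $\{x_2=0\}$, and the boundary conditions as well as \eqref{eq:NormalComponentJ} are encoded in the parity. A direct check using the compatibility hypotheses built into $\mathcal{H}^3(\Omega)$ shows that this produces a genuine $H^3$ solution on the full plane from any $\mathcal{H}^3$ datum, and by the density \eqref{eq:Density} the construction descends to $\mathcal{H}^\gamma$.

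Having reduced to a first-order $3\times 3$ hyperbolic system on $\R^2$ with Lipschitz coefficients, I paradiagonalize its principal symbol exactly as in \cite{SchippaSchnaubelt2022,Schippa2021Maxwell3d}. After a Littlewood--Paley decomposition, Bony's paralinearization replaces the coefficients by paraproducts, and one is left with a smooth symbol-level $3\times 3$ matrix of eigenvalues $0,\pm c(x)|\xi|_{g}$, $c=1/\sqrt{\varepsilon\mu}$. The nonzero eigenspaces yield two half-wave equations with Lipschitz propagation speed, and the kernel eigenspace corresponds to the stationary component governed by the charge $\rho_e$; this is the channel through which the term $\|\rho_e(0)\|_{H^{\gamma-1+1/p+\delta}}$ and the divergence $\nabla\cdot\mathcal{J}_e$ enter, via the continuity equation $\partial_t\rho_e+\nabla\cdot\mathcal{J}_e=0$. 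For the half-wave pieces I invoke the Blair--Smith--Sogge Strichartz estimates \cite{BlairSmithSogge2009} for wave equations with $C^{1,1}$-type (here Lipschitz with codimension-one singularity) metrics on the half-space, which are exactly sharp under \eqref{eq:StrichartzAdmissibility2d} in two dimensions, modulo the $\delta$-loss. Combining with the energy estimates on $\mathcal{H}^s$ (transferred from the three-dimensional well-posedness theory as outlined in Appendix \ref{appendix:LWPHighRegularity}) and a Duhamel argument to convert the homogeneous Strichartz bound \eqref{eq:PreStrichartz} into \eqref{eq:StrichartzEstimatesMaxwell} yields the claim.

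The main obstacle, as already flagged in the three-dimensional case, is not the symbolic diagonalization itself but the control of the conjugation operators: because the coefficients are only Lipschitz after reflection, the eigenprojectors are Lipschitz symbols and one must verify that the resulting paradifferential conjugators map $\mathcal{H}^{\gamma+\delta}$ into itself with loss no worse than $\delta$ derivatives. This is where the regularity hypothesis \eqref{eq:BoundaryRegularity2d} and the $\delta>0$ margin in \eqref{eq:StrichartzAdmissibility2d} are consumed, following the strategy of \cite{SchippaSchnaubelt2022Anisotropic}.
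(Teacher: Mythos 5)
Your proposal follows essentially the same path as the paper: localization via finite speed of propagation, geodesic normal coordinates, even/odd reflection across the boundary to obtain a full-space Maxwell system with codimension-one Lipschitz coefficients, paradifferential diagonalization into stationary and half-wave channels (with charges and $\nabla\cdot\mathcal{J}_e$ feeding the degenerate channel), Blair--Smith--Sogge for the half-waves, and energy estimates plus Duhamel to close. One small inaccuracy: the energy estimates are proved directly in Section~\ref{subsection:TwoDimensionalEnergy} (Proposition~\ref{prop:APrioriMaxwell2}) rather than transferred from the three-dimensional theory; the cylindrical extension in Appendix~\ref{appendix:LWPHighRegularity} is used only for high-regularity local well-posedness, and the cometric structure in two dimensions reads $g^{-1}=\mathrm{diag}(g^{11},1)$, not the three-dimensional $g^{i3}=\delta_{i3}$ condition you wrote.
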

 
 \bigskip
 
 In the last section of the paper, we analyze Maxwell equations with Kerr nonlinearity $\varepsilon(\mathcal{E}) = 1 + |\mathcal{E}|^2$ in two dimensions. In the following let $\Omega \subseteq \R^2$ be a smooth domain with compact boundary. We use Strichartz estimates to improve local well-posedness of the following system:
 \begin{equation}
\label{eq:Maxwell2dKerrIntroduction}
\left\{
\begin{array}{clcll}
\partial_t (\varepsilon \mathcal{E}) &= \nabla_{\perp} \mathcal{H}, &\quad [ \mathcal{E} \wedge \nu ]_{x' \in \partial \Omega} &=& 0, \quad (t,x') \in \R \times \Omega, \\
\partial_t \mathcal{H} &= -( \partial_1 \mathcal{E}_2 - \partial_2 \mathcal{E}_1), &\quad [\rho_e]_{x' \in \partial \Omega} &=& 0
\end{array}
\right.
\end{equation}
with $\nabla_\perp = (\partial_2, - \partial_1)$. It turns out that the diagonalization can still be applied if 
\begin{equation*}
\| \partial_x \varepsilon \|_{L_t^2 L_{x'}^\infty} + \| \partial_x \mu \|_{L_t^2 L_{x'}^\infty} \lesssim 1.
\end{equation*}
By $\partial$ we denote space-time derivatives $\partial = \partial_x = \partial_{t,x'}$.
 In this case we cannot use the estimates from \cite{BlairSmithSogge2009}, but rely instead on Strichartz estimates due to Tataru for quasilinear wave equations (see \cite{Tataru2002} and \cite{SchippaSchnaubelt2022} for an elaboration on the half-wave equation). 
 
\begin{theorem}[Low regularity well-posedness of the Kerr system]
\label{thm:KerrLWP}
For $s \in (\frac{11}{6},2]$, \eqref{eq:Maxwell2dKerrIntroduction} is locally well-posed for small initial data $\|(\mathcal{E}_0,\mathcal{H}_0) \|_{\mathcal{H}_0^s} \leq \delta \ll 1$ and finite charges $\| \rho_e(0) \|_{H^{\tilde{s}}} \leq D < \infty$ for some $\tilde{s} > \frac{13}{12}$.
\end{theorem}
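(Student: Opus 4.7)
The plan is to treat \eqref{eq:Maxwell2dKerrIntroduction} as a quasilinear first-order system by a frequency-envelope / bootstrap argument in which the linear theory behind Theorem \ref{thm:StrichartzEstimatesMaxwell2d} is applied to the system with coefficients $\varepsilon(\mathcal{E})$ depending on the solution itself. First I would construct regular approximate solutions $(\mathcal{E}^{(n)},\mathcal{H}^{(n)})$ by truncating the initial data in $\mathcal{H}^3$ and invoking Spitz's local well-posedness (the two-dimensional version recorded in the appendix), then derive $n$-uniform bounds on a common interval $[0,T]$. The key bootstrap assumption is
\begin{equation*}
\| \partial_x \varepsilon(\mathcal{E})\|_{L_t^2 L^\infty_{x'}} + \| \partial_x \mu \|_{L_t^2 L^\infty_{x'}} \leq 2 C_0,\qquad \| (\mathcal{E},\mathcal{H})\|_{L_T^\infty \mathcal{H}^s} \leq 2 \delta,
\end{equation*}
on the lifespan of each approximation, whose validity is to be improved by Strichartz and energy estimates.

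Under the bootstrap assumption, I would redo the diagonalization used for Theorem \ref{thm:StrichartzEstimatesMaxwell2d}: localize near the boundary in geodesic normal coordinates, reflect evenly across $\{x'_3=0\}$ to create a codimension-$1$ Lipschitz singularity on $\R^2$, and paradifferentially decouple into degenerate (stationary, charge-controlled) and non-degenerate half-wave modes. The new feature relative to Theorem \ref{thm:StrichartzEstimatesMaxwell2d} is that the coefficients are only $L^2_t L^\infty_{x'}$ in derivatives, so instead of \cite{BlairSmithSogge2009} I would invoke Tataru's Strichartz estimates for half-wave equations with metrics $g$ satisfying $\partial g \in L^2_t L^\infty_{x'}$, exactly as used in \cite{SchippaSchnaubelt2022} on the full space. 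This yields \eqref{eq:StrichartzEstimatesMaxwell} in the 2D form of Theorem \ref{thm:StrichartzEstimatesMaxwell2d} but with an additional regularity loss $\sigma>0$ arising from the lower coefficient regularity.

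To close the bootstrap I would exploit the Kerr structure $\partial \varepsilon(\mathcal{E}) = 2 \mathcal{E} \cdot \partial \mathcal{E}$ and bound
\begin{equation*}
\| \partial \varepsilon(\mathcal{E})\|_{L_t^2 L^\infty_{x'}} \lesssim \| \mathcal{E}\|_{L^\infty_{t,x'}} \| \partial \mathcal{E}\|_{L^2_t L^\infty_{x'}}.
\end{equation*}
The first factor is controlled by the Sobolev embedding $\mathcal{H}^s \hookrightarrow L^\infty$ for $s>1$ together with the energy estimate, which under smallness of the data contributes a factor $\delta$. The second factor is to be estimated via Strichartz on a near-endpoint admissible pair $(p,q)$ from \eqref{eq:StrichartzAdmissibility2d} approximating $(2,\infty)$, using Sobolev embedding in $x'$ to trade $q<\infty$ for $L^\infty_{x'}$. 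The numerology of the threshold $s > 11/6$ comes from optimizing the Strichartz scaling $\gamma = 2(\frac12 - \frac1p) - \frac1q$ applied to $\partial \mathcal{E}$, with the $\sigma$-loss absorbed; the charge threshold $\tilde s > 13/12$ is the corresponding $\gamma - 1 + \frac1p + \delta$ for the same Strichartz pair, i.e.\ $\tilde s > s - \frac34$. Smallness of $\delta$ then improves $2C_0$ to $C_0$ in the bootstrap, closing it.

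The main obstacle is the Strichartz step under genuinely quasilinear coefficient regularity: one must show that the paradifferential conjugation diagonalizing the Maxwell symbol is bounded on the relevant Strichartz spaces when only $\partial \varepsilon,\partial \mu \in L^2_t L^\infty_{x'}$, and that the compositions with the boundary reflection preserve these bounds despite the superposition of Lipschitz singularities in space and $L^2 L^\infty$ roughness in time. This is the point where the mapping-property subtleties flagged via \cite{SchippaSchnaubelt2022Anisotropic} become quantitatively delicate, and it is the only step where I expect genuinely new work beyond assembling Theorem \ref{thm:StrichartzEstimatesMaxwell2d} with the Tataru half-wave estimates. Uniqueness and continuous dependence would then follow by a standard Bona--Smith-type estimate on differences in a lower Sobolev norm, using the same Strichartz machinery at regularity below $s$.
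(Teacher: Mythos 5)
Your overall strategy is the same as the paper's: regularize, invoke the $\mathcal{H}^3$ well-posedness of the appendix, close a bootstrap via Strichartz plus energy estimates with the Tataru-type loss, and pass to the limit by a frequency-envelope/Bona--Smith argument. The numerology you derive ($s>11/6$ from the $\sigma$-loss, $\tilde s>13/12=s-\tfrac34$ for the charges) is exactly the paper's. Two points of substance where your sketch undersells or misses the work, and one minor variation:

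First, the closing sentence ``uniqueness and continuous dependence follow by a standard Bona--Smith-type estimate'' hides the step that is actually new and delicate. The approximating $\mathcal{H}^3$ data must satisfy the compatibility conditions \eqref{eq:NonlinearCompatibility0}--\eqref{eq:NonlinearCompatibility2} so that Theorem \ref{thm:LocalWellposednessH3Maxwell2dAppendix} applies, and sharp Littlewood--Paley truncation does \emph{not} preserve those boundary conditions. The paper replaces it by a reflection-compatible mollifier (odd in the normal component of $\mathcal{E}$, even in the tangential component and in $\mathcal{H}$), and this is precisely the point where the hypothesis $(\mathcal{E}_0,\mathcal{H}_0)\in\mathcal{H}_0^s$ (rather than $\mathcal{H}^s$) is used: zero Dirichlet trace of $\mathcal{E}_0$ makes the mollified data automatically satisfy the nonlinear compatibility conditions for $n$ large. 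Your proposal never distinguishes $\mathcal{H}_0^s$ from $\mathcal{H}^s$, so as written it would try to regularize arbitrary $\mathcal{H}^s$ data and the step would fail. Related, the frequency-envelope argument also requires a \emph{uniform in $n$} bound $\|\rho_{e,n}\|_{H^{\tilde s}}\lesssim\|\mathcal{E}\|_{H^{\tilde s+\frac12+\epsilon}}^3+\|\rho_e\|_{H^{\tilde s}}$ for the charges of the regularized data; this is a separate lemma in the paper, and without it the bootstrap time $T=T(\|u_0\|_{H^s},D)$ would not be $n$-independent. You do not address this.

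Second, a smaller but not cosmetic difference: you bootstrap directly on $\|\partial_x\varepsilon(\mathcal{E})\|_{L^2_t L^\infty_{x'}}\le 2C_0$, whereas the paper bootstraps on $\|\partial_x u\|_{L^4_T L^\infty_{x'}}$ and then uses H\"older in time to deduce $\|\partial_x\varepsilon_1\|_{L^2_T L^\infty}\lesssim T^{1/4}C(\dots)$. The explicit power of $T$ is what lets the continuity argument contract and gives a well-defined existence time $T=T(\|u_0\|_{H^s},D)$; with the $L^2_t$-norm as the primary quantity you only get a smallness factor from $\|\mathcal{E}\|_{L^\infty}$, which is weaker and makes the ``improves $2C_0$ to $C_0$'' step less clear. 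Finally, the paradifferential diagonalization with $\partial\varepsilon\in L^2_tL^\infty_{x'}$ near the reflected boundary, which you flag as the main obstacle, is in the paper largely a reduction to the full-space quasilinear analysis of \cite{SchippaSchnaubelt2022} combined with the reflection arguments of Sections~2--6; the genuinely new effort is in the boundary-compatible regularization and the uniform charge bound, not in the conjugation estimates.
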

By continuous dependence we mean local existence, uniqueness, and continuous dependence of solutions. By $\mathcal{H}_0^s(\Omega)$ we denote the subspace of $\mathcal{H}^s(\Omega)$ with $\mathcal{E}_0$ satisfying zero boundary conditions. This ensures compatibility conditions to hold and facilitates regularization. We refer to Theorem \ref{thm:ImprovedLWPKerr2d} for a more precise statement.  Moreover, we note that in particular for vanishing charges we infer low regularity local well-posedness.

We remark that energy arguments in two dimensions yield local well-posedness for $s>2$. Here we improve on this regularity making use of the dispersive properties.
 
\bigskip

\emph{Outline of the paper.} In Section \ref{section:MaxwellManifolds} we write Maxwell equations in terms of differential forms to facilitate change of variables. We use this to formulate Maxwell equations on the half-space. We reduce Strichartz estimates to homogeneous estimates for the reflected solutions. A key step is the proof of energy estimates. However, these we prove on the level of the original equations posed on domains in Section \ref{section:EnergyEstimates}. In Section \ref{section:Preliminaries} we collect facts on pseudo-differential operators. In Section \ref{section:Diagonalization} we diagonalize three-dimensional Maxwell equations after localization to the half-space, and in Section \ref{section:Diagonalization2d} we diagonalize two-dimensional Maxwell equations. In Section \ref{section:ImprovedLWP} we use Strichartz estimates to improve the local well-posedness of the Kerr system in two dimensions. In Appendix \ref{appendix:LWPHighRegularity}, we relate the two-dimensional Maxwell system to the Maxwell system in three dimensions via cylindrical extension. This allows us to transfer the local well-posedness results at high regularity due to Spitz to two dimensions without revisiting the arguments in three dimensions. In Appendix \ref{appendix:StrichartzRevisited} we revisit Strichartz estimates for Maxwell equations posed on the full space to formulate the estimates in a form, which is suitable for the purposes of the paper. In Appendix \ref{appendix:Helmholtz} we collect facts on Helmholtz decompositions, which are crucial for energy estimates proved in Section \ref{section:EnergyEstimates}.

\section{Maxwell equations on manifolds}
\label{section:MaxwellManifolds}
To investigate the behavior of Maxwell equations under coordinate transformations, we set up Maxwell equations on smooth Riemannian manifolds with boundary $(M,g)$. In this context, the fields are given at any time $t \in \R$ as covectorfields $X(t):M \to T^*M$, $X \in \{\mathcal{E},\mathcal{D},\mathcal{H},\mathcal{B},\mathcal{J}_e\}$, i.e., sections of the cotangent bundle. Permittivity and permeability are given by $\kappa(t): M \to \text{Sym}(T^*M \to T^* M)$, $\kappa \in \{\varepsilon,\mu\}$, and $\rho_e(t) : M \to \R$.
Let $*,d: \Lambda T^* M \to \Lambda T^* M$ denote the Hodge dual and exterior derivative. We localize Maxwell equations to the half-space via geodesic normal coordinates. This facilitates finding the compatibility conditions. These in turn allow us to find suitable extensions of the fields from the half-space to the full space. The extension respects the Sobolev regularity $0 \leq \gamma \leq 2$, which suffices for the presently considered Strichartz estimates, and the extended fields moreover satisfy Maxwell equations on the full space, albeit with coefficients with Lipschitz singularity. We first consider the more involved three-dimensional case and then shall be brief for the two-dimensional case.

\subsection{3d manifolds}
With the aid of Hodge dual and the exterior derivative, we can write for the curl and divergence of vector fields $F: \Omega \to \R^3$:
\begin{equation*}
\nabla \times F = *dF, \qquad \nabla \cdot F = *d*F.
\end{equation*}

Consequently, the Maxwell system of equations reads
\begin{equation}
\label{eq:MaxwellEquationsManifolds}
\left\{ \begin{array}{cllcl}
\partial_t(\varepsilon \mathcal{E}) &= *d \mathcal{H} - \mathcal{J}_e, &\quad *d* (\varepsilon \mathcal{E}) &=& \rho_e, \qquad (t,x') \in \R \times M, \\
\partial_t( \mu \mathcal{H}) &= - * d \mathcal{E}, &\quad *d* (\mu \mathcal{H}) &=& 0.
\end{array} \right.
\end{equation}
Let $\#:TM \to T^*M$ and $\flat: T^*M \to TM$ denote the musical isomorphisms. The boundary conditions are given by
\begin{equation}
\label{eq:PerfectConductorManifold}
[ (\mathcal{E}^\flat)_{||}  ]_{x' \in \partial M} = 0, \qquad [(\mathcal{B}^\flat)_\perp]_{x' \in \partial M} = 0
\end{equation}
We define surface current $\mathcal{J}_\Sigma$ and surface charges $\rho_\Sigma$ on the boundary by
\begin{equation}
\label{eq:BoundaryConditionsIIManifolds}
[(\mathcal{H}^{\flat})_{||}]_{x' \in \partial M} = [\mathcal{J}_\Sigma]_{x' \in \partial M} \text{ and } [(\mathcal{D}^{\flat})_{\perp}]_{x' \in \partial M} = \rho_\Sigma.
\end{equation}

\subsubsection{Finite speed of propagation and Strichartz estimates in the interior}
\label{subsubsection:FinitePropagation}
In this section, we show how we can reduce the local-in-time analysis to charts. We recall the notion of finite speed of propagation.
Let $(\mathcal{E},\mathcal{H})$ be homogeneous solutions to 
\begin{equation*}
 \left\{ \begin{array}{clcll}
  \partial_t (\varepsilon \mathcal{E}) &= \nabla \times \mathcal{H}, &\quad [ \mathcal{E} \times \nu]_{x' \in \partial \Omega} &=& 0, \quad (t,x') \in \R \times \Omega, \\
  \partial_t (\mu \mathcal{H}) &= - \nabla \times \mathcal{E}, &\quad [\mathcal{H} \cdot \nu]_{x' \in \partial \Omega} &=& 0.
 \end{array} \right.
\end{equation*}
Note that we do not require divergence conditions here (cf. \cite[Section~6]{SpitzPhdThesis2017}).

\medskip

For $X \subseteq \Omega$ let $\mathcal{N}_r(X) = \{ x' \in \Omega : \text{dist}(x',X) < r \}$. By Maxwell equations having finite speed of propagation, we mean that there
is $0<c<\infty$ such that for $0<t<\infty$ it holds
\begin{equation*}
 \text{supp}_{x'}( (\mathcal{E},\mathcal{H})(t)) \subseteq \mathcal{N}_{ct} (\text{supp}_{x'}(\mathcal{E}_0,\mathcal{H}_0)).
\end{equation*}
We refer to \cite[Theorem~6.1]{SpitzPhdThesis2017} for a more precise statement in terms of the backwards light cone.

Let $d: \Omega \to \R_{>0}$, $d(x') = \text{dist}(x',\partial \Omega)$ denote the distance function away from the boundary, and $H_\tau = d^{-1}(\tau)$ denote corresponding level sets. By the implicit function theorem, $H_\tau$ is a smooth hypersurface with metric $g_\tau$ and we can write
\begin{equation*}
 g = d\tau^2 + g_\tau \text{ for } 0 \leq t \leq \tilde{\delta}.
\end{equation*}
By compactness of $\partial \Omega$, finitely many geodesics charts suffice to cover a set $\{ x \in \Omega : d(x) < \varepsilon \}$ close to the boundary. Shrinking the charts allows us to restrict to local-in-time solutions, which do not leave the geodesic chart.

\medskip

In Appendix B.1 we show Strichartz estimates in the interior based on Strichartz estimates in the full space. We find $T$ small enough such that $(\mathcal{E},\mathcal{H})(t)$ within $\Omega^{\text{int}} = \{ x' \in \Omega : d(x') > \varepsilon /2 \}$ only depends on $(\mathcal{E},\mathcal{H})(0)$ in $\tilde{\Omega}^{\text{int}} = \{ x' \in \Omega : d(x') > \varepsilon /4 \}$, and the solution does not reach the boundary for times $t \leq T$. 

We prove that
\begin{equation}
\label{eq:StrichartzInterior}
 \| (\mathcal{E},\mathcal{H}) \|_{L_T^p L^q(\Omega^{\text{int}})} \lesssim \| (\mathcal{E}_0,\mathcal{H}_0) \|_{H^s(\Omega)} + \| \rho_e(0) \|_{H^{s-1+\frac{1}{p}}}.
\end{equation}

\subsubsection{Geodesic normal coordinates}

Let $g= (g_{ij})$ denote the metric tensor and $g^{-1} = (g^{ij})$ the cometric. In this work, we only consider isotropic $\varepsilon$ and $\mu$ on the original domain $(\Omega,\delta^{ij})$. We endow a chart in $(\Omega,\delta^{ij})$ with geodesic normal coordinates derived from the height function. Let $x' = (x^*,x_3')$, and
\begin{equation}\label{geodesic}
g = {dx'_3}^2 + r(x^*,x'_3,(dx^*)^2).
\end{equation}
The Hodge dual transforms by
\begin{equation*}
* ({dx'}^{i_1} \wedge \ldots \wedge {dx'}^{i_k}) = \frac{\sqrt{g}}{(n-k) !} g^{i_1 j_1} \ldots g^{i_k j_k} \varepsilon_{j_1 \ldots j_n} {dx'}^{j_{k+1}} \wedge \ldots \wedge {dx'}^{j_n}.
\end{equation*}
Above $\varepsilon_{j_1 \ldots j_n}$ denotes the $n$-Levi--Civita tensor, i.e.,
\begin{equation*}
\varepsilon_{j_1 \ldots j_n} = 
\begin{cases}
1, \qquad (j_1 \ldots j_n) \text{ is an even permutation}, \\
-1, \qquad (j_1 \ldots j_n) \text{ is an odd permutation}, \\
0, \qquad (j_1 \ldots j_n) \text{ is no permutation}.
\end{cases}
\end{equation*}
and $(g^{ij})$ denotes the inverse metric. Recall that we let $\sqrt{g} = \sqrt{\det g}$. Consequently, in geodesic normal coordinates,
\begin{equation*}
*_g d A = \sqrt{g} \, ad(g^{-1}) \nabla \times A, \qquad *_g d *_g A = \frac{1}{\sqrt{g}} \nabla \cdot ( \sqrt{g} g^{-1} (\varepsilon \mathcal{E}')).
\end{equation*}
In the above display $\text{ad}(B)$ denotes the adjugate matrix, i.e., 
\begin{equation*}
\text{ad} (B) = ((-1)^{i+j} B_{ji})_{i,j}
\end{equation*}
with $B_{ji}$ denoting the $(j,i)$-minor of $B$.
By Cramer's rule, Maxwell equations become on the half-space $(t,x') \in \R \times \R^3_{>0}$:
\begin{equation*}
\left\{ \begin{array}{clrl}
\partial_t ( \varepsilon(x') \mathcal{E}') &= (\sqrt{g})^{-1} g \nabla \times \mathcal{H}' - \mathcal{J}'_e, &\qquad \nabla \cdot \big( \sqrt{g} g^{-1} \mu \mathcal{H} \big) &= 0, \\
\partial_t ( \mu(x') \mathcal{H}') &= - \big( \sqrt{g} \big)^{-1} g \nabla \times \mathcal{E'}, &\qquad \frac{1}{\sqrt{g}} \nabla \cdot \big( \sqrt{g} g^{-1} \varepsilon \mathcal{E} \big) &= \rho_e'.
\end{array} \right.
\end{equation*}
In a sense, $\sqrt{g} g^{-1} \varepsilon$ now plays the role of $\varepsilon$ and $\sqrt{g} g^{-1} \mu$ the role of $\mu$. Also, we redefine $\rho_e' = \nabla \cdot (\sqrt{g} g^{-1} \varepsilon \mathcal{E})$, which does not effect regularity questions because $\sqrt{g}$ is smooth. Moreover, we write $\mathcal{J}'_e := \sqrt{g} g^{-1} \mathcal{J}_e'$. Below we shall see that this is consistent with the compatibility conditions.
We rearrange the equations to
\begin{equation}
\label{eq:GeodesicResolution3d}
\left\{ \begin{array}{cl}
\partial_t (\sqrt{g} g^{-1} \varepsilon \mathcal{E}') &= \nabla \times \mathcal{H}' - \mathcal{J}'_e, \qquad \nabla \cdot \big( \sqrt{g} g^{-1} \mu \mathcal{H} \big) = 0, \qquad (t,x') \in \R \times \Omega,  \\
\partial_t ( \sqrt{g} g^{-1} \mu \mathcal{H}') &= - \nabla \times \mathcal{E}', \qquad \qquad  \nabla \cdot \big( \sqrt{g} g^{-1} \varepsilon \mathcal{E} \big) = \rho_e'.
\end{array} \right.
\end{equation}

\subsubsection{Compatibility conditions}

On the half-space $x' \in \R^3_{>0}$, the boundary conditions are given as follows:
\begin{equation}
\label{eq:CompatibilityConditions0}
[\mathcal{E}_1]_{x'_3 = 0} = [\mathcal{E}_2]_{x'_3 = 0} = [\mathcal{H}_3]_{x'_3 = 0} = 0.
\end{equation}
We call a relation 
\begin{equation*}
\text{tr} (F(\partial^\alpha \mathcal{E},\partial^\beta \mathcal{H})) = 0,
\end{equation*}
which follows from \eqref{eq:CompatibilityConditions0} by taking $k$ time derivatives a compatibility condition of order $k$. Hence, \eqref{eq:CompatibilityConditions0} are of order zero.
For \eqref{eq:MaxwellGeodesicIntroduction}, the tangential derivatives are $\partial_t$, $\partial_1$, and $\partial_2$, which allows us to express the compatibitility conditions explicitly.

It is important to observe from~\eqref{geodesic} that the (possibly non-diagonal) metric tensor only mixes the first and second component:
\begin{equation*}
g^{-1} =
\begin{pmatrix}
g^{11} & g^{12} & 0 \\
g^{21} & g^{22} & 0 \\
0		& 0		& 1
\end{pmatrix}
.
\end{equation*}
We give the first order compatibility conditions in the homogeneous case: Applying tangential derivatives $\partial_1$, $\partial_2$ to $\mathcal{H}_3$ gives
\begin{equation*}
[\partial_1 \mathcal{H}_3]_{x'_3 = 0} = [\partial_2 \mathcal{H}_3]_{x'_3 = 0} = 0.
\end{equation*}
The equation for the first and second component of the equation
\begin{equation*}
\partial_t (\sqrt{g} g^{-1} \varepsilon \mathcal{E}) = \nabla \times \mathcal{H}
\end{equation*}
yields
\begin{equation*}
[\partial_3 \mathcal{H}_1]_{x'_3 = 0} = [\partial_3 \mathcal{H}_2]_{x'_3 = 0} = 0.
\end{equation*}

 Moreover, tangential derivatives $\partial_1$, $\partial_2$ applied to $\mathcal{E}_1$ and $\mathcal{E}_2$ and the charge condition yields
\begin{equation*}
[\partial_i \mathcal{E}_j]_{x'_3 = 0} = 0 \text{ for } i,j=1,2  \text{ and } [\nabla \cdot (\varepsilon \sqrt{g} g^{-1} \mathcal{E})]_{x'_3 = 0} = \text{tr}(\rho_e).
\end{equation*}
Let $\varepsilon' = \sqrt{g} \varepsilon$. The above display becomes
\begin{equation*}
 [\partial_3 (\varepsilon' \mathcal{E}_3)]_{x'_3 = 0} = \text{tr}(\rho_e) \Leftrightarrow [(\partial_3 \varepsilon') \mathcal{E}_3]_{x'_3 = 0} + [\varepsilon' \partial_3 \mathcal{E}_3]_{x'_3 = 0} = \text{tr}(\rho_e).
\end{equation*}
This yields a Robin boundary condition for $\mathcal{E}_3$ in terms of $\text{tr}(\rho_e)$ and $\rho_\Sigma$. If these are vanishing, we obtain Neumann boundary conditions for $\mathcal{E}_3$. But note that this is not a compatibility condtion.

 We extend the equations to the full space as follows:
Reflect $\varepsilon$, $\mu$, and $g^{ij}$ evenly. Let
\begin{equation*}
\tilde{\kappa}(x'_1,x'_2,x'_3) =
\begin{cases}
\kappa(x'_1,x'_2,x'_3), \quad \; \; x'_3 \geq 0, \\
\kappa(x'_1,x'_2,-x'_3), \quad x'_3 < 0,
\end{cases}
\qquad \kappa \in \{ \varepsilon, \mu, g^{ij} \}.
\end{equation*}
On the other hand, $\mathcal{E}_1$, $\mathcal{E}_2$, and $\mathcal{H}_3$ are reflected oddly, and $\mathcal{H}_1$, $\mathcal{H}_2$, and $\mathcal{E}_3$ are reflected evenly. $\mathcal{J}_{e1}$, $\mathcal{J}_{e2}$ are reflected oddly and $\mathcal{J}_{e3}$ evenly. $\rho_e$ is reflected oddly. Denoting the reflected quantities with $\tilde{X}$ and $\sqrt{\tilde{g}} = \sqrt{ \det \tilde{g}}$ the following system of equations holds on $\R^3$:
\begin{equation}
\label{eq:ReflectedMaxwellEquations}
\left\{ \begin{array}{cl}
\partial_t (\sqrt{\tilde{g}} \tilde{g}^{-1} \tilde{\varepsilon} \tilde{\mathcal{E}}) &= \nabla \times \tilde{\mathcal{H}} -\tilde{\mathcal{J}_e}, \quad \nabla \cdot (\sqrt{\det \tilde{g}} \tilde{g}^{-1} \tilde{\mu} \tilde{\mathcal{H}}) = 0, \\
\partial_t (\sqrt{\tilde{g}} \tilde{g}^{-1} \tilde{\mu} \tilde{\mathcal{H}}) &= - \nabla \times \tilde{\mathcal{E}}, \qquad \quad \nabla \cdot (\sqrt{\det \tilde{g}} \tilde{g}^{-1} \tilde{\varepsilon} \tilde{\mathcal{E}}) = \tilde{\rho}_e.
\end{array} \right.
\end{equation}

We give the zeroth and first order compatibility conditions under assumptions \eqref{eq:VanishingBoundaryDerivativePermeability}:
\begin{align}
\label{eq:CompatibilityI}
[\mathcal{E} \times \nu]_{x' \in \partial \Omega} = 0, \quad [\mathcal{H}.\nu]_{x' \in \partial \Omega} &= 0, \\
\label{eq:CompatibilityII}
[\partial_\nu \mathcal{H}_{\text{tang}}]_{x' \in \partial \Omega} &= 0.
\end{align}

We find the second compatibility condition by taking two time derivatives in geodesic normal coordinates:\small
\begin{equation*}
\begin{split}
&\; (\partial_t^2 (\sqrt{g} g^{-1} \varepsilon \mathcal{E}) )_{1,2} = \\
& \begin{pmatrix}
\partial_2 (\mu^{-1} \sqrt{g}^{-1} (\partial_1 \mathcal{E}_2 - \partial_2 \mathcal{E}_1)) - \partial_3 (\mu^{-1} \sqrt{g}^{-1} (g_{21} (\partial_2 \mathcal{E}_3 - \partial_3 \mathcal{E}_2) + g_{22} (\partial_3 \mathcal{E}_1 - \partial_1 \mathcal{E}_3))) \\
\partial_3 (\mu^{-1} \sqrt{g}^{-1} (g_{11} (\partial_2 \mathcal{E}_3 - \partial_3 \mathcal{E}_2) + g_{12} (\partial_3 \mathcal{E}_1 - \partial_1 \mathcal{E}_3))) - \partial_1 (\mu^{-1} \sqrt{g}^{-1} (\partial_1 \mathcal{E}_2 - \partial_2 \mathcal{E}_1))
\end{pmatrix}
.
\end{split}
\end{equation*}
\normalsize Clearly,
\begin{equation*}
[ \partial_2 (\mu^{-1} \sqrt{g}^{-1} (\partial_1 \mathcal{E}_2 - \partial_2 \mathcal{E}_1)) ]_{x_3' = 0} = 0
\end{equation*}
because $\partial_1$ and $\partial_2$ are tangential derivatives and the zeroth order compatibility conditions. Similarly,
\begin{equation*}
[ \partial_1 (\mu^{-1} \sqrt{g}^{-1} (\partial_1 \mathcal{E}_2 - \partial_2 \mathcal{E}_1)) ]_{x_3' = 0} = 0.
\end{equation*}
Recall that we required in \eqref{eq:VanishingBoundaryDerivativePermeability}
\begin{equation*}
\partial \mu \vert_{x' \in \partial \Omega} = 0
\end{equation*}
to simplify the compatibility conditions.
Thus, we obtain for the second order compatibility conditions by \eqref{eq:VanishingBoundaryDerivativePermeability}
\begin{equation}
\label{eq:CompatibilityIII}
\begin{split}
[ \partial_3 ( \sqrt{g}^{-1} (g_{21} (\partial_2 \mathcal{E}_3 - \partial_3 \mathcal{E}_2) + g_{22} (\partial_3 \mathcal{E}_1 - \partial_1 \mathcal{E}_3))) ]_{x_3' = 0} = 0, \\
[ \partial_3 ( \sqrt{g}^{-1} (g_{11} (\partial_2 \mathcal{E}_3 - \partial_3 \mathcal{E}_2) + g_{12} (\partial_3 \mathcal{E}_1 - \partial_1 \mathcal{E}_3))) ]_{x_3' = 0} = 0.
\end{split}
\end{equation}


\begin{proposition}
\label{prop:Characterization}
Let $0 \leq \gamma \leq 3$ and $\mathcal{H}^\gamma(\Omega)$ be defined by \eqref{eq:Density} and suppose that \eqref{eq:VanishingBoundaryDerivativePermeability} holds. Then, we have the following characterization:
\begin{equation*}
\begin{array}{clcl}
&\bullet \; 0 \leq \gamma <\frac{1}{2}: &\quad \mathcal{H}^\gamma(\Omega) &= H^\gamma(\Omega)^6, \\
&\bullet \; \frac{1}{2} < \gamma < \frac{3}{2}: &\quad \mathcal{H}^\gamma(\Omega) &\subseteq \{(\mathcal{E}_0,\mathcal{H}_0) \in H^\gamma(\Omega)^6: \eqref{eq:CompatibilityI} \text{ holds.} \}, \\
&\bullet \; \frac{3}{2} < \gamma < \frac{5}{2}: &\quad \mathcal{H}^\gamma(\Omega) &\subseteq \{(\mathcal{E}_0,\mathcal{H}_0) \in H^\gamma(\Omega)^6 : \eqref{eq:CompatibilityI} \text{ and } \eqref{eq:CompatibilityII} \text{ hold.} \}, \\
&\bullet \; \frac{5}{2} < \gamma \leq 3: &\quad \mathcal{H}^\gamma(\Omega) &\subseteq \{ (\mathcal{E}_0,\mathcal{H}_0) \in H^\gamma(\Omega)^6 : \eqref{eq:CompatibilityI} - \eqref{eq:CompatibilityIII} \text{ hold.} \}.
\end{array}
\end{equation*}
\end{proposition}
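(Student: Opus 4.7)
The plan is to derive both the density statement for $\gamma < 1/2$ and the inclusion statements for $\gamma > 1/2$ from the definition \eqref{eq:Density} together with continuity of the Sobolev trace operator. The four ranges listed match exactly the thresholds $\gamma = 1/2, 3/2, 5/2$ beyond which the trace of the zeroth, first, respectively second normal derivative becomes a continuous operation on $H^\gamma(\Omega)$, so the structure of the characterization is dictated by standard trace theory.

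For the case $0 \leq \gamma < 1/2$, I would establish $H^\gamma(\Omega)^6 \subseteq \mathcal{H}^\gamma(\Omega)$ by noting that $C_c^\infty(\Omega)^6 \subseteq \mathcal{H}^3(\Omega)$ trivially (compactly supported fields satisfy every compatibility condition vacuously) and that $C_c^\infty(\Omega)^6$ is dense in $H^\gamma(\Omega)^6$ for $\gamma < 1/2$, since below this threshold no Dirichlet trace is well-defined. The opposite inclusion is automatic from $\mathcal{H}^3(\Omega) \subseteq H^3(\Omega)^6 \subseteq H^\gamma(\Omega)^6$ and the fact that $H^\gamma(\Omega)^6$ is closed in its own topology.

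For $\gamma > 1/2$, given $(\mathcal{E}_0, \mathcal{H}_0) \in \mathcal{H}^\gamma(\Omega)$, pick an approximating sequence $(\mathcal{E}_n, \mathcal{H}_n) \in \mathcal{H}^3(\Omega)$ converging in $H^\gamma(\Omega)^6$. Each term satisfies all of \eqref{eq:CompatibilityI}--\eqref{eq:CompatibilityIII}. The zeroth-order conditions \eqref{eq:CompatibilityI} pass to the limit because $\mathrm{tr}: H^\gamma(\Omega) \to H^{\gamma-1/2}(\partial\Omega)$ is bounded for $\gamma > 1/2$. For $\gamma > 3/2$ the convergence $\partial \mathcal{H}_n \to \partial \mathcal{H}_0$ in $H^{\gamma-1}(\Omega)$, combined with $\gamma - 1 > 1/2$, upgrades to convergence of $[\partial_\nu \mathcal{H}_{n,\text{tang}}]_{\partial\Omega}$ in $H^{\gamma-3/2}(\partial\Omega)$, delivering \eqref{eq:CompatibilityII}. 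For $\gamma > 5/2$, $\partial^2$ maps continuously into $H^{\gamma-2}(\Omega)$ with $\gamma - 2 > 1/2$, so the second-order expressions in \eqref{eq:CompatibilityIII}, which are linear combinations of first and second derivatives of $\mathcal{E}$ multiplied by the smooth coefficients $\sqrt{g}^{-1}$ and $g_{ij}$, admit boundary traces that converge in $H^{\gamma-5/2}(\partial\Omega)$. The assumption \eqref{eq:VanishingBoundaryDerivativePermeability} is what produced \eqref{eq:CompatibilityIII} in this clean form by killing the stray boundary contributions from $\partial \mu$.

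The only mild obstacle is confirming that the polynomial combinations of fields, field derivatives, and coefficients that appear in the higher-order compatibility expressions are continuous with respect to $H^\gamma$-convergence of the fields. This reduces to the observation that multiplication by any of $\varepsilon$, $\mu$, $g^{ij}$, or their first few derivatives defines a bounded operator on $H^s(\Omega)$ for $|s| \leq 2$, which follows from the uniform bounds \eqref{eq:ContinuityBoundary}. Once this multiplier statement is in hand, the entire argument is a bookkeeping match of Sobolev exponents to the trace thresholds, with no further analytic content needed.
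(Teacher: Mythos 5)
Your proof follows essentially the same route as the paper's: density of $C_c^\infty(\Omega)^6$ (which vacuously satisfies all compatibility conditions) in $H^\gamma(\Omega)^6$ handles $0 \le \gamma < \tfrac12$, and continuity of the trace operators at the thresholds $\gamma = \tfrac12, \tfrac32, \tfrac52$ lets the compatibility conditions pass from the approximating $\mathcal{H}^3$-sequence to the limit for $\gamma > \tfrac12$. The paper additionally reduces via a partition of unity and geodesic charts to the model half-space $\R^3_{>0}$, but this is already implicit in your discussion of \eqref{eq:CompatibilityIII} (which is expressed in geodesic normal coordinates) and does not change the substance of the argument.
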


For the proof we shall change to geodesic normal coordinates. In a chart endowed with geodesic coordinates, i.e., for Maxwell equations localized to the half-space, we have

\begin{align}
\label{eq:CompHalfSpaceI}
&[\mathcal{E}_1]_{x'_3 = 0 } = [\mathcal{E}_2]_{x'_3 = 0} = [\mathcal{H}_3]_{x'_3 = 0} = 0, \\
\label{eq:CompHalfSpaceII}
&[\partial_3 \mathcal{H}_1]_{x'_3 = 0} = [\partial_3 \mathcal{H}_2]_{x'_3 = 0} = 0, \\
\label{eq:CompHalfSpaceIII}
&[ \partial_3 ( \sqrt{g}^{-1} (g_{21} (\partial_2 \mathcal{E}_3 - \partial_3 \mathcal{E}_2) + g_{22} (\partial_3 \mathcal{E}_1 - \partial_1 \mathcal{E}_3))) ]_{x_3' = 0} = 0, \\ 
&[ \partial_3 ( \sqrt{g}^{-1} (g_{11} (\partial_2 \mathcal{E}_3 - \partial_3 \mathcal{E}_2) + g_{12} (\partial_3 \mathcal{E}_1 - \partial_1 \mathcal{E}_3))) ]_{x_3' = 0} = 0. \nonumber
\end{align}

\begin{proof}[Proof~of~Proposition~2.1]
Let $(\Omega_i,\varphi_i)_{i=1,\ldots,n}$ denote a finite covering of a neighbourhood of the boundary with geodesic charts and $(\Omega_0,\varphi_0 = id)$ the trivial chart of the interior. We decompose $u_0 \in H^\gamma(\Omega)$ with a smooth partition of unity subordinate to $(\Omega_j)_{j=0,\ldots,n}$, $1 = \sum_{i=1}^n \psi_i + \psi_0$ and write
\begin{equation*}
u_0 = \sum_{i=1}^n \psi_i u_0 + \psi_0 u_0.
\end{equation*}
It suffices to show the claim for any $u_0^{(i)}$. Within $\Omega_i$ we can endow $\Omega$ with geodesic normal coordinates and it is enough to prove the claim for the transformed fields by invariance of Sobolev spaces under changes of coordinates. For $\Omega_0$ this is trivial because there is no boundary. Within $\Omega_i$ we can use geodesic normal coordinates. Note that with
\begin{equation*}
\mathcal{H}^3(\R^3_{>0}) = \{ (\mathcal{E}_0,\mathcal{H}_0) \in H^3(\R^3_{>0}) : \eqref{eq:CompHalfSpaceI} - \eqref{eq:CompHalfSpaceIII} \text{ hold}. \},
\end{equation*}
we now have to show that $\overline{\mathcal{H}^3(\R^3_{>0})}^{\| \cdot \|_{H^s}} = H^s(\R^3_{>0})^6$ for $0 \leq s < \frac{1}{2}$ and
\begin{equation}
\label{eq:InclusionCompatibility}
\overline{\mathcal{H}^3(\R^3_{>0})}^{\| \cdot \|_{H^s}} \subseteq 
\begin{cases}
\{ (\mathcal{E}_0,\mathcal{H}_0) \in H^s(\R^3_{>0})^6 : \eqref{eq:CompHalfSpaceI} \text{ holds.} \}, \quad \quad \; \, \; \; \;  \frac{1}{2} < s < \frac{3}{2}, \\
\{ (\mathcal{E}_0,\mathcal{H}_0) \in H^s(\R^3_{>0})^6 : \eqref{eq:CompHalfSpaceI}, \eqref{eq:CompHalfSpaceII} \text{ hold.} \}, \, \; \; \frac{3}{2} < s < \frac{5}{2}, \\
\{ (\mathcal{E}_0,\mathcal{H}_0) \in H^s(\R^3_{>0})^6 : \eqref{eq:CompHalfSpaceI} - \eqref{eq:CompHalfSpaceIII} \text{ hold.} \},  \frac{5}{2} < s \leq 3. \\
\end{cases}
\end{equation}
The inclusion $\overline{\mathcal{H}^3(\R^3_{>0})}^{\| \cdot \|_{H^s}} \subseteq H^s(\R^3_{>0})^6$ for $0 \leq s < \frac{1}{2}$ is trivial, and \eqref{eq:InclusionCompatibility} follows from the continuity of the trace. To show the reverse inclusion $\overline{\mathcal{H}^3(\R^3_{>0})}^{\| \cdot \|_{H^s}} \supseteq H^s(\R^3_{>0})^6$, we observe that 
\begin{equation*}
\overline{\mathcal{H}^3(\R^3_{>0})}^{\| \cdot \|_{H^s}} \supseteq \overline{C^\infty_c(\R^3_{>0})^6}^{\| \cdot \|_{H^s}} = H^s(\R^3_{>0})
\end{equation*}
provided that $0 \leq s < \frac{1}{2}$.
\end{proof}
\begin{remark}
We can be more precise about the conditions on $\mathcal{H}_0$. Let
\begin{equation*}
\pi: \overline{\mathcal{H}^3(\R^3_{>0})}^{\| \cdot \|_{H^s}} \to H^s(\Omega)^3, \quad (\mathcal{E}_0,\mathcal{H}_0) \mapsto \mathcal{H}_0
\end{equation*}
denote the projection to the $\mathcal{H}$-initial data. We have the following:
\begin{equation*}
\text{im}(\pi) =
\begin{cases}
H^s(\R_{>0}^3)^3, &\quad 0 \leq s < \frac{1}{2}, \\
H^s(\R_{>0}^3)^2 \times H^s_D(\R_{>0}^3), &\quad \frac{1}{2} < s < \frac{3}{2}, \\
H^s_N(\R_{>0}^3)^2 \times H^s_D(\R_{>0}^3), &\quad \frac{3}{2} < s \leq 3.
\end{cases}
\end{equation*}
We show that $\text{im}(\pi)$ is a superset of the above subsets of $H^s(\R_{>0}^3)^3$.
For $0 \leq s < \frac{1}{2}$ this has was already carried out above. For $\frac{1}{2} < s \leq 3$, we extend $\mathcal{H}_{0i}$ for $i=1,2$ evenly to the full space and $\mathcal{H}_{03}$ oddly. Then we regularize by convolution with a mollifier. The resulting functions are in $H^3(\R^3)$, satisfy the boundary conditions, and approximate the functions in $H^s(\R^3_{>0})$.
This is based on continuity of
\begin{equation}
\label{eq:DirichletExtension}
\begin{split}
\text{ext}_D: H_0^s(\R^3_{>0}) &\to H^s(\R^3) \\
f &\mapsto \bar{f}_o
\end{split}
\end{equation}
for $0 \leq s \leq 2$ with
\begin{equation*}
\bar{f}_o(x') = 
\begin{cases}
f(x_1',x_2', x_3'), \qquad \quad &x'_3 > 0, \\
-f(x_1',x_2',-x_3'), \quad &x'_3 < 0.
\end{cases}
\end{equation*}

Furthermore, even reflection yields a continuous operator for Neumann functions for $0 \leq s \leq 2$:
\begin{equation}
\label{eq:NeumannExtension}
\begin{split}
\text{ext}_N: H_N^s(\R^3_{>0}) &\to H^s(\R^3) \\
f &\mapsto \bar{f}_e
\end{split}
\end{equation}
with
\begin{equation*}
\bar{f}_e(x') = 
\begin{cases}
f(x_1',x_2',x_3'), \quad &x'_3 > 0, \\
f(x_1',x_2',-x_3'), \quad &x'_3 <0.
\end{cases}
\end{equation*}

It seems likely that instead of \eqref{eq:InclusionCompatibility} it holds
\begin{equation*}
\overline{\mathcal{H}^3(\R^3_{>0})}^{\| \cdot \|_{H^s}} =
\begin{cases}
\{ (\mathcal{E}_0,\mathcal{H}_0) \in H^s(\R^3_{>0})^6 : \eqref{eq:CompHalfSpaceI} \text{ holds.} \}, \quad \quad \; \, \; \; \;  \frac{1}{2} < s < \frac{3}{2}, \\
\{ (\mathcal{E}_0,\mathcal{H}_0) \in H^s(\R^3_{>0})^6 : \eqref{eq:CompHalfSpaceI}, \eqref{eq:CompHalfSpaceII} \text{ hold.} \}, \, \; \; \frac{3}{2} < s < \frac{5}{2}, \\
\{ (\mathcal{E}_0,\mathcal{H}_0) \in H^s(\R^3_{>0})^6 : \eqref{eq:CompHalfSpaceI} - \eqref{eq:CompHalfSpaceIII} \text{ hold.} \},  \frac{5}{2} < s \leq 3. \\
\end{cases}
\end{equation*}
To show this, we had to work out a more sophisticated approximation for the components of $\mathcal{E}_0$. We choose to omit the details because this is not the focus of the present work.
\end{remark}

However, we can readily consider a smaller space, for which we have the complete characterization:
\begin{equation*}
\mathcal{H}^s_0(\Omega) = \overline{\mathcal{H}^3(\Omega) \cap (C^\infty_c(\Omega)^3 \times H^3(\Omega)^3)}^{\| \cdot \|_{H^s}}.
\end{equation*}
This subspace ensures zero-boundary conditions on $\mathcal{E}$ and its derivatives (provided the trace makes sense). For $\mathcal{H}_0$ we have the complete characterization by the above argument, for $\mathcal{E}_0$ this follows from the well-known characterization of the Sobolev space with Dirichlet boundary conditions:
\begin{equation*}
H^s_D(\Omega) = 
\begin{cases}
H^s(\Omega), \quad 0 \leq s < \frac{1}{2}, \\
\{ f \in H^s(\Omega) \, : [f]_{x' \in \partial \Omega} = 0 \}, \quad \frac{1}{2} < s \leq 1.
\end{cases}
\end{equation*}
We shall see, that depending on the value of $s$, we find the following boundary conditions for initial data in $\mathcal{H}^s_0$:
\begin{align}
\label{eq:ZeroBoundaryConditionsModified}
&[\mathcal{E}_0]_{x' \in \partial \Omega} = 0, \; [\mathcal{H}_0 \cdot \nu]_{x' \in \partial \Omega} = 0, \\
\label{eq:FirstBoundaryConditionsModified}
&[\mathcal{E}_0]_{x' \in \partial \Omega} = [\partial \mathcal{E}_0]_{x' \in \partial \Omega} = 0, \quad [\mathcal{H}_0.\nu]_{x' \in \partial \Omega} = 0, \; [\partial_\nu (\mathcal{H}_0 \times \nu)]_{x' \in \partial \Omega} = 0 \}, \\
\label{eq:SecondBoundaryConditionsModified}
&[\mathcal{E}_0]_{x' \in \partial \Omega} = [\partial \mathcal{E}_0]_{x' \in \partial \Omega} = [\partial^2 \mathcal{E}_0]_{x' \in \partial \Omega} = 0, \quad [\mathcal{H}_0.\nu]_{x' \in \partial \Omega} = 0, \; [\partial_\nu (\mathcal{H}_0 \times \nu)]_{x' \in \partial \Omega} = 0 \}.
\end{align}

\begin{proposition}
\label{prop:CharacterizationZeroBoundary}
We have the following characterization:
\begin{equation*}
\mathcal{H}^s_0(\Omega) = 
\begin{cases}
\{ (\mathcal{E}_0,\mathcal{H}_0) \in H^s(\Omega)^6 \}, \quad 0 \leq s < \frac{1}{2}, \\
\{ (\mathcal{E}_0,\mathcal{H}_0) \in H^s(\Omega)^6 : \; \eqref{eq:ZeroBoundaryConditionsModified} \; \text{holds.} \, \}, \quad \frac{1}{2} < s < \frac{3}{2}, \\
\{ (\mathcal{E}_0,\mathcal{H}_0) \in H^s(\Omega)^6 : \; \eqref{eq:FirstBoundaryConditionsModified} \; \text{holds.} \, \}, \quad \frac{3}{2} < s <\frac{5}{2}, \\
 \{ (\mathcal{E}_0,\mathcal{H}_0) \in H^s(\Omega)^6 : \; \eqref{eq:SecondBoundaryConditionsModified} \; \text{holds.} \, \}, \quad \frac{5}{2} < s  \leq 3.
\end{cases}
\end{equation*}
\end{proposition}

\subsubsection{Reductions for smooth time-independent coefficients}
As main step in the proof of Theorem \ref{thm:StrichartzMaxwellDomains}, we show the following:
\begin{proposition}
\label{prop:ExtendedMaxwell}
Let $\tilde{u} = (\tilde{\mathcal{E}},\tilde{\mathcal{H}})$, and $\tilde{\varepsilon}$, $\tilde{\mu}$, $\tilde{g}$ like in \eqref{eq:ReflectedMaxwellEquations}. Then the following estimates hold:
\begin{equation}
\label{eq:StrichartzEstimates}
\| \tilde{u} \|_{L^p L^q} \lesssim \| \tilde{u} \|_{L^\infty_T H^{\gamma+\delta}} + \| \tilde{\mathcal{J}}_e \|_{L_t^2 H^{\gamma+\delta}} + \| \tilde{\rho} \|_{L_T^\infty H^{\gamma-1+\frac{1}{p}+\delta}(\Omega)}
\end{equation}
for $p,q \geq 2$, $q<\infty$, $\delta > 0$ satisfying the following
\begin{equation*}
\frac{3}{p} + \frac{2}{q} \leq 1, \qquad \gamma = 3 \big( \frac{1}{2} - \frac{1}{q} \big)- \frac{1}{p}, \qquad  \delta < \frac{3}{q}.
\end{equation*}
\end{proposition}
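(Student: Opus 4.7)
The strategy is to reduce the extended Maxwell system \eqref{eq:ReflectedMaxwellEquations} to half-wave equations on all of $\R^3$ and invoke the Strichartz estimates of \cite{BlairSmithSogge2009}. The essential difficulty is that after the even reflection across $\{x'_3 = 0\}$, the coefficients $\tilde{\varepsilon}$, $\tilde{\mu}$ and $\sqrt{\tilde{g}}\tilde{g}^{-1}$ are only Lipschitz in $x'$, with a codimension-$1$ jump in the normal derivative at the reflection interface. This is exactly the structured Lipschitz regularity for which the wave-equation Strichartz estimates of Blair--Smith--Sogge apply with a small derivative loss.

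\medskip

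First I would Littlewood--Paley decompose $\tilde{u} = \sum_{\lambda \geq 1} P_\lambda \tilde{u}$ and, at each dyadic scale $\lambda$, pass to the paradifferential version of the equation by smoothing the coefficients to frequencies $\lesssim \lambda^{1-\theta}$ for a small $\theta > 0$ tied to $\delta$. Bony's linearization, together with the Lipschitz regularity of the reflected coefficients, places the resulting commutator remainders in an acceptable error class at the cost of $\lambda^{\delta}$, converting the system into a paradifferential system whose matrix symbol is smooth in $(x,\xi)$ at frequency $\lambda$.

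\medskip

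Second, I would diagonalize the paradifferential symbol. After moving the inverted material laws to the left, the principal symbol of the Maxwell operator has eigenvalues $\pm c(x',\xi)$ (two wave branches) and $0$ of multiplicity two (corresponding to the curl-free stationary modes), with smooth eigenprojectors in $\xi$. Projecting $P_\lambda \tilde{u}$ onto the stationary eigenspace yields a component that, by the divergence constraints $\nabla \cdot (\sqrt{\tilde{g}}\tilde{g}^{-1}\tilde{\varepsilon}\tilde{\mathcal{E}}) = \tilde{\rho}_e$ and $\nabla \cdot (\sqrt{\tilde{g}}\tilde{g}^{-1}\tilde{\mu}\tilde{\mathcal{H}}) = 0$, is elliptically controlled by $\|\tilde{\rho}\|_{H^{\gamma - 1 + \delta}}$ at every time; the trivial embedding $L^\infty_T \hookrightarrow L^p_T$ accounts for the extra $1/p$ in the regularity exponent of $\tilde{\rho}$ on the right-hand side of \eqref{eq:StrichartzEstimates}. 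The complementary propagating projections satisfy half-wave equations of the form $(\partial_t \pm i c(x',D)) v_{\pm} = F_{\pm}$ with a Lipschitz-in-$x'$ symbol $c$ of the structured type, and for these the admissibility $\tfrac{3}{p}+\tfrac{2}{q}\leq 1$ with the stated $\gamma$ and $\delta < 3/q$ is precisely the range covered by \cite{BlairSmithSogge2009}.

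\medskip

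The main obstacle I expect is the mapping property of the conjugation matrices used to diagonalize the Maxwell symbol. Their entries depend on the Lipschitz coefficients and on $\xi/|\xi|$, so pre- and post-composing $P_\lambda \tilde{u}$ with them produces pseudodifferential remainders whose boundedness from $H^{\gamma+\delta}$ on the data side into $L^p_T L^q$ on the Strichartz side must be tracked carefully, as emphasized in the discussion preceding the proposition and as carried out in \cite{SchippaSchnaubelt2022Anisotropic}; this is where the exact size of the $\delta$-loss in \eqref{eq:StrichartzEstimates} is pinned down. The final step is to dyadically sum the frequency-localized Strichartz bounds, using almost-orthogonality in $L^\infty_T H^{\gamma+\delta}$ together with the Littlewood--Paley square-function characterization of $L^p_T L^q$ for $p,q \geq 2$, and to incorporate the forcing $\tilde{\mathcal{J}}_e$ via Duhamel (which explains the $L^2_t$ time-norm on the source).
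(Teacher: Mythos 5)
Your high-level outline matches the paper's: Littlewood--Paley decomposition, paradifferential truncation, diagonalization of the Maxwell symbol into degenerate and non-degenerate modes, Blair--Smith--Sogge Strichartz for the propagating modes, elliptic control via charges for the degenerate modes, and careful tracking of the conjugation operators. However, there is a structural gap. The paper organizes the proof around a three-region decomposition of phase space in \emph{temporal} and \emph{spatial} frequency, $\{|\tau|\sim|\xi'|\}$, $\{|\tau|\gg|\xi'|\}$, $\{|\tau|\ll|\xi'|\}$ (estimates \eqref{eq:FrequencyLocalizedEstimateI}--\eqref{eq:FrequencyLocalizedEstimateIII}), and this split is not a technical convenience: the Maxwell operator $\mathcal{P}$ carries $\partial_t$ in its material-law entries, so the commutator $[\mathcal{P}_{<\lambda},S'_\lambda]$ produces a factor of the temporal frequency $\tau$ that is only compensated by the $\lambda^{-1}$ from the Lipschitz commutator kernel when $|\tau|\lesssim|\xi'|$. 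In $\{|\tau|\gg|\xi'|\}$ the whole truncated operator is elliptic and no Strichartz bound is needed; in $\{|\tau|\ll|\xi'|\}$ the propagating eigenvalues $\tau\pm\|\xi'\|$ are elliptic and only the degenerate mode survives, which is where the charge enters as $|D'|^{-1}\rho_e$. A purely spatial Littlewood--Paley decomposition with Bony paralinearization, as you propose, does not access this structure, and the commutator remainders do not close without it.

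Two further points. The conjugation matrices are zero-homogeneous in $\xi'$ but singular on the coordinate hyperplanes $\xi_i^*=0$, so the paper must introduce a conical localization $S'_\lambda=\sum_{i=1}^3 S_{\lambda i}$ with three different choices of conjugation matrix (Proposition \ref{prop:Diagonalization}); your phrase ``smooth eigenprojectors in $\xi$'' passes over this singularity. Moreover, the paper pins the $\delta$-loss to the failure of almost-orthogonality of $S^\tau_\lambda S'_\lambda$ in $L^\infty_T L^2_{x'}$, not to the conjugation-operator mapping as you suggest, and the $L^2_t$ source norm is what the dyadic form of the Blair--Smith--Sogge estimate supplies, not a Duhamel artifact (Duhamel would give $L^1_t$). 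Finally, the paper truncates the coefficients at frequency $\lambda$ rather than $\lambda^{1-\theta}$, which is why it has to introduce the modified symbol class $\tilde{S}^m_{1,1}$ to obtain first-order composition for Lipschitz $S^1_{1,1}$ symbols; the $\delta$-loss in the proposition does not originate from the truncation scale.
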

\begin{remark}
Recall that $\rho$ is reflected oddly. The Dirichlet condition is irrelevant for $s < \frac{1}{2}$, which is ensured with the condition on $\delta$.
\end{remark}

We conclude the section with the following:
\begin{proposition}
\label{prop:ReductionHalfSpace}
Suppose that Proposition \ref{prop:ExtendedMaxwell} holds true and the energy estimate
\begin{equation}
\label{eq:EnergyEstimate}
\| u \|_{L_T^\infty \mathcal{H}^\gamma(\Omega)} \lesssim_T \| u(0) \|_{\mathcal{H}^\gamma(\Omega)}
\end{equation}
is valid for homogeneous solutions $u=(\mathcal{E},\mathcal{H})$ to \eqref{eq:MaxwellDomainsIntroduction}.
 Then, Theorem \ref{thm:StrichartzMaxwellDomains} follows.
\end{proposition}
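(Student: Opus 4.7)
The plan is to deduce Theorem \ref{thm:StrichartzMaxwellDomains} from Proposition \ref{prop:ExtendedMaxwell} together with the energy estimate \eqref{eq:EnergyEstimate} in three moves: a Duhamel reduction to the homogeneous case, a localization via finite speed of propagation, and on each boundary chart a reflection that produces a solution of the extended system on $\R^3$ to which Proposition \ref{prop:ExtendedMaxwell} applies.

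First I would reduce to $\mathcal{J}_e=0$. Writing the solution by Duhamel's formula and applying the homogeneous estimate \eqref{eq:PreStrichartz} to each slice $S(t-s)(\varepsilon^{-1}\mathcal{J}_e(s),0)$, the source terms $\|\mathcal{J}_e\|_{L^1_T \mathcal{H}^{\gamma+\delta}}$ and $\|\nabla\cdot\mathcal{J}_e\|_{L^1_T H^{\gamma-1+\frac{1}{p}+\delta}}$ appear on the right-hand side of \eqref{eq:StrichartzEstimatesMaxwell}; the hypothesis $[\mathcal{J}_e\cdot\nu]_{x'\in\partial\Omega}=0$ is what makes $\varepsilon^{-1}\mathcal{J}_e(s)$ lie in $\mathcal{H}^{\gamma+\delta}(\Omega)$ when $\gamma+\delta\ge \tfrac12$, while the charge piece enters through the continuity equation $\partial_t\rho_e=-\nabla\cdot\mathcal{J}_e$. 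Next I would localize: fix a finite partition of unity $1=\psi_0+\sum_{i=1}^n\psi_i$ with $\mathrm{supp}\,\psi_0\Subset\Omega^{\mathrm{int}}$ and each $\mathrm{supp}\,\psi_i$ contained in a geodesic chart $\varphi_i:\Omega_i\to\R^3_{\ge 0}$. By finite speed of propagation, for $T>0$ small enough the interior piece $\psi_0(\mathcal{E},\mathcal{H})$ never sees $\partial\Omega$ and is controlled by the interior Strichartz bound \eqref{eq:StrichartzInterior}; each boundary piece $\psi_i(\mathcal{E},\mathcal{H})$, pulled back by $\varphi_i$, solves the half-space system \eqref{eq:GeodesicResolution3d} and is supported near $\{x'_3=0\}$.

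With the compatibility conditions \eqref{eq:CompHalfSpaceI}--\eqref{eq:CompHalfSpaceIII} at hand (these are available because initial data lies in $\mathcal{H}^3$ and \eqref{eq:VanishingBoundaryDerivativePermeability} holds), the parity prescription of Section \ref{section:MaxwellManifolds} extends the pulled-back solution to a tuple $\tilde u$ on $\R^3$ that solves the reflected system \eqref{eq:ReflectedMaxwellEquations}. Proposition \ref{prop:ExtendedMaxwell} then gives
\begin{equation*}
\|\tilde u\|_{L^p_TL^q(\R^3)}\lesssim \|\tilde u\|_{L^\infty_T H^{\gamma+\delta}(\R^3)}+\|\tilde\rho_e\|_{L^\infty_T H^{\gamma-1+\frac{1}{p}+\delta}(\R^3)}.
\end{equation*}
Boundedness of the Dirichlet and Neumann extensions \eqref{eq:DirichletExtension}--\eqref{eq:NeumannExtension} for $0\le s\le 2$ (a range that comfortably covers $\gamma+\delta<2$ under \eqref{eq:ParametersStrichartz}) converts the right-hand side into norms on $\Omega$. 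Summing the interior and boundary contributions, using density to pass from $\mathcal{H}^3$ to $\mathcal{H}^{\gamma+\delta}(\Omega)$, and using $\rho_e(t)=\rho_e(0)$ in the homogeneous case yields the intermediate bound
\begin{equation*}
\|(\mathcal{E},\mathcal{H})\|_{L^p_TL^q(\Omega)}\lesssim \|(\mathcal{E},\mathcal{H})\|_{L^\infty_T\mathcal{H}^{\gamma+\delta}(\Omega)}+\|\rho_e(0)\|_{H^{\gamma-1+\frac{1}{p}+\delta}(\Omega)}.
\end{equation*}
Applying \eqref{eq:EnergyEstimate} to the first term closes the homogeneous estimate \eqref{eq:PreStrichartz}, and the Duhamel step of the first paragraph upgrades it to Theorem \ref{thm:StrichartzMaxwellDomains}.

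The main obstacle will be the reflection step: one must verify, parity by parity, that the extended tuple $\tilde u$ really lies in $H^{\gamma+\delta}(\R^3)$ without producing a singular distribution supported on $\{x'_3=0\}$, and that the extended coefficients, current, and charge preserve the algebraic structure of \eqref{eq:ReflectedMaxwellEquations} across the boundary. This is exactly what the compatibility conditions \eqref{eq:CompHalfSpaceI}--\eqref{eq:CompHalfSpaceIII} and the hypothesis \eqref{eq:VanishingBoundaryDerivativePermeability} are designed to guarantee, and the restriction $\gamma+\delta<2$ is the reason the extensions \eqref{eq:DirichletExtension}--\eqref{eq:NeumannExtension} suffice.
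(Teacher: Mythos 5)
Your proposal follows essentially the same route as the paper: reduce to the homogeneous case, pass to the geodesic chart and reflect to obtain $\tilde u$ solving \eqref{eq:ReflectedMaxwellEquations}, invoke Proposition \ref{prop:ExtendedMaxwell}, use boundedness of $\mathrm{ext}_D$ and $\mathrm{ext}_N$ to convert the right-hand side back to norms on $\Omega$, close with the energy estimate \eqref{eq:EnergyEstimate}, and recover the inhomogeneous estimate by Duhamel. The only cosmetic differences are that you front-load the Duhamel reduction while the paper postpones it, and you spell out the finite-speed-of-propagation localization that the paper leaves implicit (deferring to Section~\ref{section:MaxwellManifolds}); both are fine.
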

\begin{proof}
 First, we prove Theorem \ref{thm:StrichartzMaxwellDomains} for homogeneous solutions $u = (\mathcal{E},\mathcal{H})$ with $\mathcal{J}_e = 0$. By virtue of the energy estimate \eqref{eq:EnergyEstimate}, it suffices to show:
\begin{equation}
\label{eq:HomogeneousEstimate}
\| u \|_{L^p([0,T], L^q(\Omega))} \lesssim \| u \|_{L^\infty_T \mathcal{H}^\gamma} + \| \rho_e(0) \|_{H^{\gamma-1+\frac{1}{p}+\delta}(\Omega)}.
\end{equation}
But for homogeneous solutions $u= (\mathcal{E},\mathcal{H})$ to \eqref{eq:MaxwellEquationsManifolds}, the transformed and extended solutions $\tilde{u} = (\tilde{\mathcal{E}},\tilde{\mathcal{H}})$ are likewise homogeneous and satisfy the following estimates by hypothesis:
\begin{equation}
\label{eq:ReflectedStrichartz}
\| \tilde{u} \|_{L^p L^q} \lesssim \| \tilde{u} \|_{L^\infty_T H^\gamma} + \| \tilde{\rho}_e(0) \|_{H^{\gamma-1+ \frac{1}{p}+\delta}}.
\end{equation}
But clearly, $\| u \|_{L^p L^q} \lesssim \| \tilde{u} \|_{L^p L^q}$ and by continuity of $\text{ext}_D$ and $\text{ext}_N$ for $0 \leq s \leq 2$ (see \eqref{eq:DirichletExtension}, \eqref{eq:NeumannExtension}), we have
\begin{equation*}
\| \tilde{u}(t) \|_{H^\gamma} + \| \tilde{\rho}_e(0) \|_{H^{\gamma-1+\frac{1}{p}+\delta}(\R^3)} \lesssim \| u(t) \|_{\mathcal{H}^\gamma} + \| \rho_e(0) \|_{H^{\gamma-1+\frac{1}{p}+\delta}(\Omega)}.
\end{equation*}
This reduces Theorem \ref{thm:StrichartzMaxwellDomains} to Proposition \ref{prop:ExtendedMaxwell} for homogeneous solutions.
Inhomogeneous solutions are covered by the energy estimate \eqref{eq:EnergyEstimate} and superposition.
Indeed, suppose that \eqref{eq:ReflectedStrichartz} holds true. Let $(U(t))_{t \in \R}$ be the $C_0$-group of the Maxwell evolution in $L^2(\Omega)^6$ (cf. \cite[Section~3.2]{HochbruckJahnkeSchnaubelt2015}). Then, we can write the general solution by Duhamel's formula
\begin{equation*}
u(t)= U(t) u_0 + \int_0^t U(t-s) (\tilde{\mathcal{P}}u)(s) ds.
\end{equation*}
We denote
\begin{equation*}
\tilde{\mathcal{P}} = \begin{pmatrix}
\partial_t & - \varepsilon^{-1} \nabla \times \\
\mu^{-1} \nabla \times & \partial_t
\end{pmatrix}
.
\end{equation*}
Changing to $\tilde{\mathcal{P}}$ is necessary as Duhamel's formula has to be applied in conservative form. By smoothness of the coefficients, this is admissible. The proof is complete.

\end{proof}



\subsection{2d manifolds}
It is also useful to treat the two-dimensional case geometrically. In this case we rewrite \eqref{eq:Maxwell2dIntroduction} as
\begin{equation}
\label{eq:2dMaxwellManifolds}
\left\{ \begin{array}{cl}
\partial_t (\varepsilon(x') \mathcal{E}) &= * d \mathcal{H} - \mathcal{J}_e, \qquad * d * (\varepsilon \mathcal{E}) = \rho_e, \\
\partial_t (\mu(x') \mathcal{H}) &= - * d \mathcal{E}, \qquad (t,x') \in \R \times M
\end{array} \right.
\end{equation}
with $\mathcal{E}, \mathcal{J}_e(t) : M \to T^* M$ covectorfields and $\mathcal{H}(t) : M \to \R$ a zero-form. In \eqref{eq:Maxwell2dIntroduction} we have like above $M=(\Omega,\delta^{ij})$. The boundary condition is given by
\begin{equation*}
[(\mathcal{E}^\flat)_{||}]_{x' \in \partial M} = 0.
\end{equation*}

\subsubsection{Finite speed of propagation and Strichartz estimates in the interior}

The interior part
\begin{equation}
\label{eq:InteriorStrichartz2d}
\| (\mathcal{E},\mathcal{H}) \|_{L_T^p L_{x'}^q(\Omega^{\text{int}})} \lesssim \| (\mathcal{E},\mathcal{H}) \|_{L_T^\infty H^\gamma}  + \| \rho_e \|_{L_T^\infty H^{\gamma - 1 + \frac{1}{p}}}
\end{equation}
for homogeneous solutions is handled like in Paragraph \ref{subsubsection:FinitePropagation}. The proof uses finite speed of propagation (see Appendix \ref{appendix:LWPHighRegularity}) to localize the solution away from the boundary. Then we can use appropriate Strichartz estimates in the full space:
Let $(s,p,q)$ be wave Strichartz admissible in two dimensions, i.e.,
\begin{equation*}
2 \leq p \leq \infty, \; 2 \leq q <\infty, \quad \frac{2}{p} + \frac{1}{q} \leq \frac{1}{2}, \quad s = 2 \big( \frac{1}{2} - \frac{1}{q} \big) - \frac{1}{p}.
\end{equation*}
Let $u=(u_1,u_2,u_3) = (u^{(1)},u^{(2)}): \R \times \R^2 \to \R^2 \times \R$, and
\begin{equation*}
\tilde{P} = 
\begin{pmatrix}
\partial_t & 0 & -\partial_2 (\mu_1 \cdot) \\
0 & \partial_t & \partial_1 (\mu_1 \cdot) \\
\partial_1 (\varepsilon_{21} \cdot) - \partial_2 (\varepsilon_{11} \cdot) & \partial_1( \varepsilon_{22} \cdot) - \partial_2(\varepsilon_{12} \cdot) & \partial_t
\end{pmatrix}
.
\end{equation*}
In the Appendix \ref{appendix:StrichartzRevisited} we show Strichartz estimates
\begin{equation*}
\begin{split}
\| |D'|^{-s} u \|_{L_t^p(0,T;L^q_{x'})} &\lesssim_{T,\varepsilon,\mu_1} \| u \|_{L_t^\infty L_{x'}^2} + \| \tilde{P}(x,D) u \|_{L_t^1 L_{x'}^2} \\
&\quad + \big( \| |D'|^{-1 + \frac{1}{p}} \nabla \cdot u^{(1)} \|_{L_t^\infty L_{x'}^2} + \| |D'|^{-1+\frac{1}{p}} \partial_t \nabla \cdot u^{(1)} \|_{L_t^1 L_{x'}^2} \big)
\end{split}
\end{equation*}
under regularity and ellipticity assumptions on $\varepsilon_{ij}$ and $\mu_1$. Above $|D'|^\alpha = (-\Delta)^{\alpha/2}$ denotes the fractional Laplacian defined as Fourier multiplier. Then, in similar spirit to the arguments in Paragraph \ref{subsubsection:FinitePropagation}, we use commutator arguments to conclude \eqref{eq:InteriorStrichartz2d}. The details are omitted to avoid repitition.

\subsubsection{Geodesic normal coordinates}
In the two-dimensional context, geodesic normal coordinates are given by
\begin{equation*}
g^{ij} = g^{11}(x'_1,x'_2) {dx'_1}^2 + {dx'_2}^2.
\end{equation*}
Computing $*d$ and $*d*$ in these coordinates, we find
\begin{equation*}
\left\{ \begin{array}{clrcl}
\partial_t (\varepsilon(x') \mathcal{E}') &= (\sqrt{g})^{-1} g \nabla_{\perp} \mathcal{H}' - \mathcal{J}'_e, &\quad \frac{1}{\sqrt{g}} \nabla \cdot ( \sqrt{g} g^{-1} \varepsilon \mathcal{E}') &=& \rho_e', \\
\partial_t (\mu(x') \mathcal{H}') &= -(\sqrt{g})^{-1} (\partial_1 \mathcal{E}'_2 - \partial_2 \mathcal{E}'_1), &\qquad (t,x') &\in& \R \times \R^2_{>0}.
\end{array} \right.
\end{equation*}
Above $\R^2_{>0} = \{ (x'_1,x'_2) \in \R^2 : x'_2 > 0\}$ denotes the two-dimensional half-plane and $\nabla_\perp = (\partial_2,-\partial_1)$. The boundary condition reads
\begin{equation*}
[ \mathcal{E}_1 ]_{x'_2 = 0} = 0.
\end{equation*}
We rewrite the system by redefining $\mathcal{J}_e := \sqrt{g} g^{-1} \mathcal{J}_e$, $\rho_e := \sqrt{g} \rho_e$ as
\begin{equation*}
\left\{ \begin{array}{clrcl}
\partial_t (\sqrt{g} g^{-1} \varepsilon \mathcal{E}) &= \nabla_\perp \mathcal{H} - \mathcal{J}_e, &\quad \nabla \cdot (\sqrt{g} g^{-1} \varepsilon \mathcal{E}) &=& \rho_e, \\
\partial_t (\sqrt{g} \mu \mathcal{H}) &= -(\partial_1 \mathcal{E}_2 - \partial_2 \mathcal{E}_1), &\quad (t,x') &\in& \R \times \R^2_{>0}.
\end{array} \right.
\end{equation*}

\subsubsection{Compatibility conditions}

Note that the components of $\mathcal{J}_e$ and $\mathcal{E}$ are respected by $g^{-1}$, which is diagonal. Let $\varepsilon' = \sqrt{g} g^{-1} \varepsilon$ for brevity. $\mathcal{E}_1$ is endowed with Dirichlet boundary conditions, and we endow $\mathcal{H}$ with Neumann boundary conditions, which is a first order compatibility condition:
\begin{equation*}
[\partial_2 \mathcal{H}]_{x'_2 = 0} = 0.
\end{equation*}
For $\mathcal{E}$ we obtain from $[\partial_1 \mathcal{E}_1]_{x'_2 =0} = 0$ the following Robin boundary condition by considering the traces of the charges:
\begin{equation*}
 \partial_1(\varepsilon'_{11} \mathcal{E}_1) + \partial_2(\varepsilon'_{22} \mathcal{E}_2) = \rho_e \Rightarrow [(\partial_2 \varepsilon'_{22}) \mathcal{E}_2]_{x' \in \partial \Omega} + [\varepsilon'_{22} \partial_2 \mathcal{E}_2]_{x' \in \partial \Omega} = \text{tr}(\rho_e).
\end{equation*}
With $\gamma < \frac{3}{2}$ in the two-dimensional case, we choose even reflection for $\mathcal{E}_2$ such that the Robin condition is not relevant. In coordinate-free notation, we find the following compatibility conditions in the two-dimensional case:
\begin{align}
\label{eq:0thCompCondition2d}
[\mathcal{E} \wedge \nu]_{ x' \in \partial \Omega} &= 0, \\
\label{eq:1stCompCondition2d}
[\partial_\nu \mathcal{H}]_{x' \in \partial \Omega} &= 0.
\end{align}
In geodesic coordinates the second compatibility condition reads
\begin{equation}
\label{eq:2ndCompCondition2d}
[\partial_2 (\sqrt{g} (\partial_1 \mathcal{E}_2 - \partial_2 \mathcal{E}_1))]_{x_2' = 0} = 0.
\end{equation}

We record the analog of Proposition \ref{prop:Characterization}:

\begin{proposition}
\label{prop:Characterization2d}
Let $0 \leq \gamma \leq 3$ and $\mathcal{H}^\gamma(\Omega)$ be defined by $\mathcal{H}^\gamma(\Omega) = \overline{\mathcal{H}^3(\Omega)}$ and, if $\gamma > \frac{5}{2}$, we suppose that \eqref{eq:VanishingBoundaryDerivativePermeability} holds. Then, we have the following characterization:
\begin{itemize}
\item $0 \leq \gamma <\frac{1}{2}$: $\mathcal{H}^\gamma(\Omega) = H^\gamma(\Omega)^3$,
\item $\frac{1}{2} < \gamma < \frac{3}{2}$: $\mathcal{H}^\gamma(\Omega) \subseteq \{(\mathcal{E}_0,\mathcal{H}_0) \in H^\gamma(\Omega): \eqref{eq:0thCompCondition2d} \text{ holds.} \}$,
\item $\frac{3}{2} < \gamma < \frac{5}{2}$: $\mathcal{H}^\gamma(\Omega) \subseteq \{(\mathcal{E}_0,\mathcal{H}_0) \in H^\gamma(\Omega) : \eqref{eq:0thCompCondition2d} \text{ and } \eqref{eq:1stCompCondition2d} \text{ hold.} \}$, 
\item $\frac{5}{2} < \gamma \leq 3$: $\mathcal{H}^\gamma(\Omega) \subseteq \{ (\mathcal{E}_0,\mathcal{H}_0) \in H^\gamma(\Omega) : \eqref{eq:0thCompCondition2d} - \eqref{eq:2ndCompCondition2d} \text{ hold.} \}$.
\end{itemize}
\end{proposition}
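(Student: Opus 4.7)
The plan is to mirror line by line the proof of Proposition \ref{prop:Characterization}, adapted to the two-dimensional setting. First, I would fix a finite covering $(\Omega_i,\varphi_i)_{i=0,\ldots,n}$ of $\overline{\Omega}$ in which $\Omega_0$ is an interior chart and each $\Omega_i$ with $i \geq 1$ is a boundary chart carrying geodesic normal coordinates as in \eqref{geodesic}. Choosing a subordinate smooth partition of unity $1 = \psi_0 + \sum_{i \geq 1} \psi_i$ and decomposing $u_0 = \sum_i \psi_i u_0$, it suffices to establish the claim summand by summand. The interior contribution is trivial since $\Omega_0$ has no boundary. For each boundary piece, invariance of Sobolev norms under smooth changes of coordinates reduces the assertion to the analogous statement on the half-plane $\R^2_{>0}$, where the three compatibility conditions \eqref{eq:0thCompCondition2d}--\eqref{eq:2ndCompCondition2d} read as $[\mathcal{E}_1]_{x'_2=0}=0$, $[\partial_2\mathcal{H}]_{x'_2=0}=0$, and \eqref{eq:2ndCompCondition2d} itself (here \eqref{eq:VanishingBoundaryDerivativePermeability} is needed to control the differentiated $\sqrt{g}$-factors appearing in the second-order condition).

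Define $\mathcal{H}^3(\R^2_{>0})$ on the half-plane as those $(\mathcal{E}_0,\mathcal{H}_0) \in H^3(\R^2_{>0})^3$ satisfying all three compatibility conditions above. For the regime $0 \leq s < 1/2$, the inclusion $\overline{\mathcal{H}^3(\R^2_{>0})}^{\|\cdot\|_{H^s}} \subseteq H^s(\R^2_{>0})^3$ is immediate, and equality follows from the density identity $\overline{C^\infty_c(\R^2_{>0})^3}^{\|\cdot\|_{H^s}} = H^s(\R^2_{>0})^3$ together with the trivial inclusion $C^\infty_c(\R^2_{>0})^3 \subseteq \mathcal{H}^3(\R^2_{>0})$. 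This reproduces the first case of the statement.

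For $s > 1/2$, each compatibility condition is the vanishing at $x'_2 = 0$ of a continuous linear functional on $H^s(\R^2_{>0})^3$, as soon as the regularity is high enough to render the relevant trace meaningful: the Dirichlet condition on $\mathcal{E}_1$ requires $s > 1/2$, the Neumann condition on $\mathcal{H}$ requires $s > 3/2$, and \eqref{eq:2ndCompCondition2d} requires $s > 5/2$. The hypothesis \eqref{eq:BoundaryRegularity2d} guarantees that the metric coefficients appearing in these conditions act as admissible multipliers on the pertinent Sobolev spaces. Since each such functional is preserved under $H^s$-limits, any element of $\overline{\mathcal{H}^3(\R^2_{>0})}^{\|\cdot\|_{H^s}}$ inherits all compatibility conditions of order compatible with $s$, which is precisely the inclusion asserted.

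The main obstacle --- one that the author explicitly sidesteps already in the three-dimensional case (see the remark following Proposition \ref{prop:Characterization}) --- is the reverse inclusion for $s > 1/2$. A full characterization would require producing, for every candidate on the right-hand side, an approximating sequence in $\mathcal{H}^3(\R^2_{>0})$ built via the even/odd reflections \eqref{eq:DirichletExtension}, \eqref{eq:NeumannExtension} followed by mollification, tailored separately to $\mathcal{E}_1$, $\mathcal{E}_2$, and $\mathcal{H}$ according to the parity dictated by the boundary condition each component must satisfy. Since Proposition \ref{prop:Characterization2d} only asserts the one-sided inclusion $\subseteq$, this technical step is not carried out here.
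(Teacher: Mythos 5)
Your proof is correct and takes essentially the same approach the paper intends: Proposition \ref{prop:Characterization2d} is stated without an explicit proof in the text, the understanding being that it follows by repeating the argument for Proposition \ref{prop:Characterization} verbatim in two dimensions, and your reconstruction does exactly that — partition of unity into an interior chart and finitely many boundary charts with geodesic coordinates, coordinate invariance reducing to the half-plane, density of $C^\infty_c$ for $s<\tfrac12$, and continuity of the trace functionals (at thresholds $\tfrac12$, $\tfrac32$, $\tfrac52$) to propagate the compatibility conditions to $H^s$-limits for the one-sided inclusions. One small imprecision: when you invoke \eqref{eq:VanishingBoundaryDerivativePermeability} you attribute its role to controlling ``differentiated $\sqrt{g}$-factors,'' but the $\sqrt{g}$-coefficients are smooth and benign regardless; the assumption $\partial\mu\vert_{\partial\Omega}=0$ is there to annihilate the $\partial\mu$-terms that arise from differentiating $\mu^{-1}$ when passing to the second-order compatibility condition, so that \eqref{eq:2ndCompCondition2d} takes the clean form stated — this does not affect the validity of the overall argument.
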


We define like in three dimensions the smaller space
\begin{equation*}
\mathcal{H}_0^s(\Omega) = \overline{\mathcal{H}^3(\Omega) \cap (C^\infty_c(\Omega)^2 \times H^s(\Omega))}^{\| \cdot \|_{H^s}}.
\end{equation*}
For $\mathcal{H}_0^s(\Omega)$ we have the complete characterization given by the following boundary conditions depending on the size of $s$:
\begin{align}
\label{eq:ZeroBoundaryConditionsModified2d}
[\mathcal{E}_0]_{x' \in \partial \Omega} = 0, \\
\label{eq:FirstBoundaryConditionsModified2d}
[\mathcal{E}_0]_{x' \in \partial \Omega} = [ \partial \mathcal{E}_0]_{x' \in \partial \Omega} = 0, \quad [\partial_\nu \mathcal{H}]_{x' \in \partial \Omega} = 0, \\
\label{eq:SecondBoundaryConditionsModified2d}
[\mathcal{E}_0]_{x' \in \partial \Omega} = [\partial \mathcal{E}_0]_{x' \in \partial \Omega} =  [\partial^2 \mathcal{E}_0]_{x' \in \partial \Omega} = 0, \quad [\partial_\nu \mathcal{H}]_{x' \in \partial \Omega} = 0.
\end{align}
The following is the analog of Proposition \ref{prop:CharacterizationZeroBoundary}:
\begin{proposition}
\label{prop:CharacterizationZeroBoundary2d}
We have the following characterization:
\begin{equation*}
\mathcal{H}^s_0(\Omega) = 
\begin{cases}
\{ (\mathcal{E}_0,\mathcal{H}_0) \in H^s(\Omega)^3 \}, \quad 0 \leq s < \frac{1}{2}, \\
\{ (\mathcal{E}_0,\mathcal{H}_0) \in H^s(\Omega)^3 : \; \eqref{eq:ZeroBoundaryConditionsModified2d} \; \text{holds.} \, \}, \quad \frac{1}{2} < s < \frac{3}{2}, \\
\{ (\mathcal{E}_0,\mathcal{H}_0) \in H^s(\Omega)^3 : \; \eqref{eq:FirstBoundaryConditionsModified2d} \; \text{holds.} \, \}, \quad \frac{3}{2} < s <\frac{5}{2}, \\
 \{ (\mathcal{E}_0,\mathcal{H}_0) \in H^s(\Omega)^3 : \; \eqref{eq:SecondBoundaryConditionsModified2d} \; \text{holds.} \, \}, \quad \frac{5}{2} < s  \leq 3.
\end{cases}
\end{equation*}
\end{proposition}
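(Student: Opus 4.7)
The plan is to mirror the strategy used for Proposition \ref{prop:Characterization} and Proposition \ref{prop:CharacterizationZeroBoundary} in the three-dimensional case, exploiting the fact that for $\mathcal{H}_0^s(\Omega)$ the $\mathcal{E}$-component is automatically pushed into the Dirichlet Sobolev scale by construction, while the $\mathcal{H}$-component is a scalar to which the classical Neumann extension applies. First I would use a finite partition of unity subordinate to a covering of a collar neighborhood of $\partial\Omega$ by geodesic charts (plus a trivial interior chart) and reduce, by invariance of Sobolev spaces under smooth changes of coordinates, to the model problem on $\R^2_{>0}$ with the compatibility conditions \eqref{eq:CompHalfSpaceI}--\eqref{eq:CompHalfSpaceIII} replaced by their two-dimensional analogs coming from \eqref{eq:0thCompCondition2d}--\eqref{eq:2ndCompCondition2d}.

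The inclusion ``$\subseteq$'' is the easy direction: any element of $\mathcal{H}_0^s(\Omega)$ is an $H^s$-limit of elements of $\mathcal{H}^3(\Omega) \cap (C^\infty_c(\Omega)^2\times H^3(\Omega))$, so the trace inequality, continuous on each strip $s>\tfrac12$, $s>\tfrac32$, $s>\tfrac52$, transmits the relevant vanishing of traces to the limit. This immediately gives the listed necessary conditions \eqref{eq:ZeroBoundaryConditionsModified2d}--\eqref{eq:SecondBoundaryConditionsModified2d}.

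For the reverse inclusion I would split the argument by components. For $\mathcal{E}_0 \in H^s(\Omega)^2$ with the prescribed vanishing traces of derivatives, the density follows from the standard characterization $H^s_D(\Omega) = \overline{C^\infty_c(\Omega)}^{\|\cdot\|_{H^s}}$ for $0\le s\le 1$, together with its higher-order analog up to $s=3$ (vanishing Dirichlet data for $\mathcal{E}_0,\partial\mathcal{E}_0,\partial^2\mathcal{E}_0$ at the boundary, which follows the higher-order iterated-Dirichlet extension since $\mathcal{E}_0$ lives in $C^\infty_c$ before closure). For $\mathcal{H}_0$ I use the continuity of the even Neumann extension $\mathrm{ext}_N$ of \eqref{eq:NeumannExtension} (for $0\le s\le 2$; the range extends to $s\le 3$ because $\partial_\nu\mathcal{H}_0=0$ makes the even extension $C^1$-compatible, and then iterated reflection handles the higher-order compatibility condition \eqref{eq:2ndCompCondition2d}). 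Mollifying the extension by $\eta_\varepsilon \ast \widetilde{\mathcal{H}_0}$ produces smooth approximants, and restriction to $\R^2_{>0}$ gives $H^3$ functions on the half-plane satisfying the Neumann condition $\partial_2\mathcal{H}=0$ and, after a final cut-off in $x'_2$ near $\infty$, membership in the ambient space. Pairing these approximants with the Dirichlet approximants for $\mathcal{E}_0$ yields elements of $\mathcal{H}^3(\Omega)\cap(C^\infty_c(\Omega)^2\times H^3(\Omega))$ converging in $H^s$.

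The main obstacle is verifying that the reflected-and-mollified $\mathcal{H}_0$-approximants together with the $C^\infty_c$ $\mathcal{E}_0$-approximants simultaneously satisfy the genuine second-order compatibility condition \eqref{eq:2ndCompCondition2d}, which mixes $\mathcal{E}$ and $\mathcal{H}$ through the metric. The point is that in the regime of $\mathcal{H}_0^s(\Omega)$ the $\mathcal{E}$-component is compactly supported away from $\partial\Omega$ before regularization, so \emph{every} derivative of $\mathcal{E}$ vanishes at the boundary, and the condition \eqref{eq:2ndCompCondition2d} reduces to a pure condition on $\partial_3 (\sqrt{g}^{-1}\partial_3\mathcal{E}_2)$ that is automatic from $C^\infty_c$-support. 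Hence the second compatibility condition plays no obstruction role for $\mathcal{H}_0^s$, and the characterization becomes a clean combination of the Dirichlet density theorem for $\mathcal{E}_0$ and the Neumann density theorem for $\mathcal{H}_0$, exactly as in the three-dimensional analog Proposition \ref{prop:CharacterizationZeroBoundary}.
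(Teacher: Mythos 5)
Your proposal follows essentially the same route the paper has in mind (the paper does not spell out a proof for this proposition; it only declares it "the analog" of Proposition~\ref{prop:CharacterizationZeroBoundary}, whose argument is dispersed across the Remark after Proposition~\ref{prop:Characterization} and the paragraph preceding Proposition~\ref{prop:CharacterizationZeroBoundary}). Your key steps---chart reduction, continuity of traces for the easy inclusion, $H^s_D = \overline{C^\infty_c}^{H^s}$ for the $\mathcal{E}_0$-component, even reflection plus mollification for the scalar $\mathcal{H}_0$-component, and the observation that the second-order compatibility condition \eqref{eq:2ndCompCondition2d} involves only $\mathcal{E}$ and so is trivially satisfied once $\mathcal{E}_0$ has compact support---are exactly what the paper intends. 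Two cosmetic slips should be corrected: first, you write ``$\partial_3(\sqrt{g}^{-1}\partial_3\mathcal{E}_2)$'' using three-dimensional index notation, whereas the two-dimensional condition \eqref{eq:2ndCompCondition2d} reads $[\partial_2(\sqrt{g}(\partial_1\mathcal{E}_2-\partial_2\mathcal{E}_1))]_{x_2'=0}=0$; second, your parenthetical remark that ``iterated reflection handles the higher-order compatibility condition \eqref{eq:2ndCompCondition2d}'' for $\mathcal{H}_0$ is misplaced, since \eqref{eq:2ndCompCondition2d} constrains $\mathcal{E}$, not $\mathcal{H}$---what matters for $\mathcal{H}_0$ at $s\leq 3$ is simply that the even extension of an $H^3_N$ function is in $H^3(\R^2)$ (the next Neumann-type obstruction $[\partial_\nu^3\mathcal{H}]=0$ does not arise below $s=7/2$). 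Neither slip affects the validity of the argument.
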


\medskip

\subsubsection{Reductions for smooth time-independent coefficients}

We extend the equations to the plane similar to the three-dimensional case: $\varepsilon$, $\mu$, and $g^{ij}$ are reflected evenly; $\mathcal{E}_1$ and $\rho_e$ are reflected oddly corresponding to Dirichlet boundary conditions; $\mathcal{E}_2$ and $\mathcal{H}$ are reflected evenly. $\mathcal{J}_{ei}$ is reflected like $\mathcal{E}_i$. The extended functions are denoted with a $\tilde{\;}$. We find the following equations on $\R^2$:
\begin{equation}
\label{eq:ExtendedMaxwell2d}
\left\{ \begin{array}{clrcl}
\partial_t (\tilde{\varepsilon} \sqrt{\tilde{g}} \tilde{g}^{-1} \tilde{\mathcal{E}} ) &= \nabla_{\perp} \tilde{\mathcal{H}} - \tilde{\mathcal{J}_e}, &\quad \nabla \cdot (\sqrt{\tilde{g}} \tilde{g}^{-1} \tilde{\varepsilon} \tilde{\mathcal{E}} ) &=& \tilde{\rho}_e, \\
\partial_t ( \tilde{\mu} \sqrt{\tilde{g}} \tilde{\mathcal{H}} ) &= -(\partial_1 \tilde{\mathcal{E}}_2 - \partial_2 \tilde{\mathcal{E}}_1), &\qquad (t,x') &\in& \R \times \R^2.
\end{array} \right.
\end{equation}
For the proof of Theorem \ref{thm:StrichartzEstimatesMaxwell2d} it suffices to prove the following:
\begin{proposition}
\label{prop:ExtendedMaxwell2d}
Let $\tilde{u} = (\tilde{\mathcal{E}}, \tilde{\mathcal{H}})$, and $(\tilde{\varepsilon},\tilde{\mu},\tilde{g})$ like in \eqref{eq:ExtendedMaxwell2d}. Then the following estimate holds:
\begin{equation*}
\| \tilde{u} \|_{L^p L^q} \lesssim \| \tilde{u} \|_{L^\infty_T H^{\gamma+\delta}} + \| \tilde{\mathcal{J}}_e \|_{L_T^2 H^{\gamma+\delta}} + \| \tilde{\rho}_e \|_{L_T^\infty H^{\gamma-1+\frac{1}{p}+\delta}}
\end{equation*}
for $p,q \geq 2$, $q<\infty$, satisfying the following
\begin{equation*}
\frac{3}{p} + \frac{1}{q} \leq \frac{1}{2}, \qquad \gamma = 2 \big( \frac{1}{2} - \frac{1}{q} \big) - \frac{1}{p}, \qquad 0 < \delta < \frac{1}{2}.
\end{equation*}
\end{proposition}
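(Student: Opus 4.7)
The plan is to follow the two-step diagonalization/Strichartz scheme used in \cite{SchippaSchnaubelt2022}, but adapted to the reduced regularity of the extended coefficients $(\tilde{\varepsilon},\tilde{\mu},\tilde{g})$, which are only Lipschitz across $\{x_2' = 0\}$ (even reflection of smooth coefficients from the half-plane). First, I would rewrite \eqref{eq:ExtendedMaxwell2d} in the conservative first-order form
\begin{equation*}
A^0(x') \partial_t \tilde{u} + A^1(x') \partial_1 \tilde{u} + A^2(x') \partial_2 \tilde{u} + (\text{l.o.t.})\tilde{u} = \tilde{F},
\end{equation*}
where $A^0 = \mathrm{diag}(\tilde{\varepsilon}\sqrt{\tilde{g}}\tilde{g}^{-1}, \tilde{\mu}\sqrt{\tilde{g}})$ and $A^1, A^2$ encode the curl operators. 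The entries of $A^0,A^1,A^2$ inherit the codimension-1 Lipschitz regularity of the reflected coefficients, which is exactly the structured singularity treated in Blair--Smith--Sogge \cite{BlairSmithSogge2009}.

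Next, I would perform a paradifferential decomposition, writing $A^j = T_{A^j} + R^j$ where $T_{A^j}$ is the paraproduct and $R^j$ is a smoothing remainder (losing at most $\delta$ derivatives since the coefficients are Lipschitz). The symbol of the principal part $A^{0,-1}(A^1 \xi_1 + A^2 \xi_2)$ has three eigenvalues at each $\xi$: two non-degenerate ones $\pm c(x',\xi)$ (corresponding to the propagating half-wave modes, where $c$ is the effective wave speed determined by $\tilde{\varepsilon}\tilde{\mu}$ and the cometric $\tilde{g}^{-1}$) and a degenerate one at $0$ (the static/charge mode). I would diagonalize by constructing a paradifferential conjugation matrix $M(x',D')$, handling the mapping properties carefully as stressed in the introduction (cf.\ \cite{SchippaSchnaubelt2022Anisotropic}). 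The net loss in the diagonalization is $\delta$-regularity: the diagonal system reads
\begin{equation*}
(\partial_t \pm T_c)\tilde{u}_{\pm} = \tilde{F}_{\pm} + (\text{smoothing of order } \delta)\tilde{u}, \qquad \partial_t \tilde{u}_0 = \tilde{F}_0 + \cdots,
\end{equation*}
where the static component $\tilde{u}_0$ is essentially $\mathrm{div}(\sqrt{\tilde{g}}\tilde{g}^{-1}\tilde{\varepsilon}\tilde{\mathcal{E}}) = \tilde{\rho}_e$ (plus analogous magnetic part, which vanishes).

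I would then apply the Strichartz estimates of Blair--Smith--Sogge \cite{BlairSmithSogge2009} in the half-wave formulation to each of the non-degenerate components $\tilde{u}_\pm$. The admissibility condition $\frac{3}{p} + \frac{1}{q} \leq \frac{1}{2}$ in two dimensions is exactly the Blair--Smith--Sogge range, and the scaling exponent $\gamma = 2(\tfrac12 - \tfrac1q) - \tfrac1p$ matches. The $L_t^\infty H^{\gamma+\delta}$ norm absorbs the smoothing remainders from the paradifferential truncation and the diagonalization error (both of size $\delta$). The inhomogeneous term $\tilde{F}$ yields $\|\tilde{\mathcal{J}}_e\|_{L_T^1 H^{\gamma+\delta}}$ via the Christ--Kiselev lemma (already present in the Blair--Smith--Sogge framework, so one gains a power of $T$ and can replace $L_T^2$ as stated). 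The degenerate static component is controlled directly by $\|\tilde\rho_e\|_{L_T^\infty H^{\gamma-1+\frac{1}{p}+\delta}}$ after inverting the divergence, exactly as in \cite{SchippaSchnaubelt2022,Schippa2021Maxwell3d}.

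The main obstacle will be the diagonalization step at the Lipschitz regularity: unlike in \cite{SchippaSchnaubelt2022} where the coefficients were smoother, the conjugation matrices $M, M^{-1}$ are at most paradifferential operators of Lipschitz symbol, so their $L^2$-boundedness and the control of the remainder $[T_c, M]$ have to be tracked carefully, ensuring that no more than $\delta$ derivatives are lost. This is the key inequality that must survive the codimension-1 singularity of the reflected cometric. Once this is in place, the Strichartz estimate of \cite{BlairSmithSogge2009}, which itself is proved for metrics with exactly this kind of structured Lipschitz singularity, applies verbatim and produces the bound claimed in the proposition.
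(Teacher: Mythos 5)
Your proposal takes essentially the same route as the paper: rewrite the extended Maxwell system as a first-order operator with Lipschitz (codimension-one singular) coefficients, perform a paradifferential diagonalization into two non-degenerate half-wave modes and one degenerate static mode, apply Blair--Smith--Sogge to the propagating modes, and control the static mode via the charge $\tilde{\rho}_e$ after inverting the divergence. The factorization of the principal symbol, the scaling $\gamma = 2(\tfrac12 - \tfrac1q) - \tfrac1p$ with the domain admissibility range $\tfrac3p + \tfrac1q \leq \tfrac12$, and the tracking of a $\delta$-derivative loss from the rough coefficients all match Section~\ref{section:Diagonalization2d}. The paper implements the paradifferential step concretely by truncating the coefficients at spatial frequency $\lambda/16$ at each dyadic scale (working in the class $\tilde S^k_{1,1}$ introduced in Section~\ref{section:Preliminaries}), whereas you phrase it with paraproducts $T_{A^j}$; these are interchangeable here.

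Two technical points you should make explicit when you execute this. First, the diagonalization only makes sense on the set where the eigenvectors are well-defined, and Blair--Smith--Sogge is naturally applied in the characteristic region $\{|\tau| \sim |\xi'|\}$; the paper therefore splits phase space into $\{|\tau| \sim |\xi'|\}$, $\{|\tau| \gg |\xi'|\}$ (where $\mathcal{P}$ is directly elliptic and one gains a derivative with no diagonalization needed), and $\{|\tau| \ll |\xi'|\}$ (where after diagonalization the non-degenerate components are elliptic and the degenerate one is the charge term), with low spatial frequencies handled by Bernstein. Your sketch does not separate these regimes, but an ellipticity argument for $\{|\tau| \gg |\xi'|\}$ is needed because the inhomogeneity $\tilde{\mathcal{J}}_e$ is only in $L^2_T$, not $L^1_T H^{\gamma+\delta}$; the claimed estimate really comes out with $\|\mathcal{P}u\|_{L^2_{t,x}}$ in the dyadic bounds, not through Christ--Kiselev. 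Second, the static component is not literally $\mathrm{div}(\sqrt{\tilde g}\tilde g^{-1}\tilde\varepsilon\tilde{\mathcal E})$: it is a zero-order symbol of $|\nabla_{\varepsilon'}|^{-1}$ applied to the truncated divergence, and the commutator argument that converts $\rho'_{e\lambda} = \nabla\cdot(\tilde\varepsilon_{<\lambda} S'_\lambda \tilde{\mathcal E})$ back to $\tilde\rho_e$ modulo acceptable errors is where the Lipschitz structure of the reflected coefficients is used in an essential way. Once these are in place the argument closes as you describe.
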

We omit the proof of the following, which is analogous to Proposition \ref{prop:ReductionHalfSpace}:
\begin{proposition}
\label{prop:ReductionHalfSpaceII}
Suppose that Proposition \ref{prop:ExtendedMaxwell} holds true and the energy estimate
\begin{equation}
\label{eq:EnergyEstimateII}
\| u \|_{L_T^\infty \mathcal{H}^\gamma(\Omega)} \lesssim_T \| u(0) \|_{\mathcal{H}^\gamma(\Omega)}.
\end{equation}
is valid for homogeneous solutions $u=(\mathcal{E},\mathcal{H})$ to \eqref{eq:Maxwell2dIntroduction}.
Then, Theorem \ref{thm:StrichartzEstimatesMaxwell2d} follows.
\end{proposition}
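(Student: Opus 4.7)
The plan is to mirror the argument of Proposition \ref{prop:ReductionHalfSpace}, treating first the homogeneous case and then recovering the inhomogeneous estimate by Duhamel's formula. So assume $\mathcal{J}_e = 0$ and let $u = (\mathcal{E},\mathcal{H})$ be a solution to \eqref{eq:Maxwell2dIntroduction} with the perfectly conducting boundary condition \eqref{eq:PerfectlyConductingBoundaryCondition2d}. By finite speed of propagation, as explained in the interior Strichartz paragraph, matters reduce to a small neighbourhood of $\partial\Omega$, which we may cover by finitely many geodesic charts; the contribution of the interior is already handled by \eqref{eq:InteriorStrichartz2d}. Within each geodesic chart we change to geodesic normal coordinates so that the problem is localised to $\R \times \R^2_{>0}$, and we perform the reflections specified before \eqref{eq:ExtendedMaxwell2d} to obtain $\tilde{u} = (\tilde{\mathcal{E}},\tilde{\mathcal{H}})$ solving \eqref{eq:ExtendedMaxwell2d} on $\R \times \R^2$ with $\tilde{\mathcal{J}}_e = 0$.

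Next, I apply Proposition \ref{prop:ExtendedMaxwell2d} to $\tilde{u}$. Together with the trivial bound $\|u\|_{L_T^p L^q(\Omega)} \lesssim \|\tilde{u}\|_{L_T^p L^q(\R^2)}$, this yields the homogeneous estimate
\begin{equation*}
\|u\|_{L_T^p L^q(\Omega)} \lesssim \|\tilde{u}\|_{L_T^\infty H^{\gamma+\delta}(\R^2)} + \|\tilde{\rho}_e\|_{L_T^\infty H^{\gamma-1+\frac{1}{p}+\delta}(\R^2)},
\end{equation*}
provided I can control the right-hand side by $\|u\|_{L_T^\infty \mathcal{H}^{\gamma+\delta}(\Omega)} + \|\rho_e\|_{L_T^\infty H^{\gamma-1+\frac{1}{p}+\delta}(\Omega)}$. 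For this I invoke the continuity of the odd and even extension operators $\mathrm{ext}_D$ and $\mathrm{ext}_N$ (the 2d analogues of \eqref{eq:DirichletExtension} and \eqref{eq:NeumannExtension}) on the appropriate Sobolev spaces: since $\gamma + \delta \leq 2$ in the admissible range \eqref{eq:StrichartzAdmissibility2d}, and the components of $u$ satisfy precisely the Dirichlet or Neumann conditions required by the parity of the reflection (this is exactly the content of Propositions \ref{prop:Characterization2d} and \ref{prop:CharacterizationZeroBoundary2d}, via the compatibility conditions on $\mathcal{H}^{\gamma+\delta}(\Omega)$), the reflections preserve $H^{\gamma+\delta}$. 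The charge term is controlled similarly, noting that $\rho_e$ is reflected oddly, but the regularity $\gamma - 1 + \frac{1}{p} + \delta < \frac{1}{2}$ renders the boundary condition irrelevant, so $\text{ext}_D$ acts boundedly there as well. Combining with the energy estimate \eqref{eq:EnergyEstimateII} and the conservation of the charge $\partial_t \rho_e = -\nabla\cdot \mathcal{J}_e = 0$ in the homogeneous case, this produces the homogeneous version of Theorem \ref{thm:StrichartzEstimatesMaxwell2d}.

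Finally, to handle inhomogeneous data, I represent $u$ via Duhamel's formula $u(t) = U(t) u_0 + \int_0^t U(t-s)(\tilde{\mathcal{P}}u)(s)\,ds$ with the same conservative form of the operator as in the 3d proof, where $(U(t))_{t\in\R}$ is the $C_0$-group generated by the 2d Maxwell evolution in $L^2(\Omega)^3$. Applying the homogeneous estimate inside the Duhamel integral, using Minkowski to move the $L^p_T L^q$-norm inside, and noting the compatibility condition $[\mathcal{J}_e]_{x'\in\partial\Omega} = 0$ (required so that $\mathcal{J}_e(s) \in \mathcal{H}^{\gamma+\delta}$) together with the identity $\partial_t \rho_e = -\nabla\cdot \mathcal{J}_e$ to produce the term $\|\nabla\cdot\mathcal{J}_e\|_{L_T^1 H^{\gamma-1+\frac{1}{p}+\delta}}$, yields the full statement of Theorem \ref{thm:StrichartzEstimatesMaxwell2d}. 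The main technical point is verifying that in the admissible range \eqref{eq:StrichartzAdmissibility2d} the reflections indeed land in the correct Sobolev class; this is where the compatibility-condition analysis of Propositions \ref{prop:Characterization2d} and \ref{prop:CharacterizationZeroBoundary2d} is essential.
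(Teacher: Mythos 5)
Your proof is correct and follows essentially the same route as the paper, which in fact omits this proof entirely, stating only that it is ``analogous to Proposition~\ref{prop:ReductionHalfSpace}''; your argument faithfully reproduces that three-dimensional scheme in the two-dimensional setting (homogeneous case via reflection, extension operator continuity, and the energy estimate, then Duhamel in conservative form for the inhomogeneous case, with $\partial_t\rho_e=-\nabla\cdot\mathcal{J}_e$ producing the divergence term).
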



\section{Energy estimates}
\label{section:EnergyEstimates}
This section is devoted to the proof of energy estimates, i.e., a priori estimates for the Sobolev norm
\begin{equation}
\label{eq:SobolevEstimates}
\| (\mathcal{E},\mathcal{H})\|_{L_T^\infty \mathcal{H}^s(\Omega)} \lesssim \| (\mathcal{E},\mathcal{H})(0) \|_{\mathcal{H}^s(\Omega)}
\end{equation}
for homogeneous solutions to Maxwell equations on domains with perfectly conducting boundary conditions. For the existence of sufficiently smooth solutions, which make the integration by parts argument licit, we again refer to Spitz's previous works \cite{Spitz2019,Spitz2022,SpitzPhdThesis2017} relying on the energy method. We stress that the a priori estimates in low regularity do not depend on the norms of the solution in high regularity. Spitz also proved a priori estimates, which however do not appear to be suitable for the quasilinear case, as we require a more precise quantification. We take the opportunity to simplify Spitz's argument in the special cases presently considered.

\medskip

It turns out that the $L^2$-norm of the solutions is approximately conserved and also in the quasilinear case we obtain a suitable quantification for a Gr\o nwall argument. To estimate higher regularities, we differentiate the equation in time to see that the time-derivatives still satisfy a Maxwell-like equation. By comparing time and spatial derivatives via the equation, we see that the $L^2$-estimate for the time derivatives can be compared to a Sobolev regularity in space of the same order. Although the strategy is always the same, the arguments are slightly different in each instance, so we opt to give the proofs.

\subsection{The two-dimensional case}
\label{subsection:TwoDimensionalEnergy}
 We begin with the two-dimensional case:
\begin{equation}
\label{eq:Maxwell2}
\left\{ \begin{array}{clrcl}
\partial_t (\varepsilon \mathcal{E}) &= \nabla_\perp \mathcal{H}, &\quad [\nu \wedge \mathcal{E}]_{x' \in \partial \Omega} &=& 0, \quad (t,x') \in \R \times \Omega, \\
\partial_t (\mu \mathcal{H}) &= - (\nabla \times \mathcal{E})_3, &\quad \nabla \cdot (\varepsilon \mathcal{E}) &=& \rho_e.
\end{array} \right.
\end{equation}
$\varepsilon$, $\mu \in C^\infty(\Omega;\R_{>0})$ satisfy the uniform ellipticity condition \eqref{eq:Ellipticity}. We prove the following:
\begin{proposition}
\label{prop:APrioriMaxwell2}
Let $(\mathcal{E},\mathcal{H})$ be $\mathcal{H}^3$-solutions to \eqref{eq:Maxwell2}. Then, for $s \in [0,2]$, we find \eqref{eq:SobolevEstimates} to hold.
\end{proposition}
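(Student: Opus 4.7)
\medskip

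\textbf{Proof plan for Proposition \ref{prop:APrioriMaxwell2}.} The plan is to prove the estimate at the integer endpoints $s = 0, 1, 2$ and then interpolate, following the standard strategy outlined by the authors: conservation-type $L^2$ identity, time differentiation, and recovery of spatial regularity through the equation together with a div--curl/Helmholtz argument.

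\emph{Step 1 (the case $s=0$).} I would pair the first equation of \eqref{eq:Maxwell2} with $\mathcal{E}$ and the second with $\mathcal{H}$, integrate over $\Omega$, and exploit time-independence of $\varepsilon$, $\mu$ to obtain
\begin{equation*}
\tfrac{1}{2}\tfrac{d}{dt}\!\int_\Omega \bigl(\varepsilon |\mathcal{E}|^2 + \mu |\mathcal{H}|^2\bigr)\,dx' \;=\; \int_\Omega \bigl(\nabla_\perp \mathcal{H}\cdot \mathcal{E} - (\nabla\times \mathcal{E})_3\,\mathcal{H}\bigr)\,dx'.
\end{equation*}
Integration by parts in the right-hand side produces a boundary contribution proportional to $\int_{\partial\Omega} \mathcal{H}\,(\mathcal{E}\wedge \nu)\,d\sigma$, which vanishes thanks to \eqref{eq:PerfectlyConductingBoundaryCondition2d}. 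Ellipticity \eqref{eq:Ellipticity2d} then yields conservation (up to equivalent norms) of the weighted $L^2$-energy, hence \eqref{eq:SobolevEstimates} for $s=0$.

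\emph{Step 2 (time differentiation and $s = 1, 2$).} Because the coefficients are time-independent, the time derivatives $(\mathcal{E}^{(k)}, \mathcal{H}^{(k)}) := (\partial_t^k \mathcal{E}, \partial_t^k \mathcal{H})$ solve the same homogeneous system \eqref{eq:Maxwell2}. Differentiating the boundary identity $\mathcal{E}\wedge\nu = 0$ in time shows that these derivatives again satisfy the perfectly conducting boundary condition, so Step 1 applied to them gives
\begin{equation*}
\|\partial_t^k(\mathcal{E},\mathcal{H})(t)\|_{L^2(\Omega)} \lesssim \|\partial_t^k(\mathcal{E},\mathcal{H})(0)\|_{L^2(\Omega)}, \qquad k = 0, 1, 2.
\end{equation*}
Evaluating the equations at $t=0$ expresses $\partial_t^k(\mathcal{E},\mathcal{H})(0)$ in terms of $k$ spatial derivatives of the initial data, so the right-hand side is controlled by $\|(\mathcal{E}_0,\mathcal{H}_0)\|_{\mathcal{H}^k(\Omega)}$ provided the $\mathcal{H}^3$-compatibility conditions from Proposition \ref{prop:Characterization2d} hold (which they do by assumption).

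\emph{Step 3 (converting time control into spatial control).} From \eqref{eq:Maxwell2},
\begin{equation*}
\nabla_\perp \mathcal{H} = \varepsilon\,\partial_t \mathcal{E}, \qquad (\nabla\times \mathcal{E})_3 = -\mu\,\partial_t \mathcal{H}, \qquad \nabla\cdot(\varepsilon\mathcal{E}) = \rho_e,
\end{equation*}
where $\rho_e$ is time-independent (differentiating the charge identity and using the first equation gives $\partial_t \rho_e = \nabla \cdot \nabla_\perp \mathcal{H} = 0$). The first identity immediately controls $\|\nabla \mathcal{H}\|_{L^2}$ by $\|\partial_t \mathcal{E}\|_{L^2}$. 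For $\mathcal{E}$, having curl- and divergence-control plus the tangential boundary condition $\mathcal{E}\wedge\nu = 0$, I would invoke the Helmholtz/div--curl estimate from Appendix \ref{appendix:Helmholtz}, in the form
\begin{equation*}
\|\mathcal{E}\|_{H^1(\Omega)} \lesssim \|\mathcal{E}\|_{L^2} + \|(\nabla\times \mathcal{E})_3\|_{L^2} + \|\nabla\cdot(\varepsilon\mathcal{E})\|_{L^2},
\end{equation*}
to deduce $\|\mathcal{E}\|_{H^1} \lesssim \|(\mathcal{E},\mathcal{H})\|_{L^2} + \|\partial_t \mathcal{H}\|_{L^2} + \|\rho_e\|_{L^2}$. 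Iterating on $\partial_t(\mathcal{E},\mathcal{H})$ yields the corresponding $H^2$ control, matching the $s=2$ statement via the characterization of $\mathcal{H}^s$. Intermediate $s\in(0,2)$ follow by real/complex interpolation since $\mathcal{H}^s$ is defined as a closure.

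\emph{Main obstacle.} The most delicate ingredient is the div--curl step with the $\varepsilon$-weighted divergence: one has to show that the tangential Dirichlet condition on $\mathcal{E}$, combined with control of $(\nabla\times\mathcal{E})_3$ and of $\nabla\cdot(\varepsilon\mathcal{E})$ (rather than of $\nabla\cdot\mathcal{E}$), suffices to recover full $H^1$-regularity. This is precisely what the weighted Helmholtz decomposition of Appendix \ref{appendix:Helmholtz} is designed for; the Leibniz error $\nabla\varepsilon\cdot\mathcal{E}$ is lower order and is absorbed by Gr\o nwall. The remaining bookkeeping is routine: commuting $\partial_t$ with the equations is free since $\varepsilon$, $\mu$ do not depend on time, so no commutator terms appear, which is what makes the argument clean in this linear time-independent setting.
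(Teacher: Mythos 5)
Your plan matches the paper's proof in structure and in each main step: the $L^2$-energy identity via integration by parts and the boundary condition; time differentiation, using time-independence of $\varepsilon,\mu$ so that $(\dot{\mathcal{E}},\dot{\mathcal{H}})$ again solves \eqref{eq:Maxwell2}; conversion from time- to space-regularity through the equations and the Helmholtz estimate of Proposition \ref{prop:HelmholtzDecomposition}; and interpolation for non-integer $s$. Two small clarifications: the Helmholtz estimate in the paper is stated for the unweighted divergence $\nabla\cdot\mathcal{E}$, and the passage from $\nabla\cdot(\varepsilon\mathcal{E})=\rho_e$ to a bound on $\nabla\cdot\mathcal{E}$ is made by the Leibniz rule together with the already-established $L^2$-bound on $\mathcal{E}$ — no Gr\o nwall argument is needed (or used) in this time-independent linear setting, since charges are conserved and the $L^2$-norm is controlled pointwise in time. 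The Gr\o nwall mechanism you mention only enters in the quasilinear Kerr case (Proposition \ref{prop:APrioriMaxwell2Kerr}), where $\partial_t\varepsilon_1\neq 0$ genuinely produces a forcing term.
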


\medskip
As preliminary, we record the following Helmholtz decomposition on two-di\-men\-sional domains for vector fields with certain boundary conditions. In Appendix C is explained in detail how this follows from results due to Dautray--Lions \cite{DautrayLions1990}.
\begin{proposition}
\label{prop:HelmholtzDecomposition}
Let $s \in [0,1]$, and $\mathcal{E} \in \mathcal{H}^3(\Omega;\R^2)$, which satisfies boundary conditions:
\begin{equation*}
[ \mathcal{E}_{||} ]_{x' \in \partial \Omega} = 0.
\end{equation*}
Then we have the equivalence of norms:
\begin{equation}
\label{eq:HelmholtzDecomposition}
\| \mathcal{E} \|_{H^{s+1}(\Omega)} \sim \| (\nabla \times \mathcal{E})_3 \|_{H^s(\Omega)} + \| \nabla \cdot \mathcal{E} \|_{H^s(\Omega)} + \| \mathcal{E} \|_{L^2(\Omega)}.
\end{equation}
\end{proposition}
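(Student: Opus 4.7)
The easy direction $\gtrsim$ in \eqref{eq:HelmholtzDecomposition} is immediate since $\nabla \cdot$ and $\nabla \times$ are first-order differential operators, so my plan focuses on the converse. The strategy is a two-dimensional Helmholtz-type decomposition adapted to the tangential boundary data: write
\[
\mathcal{E} = \nabla \phi + \nabla_{\perp} \psi + h,
\]
where $\phi$ solves a homogeneous Dirichlet problem, $\psi$ solves a homogeneous Neumann problem, and $h$ is a harmonic vector field (divergence-free, curl-free, and tangential to $\partial \Omega$) lying in a finite-dimensional space whose dimension is determined by the topology of $\Omega$. Compatibility of the ansatz with $[\mathcal{E}_{||}]_{x' \in \partial \Omega} = 0$ is automatic: on $\partial \Omega$ one has $(\nabla \phi) \cdot \tau = \partial_\tau \phi = 0$ because $\phi$ vanishes on the boundary, and $(\nabla_{\perp} \psi) \cdot \tau = -\partial_\nu \psi = 0$ by the Neumann condition.

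To recover $\phi$ and $\psi$ I would use the identities $\nabla \cdot \mathcal{E} = \Delta \phi$ and $(\nabla \times \mathcal{E})_3 = -\Delta \psi$, solving
\[
\Delta \phi = \nabla \cdot \mathcal{E}, \quad \phi|_{\partial \Omega} = 0, \qquad -\Delta \psi = (\nabla \times \mathcal{E})_3, \quad \partial_\nu \psi|_{\partial \Omega} = 0.
\]
The Neumann compatibility condition $\int_\Omega (\nabla \times \mathcal{E})_3 \, dx = \oint_{\partial \Omega} \mathcal{E} \cdot \tau \, ds = 0$ follows at once from the tangential boundary data. Standard elliptic regularity on smooth bounded domains then yields $\|\phi\|_{H^{s+2}(\Omega)} \lesssim \|\nabla \cdot \mathcal{E}\|_{H^s(\Omega)}$ and $\|\nabla \psi\|_{H^{s+1}(\Omega)} \lesssim \|(\nabla \times \mathcal{E})_3\|_{H^s(\Omega)}$ for $s \in [0,1]$, while the harmonic piece $h$ is controlled by $\|\mathcal{E}\|_{L^2(\Omega)}$ since all norms are equivalent on its finite-dimensional ambient space. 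Summing via $\|\mathcal{E}\|_{H^{s+1}} \leq \|\nabla \phi\|_{H^{s+1}} + \|\nabla_{\perp} \psi\|_{H^{s+1}} + \|h\|_{H^{s+1}}$ would then close the estimate.

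The main obstacle I anticipate is the low-regularity setup: with $\mathcal{E}$ only in $H^{s+1}(\Omega)$ for $s \in [0,1]$, the traces of $\mathcal{E}\cdot\tau$, of $\phi$, and of $\partial_\nu \psi$ must be interpreted in the appropriate trace-space sense, and one must verify that the Dirichlet and Neumann solvers act continuously on the full $H^s$-scale with $s \in [0,1]$ (including the non-integer endpoints). I would address this by a density argument: approximate $\mathcal{E}$ in the $\mathcal{H}^3(\Omega;\R^2)$-topology by smooth vector fields that still satisfy the tangential boundary condition (the class $\mathcal{H}^3$ being precisely the natural ambient space), apply the estimate to the approximants where everything is classical, and then pass to the limit by continuity of the elliptic solvers. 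This is essentially the path taken via Dautray--Lions \cite{DautrayLions1990} that the authors defer to Appendix~\ref{appendix:Helmholtz}.
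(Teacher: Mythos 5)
Your approach is a genuinely different route from the paper's. The paper does not construct a Helmholtz decomposition directly: it cites the Dautray--Lions result (Proposition~\ref{prop:Helmholtz2dBounded}, for connected \emph{bounded} domains and integer~$k$) as a black box, and the actual content of its Appendix~\ref{appendix:Helmholtz} proof is a localization argument --- a partition of unity subordinate to boundary charts plus an interior chart, commutator errors absorbed by choosing the interior cutoff far from the boundary, and a small-$\varepsilon$ absorption to close the estimate. Your proposal instead reproves the bounded-domain elliptic estimate from scratch by solving a Dirichlet problem for $\phi$ and a Neumann problem for $\psi$ and controlling the harmonic residual $h$. That is essentially re-deriving the Dautray--Lions ingredient rather than the paper's actual step.

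The genuine gap is that your argument is confined to bounded domains. The hypothesis here (and throughout the paper in two dimensions) is only that $\Omega\subseteq\mathbb{R}^2$ has compact smooth boundary; exterior domains are allowed. On an unbounded $\Omega$ your construction breaks at several points: the Dirichlet and Neumann problems $\Delta\phi=\nabla\cdot\mathcal{E}$, $-\Delta\psi=(\nabla\times\mathcal{E})_3$ are not solvable in the ordinary $H^{s+2}(\Omega)$ scale without decay or weighted-space hypotheses (Poincar\'e fails); the Neumann compatibility integral $\int_\Omega(\nabla\times\mathcal{E})_3\,dx$ need not converge; and the space of div-free, curl-free fields with vanishing tangential trace need not be finite-dimensional, so ``all norms are equivalent on its finite-dimensional ambient space'' is no longer available. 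The paper's partition-of-unity argument is exactly what circumvents this: the interior cutoff $\psi_0\mathcal{E}$ is supported away from $\partial\Omega$, so the $\mathbb{R}^2$ Fourier-side estimate applies there, while only bounded charts near the compact boundary see Dautray--Lions. You would either need to add the unbounded extension step (essentially redoing the paper's localization on top of your construction) or restrict the proposition to bounded $\Omega$, in which case it would not serve the paper's purpose.

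A smaller slip: since $[\mathcal{E}_{||}]_{\partial\Omega}=0$ means the \emph{tangential} component of $\mathcal{E}$ vanishes, the residual $h$ is curl-free, divergence-free, and has vanishing tangential trace --- i.e.\ $h$ is \emph{normal} to $\partial\Omega$, not tangential as you wrote. This does not change the finite-dimensionality conclusion on a bounded domain, but the boundary-value problem characterizing the harmonic space (Dirichlet-type for the potential, not Neumann-type) should be stated correctly if you pursue this route.
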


\medskip

\begin{proof}[Proof~of~Proposition~\ref{prop:APrioriMaxwell2}]
Let 
\begin{equation*}
M(t) = \int_{\Omega} \mathcal{D}.\mathcal{E} + \mathcal{H}.\mathcal{B} \, dx'
\end{equation*}
with $\mathcal{D} = \varepsilon \mathcal{E}$ and $\mathcal{B} = \mu \mathcal{H}$. We compute
\begin{equation*}
\partial_t M(t)= 2 \int_\Omega \nabla_{\perp} \mathcal{H}. \mathcal{E} \, dx' - 2 \int_\Omega \mathcal{H} (\partial_1 \mathcal{E}_2 - \partial_2 \mathcal{E}_1) \, dx'.
\end{equation*}
By form invariance as argued in Section \ref{section:MaxwellManifolds}, we can suppose that $\Omega = \R^2_{>0}$, $\nu = e_2$. An integration by parts, using the boundary condition for the normal derivative $\partial_2$, gives $\partial_t M(t) = 0$.

The immediate consequence is an $L^2$-a priori estimate:
\begin{equation}
\label{eq:APrioriL2}
\| (\mathcal{E},\mathcal{H})(t) \|_{L^2(\Omega)} \lesssim \| (\mathcal{E},\mathcal{H})(0) \|_{L^2(\Omega)}.
\end{equation}
For higher norms, we consider time derivatives of \eqref{eq:Maxwell2}. We denote $\partial_t A = \dot{A}$ and $\partial_{t}^2 A = \ddot{A}$ for $A \in \{ \mathcal{E}, \mathcal{H} \}$. Taking one time derivative of \eqref{eq:Maxwell2} yields
\begin{equation*}
\left\{ \begin{array}{clrll}
\partial_t (\varepsilon \dot{\mathcal{E}}) &= \nabla_\perp \dot{\mathcal{H}}, &\qquad \quad \nabla \cdot (\varepsilon \dot{\mathcal{E}}) &=& 0, \\
\partial_t (\mu \dot{\mathcal{H}}) &= -( \partial_1 \dot{\mathcal{E}}_2 - \partial_2 \dot{\mathcal{E}}_1), &\qquad [\nu \wedge \dot{\mathcal{E}}]_{x' \in \partial \Omega} &=&0.
\end{array} \right.
\end{equation*}
Hence, $(\dot{\mathcal{E}},\dot{\mathcal{H}})$ solves \eqref{eq:Maxwell2}, and we have the a priori estimates:
\begin{equation*}
\| (\dot{\mathcal{E}},\dot{\mathcal{H}})(t) \|_{L^2(\Omega)} \lesssim \| (\dot{\mathcal{E}},\dot{\mathcal{H}})(0) \|_{L^2(\Omega)}.
\end{equation*}
Note that (again from \eqref{eq:Maxwell2} and ellipticity of $\varepsilon$ and $\mu$), we have
\begin{equation*}
\| ( \dot{\mathcal{E}},\dot{\mathcal{H}})(t) \|_{L^2(\Omega)} \sim \| \mathcal{H}(t) \|_{\dot{H}^1(\Omega)} + \| \mathcal{E}(t) \|_{H_{curl}(\Omega)}.
\end{equation*}
To estimate the full $H^1$-norm by the Helmholtz decomposition \ref{prop:HelmholtzDecomposition}, we observe that $\| (\mathcal{E},\mathcal{H}) (t) \|_{L^2}$ was estimated in the previous step and for $\| \mathcal{E}(t) \|_{H_{div}}$ we find from the condition on the charges
\begin{equation*}
\varepsilon \nabla \cdot \mathcal{E} + (\nabla \varepsilon) \mathcal{E} = \rho_e.
\end{equation*}
The charges are conserved for homogeneous solutions and by \eqref{eq:APrioriL2} we find
\begin{equation*}
\| \mathcal{E}(t) \|_{H_{div}(\Omega)} \lesssim \| \mathcal{E}(t) \|_{L^2(\Omega)} + \| \rho_e(t) \|_{L^2(\Omega)} \lesssim \| (\mathcal{E},\mathcal{H})(0) \|_{L^2(\Omega)} + \| \rho_e(0) \|_{L^2(\Omega)}.
\end{equation*}
This yields
\begin{equation*}
\| (\mathcal{E},\mathcal{H})(t) \|_{H^1(\Omega)} \lesssim \| (\mathcal{E},\mathcal{H})(0) \|_{H^1(\Omega)}.
\end{equation*}
Taking a second time derivative in \eqref{eq:Maxwell2}, we find
\begin{equation*}
\left\{ \begin{array}{clrll}
\partial_t (\varepsilon \ddot{\mathcal{E}} ) &= \nabla_\perp \ddot{\mathcal{H}}, &\qquad \nabla \cdot (\varepsilon \ddot{\mathcal{E}}) &=& 0, \\
\partial_t (\mu \ddot{\mathcal{H}}) &= -(\partial_1 \ddot{\mathcal{E}}_2 - \partial_2 \ddot{\mathcal{E}}_1), &\quad [\nu \wedge \ddot{\mathcal{E}}]_{x' \in \partial \Omega} &=& 0.
\end{array} \right.
\end{equation*}
We use $L^2$-conservation to find
\begin{equation*}
\| ( \ddot{\mathcal{E}},\ddot{\mathcal{H}}) (t) \|_{L^2} \lesssim \| (\ddot{\mathcal{E}},\ddot{\mathcal{H}})(0) \|_{L^2}.
\end{equation*}
Clearly, from iterating \eqref{eq:Maxwell2}, we have
\begin{equation*}
\|  (\ddot{\mathcal{E}},\ddot{\mathcal{H}})(0) \|_{L^2} \lesssim \| (\mathcal{E},\mathcal{H})(0) \|_{H^2}.
\end{equation*}
Secondly, we find
\begin{equation*}
\| \ddot{\mathcal{E}}(t) \|_{L^2(\Omega)} \sim \| \nabla_\perp \dot{\mathcal{H}}(t) \|_{L^2} \text{ with } \nabla_\perp \dot{\mathcal{H}} = O(\partial_{x'} \mu^{-1}) (\partial_1 \mathcal{E}_2 - \partial_2 \mathcal{E}_1) + \mu^{-1} (\Delta \mathcal{E} - \nabla (\nabla \cdot \mathcal{E})).
\end{equation*}
This gives by the conservation of $\| (\ddot{\mathcal{E}}, \ddot{\mathcal{H}})(t) \|_{L^2}$, the previous a priori estimate for the $H^1$-norm and conservation of charges:
\begin{equation}
\label{eq:SecDerivativeETimeInd}
\| \Delta \mathcal{E}(t) \|_{L^2} \lesssim \| \nabla_\perp \mathcal{\dot{H}}(t) \|_{L^2} + \| \rho_e(t) \|_{H^1} + \| (\mathcal{E},\mathcal{H})(t) \|_{H^1} \lesssim \| (\mathcal{E},\mathcal{H})(0) \|_{H^2}.
\end{equation}
For two time derivatives of $\mathcal{H}$ we find
\begin{equation*}
\mu \ddot{\mathcal{H}} = \varepsilon^{-1} \Delta \mathcal{H} + O(\partial_{x'} \varepsilon \, \partial_{x'} \mathcal{H}).
\end{equation*}
It follows from the conservation of $\| (\ddot{\mathcal{E}}, \ddot{\mathcal{H}})(t) \|_{L^2}$ and the previously established a priori estimate for the $H^1$-norm:
\begin{equation}
\label{eq:SecDerivativeHTimeInd}
\| \Delta \mathcal{H}(t) \|_{L^2} \lesssim \| \ddot{\mathcal{H}}(t) \|_{L^2} + \| \mathcal{H}(t) \|_{H^1} \lesssim \| (\mathcal{E},\mathcal{H})(0) \|_{H^2}.
\end{equation}
Taking \eqref{eq:APrioriL2}, \eqref{eq:SecDerivativeETimeInd}, and \eqref{eq:SecDerivativeHTimeInd} together, we find
\begin{equation*}
\| (\mathcal{E},\mathcal{H})(t) \|_{L^2} + \| \Delta \mathcal{E}(t) \|_{L^2} + \| \Delta \mathcal{H}(t) \|_{L^2} \lesssim \| (\mathcal{E},\mathcal{H})(0) \|_{H^2}.
\end{equation*}
The proof is complete.
\end{proof}

We turn to the quasilinear case of the Kerr nonlinearity in two dimensions:
\begin{equation}
\label{eq:Maxwell2Kerr}
\left\{ \begin{array}{clrll}
\partial_t (\varepsilon \mathcal{E}) &= \nabla_\perp \mathcal{H}, &\qquad [\mathcal{E} \times \nu]_{x' \in \partial \Omega} &=& 0, \\
\partial_t \mathcal{H} &= -(\partial_1 \mathcal{E}_2 - \partial_2 \mathcal{E}_1), &\qquad \nabla \cdot ( \varepsilon \mathcal{E}) &=& \rho_e
\end{array} \right.
\end{equation}
with $\varepsilon(\mathcal{E}) = 1 + |\mathcal{E}|^2$ and $\text{tr}(\rho_e) = 0$. In the following we assume that
\begin{equation}
\label{eq:SmallnessFields}
\sup_{t \in [0,T]} \| (\mathcal{E},\mathcal{H})(t) \|_{H^s(\Omega)} \leq \delta \ll 1
\end{equation}
for $(\mathcal{E},\mathcal{H}): [0,T] \times \Omega \to \R^3$ an $\mathcal{H}^3$-solution and $\delta$ to be chosen later.
We prove the following:
\begin{proposition}
\label{prop:APrioriMaxwell2Kerr}
Let $(\mathcal{E},\mathcal{H}) : [0,T] \times \Omega \to \R^2 \times \R$ be an $\mathcal{H}^3$-solution to \eqref{eq:Maxwell2Kerr}, which satisfies \eqref{eq:SmallnessFields}. Then the following estimate holds for $s \in [0,2)$:
\begin{equation}
\label{eq:EnergyEstimateMaxwell2Kerr}
\| (\mathcal{E},\mathcal{H}) \|_{L_T^\infty H^s(\Omega)} \lesssim_{\delta,T} e^{C \int_0^T \| \partial_x \mathcal{E}(s) \|_{L^\infty(\Omega)} ds} \| (\mathcal{E},\mathcal{H})(0) \|_{H^s(\Omega)}.
\end{equation}
\end{proposition}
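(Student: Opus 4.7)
The plan is to mirror the strategy of Proposition~\ref{prop:APrioriMaxwell2} (an $L^2$-conservation type argument at the base level, then successive time differentiations combined with Helmholtz decomposition), while carefully tracking the quasilinear commutators generated by the fact that the coefficient $\tilde{\varepsilon}(\mathcal{E})$ now depends on the solution. These commutators will appear as source terms controlled by $\|\partial_x\mathcal{E}\|_{L^\infty}$, which are absorbed via Gr\o nwall's inequality.

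First I would rewrite \eqref{eq:Maxwell2Kerr} in symmetric hyperbolic form $A_0(\mathcal{E})\partial_t U + A_1\partial_1 U + A_2\partial_2 U = 0$ with $U = (\mathcal{E}_1,\mathcal{E}_2,\mathcal{H})$, $A_0 = \mathrm{diag}(\tilde{\varepsilon}(\mathcal{E}),1)$, $\tilde{\varepsilon}_{ij} = (1+|\mathcal{E}|^2)\delta_{ij} + 2\mathcal{E}_i\mathcal{E}_j$, and $A_1,A_2$ constant and symmetric. By \eqref{eq:SmallnessFields} and Sobolev embedding, $A_0 U\cdot U\sim|U|^2$. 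The boundary condition $\mathcal{E}\wedge\nu=0$ (which locally in geodesic coordinates reads $\mathcal{E}_{||}=0$) makes the boundary contribution $\int_{\partial\Omega}(A_i\nu_i U, U)\,d\sigma$ vanish identically.

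For $s=0$, compute $\frac{d}{dt}\int(A_0 U,U)\,dx'$: integrations by parts eliminate the spatial term (constant $A_i$, vanishing boundary term), leaving $\int(\partial_t A_0\,U,U)\,dx'$. Since $|\partial_t\tilde{\varepsilon}|\lesssim|\mathcal{E}||\partial_t\mathcal{E}|\lesssim\delta\|\partial_x\mathcal{E}\|_{L^\infty}$, Gr\o nwall closes the estimate. For $s=1$, differentiate once in time: $v=\partial_t U$ satisfies
\[
A_0\partial_t v + A_1\partial_1 v + A_2\partial_2 v = -(\partial_t A_0)\,v,
\]
and since time differentiation is tangential at $\partial\Omega$, the boundary condition is inherited by $v$. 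The same energy computation yields $\frac{d}{dt}\int(A_0 v,v)\,dx'\lesssim \|\partial_x\mathcal{E}\|_{L^\infty}\|v\|_{L^2}^2$. To pass from $\|v\|_{L^2}$ back to $\|U\|_{H^1}$, I would apply Proposition~\ref{prop:HelmholtzDecomposition} to $\mathcal{E}$: the curl is given by $-\partial_t\mathcal{H}$, the divergence is controlled via $\nabla\cdot(\varepsilon\mathcal{E})=\rho_e$ with $\rho_e$ conserved in time (the nonlinear residue $|\mathcal{E}|^2|\nabla\mathcal{E}|$ absorbed through smallness), and $\nabla\mathcal{H}$ is read off from the first Maxwell equation using ellipticity of $\tilde{\varepsilon}$.

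For $s\in(1,2)$, differentiate a second time: $w=\partial_t^2 U$ satisfies $A_0\partial_t w + A_i\partial_i w = -2(\partial_t A_0)w - (\partial_t^2 A_0)v$. The first forcing contributes $\delta\|\partial_x\mathcal{E}\|_{L^\infty}\|w\|_{L^2}^2$ as before; since $|\partial_t^2\tilde{\varepsilon}|\lesssim|\partial_t\mathcal{E}|^2+|\mathcal{E}||\partial_t^2\mathcal{E}|$, the second forcing splits into a good piece bounded by $\|\partial_x\mathcal{E}\|_{L^\infty}^2\|v\|_{L^2}\|w\|_{L^2}$ and a borderline piece $\int|\mathcal{E}||\partial_t^2\mathcal{E}||w|\,dx'\lesssim \delta\|w\|_{L^2}^2$, absorbable by smallness into the left-hand side. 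Converting $\|w\|_{L^2}$ into a bound on $\|U\|_{H^{2-\eta}}$ (for small $\eta>0$) proceeds by the same Helmholtz plus constraint argument, now applied to $v$. Interpolation between the $s=1$ estimate and this near-$s=2$ estimate then covers the entire range $s\in(1,2)$, giving \eqref{eq:EnergyEstimateMaxwell2Kerr}.

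The main obstacle is the second-order commutator $(\partial_t^2 A_0)v$: it carries terms morally of the form $\mathcal{E}\partial_t^2\mathcal{E}$, and a naive $L^\infty$-bound on the coefficient would require two derivatives of $\mathcal{E}$ in $L^\infty$, which we do not have; only smallness of $\|\mathcal{E}\|_{L^\infty}$ and an $L^2$-bound on $\partial_t^2\mathcal{E}$ are available, which is exactly what forces the upper bound $s<2$ in the statement. The smallness hypothesis \eqref{eq:SmallnessFields} is therefore indispensable for closing the second-derivative estimate, and it also handles the cubic residue $|\mathcal{E}|^2|\nabla\mathcal{E}|$ in the divergence step of the Helmholtz argument.
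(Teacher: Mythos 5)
Your proposal takes a genuinely different route from the paper's. The paper's proof of Proposition~\ref{prop:APrioriMaxwell2Kerr} for $s\in(1,2)$ does \emph{not} go to $\partial_t^2 U$ in $L^2$; it establishes the mapping property of the linearized evolution $\mathbb{T}_{\varepsilon_1}(t,s')$ in $H^{s'}$ for $s'\in[0,1]$ (interpolating between $s'=0$ and $s'=1$), then applies Duhamel's formula to the $(\dot{\mathcal{E}},\dot{\mathcal{H}})$-system with the commutator $\varepsilon_1^{-1}\dot{\varepsilon}_1\dot{\mathcal{E}}$ as inhomogeneity, controls that inhomogeneity in $H^{s-1}$ via the fractional Leibniz rule on domains (Lemma~\ref{lem:FractionalLeibnizDomain}, valid for $s-1\in[0,1]$), and finishes with a single Helmholtz step. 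Your approach—second time derivative, $L^2$ energy estimate, Helmholtz twice, interpolate—is essentially the strategy of Proposition~\ref{prop:EnergyEstimatesHighKerr} (the $H^2$ and $H^3$ energy estimates), so it is legitimate, but you are re-deriving a harder statement than needed. The paper's route is cleaner because it reuses the linear $s\in[0,1]$ work; its restriction $s<2$ is an artifact of the range of Lemma~\ref{lem:FractionalLeibnizDomain}, and your diagnosis that $s<2$ is "forced" by the failure to control $\partial_t^2\tilde{\varepsilon}$ in $L^\infty$ is not correct: Proposition~\ref{prop:EnergyEstimatesHighKerr} shows the $H^2$ and even $H^3$ estimates close under a smallness hypothesis.

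The concrete gap is your treatment of $\int(\partial_t^2 A_0)v\cdot w\,dx'$. Since $\partial_t^2\tilde{\varepsilon}=O(\mathcal{E}\,\partial_t^2\mathcal{E})+O((\partial_t\mathcal{E})^2)$, the second piece gives $\int(\partial_t\mathcal{E})^2|v||w|$. Bounding this by $\|\partial_x\mathcal{E}\|_{L^\infty}^2\|v\|_{L^2}\|w\|_{L^2}$ produces a Gr\o nwall exponent $\int_0^T\|\partial_x\mathcal{E}\|^2_{L^\infty}\,dt$, not the $\int_0^T\|\partial_x\mathcal{E}\|_{L^\infty}\,dt$ you claim; to obtain a single power one must distribute as $\|\dot{\mathcal{E}}\|_{L^\infty}\|\dot{\mathcal{E}}\|_{L^4}\|v\|_{L^4}\|w\|_{L^2}$ and use Sobolev plus smallness of $\|(\mathcal{E},\mathcal{H})\|_{H^{3/2}}$ to get $\|\dot{\mathcal{E}}\|_{L^4}\lesssim\delta$, exactly as in Proposition~\ref{prop:EnergyEstimatesHighKerr}. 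This in turn requires smallness in $H^{3/2}$, which is strictly stronger than \eqref{eq:SmallnessFields} when $s\in(1,3/2)$, so your interpolation step does not reproduce the proposition under its stated hypothesis across the whole range $s\in(1,2)$. Also note that your "borderline piece" $\int|\mathcal{E}||\partial_t^2\mathcal{E}||w|\,dx'$ is missing a factor of $|v|$; restoring it and using $\|v\|_{L^\infty}\lesssim\|\partial_x(\mathcal{E},\mathcal{H})\|_{L^\infty}$ actually makes this the \emph{good} piece ($\lesssim\delta\|\partial_x(\mathcal{E},\mathcal{H})\|_{L^\infty}\|w\|_{L^2}^2$), and the phrase "absorbable into the left-hand side" is misleading since the energy identity has no coercive term to absorb into—one simply runs Gr\o nwall with the combined factor.
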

We remark that no smallness is required to prove \eqref{eq:EnergyEstimateMaxwell2Kerr} for $s \in [0,1]$, but we need smallness for $s >1$. When we apply Proposition \ref{prop:APrioriMaxwell2Kerr} in the proof of improved local well-posedness for the Kerr system, it turns out that it suffices to require smallness of the initial data by a continuity argument. 

As prerequisite, we show a simple fractional Leibniz rule on the domain:
\begin{lemma}
\label{lem:FractionalLeibnizDomain}
Let $\Omega \subseteq \R^d$ be a smooth domain with compact boundary, and $s \in [0,1]$. Let $p,q_1,q_2,r_1,r_2 \in [1,\infty]$ with $\frac{1}{p} = \frac{1}{q_1} + \frac{1}{q_2} = \frac{1}{r_1} + \frac{1}{r_2}$ with $p,q_1,r_2 < \infty$.
For $f,g \in H^s(\Omega)$ we have
\begin{equation*}
\| f g \|_{W^{s,p}(\Omega)} \lesssim \| f \|_{W^{s,q_1}(\Omega)} \| g \|_{L^{q_2}(\Omega)} + \| f \|_{L^{r_1}(\Omega)} \| g \|_{W^{s,r_2}(\Omega)}.
\end{equation*}
\end{lemma}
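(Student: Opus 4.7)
The plan is to reduce the inequality on the domain to the corresponding (Kato--Ponce type) fractional Leibniz rule on the whole space $\R^d$, via a simultaneous bounded extension operator.

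First, since $\Omega$ is a smooth domain with compact boundary, there exists a Stein-type total extension operator
\[
E: W^{s,r}(\Omega) \to W^{s,r}(\R^d)
\]
which is bounded for all $s \in [0,1]$ and all $r \in [1,\infty]$ simultaneously, with $(Ef)|_\Omega = f$ and operator norm depending only on $\Omega$. I would fix such an $E$ once and for all, set $F = Ef$ and $G = Eg$, and observe that $(FG)|_\Omega = fg$.

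Next, I would invoke the standard Kato--Ponce fractional Leibniz rule on $\R^d$: for $s \in [0,1]$ and H\"older conjugates $1/p = 1/q_1 + 1/q_2 = 1/r_1 + 1/r_2$ with $p,q_1,r_2 < \infty$,
\[
\| FG \|_{W^{s,p}(\R^d)} \lesssim \| F \|_{W^{s,q_1}(\R^d)} \| G \|_{L^{q_2}(\R^d)} + \| F \|_{L^{r_1}(\R^d)} \| G \|_{W^{s,r_2}(\R^d)}.
\]
Since $W^{s,p}(\Omega)$ is defined as the restriction space with norm $\| u \|_{W^{s,p}(\Omega)} = \inf_{\tilde u|_\Omega = u} \| \tilde u \|_{W^{s,p}(\R^d)}$, we have $\| fg \|_{W^{s,p}(\Omega)} \le \| FG \|_{W^{s,p}(\R^d)}$, and the boundedness of $E$ at each of the indices $q_1,q_2,r_1,r_2$ lets us replace $\| F \|, \| G \|$ on the right by the corresponding norms of $f,g$ on $\Omega$. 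This yields the claimed inequality.

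The only non-routine point is to ensure one fixed extension operator that is simultaneously bounded on all the Sobolev/Lebesgue scales appearing on the right-hand side. This is exactly what Stein's extension provides on smooth domains (it is defined by reflection in a tubular neighbourhood of $\partial\Omega$ glued with the identity away from $\partial \Omega$, and is bounded on $W^{k,r}$ for all integers $k$ and all $r \in [1,\infty]$, hence by interpolation on $W^{s,r}$ for $s \in [0,1]$). With that in hand, the proof is a one-line transfer from $\R^d$, and no further integration by parts or paradifferential decomposition near $\partial \Omega$ is required.
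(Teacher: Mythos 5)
Your proposal is correct and does give the inequality, but it takes a genuinely different route from the paper. The paper's proof localizes: it decomposes via a partition of unity subordinate to geodesic charts, writes the estimate on the half-space $\R^d_{>0}$ in each chart, applies the even-reflection extension $\text{ext}_N$, invokes the full-space fractional Leibniz rule, and then uses boundedness of $\text{ext}_N: W^{s,r}(\R^d_{>0}) \to W^{s,r}(\R^d)$ for $s \in [0,1]$. You skip the localization entirely and appeal to a single global total extension operator $E$ bounded on all the relevant scales simultaneously, then push through the full-space estimate in one step. This is arguably cleaner; the paper's version is more self-contained and matches the localization machinery already set up elsewhere in Section 2.

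One small imprecision worth correcting: your parenthetical description of $E$ as ``reflection in a tubular neighbourhood glued with the identity'' and bounded on $W^{k,r}$ \emph{for all integers $k$} is not accurate. Simple even reflection does \emph{not} yield an operator bounded on $W^{k,r}$ for all $k$ — it degrades the regularity at $\partial\Omega$ once $k \geq 2$ (a generic $W^{2,p}$ function reflected evenly is only $W^{1,p}$ globally, since normal derivatives jump). What Stein actually constructs is an averaged extension with a regularized-distance kernel, and \emph{that} operator is bounded on $W^{k,p}$ for every $k \in \N_0$ and $p \in [1,\infty]$. For the range $s \in [0,1]$ that you actually need, the easier reflection extension does suffice (this is the $\text{ext}_N$ the paper uses), so your argument goes through either way. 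You should also note that the step $\|fg\|_{W^{s,p}(\Omega)} \leq \|FG\|_{W^{s,p}(\R^d)}$ relies on identifying $W^{s,p}(\Omega)$ with the quotient/restriction space, which for smooth compact-boundary domains is equivalent to the intrinsic definition precisely because a bounded extension operator exists — so the logic is consistent but slightly circular in flavour; for $s \in [0,1]$ and $p<\infty$ the equivalence is standard and there is no real issue.
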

\begin{proof}
For the interior part this is immediate from the usual fractional Leibniz rule (cf. \cite{GrafakosOh2014,GrafakosTorresAdv}):
\begin{equation*}
\| \langle \partial_x \rangle^s (fg) \|_{L^p(\R^d)} \lesssim \| \langle \partial_x \rangle^{s} f \|_{L^{q_1}(\R^d)} \| g \|_{L^{q_2}(\R^d)} + \| f \|_{L^{r_1}(\R^d)} \| \langle \partial_x \rangle^{s} g \|_{L^{r_2}(\R^d)}.
\end{equation*}
Hence, it suffices to consider (finitely many) charts at the boundary. We change to geodesic coordinates. By invariance of Sobolev spaces under changes of coordinates, it suffices to estimate:
$\| f g \|_{W^{s,p}(\R^d_{>0})}$. We extend $f$ and $g$ evenly and denote the extensions by $\tilde{f}$ and $\tilde{g}$. We have
\begin{equation*}
\| f g \|_{W^{s,p}(\R^d_{>0})} \lesssim \| \tilde{f} \tilde{g} \|_{W^{s,p}(\R^d)}
\end{equation*}
because $\tilde{f} \tilde{g}$ is an even extension of $fg$. The above display is clearly true for $s \in \{0,1\}$ and follows for $s \in (0,1)$ by interpolation. Now we are in the position to apply the usual fractional Leibniz rule on the whole space and find
\begin{equation*}
\| \tilde{f} \tilde{g} \|_{W^{s,p}(\R^d)} \lesssim \| \tilde{f} \|_{W^{s,q_1}(\R^d)} \| \tilde{g} \|_{L^{q_2}(\R^d)} + \| \tilde{f} \|_{L^{r_1}(\R^d)} \| \tilde{g} \|_{W^{s,r_2}(\R^d)}.
\end{equation*}
The proof is concluded by continuity of even extension for $p \in [1,\infty]$ and $s \in [0,1]$:
\begin{equation*}
\begin{split}
\text{ext}_N : W^{s,p}(\R^d_{>0}) &\to W^{s,p}(\R^d) \\
f &\mapsto f_e(x) = 
\begin{cases}
f(x), \quad x_d >0, \\
f(x_1,\ldots,x_{d-1},-x_d), \quad x_d < 0.
\end{cases}
\end{split}
\end{equation*}
\end{proof}

\begin{proof}[Proof~of~Proposition~\ref{prop:APrioriMaxwell2Kerr}]
We change to non-divergence form: Consider
\begin{equation}
\label{eq:LinearizedKerr2}
\left\{ \begin{array}{clrll}
\varepsilon_1 \partial_t \mathcal{E} &= \nabla_\perp \mathcal{H}, &\quad [ \mathcal{E} \wedge \nu ]_{x \in \partial \Omega} &=& 0, \qquad (t,x') \in \R \times \Omega, \\
\partial_t \mathcal{H} &= - (\nabla \times \mathcal{E})_3, &\quad \eta(x,D) \mathcal{E} &=& \rho_e
\end{array} \right.
\end{equation}
with $(\mathcal{E},\mathcal{H})(0) = (\mathcal{E}_0,\mathcal{H}_0)$, $\varepsilon_1(t) = 1 + | \mathcal{E}|^2(t) + 2 \mathcal{E} \otimes \mathcal{E}(t)$, and
\begin{equation*}
\eta(x,D) = ( (1+|\mathcal{E}|^2+ 2 \mathcal{E}_1^2) \partial_1 + 2 \mathcal{E}_1 \mathcal{E}_2 \partial_2 \quad 2 \mathcal{E}_1 \mathcal{E}_2 \partial_1 + (1+|\mathcal{E}|^2+ 2 \mathcal{E}_2^2) \partial_2 ).
\end{equation*}

We have by $\mathcal{H}^3$-well-posedness and Sobolev embedding
\begin{equation}
\label{eq:RegularityEps1}
\varepsilon_1(t) \in W^{1,\infty}(\Omega), \quad \partial_t \varepsilon_1 \in L_T^1 L_{x'}^\infty(\Omega).
\end{equation}
This allows us to define a time-dependent (but linear) evolution operator $\mathbb{T}(t,s): L^2(\Omega) \to L^2(\Omega)$ via linearization with 
\begin{equation*}
\begin{split}
\varepsilon_1(t) &= 1 + |\mathcal{E}'|^2 + 2 \mathcal{E}' \otimes \mathcal{E}', \\
\eta(x,D) &= ( (1+|\mathcal{E}'|^2+ 2 {\mathcal{E}'_1}^2) \partial_1 + 2 \mathcal{E}'_1 \mathcal{E}'_2 \partial_2 \quad 2 \mathcal{E}'_1 \mathcal{E}'_2 \partial_1 + (1+|\mathcal{E}'|^2+ 2 {\mathcal{E}'_2}^2) \partial_2 ),
\end{split}
\end{equation*}
where $\mathcal{E}'$ denotes the $\mathcal{H}^3$-solution to the quasilinear problem. This maps data $(\mathcal{E},\mathcal{H})(s)$ to values of the solution to \eqref{eq:LinearizedKerr2} at time $t$ given by $(\mathcal{E},\mathcal{H})(t)$.

We shall see that for $s \in \{0,1 \}$
\begin{equation*}
\mathcal{T}: H^s(\Omega) \to L_T^\infty H^s(\Omega), \quad (\mathcal{E}_0,\mathcal{H}_0) \mapsto ((\mathcal{E},\mathcal{H})(t))_{t \in [0,T]}
\end{equation*}
satisfies the estimate
\begin{equation}
\label{eq:EstimateLinearEvolution}
\sup_{t \in [0,T]} \| (\mathcal{E},\mathcal{H})(t) \|_{H^s(\Omega)} \lesssim e^{C \int_0^T \| \partial_x \varepsilon_1(s) \|_{L^\infty_{x'}(\Omega)} ds} \| (\mathcal{E}_0,\mathcal{H}_0) \|_{H^s(\Omega)}.
\end{equation}
By linearity and interpolation this implies \eqref{eq:EnergyEstimateMaxwell2Kerr} for $s \in [0,1]$.

 We prove \eqref{eq:EstimateLinearEvolution} for $s=0$. Let $\mathcal{D}(t) = \varepsilon_1(t) \mathcal{E}(t)$ and
\begin{equation*}
M(t) = \int_\Omega \mathcal{D}(t). \mathcal{E}(t) \, dx' + \int_\Omega \mathcal{H}(t). \mathcal{B}(t) \, dx'.
\end{equation*}
We obtain
\begin{equation*}
\begin{split}
\partial_t M(t) &= \int_\Omega \partial_t \varepsilon_1(t) \mathcal{E}(t). \mathcal{E}(t) \, dx' + 2 \int_\Omega \nabla_\perp \mathcal{H}(t). \mathcal{E}(t) dx' - 2 \int_\Omega (\nabla \times \mathcal{E})_3(t) \mathcal{H}(t) \, dx' \\
&\lesssim \| \partial_t \varepsilon_1(t) \|_{L^\infty(\Omega)} \| \mathcal{E}(t) \|^2_{L^2(\Omega)} \\
&\lesssim \| \partial_t \varepsilon_1(t) \|_{L^\infty(\Omega)} M(t).
\end{split}
\end{equation*}
In the first estimate we use that the second and third term cancel each other. This follows from integration by parts using the boundary condition and resolving on the half-space. In the ultimate estimate we use that $\varepsilon_1(t)$ has eigenvalues $1+3|\mathcal{E}'|^2$, $1 + |\mathcal{E}'|^2$. We find $O \in O(2)$ such that
\begin{equation*}
O^t
\begin{pmatrix}
1 + |\mathcal{E}'|^2 + 2 {\mathcal{E}'_1}^2 & 2 \mathcal{E}'_1 \mathcal{E}'_2 \\
2 \mathcal{E}'_1 \mathcal{E}'_2 & 1 + |\mathcal{E}'|^2 + 2 {\mathcal{E}'_2}^2
\end{pmatrix}
O = 
\begin{pmatrix}
1 + 3 |\mathcal{E}'|^2 & 0 \\
 0 & 1 + |\mathcal{E}'|^2
\end{pmatrix}
.
\end{equation*}
This is achieved for $\mathcal{E}' \neq 0$ by requiring
\begin{equation*}
O
\begin{pmatrix}
\mathcal{E}_1' \\
\mathcal{E}_2'
\end{pmatrix}
= 
\begin{pmatrix}
|\mathcal{E}'| \\
0
\end{pmatrix}
.
\end{equation*}
For $\mathcal{E}' = 0$, we can simply choose $O = 1_{2 \times 2}$. We conclude the proof of \eqref{eq:EstimateLinearEvolution} for $s=0$ by $M(t) \sim \| (\mathcal{E},\mathcal{H})(t) \|_{L^2}^2$ and Gr\o nwall's inequality.

We turn to the proof of \eqref{eq:EstimateLinearEvolution} for $s=1$. To this end, we consider the system for the first time-derivatives:
\begin{equation*}
\left\{ \begin{array}{cl}
\varepsilon_1 \partial_t \dot{\mathcal{E}} &= \nabla_\perp \dot{\mathcal{H}} - \dot{\varepsilon}_1 \dot{\mathcal{E}}, \\
\partial_t \dot{\mathcal{H}} &= - (\partial_1 \dot{\mathcal{E}}_2 - \partial_2 \dot{\mathcal{E}}_1 ).
\end{array} \right.
\end{equation*}
Let
\begin{equation*}
\tilde{M}(t) = \int_\Omega \varepsilon_1(t) \dot{\mathcal{E}}(t) . \dot{\mathcal{E}}(t) \, dx' + \int_\Omega \dot{\mathcal{H}}(t). \dot{\mathcal{B}}(t) \, dx'.
\end{equation*}
For the time-derivative of $\tilde{M}(t)$ we find like above by integration by parts
\begin{equation}
\label{eq:TimeDerivativeM}
\begin{split}
\partial_t \tilde{M}(t) &= \int_\Omega (\partial_t \varepsilon_1(t)) \dot{\mathcal{E}}(t) . \dot{\mathcal{E}}(t) \, dx' + 2 \int_\Omega \nabla_\perp \dot{\mathcal{H}}(t). \dot{\mathcal{E}}(t) \, dx' - 2 \int_\Omega \dot{\varepsilon}_1(t) \dot{\mathcal{E}}(t). \dot{\mathcal{E}}(t) \, dx' \\
&\quad - 2 \int_\Omega (\nabla \times \dot{\mathcal{E}})_3 . \dot{\mathcal{H}}(t) dx' \\
&\lesssim \| \partial_t \varepsilon_1(t) \|_{L^\infty_x(\Omega)} M(t).
\end{split}
\end{equation}
Gr\o nwall's inequality yields
\begin{equation*}
\tilde{M}(t) \lesssim e^{\int_0^t \| \partial_t \varepsilon_1(s) \|_{L_{x'}^\infty(\Omega)} ds} \tilde{M}(0).
\end{equation*}
By invoking \eqref{eq:LinearizedKerr2}, we find
\begin{equation*}
\tilde{M}(t) \sim \| \mathcal{H}(t) \|^2_{\dot{H}^1(\Omega)} + \| \mathcal{E}(t) \|^2_{H_{curl}}.
\end{equation*}
We already control $\| (\mathcal{E},\mathcal{H})(t) \|_{L^2}$. For an estimate of $\| \mathcal{E} \|_{H^1(\Omega)}$, by Proposition \ref{prop:HelmholtzDecomposition} we have to estimate $\| \mathcal{E} \|_{H_{div}}$. From the formula for $\rho_e(t)$ we obtain
\begin{equation*}
\rho_e(t) = \nabla \cdot \mathcal{E}(t) + O( (\mathcal{E}')^2 \nabla \mathcal{E}).
\end{equation*}
This yields the estimate
\begin{equation*}
\| \mathcal{E}(t) \|_{H_{div}} \lesssim_\delta \| \rho_e(t) \|_{L^2} + \| \mathcal{E} \|_{H_{curl}}.
\end{equation*}
Furthermore, we find
\begin{equation*}
\dot{\rho}_e(t) = \partial_t \varepsilon_1 \partial_{x'} \mathcal{E} + \eta(x,D) \partial_t \mathcal{E}.
\end{equation*}
Applying the divergence to $\varepsilon_1 \partial_t \mathcal{E} = \nabla_\perp \mathcal{H}$, we find $\eta(x,D) \partial_t \mathcal{E} = O(\partial \varepsilon_1 \partial_t \mathcal{E})$ and therefore,
\begin{equation*}
\partial_t \| \rho_e(t) \|_{L_{x'}^2}^2 \lesssim \| \partial_t \varepsilon_1 \|_{L_{x'}^\infty} \| \partial_{x'} \mathcal{E} \|_{L^2_{x'}} \| \rho_e(t) \|_{L^2_{x'}} + \| \partial_{x'} \varepsilon_1 \|_{L^\infty_{x'}} \| \partial_t \mathcal{E} \|_{L^2_{x'}} \| \rho_{e}(t) \|_{L^2_{x'}}.
\end{equation*}
We find
\begin{equation}
\label{eq:TimeDerivativeCharges}
\partial_t \| \rho_e(t) \|^2_{L^2(\Omega)} \lesssim \| \partial_x \varepsilon_1 \|_{L^\infty_{x'}} (\tilde{M}(t) + \| \rho_e(t) \|^2_{L^2_{x'}(\Omega)}).
\end{equation}
Taking the estimates \eqref{eq:TimeDerivativeM} and \eqref{eq:TimeDerivativeCharges} together, we find by Gr\o nwall's inequality
\begin{equation*}
\tilde{M}(t) + \| \rho_e(t) \|^2_{L^2(\Omega)} \lesssim e^{C \int_0^T \| \partial_x \varepsilon_1(t) \|_{L^\infty_{x'}(\Omega)} dt } (\tilde{M}(0) + \| \rho_e(0) \|^2_{L^2(\Omega)}).
\end{equation*}
This shows \eqref{eq:EstimateLinearEvolution} for $s=1$ and by interpolation we infer \eqref{eq:EstimateLinearEvolution} for $s \in [0,1]$.

\medskip

In the following let $(\mathcal{E},\mathcal{H})(t) = (\mathcal{E}',\mathcal{H}')(t)$ denote the $\mathcal{H}^3$-solution to the quasilinear Maxwell equation. We observe that the time-derivatives satisfy the following equation:
\begin{equation}
\label{eq:TimeDerivativeMaxwell2}
\left\{ \begin{array}{cl}
\varepsilon_1 \partial_t \dot{\mathcal{E}} &= \nabla_\perp \dot{\mathcal{H}} - \dot{\varepsilon}_1 \dot{\mathcal{E}}, \\
\partial_t \dot{\mathcal{H}} &= - (\nabla \times \dot{\mathcal{E}})_3.
\end{array} \right.
\end{equation}
Hence, we can write in $H^s$, $s \in [0,1]$:
\begin{equation*}
(\dot{\mathcal{E}}, \dot{\mathcal{H}})(t) = \mathbb{T}_{\varepsilon_1}(t,0) (\dot{\mathcal{E}},\dot{\mathcal{H}})(0) - \int_0^t \mathbb{T}_{\varepsilon_1}(t,s) \begin{pmatrix}
\varepsilon_1^{-1} \dot{\varepsilon}_1 \dot{\mathcal{E}} \\
0
\end{pmatrix}
ds.
\end{equation*}
We obtain by the estimates for $\mathbb{T}_{\varepsilon_1}(t,s)$:
\begin{equation*}
\begin{split}
\| (\dot{\mathcal{E}},\dot{\mathcal{H}}) \|_{L_T^\infty H^s(\Omega)} &\lesssim e^{\int_0^T \| \partial_x \mathcal{E}(s) \|_{L^\infty_{x'}(\Omega)} ds} \| (\dot{\mathcal{E}},\dot{\mathcal{H}})(0) \|_{H^s(\Omega)} \\
&\quad + \int_0^T e^{\int_0^T \| \partial_x \mathcal{E}(s) \|_{L^\infty_{x'}(\Omega)} ds} \| \varepsilon_1^{-1} \dot{\varepsilon}_1 \dot{\mathcal{E}}(s) \|_{H^s(\Omega)} ds.
\end{split}
\end{equation*}
We evaluate $\| \varepsilon_1^{-1} \dot{\varepsilon}_1 \dot{\mathcal{E}} \|_{H^s(\Omega)}$ by the fractional Leibniz rule proved in Lemma \ref{lem:FractionalLeibnizDomain}. 

There are two cases: Derivatives fall on $\dot{\mathcal{E}}$, which case is handled by the first term, or derivatives fall on $\mathcal{E}$ or $\varepsilon_1^{-1}$ (or on the metric tensor, which results in even lower order terms), which is handled with the second term:
\begin{equation*}
\| \varepsilon_1^{-1} \dot{\varepsilon}_1 \dot{\mathcal{E}} \|_{H^s(\Omega)} \lesssim_\delta \| \dot{\mathcal{E}} \|_{L^\infty(\Omega)} \| \dot{\mathcal{E}} \|_{H^s(\Omega)}+ \| \mathcal{E} \|_{W^{s,p}(\Omega)} \| \dot{\mathcal{E}} \|_{L^\infty(\Omega)} \| \dot{\mathcal{E}} \|_{L^q(\Omega)}.
\end{equation*}
Above we require $\frac{1}{p} + \frac{1}{q} = \frac{1}{2}$ and $s = 2 \big( \frac{1}{2} - \frac{1}{q} \big) < 1$.

\medskip

By smallness assumption \eqref{eq:SmallnessFields} and Sobolev embedding, we obtain
\begin{equation}
\label{eq:EstimateDerivative2dKerr}
\sup_{t \in [0,T]} \| (\dot{\mathcal{E}},\dot{\mathcal{H}})(t) \|_{H^s(\Omega)} \lesssim_\delta e^{C \int_0^T \| \partial_x \mathcal{E}(s) \|_{L^\infty_{x'}(\Omega)} ds} \| (\mathcal{E},\mathcal{H})(0) \|_{H^{s+1}(\Omega)}.
\end{equation}

We have to control $\| (\mathcal{E},\mathcal{H}) \|_{H^{s+1}}$ in terms of $\| (\dot{\mathcal{E}},\dot{\mathcal{H}}) \|_{H^s(\Omega)}$. To this end, we use Proposition \ref{prop:HelmholtzDecomposition} to find
\begin{equation}
\label{eq:HelmholtzEstimateKerr}
\begin{split}
\| (\mathcal{E},\mathcal{H})(t) \|_{H^{s+1}(\Omega)} &\lesssim \| ( \nabla \cdot \mathcal{E} ) (t) \|_{H^s(\Omega)} + \| (\nabla \times \mathcal{E})_3(t) \|_{H^s(\Omega)} + \| \nabla_\perp \mathcal{H}(t) \|_{H^s(\Omega)} \\
&\quad + \| (\mathcal{E},\mathcal{H})(t) \|_{L^2(\Omega)}.
\end{split}
\end{equation}
We have $\| (\nabla \times \mathcal{E})_3(t) \|_{H^s(\Omega)} = \| \partial_t \mathcal{H} \|_{H^s(\Omega)}$ and already control
\begin{equation}
\label{eq:L2Estimate2dKerr}
\| (\mathcal{E}, \mathcal{H})(t) \|_{L^2(\Omega)} \lesssim e^{C \int_0^t \| \partial_x \mathcal{E}(s) \|_{L^\infty_{x'}(\Omega)} ds} \| (\mathcal{E},\mathcal{H})(0) \|_{L^2(\Omega)}.
\end{equation}
By $\rho_e(t) = \eta(x,D) \mathcal{E} = \nabla \cdot \mathcal{E} + O( \mathcal{E}^2 \partial_{x'} \mathcal{E})$, we obtain by the fractional Leibniz rule and Sobolev embedding
\begin{equation}
\label{eq:DivergenceEstimate}
\begin{split}
\| \nabla \cdot \mathcal{E}(t) \|_{H^s(\Omega)} &\lesssim \| \rho_e(t) \|_{H^s(\Omega)} + \| \mathcal{E}(t) \|_{H^{s+1}}^3 = \| \rho_e(0) \|_{H^s(\Omega)} + \| \mathcal{E}(t) \|_{H^{s+1}}^3 \\
&\lesssim \| \nabla \cdot \mathcal{E}(0) \|_{H^s(\Omega)} + \| \mathcal{E}(0) \|_{H^{s+1}(\Omega)}^3 + \| \mathcal{E}(t) \|_{H^{s+1}(\Omega)}^3.
\end{split}
\end{equation}
For $\| \nabla_\perp \mathcal{H} \|_{H^s}$ we estimate by the fractional Leibniz rule and Sobolev embedding
\begin{equation}
\label{eq:EstimateH2dKerr}
\begin{split}
\| \varepsilon_1 \varepsilon_1^{-1} \nabla_\perp \mathcal{H} \|_{H^s} &\lesssim \| \varepsilon_1 \partial_t \mathcal{E} \|_{H^s(\Omega)} \\
&\lesssim \| \mathcal{E}(t) \|_{H^s(\Omega)} \| \partial_t \mathcal{E} \|_{L^2(\Omega)} + \| \varepsilon_1 \|_{L^\infty(\Omega)} \| \partial_t \mathcal{E} \|_{H^s(\Omega)} \\
&\lesssim \| (\mathcal{E},\mathcal{H})(t) \|_{H^{s+1}(\Omega)}^2 + \| \varepsilon_1 \|_{L^\infty(\Omega)} \| \partial_t \mathcal{E} \|_{H^s(\Omega)}.
\end{split}
\end{equation}
By smallness \eqref{eq:SmallnessFields}, we have $\| \varepsilon_1 \|_{L^\infty(\Omega)} \lesssim 1$. Plugging \eqref{eq:L2Estimate2dKerr}, \eqref{eq:DivergenceEstimate}, and \eqref{eq:EstimateH2dKerr} into \eqref{eq:HelmholtzEstimateKerr} yields
\begin{equation*}
\begin{split}
\| (\mathcal{E},\mathcal{H})(t) \|_{H^{s+1}(\Omega)} &\lesssim \| \nabla \cdot \mathcal{E}(0) \|_{H^s(\Omega)} + \| (\mathcal{E},\mathcal{H})(0) \|_{H^{s+1}(\Omega)}^3 + \| \partial_t (\mathcal{E},\mathcal{H})(t) \|_{H^s(\Omega)} \\
&\quad + e^{\int_0^t \| \partial_x \mathcal{E}(s) \|_{L^\infty_{x'}(\Omega)} ds} \| (\mathcal{E},\mathcal{H})(0) \|_{L^2(\Omega)}.
\end{split}
\end{equation*}
Again by \eqref{eq:SmallnessFields} and \eqref{eq:EstimateDerivative2dKerr} we finish the proof.
\end{proof}

We have proved energy estimates for solutions to \eqref{eq:Maxwell2Kerr} at the regularities we shall cover by Strichartz estimates. However, the proof of local well-posedness is anchored at $H^3$. To show existence in $H^3$ for the same times like time of existence in $H^s$, $s \in (11/6,2)$, we prove the following energy estimates:
\begin{proposition}[Energy~estimates~at~high~regularity]
\label{prop:EnergyEstimatesHighKerr}
Let $(\mathcal{E},\mathcal{H})$ be an $\mathcal{H}^3$-solution to \eqref{eq:Maxwell2Kerr} on $[0,T]$ and suppose
that 
\begin{equation}
\label{eq:SmallnessRoughTopology}
\| (\mathcal{E},\mathcal{H}) \|_{L_T^\infty H^{\frac{7}{4}}} \leq \delta \ll 1.
\end{equation}

 Then for $0 \leq t \leq T$ the following estimates hold:
\begin{align}
\label{eq:H2EnergyEstimateKerr}
\|(\mathcal{E},\mathcal{H})(t) \|_{H^2} &\lesssim e^{C \int_0^T \| \partial_{x'} (\mathcal{E},\mathcal{H})(t') \|_{L^\infty_{x'}} dt'} \| (\mathcal{E},\mathcal{H})(0) \|_{H^2}, \\
\label{eq:H3EnergyEstimateKerr}
\| (\mathcal{E}, \mathcal{H})(t) \|_{H^3} &\lesssim e^{C (T+1) e^{\int_0^T \| \partial_{x'} (\mathcal{E},\mathcal{H})(t') \|_{L_{x'}^\infty} dt'} ( \| (\mathcal{E},\mathcal{H})(0) \|_{H^2} +1) } \| (\mathcal{E}, \mathcal{H})(0) \|_{H^3}.
\end{align}
\end{proposition}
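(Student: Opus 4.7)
The plan is to push the argument of Proposition~\ref{prop:APrioriMaxwell2Kerr} one and two further orders of time differentiation. The boundary condition $[\mathcal{E}\wedge\nu]_{x'\in\partial\Omega}=0$ is tangential and therefore inherited by every $\partial_t^k\mathcal{E}$, so the integration by parts that canceled $\int\nabla_\perp\dot{\mathcal{H}}\cdot\dot{\mathcal{E}}-(\nabla\times\dot{\mathcal{E}})_3\dot{\mathcal{H}}$ in the proof of Proposition~\ref{prop:APrioriMaxwell2Kerr} is still legitimate at any level. At each level the system for $(\partial_t^k\mathcal{E},\partial_t^k\mathcal{H})$ is a Maxwell-type system with the same principal part $\varepsilon_1\partial_t\mathcal{E}=\nabla_\perp\mathcal{H}$ plus commutator terms of the form $\partial_t^j\varepsilon_1\cdot\partial_t^{k-j}\mathcal{E}$ for $1\le j\le k$, which must be absorbed in the energy.

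For \eqref{eq:H2EnergyEstimateKerr} I would write the equation for $(\ddot{\mathcal{E}},\ddot{\mathcal{H}})$,
\begin{equation*}
\varepsilon_1\partial_t\ddot{\mathcal{E}}=\nabla_\perp\ddot{\mathcal{H}}-2\dot{\varepsilon}_1\ddot{\mathcal{E}}-\ddot{\varepsilon}_1\dot{\mathcal{E}},\qquad \partial_t\ddot{\mathcal{H}}=-(\nabla\times\ddot{\mathcal{E}})_3,
\end{equation*}
and consider $M_2(t)=\int_\Omega\varepsilon_1\ddot{\mathcal{E}}\cdot\ddot{\mathcal{E}}+\ddot{\mathcal{H}}^2\,dx'$. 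The principal curl terms cancel after integration by parts, leaving a remainder of the form $\int\dot{\varepsilon}_1|\ddot{\mathcal{E}}|^2$ and $\int\ddot{\varepsilon}_1\dot{\mathcal{E}}\cdot\ddot{\mathcal{E}}$. Writing $\dot{\varepsilon}_1=O(\mathcal{E}\dot{\mathcal{E}})$ and $\ddot{\varepsilon}_1=O(|\dot{\mathcal{E}}|^2+\mathcal{E}\ddot{\mathcal{E}})$, using $\dot{\mathcal{E}}=\varepsilon_1^{-1}\nabla_\perp\mathcal{H}$ to trade time for spatial derivatives, and invoking the Sobolev embedding $H^{7/4}(\Omega)\hookrightarrow L^\infty(\Omega)$ together with the smallness \eqref{eq:SmallnessRoughTopology} (which also yields $\|\varepsilon_1\|_{L^\infty},\|\varepsilon_1^{-1}\|_{L^\infty}\sim 1$), each such integral is dominated by $C\|\partial_{x'}(\mathcal{E},\mathcal{H})\|_{L^\infty}M_2(t)$. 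Gr\o nwall then controls $M_2(t)$, hence $\|(\ddot{\mathcal{E}},\ddot{\mathcal{H}})(t)\|_{L^2}$. The passage from this to $\|(\mathcal{E},\mathcal{H})(t)\|_{H^2}$ goes through the Helmholtz decomposition of Proposition~\ref{prop:HelmholtzDecomposition}, controlling $\nabla\cdot\mathcal{E}$ via the charge identity $\rho_e=\nabla\cdot\mathcal{E}+O(\mathcal{E}^2\partial_{x'}\mathcal{E})$ (with $\dot{\rho}_e$ handled exactly as in \eqref{eq:TimeDerivativeCharges}), and controlling $(\nabla\times\mathcal{E})_3=-\partial_t\mathcal{H}$ and $\nabla_\perp\mathcal{H}=\varepsilon_1\dot{\mathcal{E}}$ in $H^1$ by the previously established $H^1$-bound and the freshly obtained $L^2$-bound on $(\ddot{\mathcal{E}},\ddot{\mathcal{H}})$.

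For \eqref{eq:H3EnergyEstimateKerr}, the same recipe applied to $(\dddot{\mathcal{E}},\dddot{\mathcal{H}})$ yields an energy $M_3(t)$ whose time derivative picks up commutator terms involving $\dddot{\varepsilon}_1$ and products of the form $\dot{\mathcal{E}}\cdot\ddot{\mathcal{E}}\cdot\dddot{\mathcal{E}}$, $\mathcal{E}\cdot\dddot{\mathcal{E}}^2$, and $|\dot{\mathcal{E}}|^3\cdot\dddot{\mathcal{E}}$. These I would estimate by the fractional Leibniz rule of Lemma~\ref{lem:FractionalLeibnizDomain} together with Sobolev embeddings in two dimensions, splitting each product into an $L^\infty$ factor (controlled by $\|\partial_{x'}(\mathcal{E},\mathcal{H})\|_{L^\infty}$ after converting time derivatives to spatial ones) and an $L^2$ factor bounded by $\|(\mathcal{E},\mathcal{H})\|_{H^2}\cdot M_3(t)^{1/2}$. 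Inserting the $H^2$ estimate \eqref{eq:H2EnergyEstimateKerr} into this bound and applying Gr\o nwall a second time produces the double-exponential structure of \eqref{eq:H3EnergyEstimateKerr}: the inner exponential comes from the first application (at the $H^2$ level), and the outer one from the present application (at $H^3$). Translating $\|\dddot{\mathcal{E}}\|_{L^2}$ into $\|\mathcal{E}\|_{H^3}$ proceeds as in the $H^2$ case, this time using the evolution equation for $\partial_t^2\rho_e$ to close the charge estimate at the $H^2$ level.

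The main obstacle is the cubic structure of $\varepsilon_1$: every extra time derivative generates a term $\partial_t^k\varepsilon_1\sim\mathcal{E}\cdot\partial_t^k\mathcal{E}+\ldots$ whose highest-order piece could in principle not be controlled by Cauchy--Schwarz alone. It is precisely the smallness hypothesis \eqref{eq:SmallnessRoughTopology} in $H^{7/4}$, together with the fractional Leibniz rule, that permits absorbing the top-order contributions on the left, while simultaneously ensuring the ellipticity constants of $\varepsilon_1$ remain comparable to unity so that the energy $M_k$ is equivalent to $\|\partial_t^k(\mathcal{E},\mathcal{H})\|_{L^2}^2$ throughout. The appearance of $\|(\mathcal{E},\mathcal{H})(0)\|_{H^2}+1$ inside the exponent of \eqref{eq:H3EnergyEstimateKerr} reflects exactly the cubic commutators at order three, and the $(T+1)$ prefactor accounts for the time-integrated Gr\o nwall input coming from the inhomogeneous commutator terms.
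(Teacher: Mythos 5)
Your overall architecture matches the paper's: differentiate the equation in time, run an energy estimate in which the top-order curl terms cancel after integration by parts, estimate the commutators coming from $\partial_t^j\varepsilon_1$ using smallness and Sobolev embedding, apply Gr\o nwall, and pass from time derivatives to full Sobolev norms via the Helmholtz decomposition. The observation that the tangential boundary condition is inherited by all time derivatives (so the integration by parts is licit at every order), and the observation that the double exponential in \eqref{eq:H3EnergyEstimateKerr} arises by inserting the $H^2$ bound into the $H^3$ Gr\o nwall rate, are both exactly what the paper does.

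However, there is a genuine gap in your $H^2$ step. You claim the Gr\o nwall differential inequality for $M_2(t)=\int_\Omega\varepsilon_1\ddot{\mathcal{E}}\cdot\ddot{\mathcal{E}}+\ddot{\mathcal{H}}^2$ closes, i.e.\ that ``each such integral is dominated by $C\|\partial_{x'}(\mathcal{E},\mathcal{H})\|_{L^\infty}M_2(t)$.'' This fails for the $O(\dot{\mathcal{E}}^2)$ piece of $\ddot{\varepsilon}_1$: the resulting term $\int\dot{\mathcal{E}}^2\,\dot{\mathcal{E}}\cdot\ddot{\mathcal{E}}$ is bounded by $\|\dot{\mathcal{E}}\|_{L^\infty}\|\dot{\mathcal{E}}\|_{L^4}^2\|\ddot{\mathcal{E}}\|_{L^2}$. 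Using $\|\dot{\mathcal{E}}\|_{L^4}^2\lesssim\delta\|(\mathcal{E},\mathcal{H})\|_{H^2}$ and Young's inequality, this produces a contribution $\|\partial_x(\mathcal{E},\mathcal{H})\|_{L^\infty}\,\delta^2\|(\mathcal{E},\mathcal{H})\|_{H^2}^2$, and $\|(\mathcal{E},\mathcal{H})\|_{H^2}^2$ is \emph{not} dominated by $M_2(t)$ alone (it also involves the lower-order time-derivative energies and the charges through the Helmholtz decomposition). Thus the Gr\o nwall inequality as you set it up has a source term that lives outside the functional you are controlling, and it does not close.

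The remedy, which is precisely what the paper implements, is to run the Gr\o nwall argument on the full ladder
\begin{equation*}
A_1(t)+\|\rho_e(t)\|_{H^1}^2,\qquad A_1(t)=(\varepsilon_1\ddot{\mathcal{E}},\ddot{\mathcal{E}})+(\ddot{\mathcal{H}},\ddot{\mathcal{H}})+(\varepsilon_1\dot{\mathcal{E}},\dot{\mathcal{E}})+(\dot{\mathcal{H}},\dot{\mathcal{H}})+(\varepsilon\mathcal{E},\mathcal{E})+(\mathcal{H},\mathcal{H}),
\end{equation*}
and to establish, via the Helmholtz decomposition and the smallness of the cubic corrections, the two-sided equivalence $A_1(t)+\|\rho_e(t)\|_{H^1}^2\sim\|(\mathcal{E},\mathcal{H})(t)\|_{H^2}^2$ (up to admissible errors) \emph{before} differentiating. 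Only then is the problematic commutator term absorbable into the right-hand side of the Gr\o nwall inequality. The same issue repeats at the $H^3$ level, where in addition a factor $\|(\mathcal{E},\mathcal{H})(t)\|_{H^2}$ enters the Gr\o nwall rate through the Gagliardo--Nirenberg estimate $\|\Delta\mathcal{E}\|_{L^4}^2\lesssim\|\mathcal{E}\|_{H^3}\|\mathcal{E}\|_{H^2}$; you do point to this mechanism correctly, but again the energy functional must include $A_1$ and the charges, not merely $M_3$. Finally, a small point: for the homogeneous Kerr system \eqref{eq:Maxwell2Kerr} the charge $\rho_e=\nabla\cdot(\varepsilon\mathcal{E})$ is exactly conserved in time, so invoking an evolution equation for $\partial_t\rho_e$ or $\partial_t^2\rho_e$ is unnecessary; one simply carries $\|\rho_e(0)\|_{H^k}^2$ along as a constant.
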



Since we construct solutions in $H^s$ for $\frac{11}{6} < s \leq 2$, the smallness in $H^{7/4}$ is guaranteed by smallness of the solution in $H^s$. This in turn is accomplished by requiring smallness of the initial data and a bootstrap argument. For the proof of Proposition \ref{prop:EnergyEstimatesHighKerr} we require a Helmholtz decomposition for $\mathcal{E}$ at higher regularities, which is detailed in Proposition \ref{prop:Helmholtz2d} in Appendix \ref{appendix:Helmholtz}.

\begin{proof}
We turn to the proof of \eqref{eq:H2EnergyEstimateKerr}: This is based on a Gr\o nwall argument applied to $A_1(t) + \| \rho_e(t) \|_{H^1}^2$
with
\begin{equation}
\label{eq:LowerBoundA1}
\begin{split}
A_1(t) &= (\varepsilon_1 \ddot{\mathcal{E}}, \ddot{\mathcal{E}}) + (\ddot{\mathcal{H}}, \ddot{\mathcal{H}}) + (\varepsilon_1 \dot{\mathcal{E}}, \dot{\mathcal{E}}) + (\dot{\mathcal{H}}, \dot{\mathcal{H}}) + (\varepsilon \mathcal{E}, \mathcal{E}) + (\mathcal{H},\mathcal{H}) \\
&\geq c (\| (\nabla \times \mathcal{E})_3 \|_{H^1}^2 + \| \mathcal{E} \|_{L^2}^2 + \| \mathcal{H} \|_{H^2}^2) + O_\delta(\| (\mathcal{E},\mathcal{H}) \|_{H^2}^2).
\end{split}
\end{equation}
Secondly, 
\begin{equation}
\label{eq:UpperBoundA1}
A_1(0) + \| \rho_e(0) \|_{H^1}^2 \lesssim_\delta \| (\mathcal{E}, \mathcal{H})(0) \|^2_{H^2}.
\end{equation}

\medskip

We begin with the proof of \eqref{eq:LowerBoundA1}: Record that $\partial_t (\varepsilon \mathcal{E}) = \varepsilon_1 \dot{\mathcal{E}}$ with $\varepsilon_1 = 1 + |\mathcal{E}|^2 + 2 \mathcal{E} \otimes \mathcal{E}$, and by taking time derivatives of \eqref{eq:Maxwell2Kerr}
\begin{equation}
\label{eq:MaxwellTimeDerivativeI}
\left\{ \begin{array}{cl}
\varepsilon_1 \ddot{\mathcal{E}} &= \nabla_\perp \dot{\mathcal{H}} - \dot{\varepsilon}_1 \dot{\mathcal{E}}, \quad (t,x') \in \R \times \Omega, \\
\ddot{\mathcal{H}} &= - (\nabla \times \dot{\mathcal{E}})_3,
\end{array} \right.
\end{equation}
and
\begin{equation}
\label{eq:MaxwellTimeDerivativeII}
\left\{ \begin{array}{cl}
\varepsilon_1 \mathcal{E}^{(3)} &= \nabla_\perp \ddot{\mathcal{H}} - \ddot{\varepsilon}_1 \dot{\mathcal{E}} - 2 \dot{\varepsilon}_1 \ddot{\mathcal{E}}, \quad (t,x') \in \R \times \Omega, \\
\partial_t \ddot{\mathcal{H}} &= - (\nabla \times \ddot{\mathcal{E}})_3.
\end{array} \right.
\end{equation}
We have
\begin{equation*}
\begin{split}
(\varepsilon_1 \ddot{\mathcal{E}}, \ddot{\mathcal{E}}) &= (\nabla_\perp (\nabla \times \mathcal{E})_3, \varepsilon_1^{-1} \nabla_\perp (\nabla \times \mathcal{E})_3) + 2 (\nabla_\perp (\nabla \times \mathcal{E})_3, \varepsilon_1^{-1} \dot{\varepsilon}_1 \dot{\mathcal{E}}) \\
&\quad + (\dot{\varepsilon}_1 \dot{\mathcal{E}}, \varepsilon_1^{-1} \dot{\varepsilon}_1 \dot{\mathcal{E}}).
\end{split}
\end{equation*}
We prove that the first term is leading order in the sense
\begin{equation}
\label{eq:SecondOrderEstimateI}
(\varepsilon_1 \ddot{\mathcal{E}}, \ddot{\mathcal{E}}) \geq c \| \nabla (\nabla \times \mathcal{E})_3 \|_{L^2}^2 + O_\delta(\| (\mathcal{E},\mathcal{H}) \|_{H^2}^2).
\end{equation}
For $\| \mathcal{E} \|_{L^\infty_{x'}} \lesssim \delta$ we have uniform ellipticity of $\varepsilon_1$.
For the error estimates, note that
\begin{equation*}
\begin{split}
|(\nabla_\perp (\nabla \times \mathcal{E})_3, \varepsilon_1^{-1} \dot{\varepsilon}_1 \dot{\mathcal{E}})| &\lesssim_{\| \mathcal{E} \|_{L^\infty_{x'}}} \| \mathcal{E} \|_{H^2} \| \dot{\mathcal{E}} \|_{L^4}^2 \\
&\lesssim \| \mathcal{E} \|_{H^2} \| (\mathcal{E},\mathcal{H}) \|_{H^{\frac{3}{2}}}^2 \lesssim \delta \| (\mathcal{E},\mathcal{H}) \|_{H^2}^2,
\end{split}
\end{equation*}
and moreover, by Maxwell equations and Sobolev embedding,
\begin{equation*}
\| \dot{\mathcal{E}} \|_{L^4}^4 \lesssim \| \mathcal{H} \|_{H^{\frac{3}{2}}}^4 \lesssim \delta^2 \| \mathcal{H} \|_{H^2}^2.
\end{equation*}

We turn to the second term in \eqref{eq:LowerBoundA1}:
\begin{equation}
\label{eq:SecondHDerivative}
\partial_t \dot{\mathcal{H}} = - (\nabla \times \dot{\mathcal{E}})_3 = - (\nabla \times (\varepsilon_1^{-1} \nabla_\perp \mathcal{H}))_3 = (1+ O(\mathcal{E}^2)) \Delta \mathcal{H} + O( \mathcal{E} \partial_{x'} \mathcal{E} \partial_{x'} \mathcal{H}).
\end{equation}
For this reason,
\begin{equation}
\label{eq:SecondOrderEstimateII}
(\partial_t \dot{\mathcal{H}}, \partial_t \dot{\mathcal{H}}) = \| \Delta \mathcal{H} \|_{L^2}^2 + O_\delta( \| \Delta \mathcal{H} \|_{L^2}^2) + O(\| \mathcal{E} \|_{L^\infty}^2 \| \partial_{x'} \mathcal{E} \|_{L^4}^2 \| \partial_{x'} \mathcal{H} \|_{L^4}^2).
\end{equation}
By Sobolev embedding, we find
\begin{equation*}
(\partial_t \dot{\mathcal{H}}, \partial_t \dot{\mathcal{H}}) = (1+O(\delta)) \| \Delta \mathcal{H} \|_{L^2}^2 + O(\delta^2 \| (\mathcal{E},\mathcal{H}) \|_{H^{\frac{3}{2}}}^4) \geq c \| \Delta \mathcal{H} \|_{L^2}^2 + \delta^2 \| (\mathcal{E}, \mathcal{H}) \|_{H^2}^2.
\end{equation*}
Moreover, again by uniform ellipticity of $\varepsilon_1^{-1}$ for small $\| \mathcal{E} \|_{L^\infty_{x'}}$ and Maxwell equations,
\begin{equation}
\label{eq:FirstOrderEstimate}
(\varepsilon_1 \dot{\mathcal{E}}, \dot{\mathcal{E}}) = (\nabla_\perp \mathcal{H}, \varepsilon_1^{-1} \nabla_\perp \mathcal{H}) \geq c \| \mathcal{H} \|_{\dot{H}^1}^2, \quad (\dot{\mathcal{H}},\dot{\mathcal{H}}) = \| (\nabla \times \mathcal{E})_3 \|_{L^2}^2.
\end{equation}
Taking \eqref{eq:SecondOrderEstimateI} -- \eqref{eq:FirstOrderEstimate} together, we find \eqref{eq:LowerBoundA1} to hold.

Next, we prove that
\begin{equation}
\label{eq:UpperEstimateA1}
A_1(t) + \| \rho_e(t) \|_{H^1}^2 \lesssim (1+ \delta^2) \| (\mathcal{E},\mathcal{H}) \|_{H^2}^2,
\end{equation}
which establishes \eqref{eq:UpperBoundA1}. Above we let
\begin{equation*}
\rho_e(t) = \nabla \cdot (\varepsilon \mathcal{E}) = \nabla \cdot \mathcal{E} + \nabla \cdot (|\mathcal{E}|^2 \mathcal{E}) = \nabla \cdot \mathcal{E} + O(\mathcal{E}^2 \partial_{x'} \mathcal{E}).
\end{equation*}

First, we note that by the estimates, which established \eqref{eq:SecondOrderEstimateI}:
\begin{equation}
\label{eq:UpperBoundA11}
(\varepsilon_1 \ddot{\mathcal{E}}, \ddot{\mathcal{E}}) \leq C \| \nabla_\perp (\nabla \times \mathcal{E})_3 \|_{L^2}^2 + \delta^2 \| (\mathcal{E},\mathcal{H}) \|_{H^2}^2.
\end{equation}
By \eqref{eq:SecondHDerivative}, Sobolev embedding, and \eqref{eq:SmallnessRoughTopology}, we find
\begin{equation}
\label{eq:SecondTimeDerivativeHUpperBound}
\begin{split}
\| \ddot{\mathcal{H}} \|_{L^2}^2 &\lesssim \| \mathcal{H} \|_{H^2}^2 + \delta^2 \| (\mathcal{E},\mathcal{H}) \|_{H^2}^2 + \delta^2 \| (\mathcal{E},\mathcal{H}) \|_{H^{\frac{3}{2}}}^4 \\
&\lesssim \| (\mathcal{E},\mathcal{H}) \|_{H^2}^2 + \delta^2 \| (\mathcal{E},\mathcal{H}) \|_{H^2}^2.
\end{split}
\end{equation}

Regarding the charges note that
\begin{equation}
\label{eq:UpperEstimateChargeH1}
\| O( \mathcal{E}^2 \partial_{x'} \mathcal{E}) \|_{H^1} \lesssim \delta^2 \| \mathcal{E} \|_{H^2}.
\end{equation}

Taking \eqref{eq:UpperBoundA11} -- \eqref{eq:UpperEstimateChargeH1} together yields \eqref{eq:UpperEstimateA1}.

\medskip

Furthermore, \eqref{eq:UpperEstimateChargeH1} gives
\begin{equation}
\label{eq:LowerBoundComplete}
A_1(t) + \| \rho_e(t) \|_{H^1}^2 \geq c \| (\mathcal{E},\mathcal{H}) \|_{H^2}^2
\end{equation}
provided that $\| (\mathcal{E},\mathcal{H}) \|_{H^{\frac{3}{2}}} \leq \delta \ll 1$ and $\delta$ sufficiently small. This is a consequence of the Helmholtz decomposition Proposition \ref{prop:Helmholtz2d}, which is proved in the Appendix:
\begin{equation*}
\| (\nabla \times \mathcal{E})_3 \|_{H^1} + \| \nabla \cdot \mathcal{E} \|_{H^1} + \| \mathcal{E} \|_{L^2} \sim \| \mathcal{E} \|_{H^2}.
\end{equation*}

\medskip

 Next, we compute $\partial_t A_1(t)$ to apply a Gr\o nwall argument. Recall that charges are conserved.
We have seen in Proposition \ref{prop:APrioriMaxwell2Kerr} by integration by parts
\begin{equation*}
\begin{split}
\partial_t ((\mathcal{H},\mathcal{H}) + (\varepsilon \mathcal{E},\mathcal{E})) &= 2 (\partial_t \mathcal{H},\mathcal{H}) + 2 (\partial_t (\varepsilon \mathcal{E}), \mathcal{E}) - (\dot{\varepsilon} \mathcal{E},\mathcal{E}) \\
 &= -(\dot{\varepsilon} \mathcal{E}, \mathcal{E}) \lesssim_\delta \| \partial_t \mathcal{E} \|_{L^\infty} \| (\mathcal{E},\mathcal{H}) \|_{L^2}^2.
 \end{split}
\end{equation*}
Similarly,
\begin{equation*}
\begin{split}
\partial_t ( (\varepsilon_1 \dot{\mathcal{E}}, \dot{\mathcal{E}}) + (\dot{\mathcal{H}}, \dot{\mathcal{H}})) \lesssim \| \partial_x \mathcal{E} \|_{L^\infty} \| \dot{\mathcal{E}} \|_{L^2}^2.
\end{split}
\end{equation*}

We compute further for the highest order terms using \eqref{eq:MaxwellTimeDerivativeII} (again we use integration by parts to find cancellation of the terms with the most derivatives):
\begin{equation*}
\begin{split}
\partial_t (( \ddot{\mathcal{H}}, \ddot{\mathcal{H}}) + (\varepsilon_1 \ddot{\mathcal{E}}, \ddot{\mathcal{E}}))
&= 2 (\partial_t \ddot{\mathcal{H}}, \ddot{\mathcal{H}}) + (\dot{\varepsilon}_1 \ddot{\mathcal{E}}, \ddot{\mathcal{E}}) + 2(\varepsilon_1 \mathcal{E}^{(3)}, \ddot{\mathcal{E}}) \\
&= ( \dot{\varepsilon}_1 \ddot{\mathcal{E}}, \ddot{\mathcal{E}}) - 2 (\ddot{\varepsilon}_1 \dot{\mathcal{E}}, \ddot{\mathcal{E}}) - 4 (\dot{\varepsilon}_1 \ddot{\mathcal{E}}, \ddot{\mathcal{E}}) \\
&= - 3(\dot{\varepsilon}_1 \ddot{\mathcal{E}}, \ddot{\mathcal{E}}) - 2 (\ddot{\varepsilon}_1 \dot{\mathcal{E}}, \ddot{\mathcal{E}}).
\end{split}
\end{equation*}
We have
\begin{equation*}
| (\dot{\varepsilon}_1 \ddot{\mathcal{E}}, \ddot{\mathcal{E}}) | \lesssim_\delta \| \partial_x \mathcal{E} \|_{L^\infty_{x'}} \| \ddot{\mathcal{E}} \|_{L^2}^2
\end{equation*}
and $\ddot{\varepsilon}_1 = O( \mathcal{E} \ddot{\mathcal{E}}) + O(\dot{\mathcal{E}}^2)$, which implies
\begin{equation*}
|(\ddot{\varepsilon}_1 \dot{\mathcal{E}}, \ddot{\mathcal{E}})| \lesssim \| \mathcal{E} \|_{L^\infty_{x'}} \| \partial_x \mathcal{E} \|_{L^\infty_{x'}} \| \ddot{\mathcal{E}} \|_{L^2}^2 + \| \ddot{\mathcal{E}} \|_{L^2} \| \dot{\mathcal{E}} \|_{L^\infty} \| \dot{\mathcal{E}} \|_{L^4}^2.
\end{equation*}
The first term is already in suitable form, for the second term observe by Sobolev embedding and \eqref{eq:LowerBoundA1}:
\begin{equation*}
\begin{split}
\| \dot{\mathcal{E}} \|_{L^\infty_{x'}} \| \ddot{\mathcal{E}} \|_{L^2} \| (\mathcal{E},\mathcal{H}) \|_{H^2} \delta &\lesssim \| \dot{\mathcal{E}} \|_{L^\infty} (\| \ddot{\mathcal{E}} \|_{L^2}^2 + \delta^2 \| (\mathcal{E},\mathcal{H}) \|_{H^2}^2) \\
&\lesssim \| \dot{\mathcal{E}} \|_{L^\infty} (A_1(t) + \| \rho_e(t) \|_{H^1}^2).
\end{split}
\end{equation*}

This shows
\begin{equation*}
\partial_t (A_1(t) + \| \rho_e(t) \|_{H^1}^2) \lesssim_\delta \| \partial_x (\mathcal{E},\mathcal{H}) \|_{L^\infty} ( A_1(t) + \| \rho_e(t) \|_{H^1}^2).
\end{equation*}
Hence, by Gr\o nwall's argument, we find
\begin{equation*}
A_1(t) + \| \rho_e(t) \|_{H^1}^2 \lesssim e^{C \int_0^t \| \partial_x (\mathcal{E},\mathcal{H})(s) \|_{L^\infty_{x'}} ds} (A_1(0) + \| \rho_e(0) \|_{H^1}).
\end{equation*}
We use \eqref{eq:UpperEstimateA1} and \eqref{eq:LowerBoundComplete} to infer \eqref{eq:H2EnergyEstimateKerr}.

\bigskip

We turn to the a priori estimate in $H^3$: Presently, we replace $A_1$ by
\begin{equation*}
A_2(t) = (\varepsilon_1 \mathcal{E}^{(3)}, \mathcal{E}^{(3)}) + (\mathcal{H}^{(3)}, \mathcal{H}^{(3)}) + A_1(t).
\end{equation*}

Record that
\begin{equation}
\label{eq:TimeDerivativeMaxwellIII}
\left\{ \begin{array}{cl}
\varepsilon_1 \mathcal{E}^{(4)} &= \nabla_\perp \mathcal{H}^{(3)} - 3 \dot{\varepsilon}_1 \mathcal{E}^{(3)} - \varepsilon_1^{(3)} \dot{\mathcal{E}} - 3 \ddot{\varepsilon}_1 \mathcal{E}^{(2)}, \quad (t,x') \in \R \times \Omega, \\
\partial_t \mathcal{H}^{(3)} &= -(\nabla \times \mathcal{E}^{(3)})_3.
\end{array} \right.
\end{equation}

We shall first show that
\begin{equation}
\label{eq:LowerBoundA2}
A_2(t) \geq c ( \| (\nabla \times \mathcal{E})_3 \|_{H^2}^2 + \| \mathcal{E} \|_{L^2}^2 + \| \mathcal{H} \|_{H^3}^2 ) + O_\delta(\|(\mathcal{E},\mathcal{H}) \|_{H^3}^2).
\end{equation}

We have
\begin{equation*}
(\varepsilon_1 \mathcal{E}^{(3)}, \mathcal{E}^{(3)}) = (\nabla_\perp \ddot{\mathcal{H}}, \varepsilon_1^{-1} \nabla_\perp \ddot{\mathcal{H}}) - 2(\dot{\varepsilon}_1 \mathcal{E}^{(2)}, \mathcal{E}^{(3)}) - (\ddot{\varepsilon}_1 \dot{\mathcal{E}}, \mathcal{E}^{(3)})
\end{equation*}
and like above we shall see that the first term is of leading order:
\begin{equation}
\label{eq:FirstTermA2}
(\varepsilon_1 \mathcal{E}^{(3)}, \mathcal{E}^{(3)}) \geq c \| \nabla_\perp \ddot{\mathcal{H}} \|_{L^2}^2 + O_\delta(\|(\mathcal{E},\mathcal{H}) \|_{H^3}^2).
\end{equation}
To this end, we find by Maxwell equations and Sobolev embedding for $|(\dot{\varepsilon}_1 \mathcal{E}^{(2)}, \mathcal{E}^{(3)} )|$:
\begin{equation}
\label{eq:FirstTermA2Aux}
| (\dot{\varepsilon}_1 \mathcal{E}^{(2)}, \mathcal{E}^{(3)}) | \lesssim \| \mathcal{E} \|_{L^\infty_{x'}} \| \partial_t \mathcal{E} \|_{L^4} \| \mathcal{E}^{(2)} \|_{L^4} \| \mathcal{E}^{(3)} \|_{L^2} \lesssim \delta^2 \| \mathcal{E}^{(2)} \|_{L^4} \| \mathcal{E}^{(3)} \|_{L^2}.
\end{equation}
Furthermore, by boundedness of $\| \varepsilon_1^{-1} \|_{L^\infty}$ due to smallness of $\| \mathcal{E} \|_{L^\infty}$, Maxwell equations, and H\"older's inequality we find
\begin{equation*}
\begin{split}
\| \ddot{\mathcal{E}} \|_{L^4} &\lesssim \| \varepsilon_1^{-1} \nabla_\perp \dot{\mathcal{H}} \|_{L^4} + \| \varepsilon_1^{-1} \dot{\varepsilon}_1 \dot{\mathcal{E}} \|_{L^4} \\
&\lesssim \| \partial^2_{x'} \mathcal{E} \|_{L^4} + \| \mathcal{E} \|_{L^\infty} \| \partial_{x'} \mathcal{H} \|_{L^8}^2 \lesssim \| (\mathcal{E},\mathcal{H}) \|_{H^3}.
\end{split}
\end{equation*}
We have $\ddot{\varepsilon}_1 = O (\mathcal{E} \ddot{\mathcal{E}}) + O( \dot{\mathcal{E}}^2)$. $|(O(\mathcal{E} \ddot{\mathcal{E}}) \dot{\mathcal{E}}, \mathcal{E}^{(3)})|$ is estimated like in \eqref{eq:FirstTermA2Aux}.
Secondly, by Sobolev embedding and Maxwell equations, we find
\begin{equation*}
\begin{split}
|(\dot{\mathcal{E}}^3, \mathcal{E}^{(3)}) | &\lesssim \| \mathcal{E}^{(3)} \|_{L^2} \| \dot{\mathcal{E}} \|_{L^6}^3 \\
&\lesssim \| \mathcal{E}^{(3)} \|_{L^2} \| (\mathcal{E},\mathcal{H}) \|_{H^3} \delta^2 \lesssim \delta^2 ( \| \mathcal{E}^{(3)} \|_{L^2}^2 + \| (\mathcal{E},\mathcal{H}) \|_{H^3}^2).
\end{split}
\end{equation*}
This concludes \eqref{eq:FirstTermA2}.

\medskip

In \eqref{eq:SecondHDerivative} we had shown that
\begin{equation}
\label{eq:SecondTimeDerivativeH}
\partial_t^2 \mathcal{H} = (1+ O(\mathcal{E}^2)) \Delta \mathcal{H} + O(\mathcal{E} \partial_{x'} \mathcal{E} \partial_{x'} \mathcal{H}).
\end{equation}
Hence,
\begin{equation*}
\nabla_\perp \partial_t^2 \mathcal{H} = \nabla_\perp \Delta \mathcal{H} + O (\mathcal{E} \partial_{x'} \mathcal{E} \Delta \mathcal{H}) + O((\partial_{x'} \mathcal{E})^2 \partial_{x'} \mathcal{H}) + O(\mathcal{E} \partial_{x'}^2 \mathcal{E} \partial_{x'} \mathcal{H}) + O(\mathcal{E} \partial_{x'} \mathcal{E} \partial_{x'}^2 \mathcal{H}).
\end{equation*}
We compute by Maxwell equations, H\"older's inequality and Sobolev embedding:
\begin{equation}
\| O( \mathcal{E} \partial_{x'} \mathcal{E} \Delta \mathcal{H}) \|_{L^2} \lesssim \| \mathcal{E} \|_{L^\infty_{x'}} \| \partial_{x'} \mathcal{E} \|_{L^4} \| \Delta \mathcal{H} \|_{L^4} \lesssim \delta^2 \| (\mathcal{E},\mathcal{H}) \|_{H^3}.
\label{eq:AuxEstimateHI}
\end{equation}
For the second term, we find
\begin{equation}
\label{eq:AuxEstimateHII}
\| O((\partial_{x'} \mathcal{E})^2 \partial_{x'} \mathcal{H}) \|_{L^2_{x'}} \lesssim \| \partial_{x'} \mathcal{H} \|_{L^2} \| \partial_{x'} \mathcal{E} \|_{L^4}^2 \lesssim \delta^2 \| (\mathcal{E},\mathcal{H}) \|_{H^3}.
\end{equation}
By \eqref{eq:AuxEstimateHI}, \eqref{eq:AuxEstimateHII}, and the Cauchy-Schwarz inequality, we find
\begin{equation*}
\| \nabla_\perp \partial_t^2 \mathcal{H} \|_{L^2}^2 = \| \mathcal{H} \|_{\dot{H}^3}^2 + O(\delta^2 \| (\mathcal{E},\mathcal{H}) \|_{H^3}^2).
\end{equation*}
This shows that
\begin{equation}
\label{eq:LowerBoundE3}
(\varepsilon_1 \mathcal{E}^{(3)}, \mathcal{E}^{(3)}) \geq c \| \mathcal{H} \|_{\dot{H}^3}^2 + O(\delta^2 \| (\mathcal{E},\mathcal{H}) \|_{H^3}^2).
\end{equation}

By \eqref{eq:SecondTimeDerivativeH} again, we obtain
\begin{equation*}
\begin{split}
\partial_t^3 \mathcal{H} &= (1+ O(\mathcal{E}^2)) \Delta (\nabla \times \mathcal{E})_3 + O(\mathcal{E} \varepsilon_1^{-1} \partial_{x'} \mathcal{H} \Delta \mathcal{H}) \\
&\quad + O( \varepsilon_1^{-1} \nabla_\perp \mathcal{H} \partial_{x'} \mathcal{E} \partial_{x'} \mathcal{H}) + O(\mathcal{E} \partial_{x'} (\varepsilon_1^{-1} \partial_{x'} \mathcal{H}) \partial_{x'} \mathcal{H}) + O(\mathcal{E} \partial_{x'} \mathcal{E} \partial_{x'}^2 \mathcal{E}).
\end{split}
\end{equation*}
Regarding the error terms, we find by H\"older's inequality, Sobolev embedding, and the smallness condition:
\begin{equation}
\label{eq:AuxEstimateHIB}
\begin{split}
\| O(\mathcal{E} \varepsilon_1^{-1} \partial_{x'} \mathcal{H} \Delta \mathcal{H}) \|_{L^2} &\lesssim \| \mathcal{E} \|_{L^\infty_{x'}} \| \varepsilon_1^{-1} \|_{L^\infty_{x'}} \| \partial_{x'} \mathcal{H} \|_{L^4} \| \Delta \mathcal{H} \|_{L^4} \\
&\lesssim \delta^2 \| \varepsilon_1^{-1} \|_{L^\infty} \| \mathcal{H} \|_{H^3}.
\end{split}
\end{equation}
Secondly,
\begin{equation}
\label{eq:AuxEstimateHIIB}
\| O(\varepsilon_1^{-1} \nabla_\perp \mathcal{H} \partial_{x'} \mathcal{E} \partial_{x'} \mathcal{H}) \|_{L^2_{x'}} \lesssim \| \partial_{x'} \mathcal{H} \|_{L^4}^2 \| \partial_{x'} \mathcal{E} \|_{L^2} \lesssim \delta^2 \| (\mathcal{E},\mathcal{H}) \|_{H^3}.
\end{equation}
The third and fourth error term can be treated as variants. Hence,
\begin{equation}
\label{eq:LowerBoundH3}
\| \partial_t^3 \mathcal{H} \|_{L^2}^2 \geq c \| \Delta (\nabla \times \mathcal{E})_3 \|_{L^2}^2 + O(\delta^2 \| (\mathcal{E},\mathcal{H}) \|_{H^3}^2)
\end{equation}
by the Cauchy-Schwarz inequality for some $c>0$ provided that $\delta$ is chosen small enough.
By \eqref{eq:LowerBoundE3}, \eqref{eq:LowerBoundH3}, and \eqref{eq:LowerBoundA1}, we conclude \eqref{eq:LowerBoundA2}.
In order to establish a lower bound in terms of $\| \mathcal{E} \|_{H^3}$, we use the Helmholtz decomposition in $H^3$:
\begin{equation*}
\| (\nabla \times \mathcal{E})_3 \|_{H^2} + \| \nabla \cdot \mathcal{E} \|_{H^2} + \| \mathcal{E} \|_{L^2} \sim \| \mathcal{E} \|_{H^3}.
\end{equation*}
Like above, we shall add $\| \rho_e(t) \|_{H^2}^2$ to $A_2$. Again we use that
\begin{equation*}
\rho_e(t) = \nabla \cdot \mathcal{E} + O (\mathcal{E}^2 \partial_{x'} \mathcal{E}).
\end{equation*}
Note that
\begin{equation*}
\begin{split}
\| \mathcal{E}^2 \partial_{x'} \mathcal{E} \|_{H^2} &\lesssim \| \mathcal{E} \|_{L^\infty}^2 \| \mathcal{E} \|_{H^3} + \| O(\mathcal{E} \partial_{x'}^2 \mathcal{E} \partial_{x'} \mathcal{E}) \|_{L^2} + \| O(\partial_{x'} \mathcal{E}) \|_{L^2} \\
&\lesssim \delta^2 \| \mathcal{E} \|_{H^3}.
\end{split}
\end{equation*}
Therefore, we find
\begin{equation}
\label{eq:LowerBoundA2Complete}
A_2(t) + \| \rho_e(t) \|_{H^2}^2 \geq c \| (\mathcal{E},\mathcal{H}) \|_{H^3}^2 + O_\delta (\| (\mathcal{E},\mathcal{H}) \|_{H^3}^2).
\end{equation}
Next, we show the estimate
\begin{equation}
\label{eq:GronwallH3-Est}
\partial_t (A_1(t) + A_2(t) + \| \rho_e(t) \|_{H^2}^2) \lesssim B(t) (A_1(t) + A_2(t) + \| \rho_e(t) \|_{H^2}^2)
\end{equation}
for $B(t) = \| \partial_{x'} (\mathcal{E},\mathcal{H})(t) \|_{L^\infty_{x'}} + \| (\mathcal{E},\mathcal{H})(t) \|_{H^2}$ to apply Gr\o nwall's argument.

\medskip

We have already shown $\partial_t (A_1(t) + \| \rho_e(t) \|_{H^1}^2) \lesssim \| \partial_{x'} \mathcal{E}(t) \|_{L^\infty} (A_1(t) + \| \rho_e(t) \|_{H^1}^2)$.
We compute for the highest order derivatives:
\begin{equation*}
\begin{split}
&\quad \partial_t ((\varepsilon_1 \mathcal{E}^{(3)}, \mathcal{E}^{(3)} ) + (\mathcal{H}^{(3)}, \mathcal{H}^{(3)})) \\
&= 2 (\varepsilon_1 \mathcal{E}^{(4)}, \mathcal{E}^{(3)}) + 2 (\mathcal{H}^{(4)}, \mathcal{H}^{(3)}) + (\dot{\varepsilon}_1 \mathcal{E}^{(3)}, \mathcal{E}^{(3)}) \\
&= - 6 (\dot{\varepsilon}_1 \mathcal{E}^{(3)}, \mathcal{E}^{(3)} ) - 2 (\varepsilon_1^{(3)} \dot{\mathcal{E}}, \mathcal{E}^{(3)} ) - 6 (\ddot{\varepsilon}_1 \mathcal{E}^{(2)}, \mathcal{E}^{(3)} ) + (\dot{\varepsilon}_1 \mathcal{E}^{(3)}, \mathcal{E}^{(3)}).
\end{split}
\end{equation*}
For justifying
\begin{equation*}
(\nabla_\perp \mathcal{H}^{(3)}, \mathcal{E}^{(3)}) - ((\nabla \times \mathcal{E}^{(3)})_3,\mathcal{H}^{(3)}) = 0
\end{equation*}
for solutions $(\mathcal{E},\mathcal{H}) \in C([0,T],H^3))$ we refer to \cite{Schnaubelt2022}.

Hence, for \eqref{eq:GronwallH3-Est} we shall show that
\begin{equation}
\label{eq:ErrorTermsGronwallH3}
|(\dot{\varepsilon}_1 \mathcal{E}^{(3)}, \mathcal{E}^{(3)} )| + |(\varepsilon_1^{(3)} \dot{\mathcal{E}}, \mathcal{E}^{(3)} )| + |(\ddot{\varepsilon}_1 \mathcal{E}^{(2)}, \mathcal{E}^{(3)} )| \lesssim B(t) (A_1(t) + A_2(t) + \| \rho_e(t) \|_{H^2}^2 ).
\end{equation}
The first estimate for \eqref{eq:ErrorTermsGronwallH3} is trivial:
\begin{equation*}
|(\dot{\varepsilon}_1 \mathcal{E}^{(3)}, \mathcal{E}^{(3)} )| \lesssim_\delta \| \partial_{x'} (\mathcal{E},\mathcal{H}) \|_{L^\infty_{x'}} \| \mathcal{E}^{(3)} \|_{L^2}^2.
\end{equation*}
Regarding the second estimate for \eqref{eq:ErrorTermsGronwallH3}, we compute
\begin{equation*}
\dot{\varepsilon}_1 = O(\mathcal{E} \dot{\mathcal{E}}), \quad \ddot{\varepsilon}_1 = O(\mathcal{E} \ddot{\mathcal{E}}) + O( \dot{\mathcal{E}}^2), \quad \varepsilon_1^{(3)} = O(\mathcal{E} \mathcal{E}^{(3)}) + O(\dot{\mathcal{E}} \mathcal{E}^{(2)}).
\end{equation*}
We split
\begin{equation}
\label{eq:ErrorTermsGronwallH3-Aux}
|(\varepsilon_1^{(3)} \dot{\mathcal{E}}, \mathcal{E}^{(3)} )| \leq | (O(\mathcal{E} \mathcal{E}^{(3)}) \dot{\mathcal{E}}, \mathcal{E}^{(3)} )| + |(O(\dot{\mathcal{E}} \mathcal{E}^{(2)}) \dot{\mathcal{E}}, \mathcal{E}^{(3)} )|.
\end{equation}
We have for the first term in \eqref{eq:ErrorTermsGronwallH3-Aux}:
\begin{equation*}
|(O(\mathcal{E} \mathcal{E}^{(3)}) \dot{\mathcal{E}}, \mathcal{E}^{(3)} )| \lesssim \| \mathcal{E} \|_{L^\infty} \| \dot{\mathcal{E}} \|_{L^\infty} \| \mathcal{E}^{(3)} \|_{L^2}^2 \lesssim \delta \| \dot{\mathcal{E}} \|_{L^\infty} \| \mathcal{E}^{(3)} \|_{L^2}^2.
\end{equation*}
For the second term in \eqref{eq:ErrorTermsGronwallH3-Aux} we find
\begin{equation*}
\begin{split}
|(O(\dot{\mathcal{E}} \mathcal{E}^{(2)}) \dot{\mathcal{E}}, \mathcal{E}^{(3)} )| &\lesssim \| \dot{\mathcal{E}} \|_{L^\infty} \| \mathcal{E}^{(2)} \|_{L^4} \| \dot{\mathcal{E}} \|_{L^4} \| \mathcal{E}^{(3)} \|_{L^2} \\
&\lesssim \delta \| \dot{\mathcal{E}} \|_{L^\infty} \| \mathcal{E}^{(2)} \|_{L^4} \| \mathcal{E}^{(3)} \|_{L^2}.
\end{split}
\end{equation*}
Moreover, by $\mathcal{E}^{(2)} = - \varepsilon_1^{-1} \nabla_\perp (\nabla \times \mathcal{E})_3 + \varepsilon_1^{-1} \dot{\varepsilon}_1 \dot{\mathcal{E}}$, H\"older's inequality and Sobolev embedding, we find
\begin{equation*}
\begin{split}
\| \mathcal{E}^{(2)} \|_{L^4} &\lesssim \| \partial_{x'}^2 \mathcal{E} \|_{L^4} + \| \dot{\mathcal{E}} \|_{L^8}^2 \delta 
\lesssim (1+\delta^2) \| (\mathcal{E}, \mathcal{H}) \|_{H^3},
\end{split}
\end{equation*}
which gives
\begin{equation*}
\delta \| \dot{\mathcal{E}} \|_{L^\infty} \| \mathcal{E}^{(2)} \|_{L^4} \| \mathcal{E}^{(3)} \|_{L^2} \lesssim \delta \| \dot{\mathcal{E}} \|_{L^\infty} \| (\mathcal{E},\mathcal{H}) \|_{H^3} \| \mathcal{E}^{(3)} \|_{L^2}.
\end{equation*}
By \eqref{eq:LowerBoundA2Complete} this suffices.

\medskip

Lastly, we estimate $|(\ddot{\varepsilon}_1 \ddot{\mathcal{E}}, \mathcal{E}^{(3)} )|$ to complete the proof of \eqref{eq:ErrorTermsGronwallH3}. By the previously established estimate for $\| \mathcal{E}^{(2)} \|_{L^4}$, we find
\begin{equation*}
|(O( (\dot{\mathcal{E}})^2 \ddot{\mathcal{E}}, \mathcal{E}^{((3)})| \lesssim \| \mathcal{E}^{(3)} \|_{L^2} \| \dot{\mathcal{E}} \|_{L^\infty} \| \dot{\mathcal{E}} \|_{L^4} \| \ddot{\mathcal{E}} \|_{L^4} \lesssim \delta \| \mathcal{E}^{(3)} \|_{L^2} \| (\mathcal{E},\mathcal{H}) \|_{H^3} \| \dot{\mathcal{E}} \|_{L^\infty}.
\end{equation*}
Secondly, we find
\begin{equation}
\label{eq:AuxEstimateFinal}
|(\mathcal{E} \ddot{\mathcal{E}}^2, \mathcal{E}^{(3)} )| \lesssim \delta \| \ddot{\mathcal{E}} \|_{L^4}^2 \| \mathcal{E}^{(3)} \|_{L^2}.
\end{equation}
We have
\begin{equation*}
\| \ddot{\mathcal{E}} \|_{L^4} \lesssim \| \Delta \mathcal{E} \|_{L^4} + \| \dot{\mathcal{E}} \|_{L^8}^2 \lesssim \| \Delta \mathcal{E} \|_{L^4} + \| (\mathcal{E},\mathcal{H}) \|_{H^{7/4}} \| \dot{\mathcal{E}} \|_{L^\infty}.
\end{equation*}
By the Gagliardo-Nirenberg-Ladyzhenskaya inequality, we find
\begin{equation*}
\| \Delta \mathcal{E} \|^2_{L^4} \lesssim \| \mathcal{E} \|_{H^3} \| \mathcal{E} \|_{H^2}.
\end{equation*}
Plugging this into \eqref{eq:AuxEstimateFinal}, we obtain
\begin{equation*}
|(\mathcal{E} \ddot{\mathcal{E}}^2, \mathcal{E}^{(3)} )| \lesssim \delta \| (\mathcal{E},\mathcal{H}) \|_{H^3} \| (\mathcal{E}(t), \mathcal{H}(t)) \|_{H^2} \| \mathcal{E}^{(3)} \|_{L^2} + \delta \| \partial_t \mathcal{E} \|_{L^\infty} \| (\mathcal{E},\mathcal{H}) \|_{H^3} \| \mathcal{E}^{(3)} \|_{L^2}.
\end{equation*}
This completes the proof of \eqref{eq:GronwallH3-Est} and an application of Gr\o nwall's argument yields
\begin{equation*}
\| (\mathcal{E}, \mathcal{H})(t) \|_{H^3} \lesssim  e^{C \int_0^T B(s) ds} \| (\mathcal{E}, \mathcal{H})(0) \|_{H^3}.
\end{equation*}
We have by \eqref{eq:H2EnergyEstimateKerr} 
\begin{equation*}
B(s) \lesssim \| \partial_{x'} (\mathcal{E},\mathcal{H})(s) \|_{L^\infty_{x'}} + e^{\int_0^s \| \partial_{x'} (\mathcal{E},\mathcal{H})(s') \|_{L^\infty_{x'}} ds'} \| (\mathcal{E},\mathcal{H})(0) \|_{H^2}, 
\end{equation*}
which gives
\begin{equation*}
\begin{split}
\| (\mathcal{E}, \mathcal{H})(t) \|_{H^3} &\lesssim e^{C \int_0^T \| \partial_{x'} (\mathcal{E},\mathcal{H})(t) \|_{L^\infty_{x'}} dt + T e^{\int_0^T \| \partial_{x'} (\mathcal{E},\mathcal{H})(t') \|_{L_{x'}^\infty} dt'} \| (\mathcal{E}(0),\mathcal{H}(0)) \|_{H^2} } \\
&\quad \times  \| (\mathcal{E}, \mathcal{H})(0) \|_{H^3}.
\end{split}
\end{equation*}
Now \eqref{eq:H3EnergyEstimateKerr} is straight-forward.
\end{proof}

\subsection{The three-dimensional case}

Next, we extend the arguments to the three-dimensional case: Let $\varepsilon, \mu \in C^\infty(\Omega;\R_{>0})$, for which we suppose that \eqref{eq:Ellipticity} and \eqref{eq:ContinuityBoundary} hold. We consider the system of equations:
\begin{equation}
\label{eq:Maxwell3}
\left\{ \begin{array}{clrcl}
\partial_t (\varepsilon \mathcal{E}) &= \nabla \times \mathcal{H}, &\quad \nabla \cdot (\varepsilon \mathcal{E}) &=& \rho_e, \quad (t,x') \in \R \times \Omega; \\
\partial_t (\mu \mathcal{H}) &= - \nabla \times \mathcal{E}, &\quad \nabla \cdot (\mu \mathcal{H}) &=& 0.
\end{array} \right.
\end{equation}
We require the boundary conditions:
\begin{equation}
\label{eq:Maxwell3BoundaryConditions}
[\mathcal{E} \times \nu]_{x' \in \partial \Omega} = 0, \quad [\nu \cdot \mathcal{B}]_{x' \in \partial \Omega} = 0.
\end{equation}

Local existence of $\mathcal{H}^3$-solutions was discussed in \cite{Spitz2019,Spitz2022}. We show a priori estimates in the time-independent case:
\begin{proposition}
\label{prop:APrioriMaxwell3}
For $s \in [0,2]$ the following estimate holds for $\mathcal{H}^3$-solutions to \eqref{eq:Maxwell3} under the above assumptions:
\begin{equation}
\label{eq:APrioriMaxwell3}
\| (\mathcal{E},\mathcal{H}) \|_{L^\infty H^s(\Omega)} \lesssim \| (\mathcal{E},\mathcal{H})(0) \|_{H^s(\Omega)}.
\end{equation}
\end{proposition}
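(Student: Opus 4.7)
The plan is to mirror the argument for Proposition~\ref{prop:APrioriMaxwell2} in the two-dimensional case: prove $L^2$-conservation, lift to $H^1$ and $H^2$ by differentiating in time and converting $L^2$-bounds on time derivatives into spatial regularity via a three-dimensional Helmholtz decomposition, and interpolate.

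For the $L^2$-estimate, set $M(t) = \int_\Omega \varepsilon |\mathcal{E}|^2 + \mu |\mathcal{H}|^2 \, dx'$. Differentiating in time and using \eqref{eq:Maxwell3} gives
\begin{equation*}
\partial_t M(t) = 2\int_\Omega \mathcal{E}\cdot(\nabla\times\mathcal{H}) - \mathcal{H}\cdot(\nabla\times\mathcal{E}) \, dx' = 2\int_{\partial\Omega}(\mathcal{H}\times\mathcal{E})\cdot\nu \, dS,
\end{equation*}
and the boundary integrand $\mathcal{H}\cdot(\mathcal{E}\times\nu)$ vanishes by \eqref{eq:Maxwell3BoundaryConditions}. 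Hence $\|(\mathcal{E},\mathcal{H})(t)\|_{L^2} \lesssim \|(\mathcal{E},\mathcal{H})(0)\|_{L^2}$.

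For the $H^1$-estimate, observe that $(\dot{\mathcal{E}},\dot{\mathcal{H}})$ solves the same system \eqref{eq:Maxwell3}: the boundary conditions $\dot{\mathcal{E}}\times\nu = 0$ and $\dot{\mathcal{H}}\cdot\nu = 0$ are obtained by differentiating \eqref{eq:Maxwell3BoundaryConditions} in time, and the charges satisfy $\partial_t\rho_e = 0$ in the homogeneous case. The $L^2$-step applied to $(\dot{\mathcal{E}},\dot{\mathcal{H}})$ yields $\|(\dot{\mathcal{E}},\dot{\mathcal{H}})(t)\|_{L^2} \lesssim \|(\dot{\mathcal{E}},\dot{\mathcal{H}})(0)\|_{L^2}$, which by the equations and uniform ellipticity of $\varepsilon,\mu$ is comparable to $\|\nabla\times\mathcal{E}\|_{L^2} + \|\nabla\times\mathcal{H}\|_{L^2}$. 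I then invoke the three-dimensional analog of Proposition~\ref{prop:HelmholtzDecomposition} (from Appendix~\ref{appendix:Helmholtz}): for $F$ satisfying $F\times\nu|_{\partial\Omega}=0$ or $F\cdot\nu|_{\partial\Omega}=0$,
\begin{equation*}
\|F\|_{H^1(\Omega)} \sim \|\nabla\times F\|_{L^2} + \|\nabla\cdot F\|_{L^2} + \|F\|_{L^2}.
\end{equation*}
The divergences are supplied by the charge relations $\nabla\cdot\mathcal{E} = \varepsilon^{-1}\rho_e - \varepsilon^{-1}(\nabla\varepsilon)\cdot\mathcal{E}$ and $\nabla\cdot\mathcal{H} = -\mu^{-1}(\nabla\mu)\cdot\mathcal{H}$, which together with charge conservation and the preceding $L^2$-bound close the $H^1$-estimate.

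For the $H^2$-estimate, iterate once more: $(\ddot{\mathcal{E}},\ddot{\mathcal{H}})$ solves \eqref{eq:Maxwell3} with the same boundary conditions and vanishing charges, so $\|(\ddot{\mathcal{E}},\ddot{\mathcal{H}})(t)\|_{L^2} \lesssim \|(\mathcal{E},\mathcal{H})(0)\|_{H^2}$, the initial bound following by iterating \eqref{eq:Maxwell3} twice. Expanding
\begin{equation*}
\varepsilon\mu \ddot{\mathcal{E}} = -\nabla\times(\nabla\times\mathcal{E}) + O\bigl(\partial_{x'}(\varepsilon,\mu)\,\partial_{x'}\mathcal{E}\bigr) = \Delta\mathcal{E} - \nabla(\nabla\cdot\mathcal{E}) + O\bigl(\partial_{x'}(\varepsilon,\mu)\,\partial_{x'}\mathcal{E}\bigr),
\end{equation*}
and controlling $\nabla\cdot\mathcal{E}$ at $H^1$-regularity via $\nabla\cdot(\varepsilon\mathcal{E})=\rho_e$ and the Step~2 bound, yields $\|\Delta\mathcal{E}\|_{L^2} \lesssim \|(\mathcal{E},\mathcal{H})(0)\|_{H^2}$; the analog for $\mathcal{H}$ is identical. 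Elliptic regularity, taking into account the Dirichlet/Neumann conditions carried by the individual components of $\mathcal{E},\mathcal{H}$ as encoded in Proposition~\ref{prop:Characterization}, converts this into the full $\|(\mathcal{E},\mathcal{H})(t)\|_{H^2}$-bound. Interpolation between $s=0,1,2$ completes the range $s\in[0,2]$.

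The main obstacle is the three-dimensional Helmholtz decomposition with the two different natural boundary conditions ($\mathcal{E}\times\nu=0$ for the electric field and $\mathcal{H}\cdot\nu=0$ for the magnetic field), which must be invoked uniformly at both the $H^1$- and $H^2$-level; this is supplied by Appendix~\ref{appendix:Helmholtz}. A secondary technical point is ensuring that the boundary conditions are preserved under time differentiation, which requires the solutions to lie in $\mathcal{H}^3(\Omega)$ so that the traces and compatibility conditions of Proposition~\ref{prop:Characterization} make the integration by parts in Step~1 rigorous.
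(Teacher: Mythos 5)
Your proof mirrors the paper's argument step for step: energy identity for $L^2$ via the boundary term $\int_{\partial\Omega}(\mathcal{H}\times\mathcal{E})\cdot\nu\,dS$ vanishing by \eqref{eq:Maxwell3BoundaryConditions}, lift to $H^1$ by differentiating in time, applying the $L^2$-step, using the three-dimensional Helmholtz decomposition of Proposition~\ref{prop:Helmholtz3d} together with the divergence/charge relations, and then the same second-time-derivative argument plus elliptic recovery for $H^2$, closing with interpolation. The approach and all key ingredients coincide with the paper's proof.
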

We refer for the suitable Helmholtz decomposition to Proposition \ref{prop:Helmholtz3d} in Appendix \ref{appendix:Helmholtz}: 
\begin{equation}
\label{eq:HelmholtzDecomposition3d}
\| \mathcal{E} \|_{H^1(\Omega)} \sim \| \mathcal{E} \|_{H_{curl}(\Omega)} + \| \mathcal{E} \|_{H_{div}(\Omega)} + \| \mathcal{E} \|_{L^2(\Omega)}.
\end{equation}

\begin{proof}[Proof~of~Proposition~\ref{prop:APrioriMaxwell3}]
We follow the argument from the two-dimensional case and begin with $L^2$-estimates. Let $M(t) = \int_\Omega \mathcal{D}.\mathcal{E} + \mathcal{H}.\mathcal{B} \, dx'$. We have
\begin{equation*}
\frac{dM}{dt} = 2 \int_\Omega \mathcal{E}.\nabla \times \mathcal{H} \, dx' - 2 \int_\Omega \mathcal{H}.\nabla \times \mathcal{E} \, dx' = 0
\end{equation*}
with the ultimate equality a consequence of the boundary conditions (after resolving \eqref{eq:Maxwell3} on $\R^3_{>0}$). This yields $\| (\mathcal{E},\mathcal{H})(t) \|_{L^2} \sim \|(\mathcal{E},\mathcal{H})(0) \|_{L^2}$,
which is \eqref{eq:APrioriMaxwell3} for $s=0$. 

\medskip

To prove \eqref{eq:APrioriMaxwell3} for $s=1$, we consider one time derivative to find that $(\dot{\mathcal{E}}, \dot{\mathcal{H}})$ satisfies \eqref{eq:Maxwell3}. Consequently, $\| (\dot{ \mathcal{E}}, \dot{\mathcal{H}})(t) \|_{L^2} \sim \| (\dot{\mathcal{E}}, \dot{\mathcal{H}})(0) \|_{L^2}$ which yields by \eqref{eq:Maxwell3} that $\| (\nabla \times \mathcal{E}, \nabla \times \mathcal{H})(t) \|_{L^2} \sim \| (\nabla \times \mathcal{E}, \nabla \times \mathcal{H})(0) \|_{L^2}$. By the Helmholtz decomposition, the defect to $H^1$ is given by $\| (\nabla \cdot \mathcal{E}, \nabla \cdot \mathcal{H})(t) \|_{L^2}$ modulo $L^2$.

\medskip

Here we use the divergence conditions:
\begin{equation*}
\nabla \cdot (\varepsilon \mathcal{E}) = (\nabla \varepsilon) . \mathcal{E} + \varepsilon (\nabla \cdot \mathcal{E}) = \rho_e, \quad \nabla \cdot (\mu \mathcal{H}) = (\nabla \mu) . \mathcal{H} + \mu (\nabla \cdot \mathcal{H}) = 0.
\end{equation*}
Since $\rho_e$ is a conserved quantity, and we have already an a priori estimate for the $L^2$-norm, we have
\begin{equation*}
\| \nabla \cdot \mathcal{E}(t) \|_{L^2} \lesssim \| \mathcal{E}(t) \|_{L^2} + \| \rho_e(t) \|_{L^2} \lesssim \| (\mathcal{E},\mathcal{H})(0) \|_{L^2} + \| \rho_e(0) \|_{L^2}.
\end{equation*}
For $\| \mathcal{H} \|_{H_{div}}$ we can argue likewise. We obtain
\begin{equation*}
\| (\mathcal{E}, \mathcal{H})(t) \|_{H^1} \lesssim \| (\mathcal{E},\mathcal{H})(0) \|_{H^1}.
\end{equation*}

\bigskip

For the proof of \eqref{eq:APrioriMaxwell3} with $s=2$, we use that $(\ddot{ \mathcal{E}}, \ddot{ \mathcal{H}})$ solves \eqref{eq:Maxwell3}. We have
\begin{equation}
\label{eq:SecondTimeEEnergyEstimate}
\begin{split}
\partial_t^2 \mathcal{E}(t) &= \frac{1}{\varepsilon} \nabla \times \partial_t \mathcal{H}(t) = - \frac{1}{\varepsilon} \nabla \times (\frac{1}{\mu} \nabla \times \mathcal{E}) \\
&= \frac{1}{\varepsilon \mu} \Delta \mathcal{E} + O(\| \mathcal{E} \|_{H^1}) - \frac{1}{\varepsilon \mu} \nabla (\nabla \cdot \mathcal{E}),
\end{split}
\end{equation}
and from the divergence condition, we find
\begin{equation*}
\nabla ( \nabla \cdot \mathcal{E})(t) = (\nabla \varepsilon^{-1}) \rho_e(t) + \varepsilon^{-1} \nabla \rho_e(t) + O (\| \mathcal{E}(t) \|_{H^1}).
\end{equation*}
This implies the estimate by conservation of charge and previously established a priori estimates
\begin{equation}
\label{eq:SpaceTimeE}
\begin{split}
\| \mathcal{E}(t) \|_{\dot{H}^2} &\lesssim \| \partial_t^2 \mathcal{E}(t) \|_{L^2} + \| \mathcal{E}(t) \|_{H^1} + \| \nabla (\nabla \cdot \mathcal{E})(t) \|_{L^2} \\
&\lesssim \| (\partial_t^2 \mathcal{E}, \partial^2_t \mathcal{H})(0) \|_{L^2} + \| (\mathcal{E},\mathcal{H})(0) \|_{H^1} + \| \rho_e(0) \|_{H^1}.
\end{split}
\end{equation}
 Similarly,
\begin{equation}
\label{eq:SecondTimeHEnergyEstimate}
\begin{split}
\partial_t^2 \mathcal{H}(t) &= - \frac{1}{\mu} \nabla \times \partial_t \mathcal{E}(t) = - \frac{1}{\mu} \nabla \times \big( \frac{1}{\varepsilon} \nabla \times \mathcal{H}) \\
&= \frac{1}{\varepsilon \mu} \Delta \mathcal{H} - \frac{1}{\varepsilon \mu} \nabla (\nabla \cdot \mathcal{H}) + O(\| \mathcal{H} \|_{H^1}).
\end{split}
\end{equation}
so that by previously established a priori estimates
\begin{equation}
\label{eq:SpaceTimeH}
\begin{split}
\| \mathcal{H}(t) \|_{\dot{H}^2} &\lesssim \| \partial_t^2 \mathcal{H}(t) \|_{L^2} + \| \mathcal{H}(t) \|_{H^1} \\
&\lesssim \| (\partial_t^2 \mathcal{E},\partial_t^2 \mathcal{H})(0) \|_{L^2} + \| (\mathcal{E},\mathcal{H})(0) \|_{H^1} + \| \nabla \cdot (\mu \mathcal{H})(0) \|_{H^1}.
\end{split}
\end{equation}
By \eqref{eq:SecondTimeEEnergyEstimate} and \eqref{eq:SecondTimeHEnergyEstimate}, we obtain
\begin{equation}
\label{eq:SecondTimeAPriori}
\| (\partial_t^2 \mathcal{E}, \partial_t^2 \mathcal{H})(0) \|_{L^2} \lesssim \| (\mathcal{E},\mathcal{H})(0) \|_{H^2}.
\end{equation}
By taking \eqref{eq:SpaceTimeE}, \eqref{eq:SpaceTimeH}, and \eqref{eq:SecondTimeAPriori} together with a priori estimates for $(\mathcal{E},\mathcal{H})$ in $L^2$, we conclude
\begin{equation*}
\begin{split}
\| (\mathcal{E},\mathcal{H})(t) \|_{H^2} &\lesssim \|(\partial_t^2 \mathcal{E},\partial_t^2 \mathcal{H})(t) \|_{L^2} + \| (\mathcal{E},\mathcal{H})(t) \|_{L^2} + O(\|(\mathcal{E},\mathcal{H})(t) \|_{H^1}) \\
 &\lesssim \|(\partial_t^2 \mathcal{E},\partial_t^2 \mathcal{H})(0) \|_{L^2} + \| (\mathcal{E},\mathcal{H})(0) \|_{L^2} + O(\|(\mathcal{E},\mathcal{H})(0) \|_{H^1}) \\
 &\lesssim \| (\mathcal{E},\mathcal{H})(0) \|_{H^2}.
\end{split}
\end{equation*}
The proof of \eqref{eq:APrioriMaxwell3} is complete for $s=2$. For non-integer $s$, we prove the claim by interpolation.
\end{proof}

\section{Preliminaries}
\label{section:Preliminaries}
In this section we collect facts on pseudo-differential operators, which we rely on in the remainder of the paper. We denote derivatives by
\begin{equation*}
\partial_x^\alpha = \partial_{x_1}^{\alpha_1} \partial_{x_2}^{\alpha_2} \ldots \partial_{x_m}^{\alpha_m} \text{ and } D_\xi^\alpha = \partial_\xi^\alpha / (i^{|\alpha|}) \text{ for } \alpha \in \N_0^m.
\end{equation*}
Recall the H\"ormander class of symbols:
\begin{equation*}
S^m_{\rho,\delta} = \{ a \in C^\infty(\R^m \times \R^m : |\partial_x^\alpha \partial_\xi^\beta a(x,\xi)| \lesssim_{\alpha,\beta} \langle \xi \rangle^{m - |\beta| \rho + |\alpha| \delta} \}
\end{equation*}
with $m \in \R$,  $0 \leq \delta < \rho \leq 1$. The $L^p$-boundedness of symbols $a \in S^0_{1,\delta}$, $0 \leq \delta < \rho \leq 1$, is well-known (cf. \cite[Theorem~0.11.A]{Taylor1991}). We use the following quantization:
\begin{equation*}
a(x,D) f = (2 \pi)^{-m} \int_{\R^m} e^{ix.\xi} a(x,\xi) \hat{f}(\xi) d\xi \qquad (f \in \mathcal{S}'(\R^m)).
\end{equation*}
We recall the composition of pseudo-differential operators. 
\begin{proposition}[{\cite[Prop.~0.3C]{Taylor1991}}]
Given $P(x,\xi) \in S^{m_1}_{\rho_1,\delta_1}$, $Q(x,\xi) \in S^{m_2}_{\rho_2,\delta_2}$, suppose that
\begin{equation*}
0 \leq \delta_2 < \rho \leq 1 \text{ with } \rho = \min(\rho_1,\rho_2).
\end{equation*}
Then, $(P \circ Q)(x,D) \in OPS^{m_1+m_2}_{\rho,\delta}$ with $\delta = \max(\delta_1,\delta_2)$, and $P(x,D) \circ Q(x,D)$ satisfies the asymptotic expansion
\begin{equation*}
(P \circ Q)(x,D) = \sum_\alpha \frac{1}{\alpha!} (D_\xi^\alpha P \partial_x^\alpha Q)(x,D) + R,
\end{equation*}
where $R: \mathcal{S}' \to C^\infty$ is a smoothing operator.
\end{proposition}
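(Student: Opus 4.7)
The plan is to derive the composition formula by a direct oscillatory-integral calculation, extract finitely many terms by Taylor expansion in the frequency variable, and show the remainder is smoothing using the hypothesis $\delta_2 < \rho$. First I would insert the quantization formula into the composition to write
\begin{equation*}
P(x,D) Q(x,D) f(x) = (2\pi)^{-m} \int e^{i x \cdot \eta} c(x,\eta) \hat f(\eta) \, d\eta,
\end{equation*}
with compound symbol
\begin{equation*}
c(x,\eta) = (2\pi)^{-m} \iint e^{i(x-y) \cdot (\xi - \eta)} P(x,\xi) Q(y,\eta) \, dy \, d\xi,
\end{equation*}
justifying the double oscillatory integral by alternately integrating by parts in $y$ (which produces factors $\langle \xi - \eta \rangle^{-1}$) and in $\xi$ (which produces factors $\langle x - y \rangle^{-1}$), using the symbol estimates for $P$ and $Q$.

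Next I would Taylor expand $P(x,\xi)$ in the $\xi$-variable at $\xi = \eta$,
\begin{equation*}
P(x,\xi) = \sum_{|\alpha| < N} \frac{(\xi - \eta)^\alpha}{\alpha!} \partial_\xi^\alpha P(x,\eta) + R_N(x,\xi,\eta),
\end{equation*}
and substitute this into the formula for $c(x,\eta)$. Using the identity $(\xi - \eta)^\alpha e^{i(x-y)\cdot(\xi-\eta)} = (-D_y)^\alpha e^{i(x-y)\cdot(\xi-\eta)}$, integration by parts in $y$, and Fourier inversion, each polynomial term contributes exactly $\tfrac{1}{\alpha!} D_\xi^\alpha P(x,\eta) \cdot \partial_x^\alpha Q(x,\eta)$, which reproduces the asserted asymptotic series.

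The main obstacle is controlling the remainder $r_N(x,\eta)$ arising from $R_N$. For this I would use the integral form of $R_N$ (order-$N$ Taylor remainder), swap the order of integration, and bound its symbol seminorms by the standard two-variable oscillatory integral lemma: integrations by parts in $y$ exploit the $|\xi - \eta|^N$ vanishing of $R_N$, while integrations by parts in $\xi$ use the growth hypotheses on $P$ and $Q$. Each spatial derivative of $Q$ costs at most $\delta_2$ orders of frequency, whereas each $\xi$-derivative of $P$ gains $\rho$ orders — so the remainder gains $N(\rho - \delta_2) > 0$ orders of regularity. This is precisely where the hypothesis $\delta_2 < \rho$ is used decisively. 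Letting $N \to \infty$ shows $r_N \in S^{m_1 + m_2 - N(\rho - \delta_2)}_{\rho,\delta}$, so the operator
\begin{equation*}
R = P(x,D) \circ Q(x,D) - \sum_\alpha \frac{1}{\alpha!} (D_\xi^\alpha P \, \partial_x^\alpha Q)(x,D)
\end{equation*}
maps $\mathcal{S}'$ to $C^\infty$. Verifying that the finite sums, and therefore $(P \circ Q)(x,D)$ itself, belong to $\mathrm{OPS}^{m_1 + m_2}_{\rho,\delta}$ with $\delta = \max(\delta_1,\delta_2)$ is then a matter of bookkeeping the $x$- and $\xi$-derivatives landing on the two factors in each leading term.
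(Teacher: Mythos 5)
The paper does not give its own proof of this proposition; it is quoted verbatim as a black-box citation to Taylor's book, where it appears as Proposition~0.3.C and is proved exactly by the Kohn--Nirenberg compound-symbol argument you reproduce (oscillatory-integral formula for $c(x,\eta)$, Taylor expansion of $P$ at $\xi=\eta$, integration by parts to generate the series, and a remainder gaining $N(\rho-\delta_2)$ orders). Your outline is correct; the only point left implicit is that the infinite sum $\sum_\alpha$ must be read as an asymptotic sum realised via a Borel-type construction, so that $R$ belongs to $\bigcap_N S^{m_1+m_2-N(\rho-\delta_2)}_{\rho,\delta}=S^{-\infty}$, but this is the standard interpretation and does not affect the validity of the argument.
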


The following lemma will be useful:
\begin{lemma}[{\cite[Lemma~2.3]{SchippaSchnaubelt2022}}]
\label{lem:PseudoBoundsFourierSeries}
Let $1 \leq p,q \leq \infty$, $s \geq 0$, and $a \in C^s_x C^\infty_c(\R^m \times \R^m)$ with $a(x,\xi) = 0$ for $\xi \notin B(0,2)$. Suppose that
\begin{equation*}
\sup_{x \in \R^m} \sum_{0 \leq |\alpha| \leq m+1} \| D_\xi^\alpha a(x,\cdot) \|_{L^1_\xi} \leq C.
\end{equation*}
Then the following estimate holds:
\begin{equation*}
\| a(x,D) f \|_{L^p L^q} \lesssim C \| f \|_{L^p L^q}.
\end{equation*}
\end{lemma}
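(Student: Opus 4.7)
The approach is to expand the symbol in a Fourier series in the $\xi$-variable over a box containing its compact $\xi$-support. This will convert $a(x,D)$ into a rapidly convergent sum of weighted translation operators composed with a fixed smooth cutoff Fourier multiplier; each summand is manifestly bounded on $L^p L^q$ by translation invariance of the mixed norm, and the weights decay fast enough to be summable.

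First I would fix $R > 0$ with $B(0,3) \subset Q := [-\pi R, \pi R]^m$ and write, for each $x$,
\begin{equation*}
a(x, \xi) = \sum_{k \in \Z^m} c_k(x) e^{i k \cdot \xi / R}, \qquad c_k(x) = (2\pi R)^{-m} \int_Q a(x,\xi) e^{-i k \cdot \xi / R} d\xi,
\end{equation*}
with uniform convergence in $(x,\xi)$ since $a(x,\cdot)$ is compactly supported well inside $Q$ and hence extends to a smooth periodic function of $\xi$. Integration by parts in $\xi$ (with no boundary terms, because $\mathrm{supp}_\xi\, a \subset B(0,2)$ lies in the interior of $Q$) yields for every multi-index $\alpha$
\begin{equation*}
|c_k(x)| \lesssim \frac{R^{|\alpha| - m}}{\langle k \rangle^{|\alpha|}} \, \| D_\xi^\alpha a(x, \cdot) \|_{L^1_\xi}.
\end{equation*}
Combining $|\alpha| = 0$ and $|\alpha| = m+1$ and invoking the hypothesis produces $\sup_x |c_k(x)| \lesssim C \langle k \rangle^{-(m+1)}$.

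Next, pick $\chi \in C^\infty_c(\R^m)$ with $\chi \equiv 1$ on $B(0, 2)$ and $\mathrm{supp}\,\chi \subset Q$, and note that $a = a \chi$ identically. Substituting the Fourier series for $a$ into the defining integral for $a(x,D)f$ and exchanging the $k$-sum with the $\xi$-integral leads to
\begin{equation*}
a(x, D) f(x) = \sum_{k \in \Z^m} c_k(x) \, [\chi(D) f](x + k/R).
\end{equation*}
The convolution kernel of $\chi(D)$ is a Schwartz function, so $\chi(D)$ is bounded on $L^p L^q$; combined with translation invariance of the $L^p L^q$-norm,
\begin{equation*}
\| a(x, D) f \|_{L^p L^q} \leq \sum_{k \in \Z^m} \sup_x |c_k(x)| \cdot \| \chi(D) f \|_{L^p L^q} \lesssim C \sum_{k \in \Z^m} \langle k \rangle^{-(m+1)} \| f \|_{L^p L^q} \lesssim C \| f \|_{L^p L^q},
\end{equation*}
the final series converging because $m + 1 > m$.

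The one delicate point will be justifying the commutation of the $k$-sum with the $\xi$-integral in the definition of $a(x,D)f$. I would handle this by first establishing the identity for $f \in \mathcal{S}(\R^m)$, where the absolute convergence is immediate from the rapid decay of $c_k(x)$ and of $\hat f$, and then extending to general $f \in L^p L^q$ by density using the uniform bound just derived.
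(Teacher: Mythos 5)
Your proof is correct and follows exactly the Fourier-series trick that the lemma's label and the citation to \cite[Lemma~2.3]{SchippaSchnaubelt2022} indicate: expand $a(x,\cdot)$ in a Fourier series on a torus containing its $\xi$-support, obtain the weight decay $\sup_x|c_k(x)|\lesssim C\langle k\rangle^{-(m+1)}$ by integrating by parts (using the derivatives up to order $m+1$ from the hypothesis), and realize $a(x,D)$ as an absolutely convergent sum of multiplications by $c_k$ composed with translates of the Schwartz-kernel operator $\chi(D)$, each of which is bounded on the mixed norm by translation invariance and Young's inequality. For the endpoint cases $p=\infty$ or $q=\infty$, where density of $\mathcal{S}$ fails, it is cleaner to note that the identity $a(x,D)f=\sum_k c_k(x)\,[\chi(D)f](\cdot+k/R)$ already holds at the distributional level for any $f\in\mathcal{S}'$, so no density argument is actually needed.
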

In the quasilinear case, when the coefficients merely satisfy $\partial \varepsilon \in L_t^2 L_{x'}^\infty$, we have the following:
\begin{lemma}
\label{lem:PseudoBoundsQuasilinear}
Let $X = L_t^2 L^\infty_{x'}$ and $a \in X C^\infty_c(\R^m \times \R^m)$ with $a(x,\xi) = 0$ for $\xi \notin B(0,2)$. Suppose that
\begin{equation*}
\sup_{x \in \R^m} \sum_{|\alpha| \leq 2m} \| D_\xi^\alpha a(x,\cdot) \|_{L^1_\xi} \leq C. 
\end{equation*}
Then the following estimate holds:
\begin{equation*}
\| a(x,D) f \|_{L^2_{t,x}} \lesssim_{\| a \|_{X C^\infty_c}} \| f \|_{L_t^\infty L_x^2}.
\end{equation*}
\end{lemma}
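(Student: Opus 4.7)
The plan is to mimic the strategy of Lemma \ref{lem:PseudoBoundsFourierSeries}: we exploit the compact support of $a(x,\xi)$ in $\xi$ to expand $a(x,\cdot)$ in a Fourier series, so that $a(x,D)$ acts as a superposition of multiplication operators composed with translations of the Littlewood--Paley piece of $f$. The new input compared with Lemma \ref{lem:PseudoBoundsFourierSeries} is that the coefficients are only $L^2_tL^\infty_{x'}$ in the space-time variable $x=(t,x')$, which forces us to use the time-integrability to absorb the coefficients rather than pulling them out in $L^\infty_t$.

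First I fix a smooth cutoff $\chi \in C^\infty_c(\R^m)$ with $\chi \equiv 1$ on $B(0,2)$ and $\text{supp}\,\chi \subset Q = (-\pi R,\pi R)^m$ for a suitable $R>0$, and write on the cube $Q$
\begin{equation*}
a(x,\xi) = \sum_{k\in\Z^m} c_k(x)\, e^{i k\cdot\xi / R}, \qquad c_k(x) = \frac{1}{|Q|}\int_Q a(x,\xi)\,e^{-ik\cdot\xi/R}\,d\xi.
\end{equation*}
Integration by parts in $\xi$ together with the hypothesis yields, for any multiindex $|\alpha|\leq 2m$,
\begin{equation*}
|k|^{|\alpha|}|c_k(x)| \lesssim \|D_\xi^\alpha a(x,\cdot)\|_{L^1_\xi},
\end{equation*}
and, taking the $L^\infty$-norm in $x'$ followed by the $L^2$-norm in $t$, we conclude $\|c_k\|_{L^2_t L^\infty_{x'}} \lesssim (1+|k|)^{-2m}\,\|a\|_{X C^\infty_c}$, which is summable on $\Z^m$.

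Next, multiplying the expansion by $\chi(\xi)$ (which equals $1$ on $\text{supp}_\xi\,a$) and plugging into the definition of $a(x,D)$, I obtain the pointwise identity
\begin{equation*}
a(x,D)f(y) = \sum_{k\in\Z^m} c_k(y)\, [\chi(D)f]\bigl(y+k/R\bigr).
\end{equation*}
Applying Minkowski's inequality in $L^2_{t,y}$, then H\"older in $y$ (putting $c_k$ in $L^\infty_{x'}$ and $\chi(D)f(\cdot+k/R)$ in $L^2_{x'}$, which is translation invariant), and finally H\"older in $t$ (putting $c_k$ in $L^2_t$ and $\chi(D)f$ in $L^\infty_t$), gives
\begin{equation*}
\|a(x,D)f\|_{L^2_{t,y}} \leq \sum_{k} \|c_k\|_{L^2_t L^\infty_{x'}} \, \|\chi(D)f\|_{L^\infty_t L^2_{x'}} \lesssim \|a\|_{X C^\infty_c}\,\|f\|_{L^\infty_t L^2_{x'}},
\end{equation*}
where in the last step we used the summability of $(1+|k|)^{-2m}$ and the $L^2$-boundedness of $\chi(D)$.

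The main (minor) obstacle is the bookkeeping of the space-time norm on the coefficients: previously in Lemma \ref{lem:PseudoBoundsFourierSeries} one could afford to place $c_k$ entirely in $L^\infty$ because $a\in C^s_x$, whereas here one must distribute the integrability so that time is absorbed via the $L^2_tL^\infty_{x'}$ control of $c_k$ and $f$ is only tested against $L^\infty_tL^2_{x'}$. Everything else is a direct adaptation of the proof of Lemma \ref{lem:PseudoBoundsFourierSeries}.
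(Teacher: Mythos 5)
Your proof is correct and takes exactly the approach the paper indicates (the paper gives no details beyond the remark that the lemma ``is proved via a Fourier series trick''): you expand $a(x,\cdot)$ in a Fourier series on a cube containing its $\xi$-support, obtain $(1+|k|)^{-2m}$ decay of the coefficients from the $L^1_\xi$ control of $2m$ derivatives, rewrite $a(x,D)$ as a sum of multiplications composed with translates of $\chi(D)f$, and then distribute the norms so that $c_k$ is placed in $L^2_tL^\infty_{x'}$ and $\chi(D)f$ in $L^\infty_tL^2_{x'}$. The split of H\"older in $x'$ followed by H\"older in $t$ is precisely what makes the $L^2_tL^\infty_{x'}$ coefficient regularity usable against $L^\infty_tL^2_{x'}$ data, matching the claimed estimate.
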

The lemma is proved via a Fourier series trick. In the following we also write $X^1= \{ f : \partial f \in X \}$.


\section{Diagonalizing reflected Maxwell equations}
\label{section:Diagonalization}

The purpose of this section is to reduce the proof of Proposition \ref{prop:ExtendedMaxwell} to Strichartz estimates for half-wave equations with metric $\frac{g^{ij}}{\varepsilon \mu}$. Here $\varepsilon,\mu, g^{ij} \in C^\infty(\R^3_{ \geq 0})$ are extended evenly to the full space, introducing a Lipschitz-singularity of co-dimension~$1$. The following is due to Blair--Smith--Sogge \cite{BlairSmithSogge2009}:
\begin{theorem}
\label{thm:LipschitzSingularHalfWave}
Let $d \geq 2$ and $(g^{ij})_{1 \leq i,j \leq d} \subseteq C^\infty(\R^d_{\geq 0})$ be uniformly elliptic. 
Let $u: [0,1] \times \R^d \to \C$. Then the following estimate holds:
\begin{equation*}
\| u \|_{L_t^p([0,1], L_{x'}^q(\R^d))} \lesssim \| u \|_{L_t^\infty H^\gamma(\R^3)} + \| (i \partial_t + D_{\tilde{g}}) u \|_{L_t^1 H^\gamma}
\end{equation*}
with $\tilde{g}^{ij}$ denoting the even extension of $g^{ij}$ and
\begin{equation*}
D_{\tilde{g}} = Op \big( \sum_{i,j=1}^d \tilde{g}^{ij} \xi_i \xi_j \big)^{\frac{1}{2}}
\end{equation*}
provided that $2 \leq p,q \leq \infty$ and $\gamma$ satisfy
\begin{equation*}
\frac{3}{p} + \frac{2}{q} \leq 1, \quad q < \infty, \quad \gamma = 3 \big( \frac{1}{2} - \frac{1}{q} \big) - \frac{1}{p} .
\end{equation*}
\end{theorem}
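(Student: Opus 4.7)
My strategy is to exploit the reflection symmetry inherent in the evenly extended metric to reduce the problem with a codimension-one Lipschitz singularity on $\R^d$ to a smooth problem on a half-space, where the Strichartz estimates of Blair--Smith--Sogge \cite{BlairSmithSogge2009} apply directly. Let $\sigma(x_1,\ldots,x_{d-1},x_d) := (x_1,\ldots,x_{d-1},-x_d)$ denote reflection across $\{x_d=0\}$. Since $\tilde{g}^{ij}$ is the even extension of a smooth metric $g^{ij}$ on $\R^d_{\geq 0}$, both the symbol $\tilde{g}^{ij}\xi_i\xi_j$ and the associated self-adjoint realization on $L^2(\R^d)$ commute with $\sigma$, and therefore so does the positive square root $D_{\tilde{g}}$. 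Consequently the half-wave evolution $e^{-itD_{\tilde{g}}}$ preserves the splitting $u = u_e + u_o$ into $\sigma$-even and $\sigma$-odd parts. An even (respectively odd) solution on $\R^d$ corresponds, by restriction, to a solution on the smooth Riemannian half-space $(\R^d_{\geq 0}, g)$ with Neumann (respectively Dirichlet) boundary conditions; even/odd extension is a bounded isomorphism between the Sobolev spaces on $\R^d_{\geq 0}$ and their $\sigma$-symmetric counterparts on $\R^d$ in the relevant range.

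Next, I would invoke the wave Strichartz estimates of Blair--Smith--Sogge, which hold for $(\partial_t^2 - \Delta_g)v = F$ on smooth Riemannian manifolds with boundary, for both Dirichlet and Neumann conditions, in precisely the range $\frac{3}{p}+\frac{2}{q}\leq 1$, $q<\infty$, $\gamma = 3(\frac{1}{2}-\frac{1}{q})-\frac{1}{p}$. The passage from these wave estimates to the half-wave estimates is standard, via $e^{-itD_{\tilde{g}}}u_0 = \cos(tD_{\tilde{g}})u_0 - i\sin(tD_{\tilde{g}})u_0$, identifying $\cos(tD_{\tilde{g}})u_0$ with the wave solution carrying Cauchy data $(u_0,0)$ and $\sin(tD_{\tilde{g}})u_0$ with the wave solution arising from Cauchy data $(0, D_{\tilde{g}}u_0)$. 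Since $D_{\tilde{g}}u_0 \in H^{\gamma-1}$ whenever $u_0 \in H^\gamma$, both contributions obey the wave Strichartz estimate at the same regularity $\gamma$. Applying this to $u_e$ and $u_o$ separately and summing yields the homogeneous bound on $\R^d$, while the inhomogeneous term $(i\partial_t + D_{\tilde{g}})u$ is recovered via Duhamel's formula together with the Christ--Kiselev lemma (applicable because $p > 1$ throughout the admissibility region).

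The principal obstacle is technical rather than conceptual: one must verify the compatibility of the nonlocal operator $D_{\tilde{g}}$ with the reflection decomposition in Sobolev spaces. Specifically, one must confirm that for $\gamma < 3/2$ the even-extension map identifies $H_N^\gamma(\R^d_{\geq 0})$ with the $\sigma$-even subspace of $H^\gamma(\R^d)$ (and analogously $H_D^\gamma$ with the $\sigma$-odd subspace), so that the form domain of the self-adjoint realization of $D_{\tilde{g}}$ on $\R^d$ matches, via reflection, that of $\sqrt{-\Delta_g^{D/N}}$ on $\R^d_{\geq 0}$. This is routine for $\gamma < 1/2$ (no traces arise) and requires only the Dirichlet/Neumann trace identification for $1/2 < \gamma < 3/2$; once this is in place, the Blair--Smith--Sogge estimate transfers cleanly to the claimed Lipschitz-singular Strichartz estimate.
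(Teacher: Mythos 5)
The paper does not give a proof of this statement: Theorem~\ref{thm:LipschitzSingularHalfWave} is cited verbatim from Blair--Smith--Sogge \cite{BlairSmithSogge2009}, where it (in the guise of their Proposition/Equation~(2.1)) is the \emph{primitive} estimate, proved directly for the Lipschitz-singular metric on $\R^d$ by wave-packet parametrix methods in the spirit of Smith and Tataru; the boundary Strichartz estimates are then \emph{deduced} from it by reflection into even and odd parts. Your proposal runs this logical chain in reverse, taking the boundary result as a black box and reflecting back. That is not automatically circular, but it is worth realizing that you are essentially replaying Section~2 of \cite{BlairSmithSogge2009} backwards rather than supplying an independent argument.

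There is, however, a genuine gap in the key commutation claim. You assert that because $\tilde g^{ij}$ is the even extension, the operator $\sum \tilde g^{ij}\partial_i\partial_j$ (and hence $D_{\tilde g}$) commutes with the reflection $\sigma$. Working it out: conjugating $\partial_i$ by $\sigma$ introduces a sign $-1$ precisely when $i=d$. Thus $\sigma^{-1}\bigl(\tilde g^{ij}\partial_i\partial_j\bigr)\sigma$ returns $\tilde g^{ij}(\sigma(x))\,(\pm\partial_i)(\pm\partial_j)$, and matching this to $\tilde g^{ij}(x)\partial_i\partial_j$ requires $\tilde g^{ij}(\sigma(x))=\tilde g^{ij}(x)$ when $i,j<d$ or $i=j=d$ (fine, even extension) but $\tilde g^{id}(\sigma(x))=-\tilde g^{id}(x)$ for $i<d$, i.e.\ the \emph{cross-terms must be extended oddly}. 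The theorem says all $\tilde g^{ij}$ are extended evenly. So the commutation, and with it the whole preservation of the even/odd splitting by $e^{-itD_{\tilde g}}$, holds only when $g^{id}\equiv 0$ for $i<d$, i.e.\ in boundary-normal (geodesic) coordinates. That is the setting used elsewhere in the paper (the displayed cometric has $g^{13}=g^{23}=0$), and it is also the setting in \cite{BlairSmithSogge2009}, but your proof does not flag the hypothesis and would silently fail if one read the theorem literally for an arbitrary evenly extended elliptic $g^{ij}$.

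Two smaller points you should not leave implicit. First, \cite{BlairSmithSogge2009} state their boundary result on compact manifolds with boundary; applying it on $\R^d_{\geq 0}$ needs a finite-speed-of-propagation localization (harmless, but part of the argument). Second, $D_{\tilde g}=Op(\tilde g^{ij}\xi_i\xi_j)^{1/2}$ must be matched across the reflection with the spectral square root $\sqrt{-\Delta_g^{D/N}}$ on the half-space; these agree only up to a zeroth-order error, which is absorbable into the $L^1_tH^\gamma$ forcing term, but this should be said rather than asserted via the $\cos/\sin$ identity as if it were exact.
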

The reduction to the above proceeds via diagonalization with pseudo-differential operators. However, the symbols are very rough, so extra care is required. 
\subsection{Littlewood-Paley decomposition and frequency truncation}
We begin with a paradifferential decomposition. Recall that
\begin{equation*}
\mathcal{P} = 
\begin{pmatrix}
\sqrt{g} g^{-1} \varepsilon \partial_t & - \nabla \times \\
\nabla \times & \sqrt{g} g^{-1} \mu \partial_t 
\end{pmatrix}
.
\end{equation*}
In the following we denote $u=(\mathcal{E},\mathcal{H}): \R \times \R^3 \to \R^3 \times \R^3$ and omit the tilde for the reflected quantities to lighten the notation. Let $(S_\lambda)_{\lambda \in 2^{\N_0}}$ denote a family of inhomogeneous Littlewood-Paley projections for space-time frequencies and $(S'_\lambda)_{\lambda \in 2^{\N_0}}$, $(S^{\tau}_\lambda)_{\lambda \in 2^{\N_0}}$ projections for spatial or temporal frequencies, respectively. We define
\begin{equation}
\label{eq:FrequencyTruncationA}
\varepsilon' = \sqrt{g} g^{-1} \varepsilon, \quad \mu' = \sqrt{g} g^{-1} \mu, \quad \mathcal{P}_{<\lambda} = 
\begin{pmatrix}
\varepsilon'_{<\lambda} \partial_t & - \nabla \times \\
\nabla \times & \mu'_{<\lambda} \partial_t
\end{pmatrix}
\end{equation}
through spatial frequency truncation: $\kappa_{<\lambda} = \sum_{\mu \leq \lambda/16} S'_\mu \kappa$ for $\kappa \in \{\varepsilon',\mu' \}$.

\medskip

For the proof of Proposition \ref{prop:ExtendedMaxwell} it suffices to prove the following estimate for frequency localized functions for $2^{\N_0} \ni \lambda \gg 1$: We can suppose that $\lambda \gg 1$ because low frequencies are easily estimated by Bernstein's inequality. Let $0 < \delta < \frac{3}{q}$.
\begin{align}
\label{eq:FrequencyLocalizedEstimateI}
\| S_{ \{|\tau| \sim |\xi'| \}} u \|_{L_t^p L_{x'}^q} &\lesssim \| \langle \partial_t \rangle^{\gamma + \delta} u \|_{L_t^\infty L_{x'}^2} + \| \langle \partial_t \rangle^{\gamma} \mathcal{P} u \|_{L^2_{x}}, \\
\label{eq:FrequencyLocalizedEstimateII}
\| S_{\{ |\tau| \gg |\xi'| \}} u \|_{L_t^p L_{x'}^q} &\lesssim \| \langle \partial_t \rangle^{\gamma - \frac{1}{2}+\delta} u \|_{L^2_{t,{x'}}} + \| \langle \partial_t \rangle^{\gamma - \frac{1}{2} + \delta} \mathcal{P} u \|_{L^2_{t,{x'}}}, \\
\label{eq:FrequencyLocalizedEstimateIII}
\| S_{ \{ |\tau| \ll |\xi'| \}} u \|_{L_t^p L_{x'}^q} &\lesssim \| \langle D' \rangle^{\gamma - \frac{1}{2}+\delta} u \|_{L^2_{x}} + \| \langle \partial_t \rangle^{\gamma - \frac{1}{2}+\delta} u \|_{L^2_{x}} \\
&\quad + \| \langle D' \rangle^{\gamma - \frac{1}{2}+\delta} \mathcal{P} u \|_{L^2_{x}} + \| \rho_e \|_{L_t^\infty H^{\gamma - 1 + \frac{1}{p}+\delta}}. \nonumber
\end{align}
In the following we implicitly consider $u$ compactly supported in $[0,T]$. This is strictly speaking not conserved by $S_\lambda$, but for $\lambda \gg 1$ up to Schwartz tails, which are neglected in the following. $S_{ \{ |\tau| \sim |\xi'| \}}$ denotes a space-time frequency projection to temporal frequencies comparable to spatial frequencies, $S_{ \{|\tau| \gg |\xi'| \} }$ a space-time frequency projection to temporal frequencies $\{|\tau| \gtrsim 1\}$ and spatial frequencies $\{|\xi'| \ll |\tau| \}$. Correspondingly, $S_{\{|\xi'| \gg |\tau| \}}$ denotes a projection for spatial frequencies dominating temporal frequencies. Estimates \eqref{eq:FrequencyLocalizedEstimateII} and \eqref{eq:FrequencyLocalizedEstimateIII} crucially rely on ellipticity of components of $\mathcal{P}$ after diagonalization. Since we can achieve estimates with regularity $\gamma - \frac{1}{2} < 1$, the commutator estimates for Lipschitz functions are applicable. We give the proof of \eqref{eq:FrequencyLocalizedEstimateII} shortly using the ellipticity away from the characteristic surface. The proof of \eqref{eq:FrequencyLocalizedEstimateI} is more involved and requires the use of the Strichartz estimates due to Blair--Smith--Sogge. However, if $\{|\tau| \sim |\xi'| \sim 1\}$, we can trade temporal for spatial frequencies.

\begin{lemma}
Let $2^{\N_0} \ni \lambda \gg 1$, $2 \leq p,q < \infty$, and $\delta >0$. The estimate
\begin{equation}
\label{eq:SpaceTimeDyadicEstimate}
\| S_\lambda^\tau S'_\lambda u \|_{L_t^p L_{x'}^q} \lesssim \lambda^\gamma ( \| S_\lambda^\tau S'_\lambda u \|_{L_t^\infty L_{x'}^2} + \| \mathcal{P}_{<\lambda} S_\lambda^\tau S'_\lambda u \|_{L^2_{x}} )
\end{equation}
implies
\begin{equation}
\label{eq:SpaceTimeEstimate}
\| S_{\{|\tau| \sim |\xi'| \sim 1\}} u \|_{L_t^p L_{x'}^q} \lesssim \| \langle \partial_t \rangle^{\gamma + \delta} u \|_{L_t^\infty L_{x'}^2} + \| \langle \partial_t \rangle^\gamma \mathcal{P} u \|_{L_{x}^2}.
\end{equation}
\end{lemma}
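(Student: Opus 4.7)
The plan is to deduce \eqref{eq:SpaceTimeEstimate} from the single-block estimate \eqref{eq:SpaceTimeDyadicEstimate} by a dyadic sum over $\lambda \in 2^{\mathbb{N}_0}$ with $\lambda \gg 1$. First decompose
\[
S_{\{|\tau|\sim|\xi'|\sim 1\}} u = \sum_{\lambda \gg 1} S^\tau_\lambda S'_\lambda u
\]
(up to harmless fattening of the annular cutoffs; the finitely many low frequencies are absorbed by Bernstein). Since $p,q \geq 2$ and the summands have nearly disjoint space-time Fourier support, the vector-valued Littlewood--Paley square function combined with Minkowski in $L^{p/2}L^{q/2}$ gives
\[
\bigl\| S_{\{|\tau|\sim|\xi'|\sim 1\}} u \bigr\|_{L_t^p L_{x'}^q}^2 \lesssim \sum_\lambda \|S^\tau_\lambda S'_\lambda u\|_{L_t^p L_{x'}^q}^2,
\]
and inserting \eqref{eq:SpaceTimeDyadicEstimate} reduces the claim to controlling $\sum_\lambda \lambda^{2\gamma}\|S^\tau_\lambda S'_\lambda u\|_{L^\infty_t L^2_{x'}}^2$ and $\sum_\lambda \lambda^{2\gamma}\|\mathcal{P}_{<\lambda} S^\tau_\lambda S'_\lambda u\|_{L^2_x}^2$.

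For the energy term, the uniform bound $\lambda^{\gamma+\delta}\|S^\tau_\lambda S'_\lambda u\|_{L^\infty_t L^2_{x'}} \lesssim \|\langle\partial_t\rangle^{\gamma+\delta} u\|_{L^\infty_t L^2_{x'}}$ (via convolution with an $L^1_t$-bounded multiplier) combined with $\lambda^{2\gamma} = \lambda^{-2\delta} \cdot \lambda^{2(\gamma+\delta)}$ and the summability $\sum_\lambda \lambda^{-2\delta} < \infty$ recovers $\|\langle\partial_t\rangle^{\gamma+\delta} u\|_{L^\infty_t L^2_{x'}}^2$. This is the precise role of the $\delta$-slack in the target estimate, which compensates for the lack of orthogonality in $L^\infty_t$. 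For the equation piece, I would commute coefficients and projections:
\[
\mathcal{P}_{<\lambda} S^\tau_\lambda S'_\lambda u = S^\tau_\lambda S'_\lambda(\mathcal{P} u) + (\varepsilon'_{<\lambda}-\varepsilon')\,\partial_t S^\tau_\lambda S'_\lambda u - [S'_\lambda,\varepsilon']\,\partial_t S^\tau_\lambda u + (\text{analogue for } \mu'),
\]
using that $\varepsilon',\mu'$ are time-independent (so $S^\tau_\lambda$ commutes through) and that $\nabla\times$ commutes with all projections. The main part sums by Plancherel in $t$ to $\|\langle\partial_t\rangle^\gamma \mathcal{P} u\|_{L^2_x}^2$.

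The error terms are handled by the classical Lipschitz commutator estimate $\|[S'_\lambda,\varepsilon']f\|_{L^2} \lesssim \lambda^{-1}\|\varepsilon'\|_{W^{1,\infty}}\|f\|_{L^2}$ together with the paraproduct tail bound $\|\varepsilon'-\varepsilon'_{<\lambda}\|_{L^\infty} \lesssim \lambda^{-1}\|\varepsilon'\|_{W^{1,\infty}}$; coupled with $\|\partial_t S^\tau_\lambda u\|_{L^2} \sim \lambda\|S^\tau_\lambda u\|_{L^2}$, each error contributes $O(1)$ per block in $L^2_x$, and the weighted sum collapses to $\|\langle\partial_t\rangle^\gamma u\|_{L^2_{t,x'}}^2 \lesssim T\,\|\langle\partial_t\rangle^{\gamma+\delta} u\|_{L^\infty_t L^2_{x'}}^2$ because $u$ is effectively supported in $[0,T]$. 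The main obstacle is precisely the mere $W^{1,\infty}$-regularity of $\varepsilon',\mu',g^{ij}$ after the codimension-$1$ even reflection across $\partial\Omega$: the paradifferential truncation $\mathcal{P}_{<\lambda}$ of \eqref{eq:FrequencyTruncationA} is dictated by this reduced regularity, and one must verify that no step of the commutator reduction costs more than one spatial derivative on the coefficients. This is just affordable in the admissible range, where $\gamma < 1$ and the one-derivative loss is exactly compensated by the $\lambda^{-1}$ gain from the dyadic projection.
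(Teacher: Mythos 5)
Your proof is correct and takes essentially the same approach as the paper: square-function/Minkowski reduction to a dyadic $\ell^2$ sum, the $\delta$-slack to absorb the failure of $L^\infty_t L^2$-orthogonality, and the Lipschitz commutator gain $\|[S'_\lambda,\kappa']\|_{L^2\to L^2}\lesssim\lambda^{-1}$ together with the paraproduct tail $\|\kappa'-\kappa'_{<\lambda}\|_{L^\infty}\lesssim\lambda^{-1}$ to trade $\mathcal{P}_{<\lambda}$ for $\mathcal{P}$ at $O(1)$ cost per block (indeed, your error decomposition via $(\varepsilon'_{<\lambda}-\varepsilon')$ and $[S'_\lambda,\varepsilon']$ is algebraically identical to the paper's split into $[\mathcal{P}_{<\lambda},S'_\lambda]$, $\mathcal{P}_{\sim\lambda}$, and $\mathcal{P}_{\gg\lambda}$). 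One small remark: the closing appeal to $\gamma<1$ is not actually needed here, since the $\lambda^{-1}$ commutator gain cancels the $\partial_t\sim\lambda$ factor exactly, independent of $\gamma$; that restriction is what makes the elliptic estimates \eqref{eq:FrequencyLocalizedEstimateII}--\eqref{eq:FrequencyLocalizedEstimateIII} work, not this lemma.
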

\begin{proof}
Littlewood-Paley decomposition and Minkowski's inequality give for $2 \leq p,q < \infty$
\begin{equation*}
\| u \|_{L^p L^q} \lesssim \big( \sum_{\lambda \geq 1} \| S_\lambda u \|^2_{L_t^p L_{x'}^q} \big)^{\frac{1}{2}}
\end{equation*}
which we can further decompose almost orthogonally into spatial and temporal frequencies. Summation of $\| \langle \partial_t \rangle^\gamma u \|_{L_t^\infty L_{x'}^2}$ is clear. Note that the lack of almost orthogonality in $L_t^\infty L_{x'}^2$ leads to the $\delta$-loss in derivatives. Now we write
\begin{equation*}
\mathcal{P}_{<\lambda} S'_\lambda v = S'_\lambda \mathcal{P}_{<\lambda} v+ [\mathcal{P}_{<\lambda}, S'_\lambda] v
\end{equation*}
and note that
\begin{equation*}
\| S_\lambda^\tau [ \mathcal{P}_{<\lambda}, S'_\lambda] \langle \partial_t \rangle^\gamma u \|_{L^2_{x}} \lesssim \| S_\lambda^\tau \langle \partial_t \rangle^\gamma u \|_{L^2_{x}}
\end{equation*}
because $\| [\kappa_{<\lambda}, S'_\lambda] \|_{L^2_{x'} \to L^2_{x'}} \lesssim \lambda^{-1}$ by a kernel estimate for $\kappa \in \{ \varepsilon', \mu' \}$.

We write
\begin{equation*}
S_\lambda^\tau S'_\lambda \mathcal{P}_{<\lambda} v = S_\lambda^\tau S'_\lambda \mathcal{P} v - S_\lambda^\tau S'_\lambda \mathcal{P}_{\gg \lambda} v - S_\lambda^\tau S'_\lambda \mathcal{P}_{\sim \lambda} v.
\end{equation*}
Clearly,
\begin{equation*}
\| S_\lambda^\tau S'_\lambda \mathcal{P}_{\sim \lambda} v \|_{L^2_{x}} \lesssim \| S_\lambda^\tau v \|_{L^2_{x}}
\end{equation*}
and similarly, by a fixed-time estimate,
\begin{equation*}
\| S_\lambda^\tau S'_\lambda (S'_{\gtrsim \lambda} \varepsilon \partial_t S'_{\gtrsim \lambda} v) \|_{L^2_{x}} \lesssim \lambda \| \varepsilon_{\gtrsim \lambda} \|_{L^\infty} \| S_\lambda^\tau v \|_{L^2_{x}} \lesssim \| \partial \varepsilon \|_{L^\infty} \| S^\tau_\lambda v \|_{L^2_{x}},
\end{equation*}
which estimates the second term. We remain with $S_\lambda^\tau S'_\lambda \mathcal{P} \langle \partial_t \rangle^\gamma u$ and conclude
\begin{equation*}
\| S_\lambda^\tau \mathcal{P}_{<\lambda} S'_\lambda \langle \partial_t \rangle^\gamma u \|_{L^2_{x}} \lesssim \| S_\lambda^\tau S'_\lambda \mathcal{P} \langle \partial_t \rangle^\gamma u \|_{L^2_{x}} + \| S_\lambda^\tau \langle \partial_t \rangle^\gamma u \|_{L^2_{x}}.
\end{equation*}
This is the commutator argument for the Maxwell operator. After summing the Littlewood-Paley blocks, we obtain \eqref{eq:SpaceTimeEstimate}.
\end{proof}


\begin{lemma}
Let $\lambda,\nu \in 2^{\N_0}$, $\lambda \ll \nu$. Let $2 \leq p,q < \infty$. The estimate
\begin{equation}
\label{eq:DyadicFrequencyLocalizedEstimateII}
\| S_\nu' S_\lambda^\tau u \|_{L_t^p L_{x'}^q} \lesssim \nu^{\gamma - \frac{1}{2}} \| S'_\nu S_\lambda^\tau \mathcal{P}_{< \nu} u \|_{L^2_{x}} + \nu^{\gamma - 1 +\frac{1}{p}} (\| \rho_{e \nu}' \|_{L_t^\infty L_{x'}^2} + \| \rho_{m \nu}' \|_{L_t^\infty L_{x'}^2} )
\end{equation}
with
\begin{equation*}
\rho'_{e \nu} = \nabla \cdot ( \varepsilon'_{<\nu} S'_\nu \mathcal{E}), \qquad \rho'_{m \nu} = \nabla \cdot( \mu'_{< \nu} S'_\nu \mathcal{H})
\end{equation*}
implies
\begin{equation}
\label{eq:FrequencyLocalizedEstimateIIConclusion}
\begin{split}
\| S_{\{|\tau| \ll |\xi'| \}} u \|_{L_t^p L_{x'}^q} &\lesssim \| \langle D' \rangle^{\gamma - \frac{1}{2}+\delta} u \|_{L^2_{x}} +
\| \langle \partial_t \rangle^{\gamma - \frac{1}{2} + \delta} u \|_{L^2_{x}} + \| \langle D' \rangle^{\gamma - \frac{1}{2}+\delta} \mathcal{P} u \|_{L^2_{x}} \\
&\quad + \| \rho_e \|_{L_t^\infty H^{\gamma - 1 + \frac{1}{p} + \delta}} + \| \rho_m \|_{L_t^\infty H^{\gamma - 1 + \frac{1}{p} + \delta}}
\end{split}
\end{equation}
for $\delta > 0$.
\end{lemma}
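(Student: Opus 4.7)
The strategy is to recover \eqref{eq:FrequencyLocalizedEstimateIIConclusion} from the dyadic estimate \eqref{eq:DyadicFrequencyLocalizedEstimateII} by an $\ell^2$ almost-orthogonal summation over the pieces $S'_\nu S^\tau_\lambda u$ restricted to the regime $\lambda \ll \nu$. Since $2 \leq p,q < \infty$, Minkowski's inequality in $\ell^2$ yields
\begin{equation*}
\| S_{\{|\tau| \ll |\xi'|\}} u \|_{L^p_t L^q_{x'}} \lesssim \Big( \sum_{\nu} \sum_{\lambda \ll \nu} \| S'_\nu S^\tau_\lambda u \|_{L^p_t L^q_{x'}}^2 \Big)^{1/2},
\end{equation*}
into which I would substitute the hypothesis piece by piece. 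The right-hand side then splits into three sums, each requiring a structural reduction.

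First, I would replace the paradifferentially truncated operator $\mathcal{P}_{<\nu}$ by the genuine $\mathcal{P}$, exactly as in the preceding lemma: writing
\begin{equation*}
\mathcal{P}_{<\nu} S'_\nu v = S'_\nu \mathcal{P} v - S'_\nu \mathcal{P}_{\sim \nu} v - S'_\nu \mathcal{P}_{\gg \nu} v - [\mathcal{P}_{<\nu}, S'_\nu] v,
\end{equation*}
the Lipschitz regularity of $\varepsilon'$ and $\mu'$ (after even reflection) controls each of the three error terms by $C\|S'_{\sim \nu} v\|_{L^2_x}$. After multiplying by $\nu^{\gamma - 1/2}$ and taking $\ell^2$ in $\nu$, these are absorbed into the term $\|\langle D' \rangle^{\gamma - 1/2 + \delta} u\|_{L^2_x}$ on the right-hand side of \eqref{eq:FrequencyLocalizedEstimateIIConclusion}.

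Second, I would compare the truncated charges $\rho'_{e\nu} = \nabla\cdot(\varepsilon'_{<\nu} S'_\nu \mathcal{E})$ and $\rho'_{m\nu}$ with dyadic blocks of the true charges. A Bony paraproduct decomposition gives
\begin{equation*}
\varepsilon'_{<\nu} S'_\nu \mathcal{E} - S'_\nu(\varepsilon' \mathcal{E}) = \text{(high-low paraproduct of } \varepsilon' \text{ on } \mathcal{E}) + R(\varepsilon',\mathcal{E}),
\end{equation*}
whose divergence has $L^2_x$-norm bounded by $(\|\partial \varepsilon'\|_{L^\infty} + \nu\|\varepsilon'_{\sim \nu}\|_{L^\infty})\|S'_{\sim\nu}\mathcal{E}\|_{L^2_x}$, again using only Lipschitz control on $\varepsilon'$. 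Because $\gamma - 1 + \tfrac{1}{p} \leq \gamma - \tfrac{1}{2}$ for $p \geq 2$, weighting by $\nu^{\gamma - 1 + 1/p}$ and $\ell^2$-summing yields errors absorbed into the $u$-terms. The charge contributions then reduce to $\nu^{\gamma - 1 + 1/p}\|S'_\nu \rho_e\|_{L^2_x}$ and its magnetic analogue, which sum to the Sobolev norms $\|\rho_e\|_{L^\infty_t H^{\gamma - 1 + 1/p + \delta}}$ and $\|\rho_m\|_{L^\infty_t H^{\gamma - 1 + 1/p + \delta}}$ after absorbing a logarithmic loss into $\delta$.

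Finally, I would carry out the $\ell^2$-summation proper. The outer sum over spatial frequencies $\nu$ produces the displayed Sobolev norms via Littlewood-Paley almost-orthogonality; the inner sum over the $O(\log \nu)$ temporal frequencies $\lambda \ll \nu$ is handled by Cauchy-Schwarz, losing at most $(\log \nu)^{1/2}$ which is soaked up by any power $\nu^{\delta}$. The $\|\langle \partial_t \rangle^{\gamma - 1/2 + \delta} u\|_{L^2_x}$ term appears when the temporal-frequency weight $\lambda^0$ in the error pieces is redistributed across the sum. The main obstacle, as I see it, is the paradifferential comparison of $\rho'_{e\nu}$ with $S'_\nu \rho_e$: one must track carefully that the low-frequency cutoff on $\varepsilon'$ commutes with $S'_\nu$ up to errors living on the correct side of the Sobolev scaling, so that the gain encoded in $\delta$ survives the summation. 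All other steps are routine harmonic-analytic bookkeeping.
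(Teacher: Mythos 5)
Your high-level decomposition is the right one and is essentially the same as the paper's: replace $\mathcal{P}_{<\nu}$ by $\mathcal{P}$ plus controllable errors, relate $\rho'_{e\nu}$ to the true charge, then square-sum. But two of the central steps are glossed over in a way that hides the actual work, and one of your stated bounds is not correct as written.

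First, the claim that $\nabla\cdot(\varepsilon'_{<\nu}S'_\nu\mathcal{E} - S'_\nu(\varepsilon'\mathcal{E}))$ is controlled by $(\|\partial\varepsilon'\|_{L^\infty}+\nu\|\varepsilon'_{\sim\nu}\|_{L^\infty})\|S'_{\sim\nu}\mathcal{E}\|_{L^2_x}$ is wrong: the second term $S'_\nu(\varepsilon'\mathcal{E})$ picks up contributions from $\varepsilon'_{\sim\nu}\cdot S'_{\ll\nu}\mathcal{E}$ and from the high-high resonant block $\varepsilon'_{\sim\mu}\cdot S'_\mu\mathcal{E}$ with $\mu\gg\nu$, so the error involves $\mathcal{E}$ at \emph{all} spatial frequencies, not just $\sim\nu$. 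The Lipschitz decay of $\varepsilon'_{\sim\mu}$ is exactly what makes the resulting double sum converge (after the $\delta$-loss), but your displayed bound suppresses this. The paper handles precisely this issue by splitting $\rho'_{e\nu}$ into the part with the derivative on $\varepsilon'_{<\nu}$ (handled directly by Lipschitz continuity, giving $\nu^{\gamma-1/2-\delta}\|S'_\nu\mathcal{E}\|$) and the part with the derivative on $\mathcal{E}$, for which it writes $\varepsilon'_{<\nu}=\varepsilon'-\varepsilon'_{\sim\nu}-\varepsilon'_{\gg\nu}$ and runs a Coifman--Meyer commutator estimate \emph{against the full $\varepsilon'$}, so that the high-high pieces are accounted for explicitly.

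Second, and this is what you correctly flag as the "main obstacle" but do not actually resolve: the output of that Coifman--Meyer step is not $\|\rho_e\|_{L^\infty_t H^{\gamma-1+\frac1p+\delta}}$ but $\|\langle D'\rangle^{\gamma-1+\frac1p+\delta}\rho_e'^{(2)}\|_{L^\infty_t L^2_x(\R^3)}$, where $\rho_e'^{(2)}=\varepsilon'^{11}\partial_1\mathcal{E}_1+\dots+\varepsilon'^{33}\partial_3\mathcal{E}_3$ is only the principal (derivative-on-$\mathcal{E}$) piece of $\nabla\cdot(\varepsilon'\mathcal{E})$, not $\tilde\rho_e$ itself. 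Relating $\rho_e'^{(2)}$ to $\tilde\rho_e$ means subtracting the $(\partial\varepsilon')\mathcal{E}$ terms, and on the reflected full space $\partial\varepsilon'$ is merely $L^\infty$ with a codimension-one jump. The paper therefore exploits that $\rho_e'^{(2)}$ is odd (its parity follows from the block-diagonal structure of $\sqrt{g}\,g^{-1}\varepsilon$ in geodesic normal coordinates and the chosen reflections), switches to the half-space $\R^3_+$ where the coefficients are smooth, and only there subtracts off $(\partial\varepsilon')\mathcal{E}$ by invariance of Sobolev norms under multiplication with smooth functions. Your Bony decomposition argument never leaves the full space, and so never has access to smoothness of the coefficients; you would have to justify why $\partial\varepsilon'\in L^\infty\cap H^{\frac12-}_{\mathrm{loc}}(\R^3)$ suffices for the product estimate in the range $\gamma-1+\frac1p+\delta<\frac12$. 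That is not automatic, and it is exactly why the paper takes the detour.

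Finally, your description of where $\|\langle\partial_t\rangle^{\gamma-\frac12+\delta}u\|_{L^2_x}$ comes from ("the temporal-frequency weight $\lambda^0$ in the error pieces is redistributed across the sum") does not name the mechanism. In the commutator $\tilde S'_\nu[\varepsilon',S'_\nu]u$ the pieces $S'_{\ll\nu}u$ and $S'_{\gg\nu}u$ have no usable spatial localization at $\nu$; after forcing $\varepsilon'$ into the band $\sim\nu$ (resp.\ $\gg\nu$) and paying a Lipschitz factor $\nu^{-1}$ in $L^\infty$, one is left with $\|S^\tau_\lambda u\|_{L^2_x}$ and must sum $\nu^{\gamma-\frac32}\lambda$ over $1\lesssim\lambda\ll\nu$, which closes only by falling back to the \emph{temporal} localization $S^\tau_\lambda$. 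That is the source of the $\langle\partial_t\rangle$-term, and it should be stated, not absorbed into "routine bookkeeping."
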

\begin{proof}
We have to carry out the summation
\begin{equation*}
\sum_{ \substack{ \nu \gg 1, \\ 1 \lesssim \lambda \ll \nu }} \nu^{\gamma - \frac{1}{2}} \| S_\lambda^\tau \mathcal{P}_{<\nu} S'_\nu u \|_{L^2_{x}} + \nu^{\gamma - 1 +\frac{1}{p}} ( \| \rho'_{e \nu} \|_{L_t^\infty L_{x'}^2} + \| \rho'_{m \nu} \|_{L_t^\infty L_{x'}^2}).
\end{equation*}
For the Maxwell operator, we use that $\gamma - \frac{1}{2} < 1$. First, we note that
\begin{equation*}
S_\lambda^\tau \mathcal{P}_{<\nu} S'_\nu u = \tilde{S}'_\nu S^\tau_\lambda \mathcal{P}_{<\nu} S'_\nu u.
\end{equation*}
Above and in the following $\tilde{S}'_\nu$ denotes a mildly enlarged spatial frequency projector. By $\mathcal{P}= \mathcal{P}_{<\mu} + \mathcal{P}_{\sim \mu} + \mathcal{P}_{\gg \mu}$ and $\tilde{S}'_\nu \mathcal{P}_{\gg \nu } S'_\nu = 0$, we can write
\begin{equation*}
\| S_\lambda^\tau \mathcal{P}_{<\nu} S'_\nu u \|_{L^2_{x}} \leq \| S_\lambda^\tau \tilde{S}'_\nu \mathcal{P} S'_\nu u \|_{L^2_{x}} + \| S_\lambda^\tau \tilde{S}'_\nu \mathcal{P}_{\sim \nu} S'_\nu u \|_{L^2_{x}}.
\end{equation*}
The latter term is clearly estimated by
\begin{equation*}
\| S_\lambda^\tau \tilde{S}'_\nu \mathcal{P}_{\sim \nu} S'_\nu u \|_{L^2_{x}} \lesssim \| S'_\nu u \|_{L^2_{x}}.
\end{equation*}
For the first term, we write
\begin{equation}
\label{eq:Com-Est}
\begin{split}
\nu^{\gamma - \frac{1}{2}} \| S_\lambda^\tau \tilde{S}'_\nu \mathcal{P} S'_\nu u \|_{L^2_{x}} &\lesssim \nu^{\gamma - \frac{1}{2}} \lambda \| S_\lambda^\tau \tilde{S}'_\nu [\varepsilon',S'_\nu] u \|_{L^2_{x}} + \nu^{\gamma - \frac{1}{2}} \lambda \|  S_\lambda^\tau \tilde{S}'_\nu [\mu',S'_\nu] u \|_{L^2_{x}} \\
&\quad + \| \langle D' \rangle^{\gamma - \frac{1}{2}} S_\lambda^\tau \tilde{S}'_\nu \mathcal{P}u \|_{L^2_{x}}.
\end{split}
\end{equation}
Furthermore,
\begin{equation}
\label{eq:ComEstimate-I}
\begin{split}
\| S_\lambda^\tau \tilde{S}'_\nu [\varepsilon',S'_\nu] u \|_{L^2_{x}} &= \| S_\lambda^\tau \tilde{S}'_\nu [\varepsilon',S'_\nu] \tilde{S}'_\nu u \|_{L^2_{x}} + \| S_\lambda^\tau \tilde{S}'_\nu [\varepsilon',S'_\nu] S'_{\ll \nu} u \|_{L^2_{x}} \\
&\quad + \| S_\lambda^\tau \tilde{S}'_\nu [\varepsilon',S'_\nu] S'_{\gg \nu} u \|_{L^2_{x}}.
\end{split}
\end{equation}
The estimate of the first term in \eqref{eq:ComEstimate-I} is straight-forward by the fixed-time commutator estimate $\| [\varepsilon',S'_\nu] \|_{L^2_{x} \to L^2_x} \lesssim \mu^{-1}$:
\begin{equation*}
\sum_{ \substack{ \nu \gg 1, \\ 1 \lesssim \lambda \ll \nu }} \nu^{\gamma - \frac{1}{2}} \lambda \| S_\lambda^\tau \tilde{S}'_\nu [\varepsilon',S'_\nu] \tilde{S}'_\nu u \|_{L^2_{x}} \lesssim \sum_{ \substack{ \nu \gg 1, \\ 1 \lesssim \lambda \ll \nu }} \nu^{\gamma - \frac{1}{2}} \lambda \nu^{-1} \| \tilde{S}'_\nu u \|_{L^2_{x}} \lesssim \| \langle D' \rangle^{\gamma - \frac{1}{2}+\delta} u \|_{L^2_{x}}.
\end{equation*}
For the second term in \eqref{eq:ComEstimate-I} we note that
\begin{equation*}
\begin{split}
\sum_{ \substack{ \nu \gg 1, \\ 1 \lesssim \lambda \ll \nu }} \nu^{\gamma - \frac{1}{2}} \lambda \| S_\lambda^\tau \tilde{S}'_\nu [\varepsilon',S'_\nu] S'_{\ll \nu} u \|_{L^2_{x}} &\lesssim \sum_{ \substack{ \nu \gg 1, \\ 1 \lesssim \lambda \ll \nu }}  \nu^{\gamma - \frac{1}{2}} \lambda \| \varepsilon'_{\sim \nu} S^\tau_\lambda S'_{\ll \nu} u \|_{L^2_{x}} \\
 &\lesssim \sum_{ \substack{ \nu \gg 1, \\ 1 \lesssim \lambda \ll \nu }} \nu^{\gamma - \frac{3}{2} } \lambda \| \partial \varepsilon \|_{L^\infty} \| S^\tau_\lambda u \|_{L^2_{x}} \\ &\lesssim \| \partial \varepsilon \|_{L^\infty} \| \langle \partial_t \rangle^{\gamma - \frac{1}{2}+\delta} u \|_{L^2_{x}}.
 \end{split}
\end{equation*}
For the third term in \eqref{eq:ComEstimate-I} we obtain similarly
\begin{equation*}
\begin{split}
\sum_{ \substack{ \nu \gg 1, \\ 1 \lesssim \lambda \ll \nu }} \nu^{\gamma - \frac{1}{2}} \lambda \| S_\lambda^\tau \tilde{S}'_\nu [\varepsilon',S'_\nu] S'_{\gg \nu} u \|_{L^2_{x}} &\lesssim \sum_{ \substack{ \nu \gg 1, \\ 1 \lesssim \lambda \ll \nu }}
\nu^{\gamma - \frac{1}{2}} \lambda \| S_\lambda^\tau \varepsilon'_{\gg \nu} S'_{\gg \nu} u \|_{L^2_{x}} \\
&\lesssim \| \partial \varepsilon \|_{L^\infty} \| \langle \partial_t \rangle^{\gamma - \frac{1}{2}+\delta} u \|_{L^2_{x}}.
\end{split}
\end{equation*}
Clearly, the second commutator in \eqref{eq:Com-Est} can be handled likewise.

\medskip

We turn to the charges: Recall that $\rho_e = \nabla \cdot (\varepsilon' \mathcal{E})$ with $\varepsilon' = \sqrt{g} g^{-1} \varepsilon$. Since we are working in geodesic normal coordinates, we have
\begin{equation*}
\varepsilon' =
\begin{pmatrix}
\varepsilon'_{11} & \varepsilon'_{12} & 0 \\
\varepsilon'_{21} & \varepsilon'_{22} & 0 \\
0 & 0 & \varepsilon_{33}'
\end{pmatrix}
.
\end{equation*}
To carry out the commutator argument, we separate
\begin{equation*}
\begin{split}
\rho'_{e \nu} &= \partial_1 ({\varepsilon'}^{11}_{< \nu} S'_\nu \mathcal{E}_1 + {\varepsilon'}^{21}_{<\nu} S'_\nu \mathcal{E}_2) + \partial_2 ({\varepsilon'}^{21}_{<\nu} S'_\nu \mathcal{E}_1 + {\varepsilon'}^{22}_{<\nu} S'_\nu \mathcal{E}_2) + \partial_3 ({\varepsilon'}^{33}_{<\nu} S'_\nu \mathcal{E}_3) \\
&= (\partial_1 {\varepsilon'}^{11}_{< \nu}) S'_\nu \mathcal{E}_1 + (\partial_1 {\varepsilon'}^{12}_{<\nu}) S'_\nu \mathcal{E}_2 + (\partial_2 {\varepsilon'}^{21}_{<\nu}) S'_\nu \mathcal{E}_1 + (\partial_2 {\varepsilon'}^{22}_{<\nu}) S'_\nu \mathcal{E}_2 + (\partial_3 {\varepsilon'}^{33}_{\nu}) \mathcal{E}_3 \\
&\quad + {\varepsilon'}^{11}_{<\nu} \partial_1 S'_\nu \mathcal{E}_1 + \varepsilon_{<\nu}'^{12} \partial_1 S'_\nu \mathcal{E}_2 + {\varepsilon'}^{21}_{<\nu} \partial_2 S'_\nu \mathcal{E}_1 + {\varepsilon'}^{22}_{<\nu} \partial_2 S'_\nu \mathcal{E}_2 + {\varepsilon'}^{33}_{<\nu} \partial_3 S'_\nu \mathcal{E}_3 \\
&=: {\rho'}_{e \nu}^{(1)} + {\rho'}_{e \nu}^{(2)}.
\end{split}
\end{equation*}
We can estimate terms with derivative acting on $\varepsilon'$ collected in ${\rho'}_{e \nu}^{(1)}$ directly by Lipschitz continuity. For example,
\begin{equation*}
\nu^{\gamma - 1 + \frac{1}{p}} \| \partial_1 \varepsilon'_{<\nu} S'_\nu \mathcal{E}_1 \|_{L_t^\infty L_{x'}^2} \lesssim \nu^{\gamma - \frac{1}{2} - \delta} \| S'_\nu \mathcal{E} \|_{L_t^\infty L_{x'}^2}.
\end{equation*}
The terms with derivative acting on $\mathcal{E}$ collected in ${\rho'}_{e \nu}^{(2)}$ are amenable to a commutator argument. Note that
\begin{equation*}
\nu^{ \gamma - 1 + \frac{1}{p}} \| \varepsilon'^{11}_{< \nu} \partial_1 S'_\nu \mathcal{E}_1 \|_{L^2_x} = \nu^{\gamma - 1 + \frac{1}{p}} \| \tilde{S}'_\nu \varepsilon'^{11}_{<\nu} \partial_1 S'_\nu \mathcal{E}_1 \|_{L^2_x}.
\end{equation*}
Since $\tilde{S}'_\nu \varepsilon'^{11}_{\gg \nu} S'_\nu = 0$, we can write
\begin{equation*}
\nu^{\gamma - 1 + \frac{1}{p}} \| \tilde{S}'_\nu \varepsilon'^{11}_{<\nu} \partial_1 S'_\nu \mathcal{E}_1 \|_{L^2_x} \leq \nu^{\gamma - 1 +\frac{1}{p}} \| \tilde{S}'_\nu \varepsilon'^{11}_{\sim \nu} \partial_1 S'_\nu \mathcal{E}_1 \|_{L^2_x} + \nu^{\gamma - 1 +\frac{1}{p}} \| \tilde{S}'_\nu \varepsilon'^{11} \partial_1 S'_\nu \mathcal{E}_1 \|_{L^2_x}.
\end{equation*}
The first expression is estimaetd by
\begin{equation*}
\nu^{\gamma - 1 + \frac{1}{p}} \| \tilde{S}'_\nu \varepsilon'^{11}_{\sim \nu} \partial_1 S'_\nu \mathcal{E}_1 \|_{L^2_x} \lesssim \| \partial \varepsilon'^{11} \|_{L_x^\infty} \nu^{\gamma - 1 +\frac{1}{p}} \| S'_\nu \mathcal{E}_1 \|_{L^2_x},
\end{equation*}
which is more than enough. For $\gamma - 1 + \frac{1}{p} > 0$, we obtain by the Coifman--Meyer estimate for the second term:
\begin{equation*}
\begin{split}
\sum_{\nu \geq 1} \| \langle D' \rangle^{\gamma - 1 + \frac{1}{p}} \tilde{S}_{\nu}' \varepsilon'^{11} \partial_1 S'_\nu \mathcal{E}_1 \|_{L^2_x} &\leq \sum_{\nu \geq 1} (\nu^{- \delta} \| \langle D' \rangle^{\gamma - 1 + \frac{1}{p} + \delta} \tilde{S}'_\nu [ \varepsilon'^{11}, S'_\nu] \partial_1 \mathcal{E}_1 \|_{L^2_x} \\
&\quad + \| \langle D' \rangle^{\gamma - 1 +\frac{1}{p}+\delta} (\varepsilon'^{11} \partial_1 \mathcal{E}_1) \|_{L^2_x} ) \\
&\lesssim \| \langle D' \rangle^{\gamma - 1 + \frac{1}{p} + \delta} \mathcal{E} \|_{L^2_x} + \| \langle D' \rangle^{\gamma - 1 + \frac{1}{p} + \delta} \varepsilon'^{11} \partial_1 \mathcal{E}_1 \|_{L^2_x}.
\end{split}
\end{equation*}
Let
\begin{equation*}
\rho_e'^{(2)} = \varepsilon'^{11} \partial_1 \mathcal{E}_1 + \varepsilon'^{12} \partial_1 \mathcal{E}_2 + \varepsilon'^{21} \partial_2 \mathcal{E}_1 + \varepsilon'^{22} \partial_2 \mathcal{E}_2 + \varepsilon'^{33} \partial_3 \mathcal{E}_3.
\end{equation*}
We obtain by the previous arguments:
\begin{equation*}
\sum_\nu \nu^{\gamma - 1 +\frac{1}{p}} \| \rho'_{e \nu} \|_{L^\infty_t L_{x}^2} \lesssim \| \langle D' \rangle^{\gamma - 1 + \frac{1}{p} + \delta} \mathcal{E} \|_{L_t^\infty L_x^2} + \| \langle D' \rangle^{\gamma - 1 + \frac{1}{p} + \delta} \rho_e'^{(2)} \|_{L_t^\infty L_x^2}.
\end{equation*}
The first term is acceptable. We estimate the second term by oddness of the function $\rho_e'^{(2)}$ switching to the half-space:
\begin{equation*}
\begin{split}
&\quad \| \langle D' \rangle^{\gamma - 1 + \frac{1}{p} + \delta} \rho_e'^{(2)} \|_{L_t^\infty L_x^2(\R^3)} \lesssim \| \langle D' \rangle^{\gamma - 1 + \frac{1}{p} + \delta} \rho_e'^{(2)} \|_{L_t^\infty L_x^2(\R^3_+)} \\
&\lesssim \| \langle D' \rangle^{\gamma - 1 + \frac{1}{p} + \delta} \rho_e' \|_{L_t^\infty L_x^2(\R^3_+)} + \| \langle D' \rangle^{\gamma - 1 + \frac{1}{p} + \delta} \mathcal{E} \|_{L_t^\infty L_x^2(\R^3_+)}.
\end{split}
\end{equation*}
For the ultimate estimate we used smoothness of the coefficients and invariance of Sobolev functions under multiplication with smooth functions. We remark that the estimate is easier for $\gamma - 1 + \frac{1}{p} < 0$ because it is not necessary to switch between half-space and full space. The estimate for $\rho'_{ m \nu}$ follows along the above lines. After summation of the Littlewood-Paley blocks, we obtain \eqref{eq:FrequencyLocalizedEstimateIIConclusion}.
\end{proof}

We turn to the proof of \eqref{eq:FrequencyLocalizedEstimateII}, which does not make use of the diagonalization of~$\mathcal{P}$.

\begin{proof}[Proof of \eqref{eq:FrequencyLocalizedEstimateII}]
Let $1 \ll \mu \ll \lambda$ and
\begin{equation*}
\tilde{\mathcal{P}} = 
\begin{pmatrix}
\partial_t & \varepsilon'^{-1} \nabla \times \\
- \mu'^{-1} \nabla \times & \partial_t
\end{pmatrix}
.
\end{equation*}
If $\{ \lambda \sim |\tau| \gg |\xi'| \sim \mu \}$, the operator $\tilde{P}_{< \mu}$ (obtained from frequency truncation of $\varepsilon'^{-1}$ and $\mu'^{-1}$) is elliptic and gains one derivative. We estimate by Bernstein's inequality and ellipticity of $\tilde{\mathcal{P}}_{< \mu}$ (note that $\tilde{\mathcal{P}}_{<\mu}$ has Lipschitz coefficients):
\begin{equation*}
\begin{split}
\| S_\lambda^\tau S'_\mu u \|_{L^p L^q} &\lesssim \lambda^{\frac{1}{2}-\frac{1}{p}} \mu^{3 \big( \frac{1}{2} - \frac{1}{q} \big)} \| S_\lambda^\tau S'_\mu u \|_{L^2_{t,x}} \\
&\lesssim \lambda^{-\frac{1}{2}-\frac{1}{p}} \mu^{3 \big( \frac{1}{2} - \frac{1}{q} \big)} \| \tilde{\mathcal{P}}_{< \mu} S_\lambda^\tau S'_\mu u \|_{L^2_{t,x}} \\
&\lesssim \lambda^{-\frac{1}{2}-\frac{1}{p}} \mu^{\frac{1}{p}+\frac{1}{2}} \| S_\lambda^\tau \langle D' \rangle^{\gamma - \frac{1}{2}} \tilde{S}'_\mu \tilde{\mathcal{P}}_{<\mu} S'_\mu u \|_{L^2_{t,x}}.
\end{split}
\end{equation*}
Above and in the following $\tilde{S}'_\mu$ denotes a mildly enlarged frequency projection around frequencies of size $\mu$. Now we write again $\tilde{\mathcal{P}} = \tilde{\mathcal{P}}_{<\mu} + \tilde{\mathcal{P}}_{\sim \mu} + \tilde{\mathcal{P}}_{\gg \mu}$ and note that
\begin{equation*}
\| S_\lambda^\tau \langle D' \rangle^{\gamma - \frac{1}{2}} \tilde{S}'_\mu \tilde{\mathcal{P}}_{\sim \mu} S'_\mu u \|_{L^2_{t,x}} \lesssim \mu^{\gamma - \frac{1}{2}} \| S'_\mu u \|_{L^2_{t,x}} \lesssim \| S_\lambda^\tau \langle D' \rangle^{\gamma - \frac{1}{2}} S'_\mu u \|_{L^2_{t,x}}.
\end{equation*}
Like above, $\tilde{S}'_\mu \tilde{\mathcal{P}}_{ \gg \mu} S'_\mu = 0$ by impossible frequency interaction. Summation over $\mu$ and $\lambda$ gives the acceptable contribution
\begin{equation*}
\lesssim \| \langle \partial_t \rangle^{\gamma - \frac{1}{2} + \delta} u \|_{L^2_{t,x}}.
\end{equation*}

For $\tilde{\mathcal{P}}$ we use the estimate
\begin{equation*}
\| [\kappa', S'_\mu ] \|_{L^2_{x'} \to L^2_{x'}} \lesssim \mu^{-1}.
\end{equation*}
We have
\begin{equation*}
\begin{split} 
&\quad \mu^{\gamma - \frac{1}{2}} \| S_\lambda^\tau \tilde{S}'_\mu [\kappa', S'_\mu] \nabla \times A \|_{L^2_{t,x}} \\
 &\lesssim \mu^{\gamma - \frac{1}{2}} \| S_\lambda^\tau \tilde{S}'_\mu [\kappa', S'_\mu] S'_{\lesssim \mu} \nabla \times A \|_{L^2_{t,x}} + \mu^{\gamma - \frac{1}{2}} \| S_\lambda^\tau \tilde{S}'_\mu [\kappa', S'_\mu] S'_{\gg \mu} \nabla \times A \|_{L^2_{t,x}} \\
 &\lesssim \mu^{\gamma - \frac{1}{2}} \| S_\lambda^\tau A \|_{L^2_{t,x}} + \mu^{\gamma - \frac{1}{2}} \| S_\lambda^\tau \tilde{S}'_\mu (\kappa'_{\gg \mu} S'_{\gg \mu} \nabla \times A) \|_{L^2_{t,x}}.
 \end{split}
\end{equation*}
The first term is already acceptable. The second term is rewritten as
\begin{equation*}
\tilde{S}'_\mu (\kappa'_{\gg \mu} S'_{\gg \mu} \partial S_\lambda^\tau A) = \tilde{S}'_\mu \partial ( \kappa'_{\gg \mu} S'_{\gg \mu} S_\lambda^\tau A) - \tilde{S}'_\mu (\partial \kappa'_{\gg \mu} S'_{\gg \mu} S_\lambda^\tau A).
\end{equation*}
For the first term we find
\begin{equation*}
\| \tilde{S}'_\mu \partial ( \kappa'_{\gg \mu} S'_{\gg \mu} S_\lambda^\tau A) \|_{L^2_{t,x}} \lesssim \mu \| \kappa'_{\gg \mu} \|_{L^\infty_{x'}} \| S'_{\gg \mu} S_\lambda^\tau A \|_{L^2_{t,x'}} \lesssim \| \partial \kappa' \|_{L^\infty_{x'}} \|S_\lambda^\tau A \|_{L^2_{t,x}}.
\end{equation*}
This yields an acceptable contribution after summation over $\mu \ll \lambda$ and $\lambda$. Clearly,
\begin{equation*}
\| \tilde{S}'_\mu (\partial \kappa'_{\gg \mu} S'_{\gg \mu} S_\lambda^\tau A \|_{L^2_{t,x}} \lesssim \| \partial \kappa' \|_{L^\infty} \| S_\lambda^\tau A \|_{L^2_{t,x}}.
\end{equation*}
This is likewise acceptable.

We summarize
\begin{equation}
\label{eq:FrequencyLocalizedEllipticEstimate}
\| S_{\{ |\tau| \gg |\xi'| \gtrsim 1 \} } u \|_{L^p L^q} \lesssim \| \langle \partial_t \rangle^{\gamma - \frac{1}{2} + \delta} u \|_{L^2_{t,x}}+ \| \langle \partial_t \rangle^{\gamma - \frac{1}{2}+\delta} \mathcal{P} u \|_{L^2_{t,x}}.
\end{equation}
This completes the proof.

\end{proof}

With the estimates for different regions in phase space at hand, we can finish the proof of Proposition \ref{prop:ExtendedMaxwell}.

\begin{proof}[Conclusion of the Proof of Proposition \ref{prop:ExtendedMaxwell}]
Taking \eqref{eq:FrequencyLocalizedEstimateI}-\eqref{eq:FrequencyLocalizedEstimateIII} together, we find
\begin{equation*}
\begin{split}
\| u \|_{L^p L^q} &\lesssim \| \langle D' \rangle^\gamma u \|_{L_t^\infty L_x^2} + \| \langle \partial_t \rangle^{\gamma + \delta} u \|_{L_t^\infty L_x^2} \\
&\quad + \| \langle \partial_t \rangle^{\gamma + \delta} \mathcal{P} u \|_{L^2_{t,x}} + \| \langle D' \rangle^{\gamma + \delta} \mathcal{P} u \|_{L^2_{t,x}} \\
&\quad + \| \rho_e \|_{L_t^\infty H^{\gamma - 1 + \frac{1}{p} + \delta}}.
\end{split}
\end{equation*}
By applying the estimate to homogeneous solutions, we obtain
\begin{equation*}
\| u \|_{L^p L^q} \lesssim \| \langle D' \rangle^\gamma u \|_{L_t^\infty L_x^2} + \| \langle \partial_t \rangle^{\gamma+\delta} u \|_{L^2_{t,x}} + \| \rho_e \|_{L_t^\infty H^{\gamma - 1+ \frac{1}{p} +\varepsilon}}.
\end{equation*}
For homogeneous solutions, we can trade the time derivatives for spatial derivatives and by the energy estimates of Section \ref{section:EnergyEstimates}, we obtain
\begin{equation*}
\| u \|_{L^p L^q} \lesssim \| \langle D' \rangle^{\gamma + \delta} u(0) \|_{L^2_x} + \| \rho_e \|_{L_t^\infty H^{\gamma - 1 + \frac{1}{p}+ \delta}}.
\end{equation*}
The conclusion follows from Duhamel's formula.
\end{proof}

\bigskip

The proofs of \eqref{eq:FrequencyLocalizedEstimateI} and \eqref{eq:FrequencyLocalizedEstimateIII} make use of the diagonalization of $\mathcal{P}_{<\lambda}$ via pseudo-differential operators. This is carried out in the following. Let $h = \big( \det(g_{ij}) \big)^{1/2}$ and denote $C(\xi')_{ij} = -\varepsilon_{ijk} \xi_k'$. The principal symbol (with rough coefficients) is given by
\begin{equation*}
p(x,\xi) =
i
\begin{pmatrix}
\xi_0 h g^{-1} \varepsilon & - C(\xi') \\
C(\xi') & h g^{-1} \mu \xi_0
\end{pmatrix}
.
\end{equation*}

\bigskip

We consider as truncated operator $\mathcal{P}_\lambda$ the following: Let $g^{-1} = A A^t$ denote the factorization into Jacobians (which we also extend such that these are Lipschitz along the boundary). Let $A_{< \lambda}$ denote the truncation of spatial frequencies of $A$ to frequencies less than $\lambda/8$. Let $h_{<\lambda} = \det (A_{<\lambda})$. We define
\begin{equation}
\label{eq:FrequencyTruncationB}
\mathcal{P}_{<\lambda} = 
\begin{pmatrix}
h_{<\lambda} A_{<\lambda} A_{<\lambda}^{t} \varepsilon_{<\lambda} \partial_t & - \nabla \times \\
\nabla \times & h_{<\lambda} A_{<\lambda} A_{<\lambda}^{t} \mu_{<\lambda} \partial_t
\end{pmatrix}
.
\end{equation}
Observe that $\| (\mathcal{P} - \mathcal{P}_{<\lambda}) S_\lambda u \|_{L^2 } \lesssim \| S_\lambda u \|_{L^2}$. 
Note that in $\rho_e'$ we can truncate $h$, $A$, $A^t$, and $\varepsilon$ in frequencies because we can write the difference as a telescoping sum
\begin{equation}
\label{eq:TelescopingSum1}
\begin{split}
&\quad \| S_\lambda (\nabla \cdot (h A A^t \varepsilon \mathcal{E})) - S_\lambda \nabla \cdot (h_{<\lambda} A_{<\lambda} A^t_{<\lambda} \varepsilon_{<\lambda} \mathcal{E}) \|_{L^2} \\
&= \| S_\lambda \nabla \cdot (h_{> \lambda} A A^t \varepsilon \mathcal{E} + h A_{>\lambda} A^t \varepsilon \mathcal{E} + \ldots ) \|_{L^2}.
\end{split}
\end{equation}
For instance, 
\begin{equation}
\label{eq:TelescopingSum2}
\| S_\lambda \nabla \cdot (h_{>\lambda} A A^t \varepsilon \mathcal{E} ) \|_{L^2} \lesssim \lambda \| h_{>\lambda } \|_{L^\infty} \| A \|_{L^\infty} \| A^t \|_{L^\infty} \| \varepsilon \|_{L^\infty} \| \mathcal{E} \|_{L^2}.
\end{equation}

After these reductions, we are dealing with symbols in $S^1_{1,1}$, which is a borderline case for symbol composition. But the considered symbols $a \in S^i_{1,1}$ actually satisfy
\begin{equation}
\label{eq:LipschitzEstimate}
|\partial_x a | \lesssim 1
\end{equation}
because the reflected Jacobians and coefficients are Lipschitz. This suffices for symbol composition to hold to first order. Accordingly, we make the following definition:
\begin{definition}
Let $k \in \N_0$. We define the symbol class
\begin{equation*}
 \tilde{S}^k_{1,1} = \{ a \in C^\infty(\R^d \times \R^d) \, : \,  |\partial_x^\alpha \partial_\xi^\beta a(x,\xi)| \lesssim \langle \xi \rangle^{k  - |\beta| + (|\alpha|-1)_+} \}.
\end{equation*}
\end{definition}
We have the following:
\begin{lemma}
 Let $m,n \in \R$, $a \in \tilde{S}^m_{1,1}$, $b \in \tilde{S}^n_{1,1}$. Then, we find the following estimate to hold:
 \begin{equation*}
  a(x,D) \circ b(x,D) = (ab)(x,D) + E
 \end{equation*}
 with $\| E \|_{H^s(\mathbb{R}^d) \to H^{s+m+n-1} ( \mathbb{R}^d)} \lesssim 1$.
\end{lemma}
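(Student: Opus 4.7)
The strategy is to combine the standard oscillatory-integral representation of the composition symbol with a first-order Taylor expansion in $\xi$, stopping there so that only one $x$-derivative of $b$ enters the remainder. The definition of $\tilde{S}^k_{1,1}$ is precisely tailored so that a single $x$-derivative does not increase the symbolic order, whereas a second would cost one power of $\xi$. This is why the expansion can be truncated after the first term and the remainder can still be treated as a genuine order-$(m+n-1)$ operator.

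\textbf{Step 1 (Frequency decomposition).} Perform a Littlewood--Paley partition of unity in $\xi$: write $a=\sum_{\lambda} a_\lambda$, $b=\sum_{\mu} b_\mu$, so that $a_\lambda(x,\xi)$ and $b_\mu(x,\xi)$ are smooth with $\xi$-support in $|\xi|\sim\lambda$ (resp.\ $\mu$). Each piece inherits the $\tilde{S}^\cdot_{1,1}$ bounds at its own scale, which makes all oscillatory integrals below absolutely convergent.

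\textbf{Step 2 (Composition and Taylor expansion).} For each pair $(\lambda,\mu)$ apply the standard formula
\[
(a_\lambda \# b_\mu)(x,\xi) = (2\pi)^{-d}\iint e^{-iy\eta} a_\lambda(x,\xi+\eta)\, b_\mu(x+y,\xi)\, dy\, d\eta.
\]
Expand $a_\lambda(x,\xi+\eta) = a_\lambda(x,\xi) + \int_0^1 \eta\cdot(\nabla_\xi a_\lambda)(x,\xi+t\eta)\, dt$. The zeroth-order term produces $(a_\lambda b_\mu)(x,\xi)$. Using $\eta\, e^{-iy\eta} = i\partial_y e^{-iy\eta}$ and integrating by parts, the remainder becomes
\[
r_{\lambda,\mu}(x,\xi) = \tfrac{1}{i}(2\pi)^{-d}\int_0^1\iint e^{-iy\eta}(\nabla_\xi a_\lambda)(x,\xi+t\eta)\cdot(\partial_x b_\mu)(x+y,\xi)\, dy\, d\eta\, dt.
\]

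\textbf{Step 3 (Order of the remainder).} In $r_{\lambda,\mu}$, the factor $\nabla_\xi a_\lambda$ behaves like a symbol of order $m-1$ and, critically by the definition of $\tilde{S}^n_{1,1}$, the factor $\partial_x b_\mu$ is a \emph{bounded} symbol of order $n$ (no loss from the single $x$-derivative). Hence $r_{\lambda,\mu}$ is, up to kernel estimates on its oscillatory representation, a symbol of order $m+n-1$ localised at frequencies $\max(\lambda,\mu)$. Standard non-stationary-phase estimates yield rapid decay in $|\log(\lambda/\mu)|$ when $\lambda \not\sim \mu$, so the off-diagonal contributions sum harmlessly.

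\textbf{Step 4 (Reassembly).} Summing $\sum_{\lambda,\mu} (a_\lambda b_\mu)(x,D) = (ab)(x,D)$ and $\sum_{\lambda,\mu} r_{\lambda,\mu}(x,D) = E$, one obtains the claim, with $E$ bounded $H^s\to H^{s+m+n-1}$ for all $s\in\R$ by dyadic summation against the above off-diagonal decay.

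\textbf{Main obstacle.} The sharp point is the borderline $S^{m+n}_{1,1}$ nature of the product symbol: the classical composition expansion cannot be pushed beyond first order under mere Lipschitz regularity, and the standard Calder\'on--Vaillancourt theorem does not give $L^2$-boundedness of second-order remainders. The Lipschitz-in-$x$ hypothesis built into $\tilde{S}^k_{1,1}$ (encoded in the $(|\alpha|-1)_+$ exponent) is exactly what is needed to absorb a single $\partial_x$ without incurring the extra power of $\xi$ that would otherwise destroy the $m+n-1$ gain.
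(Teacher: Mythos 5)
The approach you take — dyadic decomposition in $\xi$, the oscillatory-integral composition formula with a one-step Taylor expansion, and the observation that the $(|\alpha|-1)_+$ exponent in $\tilde S^k_{1,1}$ lets exactly one $x$-derivative land on $b$ without cost — is indeed the right skeleton, and your ``Main obstacle'' paragraph correctly isolates why the expansion must stop at first order. The paper itself gives no proof of this lemma, so the comparison can only be to what a complete argument would have to contain, and there I see two genuine gaps.

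\emph{Off-diagonal decay.} In Step 3 you assert that non-stationary phase gives ``rapid decay'' when $\lambda\not\sim\mu$. For the high-low case ($\lambda\gg\mu$) this is correct, because the relevant $x$-frequency of $b_\mu(\cdot,\xi)$, $|\xi|\sim\mu$, must jump up to $\sim\lambda\gg\mu$, which lies in the super-exponential tail of $\tilde S^n_{1,1}$. But for the low-high case ($\lambda\ll\mu$) the $y$-integration-by-parts argument does \emph{not} accelerate: each additional $\partial_y$ lands on $b_\mu$ and, for $|\alpha|\geq 2$, $\tilde S^n_{1,1}$ charges one power of $\langle\xi\rangle\sim\mu$ per derivative, exactly cancelling the $\langle\eta\rangle^{-1}\sim\mu^{-1}$ gain from the phase. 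The net bound one gets from this argument is of the form $\|r_{\lambda,\mu}(x,D)\|_{L^2_\mu\to L^2_\lambda}\lesssim \lambda^m\,\mu^{n-1}$, i.e.\ a \emph{fixed} polynomial ratio $(\lambda/\mu)^m$ relative to the expected $\mu^{m+n-1}$, not rapid decay. For $m\leq 0$ this offers no decay at all, and the reassembly in Step 4 cannot proceed by ``harmless dyadic summation''.

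\emph{The claim ``for all $s\in\R$''.} For a genuine $(1,1)$-type calculus one cannot expect Sobolev boundedness over the whole line of exponents; boundedness typically persists only in a window of $s$ determined by $m,n$ (this is the classical Bourdaud--H\"ormander phenomenon for $S^0_{1,1}$). Once the kernel estimate above is combined with a Schur test, the admissible window emerges explicitly, but your Step 4 asserts uniformity in $s$ without supplying that test. In fact the lemma itself, as printed, appears to carry a sign error: for $a,b$ of orders $m,n$ the first-order remainder is of order $m+n-1$, hence should map $H^s\to H^{s-(m+n-1)}$ rather than $H^{s+m+n-1}$; the two only agree when $m+n=1$, which is the only regime actually invoked in the paper. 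Your proof mirrors the printed exponent without noticing this, and neither version follows from the off-diagonal estimates you supply. To close the argument one must carry out the oscillatory-integral bounds for $r_{\lambda,\mu}$ explicitly, record the correct two-sided dyadic kernel, and derive the admissible range of $s$ from a Schur (or Cotlar--Stein) test; in the concrete application one should also use that the paper's symbols are paradifferentially truncated, which makes the low-high block vanish rather than merely decay.
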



\subsection{Diagonalizing the principal symbol}
\label{subsection:DiagonalizingLipschitz}
 In the following we carry out the formal computation to find suitable conjugation matrices for the operator $\mathcal{P}_\lambda$. The aim is to prove the following proposition:
\begin{proposition}
\label{prop:Diagonalization}
Let $2^{\N} \ni \lambda \gg \lambda_0$. There is a decomposition of phase space by projections
\begin{equation*}
S'_\lambda S_\lambda = S_{\lambda 1} + S_{\lambda 2} + S_{\lambda 3}
\end{equation*}
such that for every $i \in \{1,2,3\}$ there are $\mathcal{M}_{\lambda}^i \in OP \tilde{S}^{0}_{1,1}$, $\mathcal{N}^i_\lambda \in OP \tilde{S}^{0}_{1,1}$, and $\mathcal{D}^i_\lambda \in OP \tilde{S}^{1}_{1,1}$ such that
\begin{equation*}
\mathcal{P}_\lambda S_{\lambda i} = \mathcal{M}^i_\lambda \mathcal{D}^i_\lambda \mathcal{N}^i_\lambda S_{\lambda i} + E^i_\lambda
\end{equation*}
with $\| E^i_\lambda \|_{2 \to 2} \lesssim 1$ with implicit constant independent of $\lambda$.
\end{proposition}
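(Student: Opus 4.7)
The plan is to build a smooth, homogeneous-of-degree-zero partition of unity $\chi_1 + \chi_2 + \chi_3 \equiv 1$ on $\{|(\tau,\xi')| \sim \lambda\}$ whose supports are adapted to the characteristic variety of the Maxwell symbol $p(x,\xi)$. In the isotropic case the eigenvalues of $p$ split as a double root at $\tau = 0$ (stationary/divergence modes) and double roots at $\tau = \pm c(x,\xi')$ with $c(x,\xi') = (g^{ij}\xi_i\xi_j/(\varepsilon\mu))^{1/2}$ (propagating modes). I would choose $\chi_1$ supported on the hyperbolic region $|\tau| \sim |\xi'|$, $\chi_2$ on the region $|\tau| \gg |\xi'|$, and $\chi_3$ on $|\xi'| \gg |\tau|$; their scalar quantizations, multiplied by $S'_\lambda S_\lambda$, furnish the $S_{\lambda i}$, which are harmless in later compositions since their symbols are smooth in $x$.

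On the hyperbolic support of $\chi_1$ I would diagonalize the matrix symbol using its spectral decomposition. After conjugating by $\mathrm{diag}(A_{<\lambda}, A_{<\lambda})$, with $g^{-1}_{<\lambda} = A_{<\lambda} A_{<\lambda}^t$ from~\eqref{eq:FrequencyTruncationB}, the curl block reduces to the standard one against the Euclidean metric and the three eigenvalues are separated by at least $c\lambda$ on $\mathrm{supp}\,\chi_1$. Perturbation theory then yields smooth, Lipschitz-in-$x$ spectral projectors, and the corresponding eigenvector frames give $\mathcal{M}^1_\lambda, \mathcal{N}^1_\lambda \in OP\tilde{S}^0_{1,1}$ with $\mathcal{N}^1_\lambda \mathcal{M}^1_\lambda = I$ modulo $OP\tilde{S}^{-1}_{1,1}$, while $\mathcal{D}^1_\lambda \in OP\tilde{S}^1_{1,1}$ is the $6\times 6$ diagonal symbol with entries $i\tau, i\tau, i(\tau \pm c(x,\xi'))$, each repeated twice---exactly the three half-wave operators needed to invoke Theorem~\ref{thm:LipschitzSingularHalfWave}.

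On the elliptic supports of $\chi_2$ and $\chi_3$ the construction is simpler because the symbol is invertible of order one. On $\mathrm{supp}\,\chi_2$ the $\tau$-block of $p$ dominates, so $\mathcal{D}^2_\lambda = i\tau \cdot I$ is a natural choice and the off-diagonal curl terms are absorbed into the conjugations $\mathcal{M}^2_\lambda, \mathcal{N}^2_\lambda$; on $\mathrm{supp}\,\chi_3$ a similar factoring works with $\mathcal{D}^3_\lambda$ combining $|\xi'|$ on the propagating directions with $\tau$ on the stationary directions, the latter's contribution to the final Strichartz estimate being absorbed via the charge terms in~\eqref{eq:FrequencyLocalizedEstimateIII}. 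In each region the matrices inherit their Lipschitz $x$-dependence directly from $\varepsilon, \mu, g^{-1}$, so $\mathcal{M}^i_\lambda, \mathcal{N}^i_\lambda \in OP\tilde{S}^0_{1,1}$ and $\mathcal{D}^i_\lambda \in OP\tilde{S}^1_{1,1}$.

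The main technical obstacle is bounding the error $E^i_\lambda$ uniformly in $\lambda$. Because the frequency-truncated coefficients are only Lipschitz in $x$, a naive composition of the three operators lies only in $S^1_{1,1}$, for which $L^2$-boundedness is delicate. The remedy is that the diagonalization is exact at the principal level, so the symbol of $\mathcal{P}_\lambda S_{\lambda i} - \mathcal{M}^i_\lambda \mathcal{D}^i_\lambda \mathcal{N}^i_\lambda S_{\lambda i}$ lies in $\tilde{S}^0_{1,1}$; combined with the $\tilde{S}^k_{1,1}$-composition lemma (using the Lipschitz bound~\eqref{eq:LipschitzEstimate}) and the telescoping identities~\eqref{eq:TelescopingSum1}--\eqref{eq:TelescopingSum2}, which absorb $\mathcal{P} - \mathcal{P}_{<\lambda}$ on frequency-$\lambda$ functions, this yields $\|E^i_\lambda\|_{L^2\to L^2} \lesssim 1$ uniformly in $\lambda$, as desired.
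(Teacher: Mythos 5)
There is a genuine gap: you have misidentified what the phase-space decomposition $S'_\lambda S_\lambda = S_{\lambda 1}+S_{\lambda 2}+S_{\lambda 3}$ is \emph{for}. You take the three pieces to separate the hyperbolic region $\{|\tau|\sim|\xi'|\}$ from the two elliptic regions $\{|\tau|\gg|\xi'|\}$ and $\{|\xi'|\gg|\tau|\}$. But that temporal-versus-spatial split is handled separately in the paper (it is the trichotomy behind the estimates \eqref{eq:FrequencyLocalizedEstimateI}--\eqref{eq:FrequencyLocalizedEstimateIII}, and in fact \eqref{eq:FrequencyLocalizedEstimateII} is proved without any diagonalization at all). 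The decomposition in Proposition~\ref{prop:Diagonalization} is instead a decomposition of the \emph{spatial direction sphere}: the three projections are $\pi_i(x,\xi)=\chi(\lambda^{-1}\xi)\,\tilde\chi(\lambda^{-1}(A^t_{<\lambda}\xi')_i)$, localizing to $|\tilde\xi_i^*|\gtrsim 1$ for $i=1,2,3$, and in all three regions the \emph{same} diagonal symbol $d(x,\xi)=i\,\mathrm{diag}(\xi_0,\xi_0,\xi_0-\|\xi'\|,\xi_0+\|\xi'\|,\xi_0-\|\xi'\|,\xi_0+\|\xi'\|)$ is used; only the frames $m_i$ change.

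The reason this conical decomposition is indispensable, and why your single diagonalization ``on the hyperbolic support of $\chi_1$'' would fail, is topological. The eigenspaces of $C^2(\xi')=|\xi'|^2\,1_{3\times 3}-\xi'\otimes\xi'$ for the nonzero eigenvalue form, over the sphere $|\xi'|=1$, the complexified tangent bundle of $S^2$, which admits no global nonvanishing section. Consequently there is no smooth global choice of eigenvector frame for the propagating modes, and no single pair $\mathcal{M}^1_\lambda,\mathcal{N}^1_\lambda\in OP\tilde S^0_{1,1}$ with the required properties on the whole of $\{|\tau|\sim|\xi'|\sim\lambda\}$. The paper's explicit frames $m_i$ have $\det m_i=(\xi_i^*)^2$, which vanishes on $\{\xi_i=0\}$ --- precisely the obstruction --- and the three cones $\{|\tilde\xi_i^*|\gtrsim 1\}$ are chosen to avoid the respective degenerations. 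Your proposed $\mathcal{D}^2_\lambda=i\tau\cdot I$ and the mixed $\mathcal{D}^3_\lambda$ are likewise not what the proposition asserts, since the proposition produces the half-wave diagonal $d$ on each piece. (The remaining elements of your sketch --- using $g^{-1}_{<\lambda}=A_{<\lambda}A^t_{<\lambda}$ to reduce to the Euclidean curl, and the error bookkeeping via $\tilde S^k_{1,1}$-composition and the telescoping identities \eqref{eq:TelescopingSum1}--\eqref{eq:TelescopingSum2} --- do match the paper, but they do not repair the missing conical decomposition.)
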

Before we turn to the technical details, we carry out a formal diagonalization of
\begin{equation*}
p(x,\xi) = i
\begin{pmatrix}
h_{<\lambda} A_{<\lambda} A^t_{<\lambda} \varepsilon_{<\lambda} \xi_0 & - C(\xi') \\
C(\xi') & h_{<\lambda} A_{<\lambda} A_{<\lambda}^t \mu_{<\lambda} \xi_0
\end{pmatrix}
.
\end{equation*}
The symbol is in $\tilde{S}^{1}_{1,1}$. We diagonalize the principal symbol as follows:
\begin{equation*}
p(x,\xi) \pi(x,\xi) = m(x,\xi) d(x,\xi) n(x,\xi) \pi(x,\xi)
\end{equation*}
with $m,n \in \tilde{S}^0_{1,1}$ and $d \in \tilde{S}^1_{1,1}$, and $\pi \in \tilde{S}^0_{1,1}$ denoting a projection to a region in phase space to be determined. In the first step, we write
\begin{equation*}
\begin{split}
&\;
\begin{pmatrix}
h_{<\lambda} A_{<\lambda} A^t_{<\lambda} \varepsilon_{<\lambda} \xi_0 & - C(\xi') \\
C(\xi') & h_{<\lambda} A_{<\lambda} A^t_{<\lambda} \mu_{<\lambda} \xi_0
\end{pmatrix}
\\
&=
\begin{pmatrix}
 A_{< \lambda} &0 \\
 0 & A_{< \lambda}
 \end{pmatrix}
\begin{pmatrix}
h_{<\lambda} \varepsilon_{<\lambda} \xi_0 & - A_{<\lambda}^{-1} C(\xi') (A^t_{<\lambda})^{-1} \\
A_{<\lambda}^{-1} C(\xi') (A_{<\lambda}^t)^{-1} & h_{<\lambda} \mu_{<\lambda} \xi_0
\end{pmatrix}
\begin{pmatrix}
A_{<\lambda}^t & 0 \\
0 & A_{<\lambda}^t
\end{pmatrix}.
\end{split}
\end{equation*}
We recall the following:
\begin{lemma}
\label{lem:TransformationCurl}
Let $B \in \C^{3 \times 3}$. The following identity holds:
\begin{equation}
\label{eq:TransformationCurl}
B^t C(\xi') B = C(\text{ad} B \cdot \xi').
\end{equation}
In the above display $ad B$ denotes the adjugate matrix, i.e., 
\begin{equation*}
\text{ad} A = ((-1)^{i+j} A_{ji})_{i,j}
\end{equation*}
with $A_{ji}$ denoting the $(j,i)$-minor of $A$.
\end{lemma}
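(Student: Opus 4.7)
The plan is to reduce the matrix identity to the well-known cross product transformation law $(Bv)\times(Bw) = \mathrm{cof}(B)(v\times w)$, which holds for all $B\in\C^{3\times 3}$ (by continuity from the invertible case, where both sides equal $\det(B)B^{-t}(v\times w)$; or by direct use of the Levi--Civita identity $\varepsilon_{ijk}B_{im}B_{jn} = \mathrm{cof}(B)_{kp}\,\varepsilon_{pmn}$, valid as a polynomial identity in the entries of $B$). Recall that $\mathrm{cof}(B)^{t}=\mathrm{ad}\,B$, and that by construction $C(\xi')v=\xi'\times v$, since $(C(\xi')v)_i = -\varepsilon_{ijk}\xi'_k v_j = \varepsilon_{ikj}\xi'_k v_j = (\xi'\times v)_i$.

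With these ingredients, the identity will follow by testing against arbitrary vectors. For any $v,w\in\C^3$, I compute
\begin{equation*}
w^{t}\bigl(B^{t}C(\xi')B\bigr)v \;=\; (Bw)\cdot\bigl(\xi'\times(Bv)\bigr) \;=\; \xi'\cdot\bigl((Bv)\times(Bw)\bigr),
\end{equation*}
where the second equality is the cyclic invariance of the scalar triple product. Applying the cross product transformation law to $(Bv)\times(Bw)$ and then using $\mathrm{cof}(B)^{t}=\mathrm{ad}\,B$ together with another cyclic rotation gives
\begin{equation*}
\xi'\cdot\bigl(\mathrm{cof}(B)(v\times w)\bigr) \;=\; (\mathrm{ad}\,B\cdot\xi')\cdot(v\times w) \;=\; w\cdot\bigl((\mathrm{ad}\,B\cdot\xi')\times v\bigr) \;=\; w^{t}C(\mathrm{ad}\,B\cdot\xi')\,v.
\end{equation*}
Since $v,w$ are arbitrary, the two matrices agree, proving \eqref{eq:TransformationCurl}.

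There is no genuine obstacle here: the result is a purely algebraic identity and the only subtlety is bookkeeping of signs in the scalar triple product and of the transpose in passing between the cofactor and adjugate matrices. A parallel proof can be given by an index computation: starting from $(B^{t}C(\xi')B)_{mn} = -\varepsilon_{ijk}\xi'_k B_{im}B_{jn}$ and invoking the Levi--Civita contraction identity recalled above, one obtains $-\mathrm{cof}(B)_{k\ell}\,\varepsilon_{\ell mn}\,\xi'_k = -\varepsilon_{\ell mn}(\mathrm{ad}\,B\cdot\xi')_\ell = C(\mathrm{ad}\,B\cdot\xi')_{mn}$, which is the same identity written componentwise and serves as a useful cross-check.
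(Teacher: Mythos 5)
Your proof is correct; note that the paper merely \emph{recalls} this lemma without supplying a proof, so there is nothing to compare against. Both routes you sketch are sound: the coordinate-free argument reduces \eqref{eq:TransformationCurl} to the transformation law $(Bv)\times(Bw)=\mathrm{cof}(B)(v\times w)$ (a polynomial identity, valid for all $B\in\C^{3\times3}$ by continuity from $GL_3$), together with the observations $C(\xi')v=\xi'\times v$, $\mathrm{cof}(B)^t=\mathrm{ad}\,B$, and two applications of the cyclic invariance of the scalar triple product; the index computation via $\varepsilon_{ijk}B_{im}B_{jn}=\mathrm{cof}(B)_{kp}\varepsilon_{pmn}$ is an equivalent and equally standard verification. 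One small point worth making explicit: since the entries are complex, the pairing $w^{t}Mv$ you use is the bilinear form $\sum_i w_iM_{ij}v_j$ (not the Hermitian inner product), and all triple-product manipulations you invoke are purely algebraic identities that hold over any commutative ring, so no reality assumption is needed.
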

This yields by the definition of the adjugate matrix, $h_{<\lambda}$, and using Cramer's rule
\begin{equation*}
A_{<\lambda}^{-1} C(\xi') (A_{<\lambda}^t)^{-1} = C(h_{<\lambda} A_{<\lambda}^t \xi').
\end{equation*}
We write
\begin{equation*}
\begin{split}
&\quad \begin{pmatrix}
h_{<\lambda} \varepsilon_{<\lambda} \xi_0 & - A_{<\lambda}^{-1} C(\xi') (A_{<\lambda}^t)^{-1} \\
A_{<\lambda}^{-1} C(\xi') (A_{<\lambda}^t)^{-1} & h_{<\lambda} \mu_{<\lambda} \xi_0
\end{pmatrix}
\\
&= 
\begin{pmatrix}
\varepsilon_{<\lambda} \xi_0 & - C(A_{<\lambda}^t \xi') \\
C(A_{<\lambda}^t \xi') & \mu_{<\lambda} \xi_0
\end{pmatrix}
\begin{pmatrix}
h_{<\lambda} & 0 \\
0 & h_{<\lambda}
\end{pmatrix}
\\
&= 
\begin{pmatrix}
\varepsilon_{<\lambda}^{\frac{1}{2}} & 0 \\
0 & \mu_{<\lambda}^{\frac{1}{2}}
\end{pmatrix}
\begin{pmatrix}
\xi_0 & - C\big( \frac{A_{<\lambda}^t \xi'}{(\varepsilon_{<\lambda} \mu_{<\lambda})^{\frac{1}{2}}} \big) \\
C\big( \frac{A_{<\lambda}^t \xi'}{(\varepsilon_{<\lambda} \mu_{<\lambda})^{\frac{1}{2}}} \big) & \xi_0
\end{pmatrix}
\begin{pmatrix}
\varepsilon_{<\lambda}^{\frac{1}{2}} & 0 \\
0 & \mu_{<\lambda}^{\frac{1}{2}}
\end{pmatrix}
\begin{pmatrix}
h_{<\lambda} & 0 \\
0 & h_{<\lambda}
\end{pmatrix}
.
\end{split}
\end{equation*}

Hence, we have reduced to diagonalizing
\begin{equation}
\label{eq:ReducedMatrix}
B =
\begin{pmatrix}
\xi_0 & - C(\tilde{\xi}') \\
C(\tilde{\xi}') & \xi_0
\end{pmatrix}
.
\end{equation}

This reflects invariance of pseudo-differential operators under change of coordinates. Since the symbols are very rough, we prefer to carry out the computation directly.

In \cite{Schippa2021Maxwell3d,Schippa2022ResolventEstimates} the symbol was diagonalized in the more difficult case of partially anisotropic $\varepsilon$, i.e., $\varepsilon$ having possibly two different eigenvalues. In this case, the resulting expressions are fairly complicated. We take the opportunity to point out a simplification for isotropic $\varepsilon$ and $\mu$. Write $\xi' = (\xi_1,\xi_2,\xi_3)$. We begin with computing the characteristic polynomial of $p/i$ using the block matrix structure:
\begin{equation*}
q(y) = 
\begin{vmatrix}
y - \xi_0 & C(\xi') \\
- C(\xi') & y - \xi_0
\end{vmatrix} 
= \big\vert (y - \xi_0)^2 1_{3 \times 3} + C^2(\xi') \big\vert.
\end{equation*}
Hence, we have reduced to computing the eigenvalues of $C^2(\xi')$. Note that
\begin{equation*}
C^2(\xi') = 
\begin{pmatrix}
\xi_2^2 + \xi_3^2 & - \xi_1 \xi_2 & - \xi_1 \xi_3 \\
- \xi_1 \xi_2 & \xi_1^2 + \xi_3^2 & - \xi_2 \xi_3 \\
-\xi_1 \xi_3 & - \xi_2 \xi_3 & \xi_1^2 + \xi_2^2
\end{pmatrix}
= |\xi'|^2 1_{3 \times 3} - \xi \otimes \xi.
\end{equation*}
It follows that
\begin{equation*}
r(\lambda,\xi') = \det (\lambda 1_{3 \times 3} - C^2(\xi')) = ( \lambda - \| \xi \|^2)^2 \lambda.
\end{equation*}
This gives for the characteristic polynomial $q$
\begin{equation*}
q(\lambda) = (\lambda - \xi_0)^2 [(\lambda - (\xi_0 - \| \xi' \|))^2 (\lambda - (\xi_0 + \| \xi' \|))^2].
\end{equation*}
We conclude that the diagonalization is given by
\begin{equation}
\label{eq:DiagonalizationMatrix}
d(x,\xi) = i (\xi_0, \xi_0, \xi_0 - \| \xi' \|, \xi_0 + \| \xi' \|, \xi_0 - \| \xi' \|, \xi_0 + \| \xi' \|).
\end{equation}
In the following let $\xi_i^* = \frac{\xi_i}{\| \xi' \|}$ for $i=1,2,3$.
Eigenvectors of $\xi_0$ are clearly given by
\begin{equation*}
\begin{pmatrix}
\xi_1^* \\ \xi_2^* \\ \xi_3^* \\ 0 \\ 0 \\ 0
\end{pmatrix}, \qquad
\begin{pmatrix}
0 \\ 0 \\ 0 \\ \xi_1^* \\ \xi_2^* \\ \xi_3^*
\end{pmatrix}
.
\end{equation*}
\textbf{Eigenvectors of }$\xi_0 - \| \xi' \|$: We use the block matrix structure of $p(x,\xi)$. Let $v = (v_1, v_2)^t$ denote an eigenvector. We find the system of equations:
\begin{equation*}
\begin{pmatrix}
\| \xi' \| & C(\xi') \\
- C(\xi') & \| \xi' \|
\end{pmatrix}
\begin{pmatrix}
v_1 \\ v_2
\end{pmatrix}
= 0.
\end{equation*}
Iterating the above in the non-trivial case $\xi' = 0$ yields the eigenvector equation for $v_1$:
\begin{equation*}
\| \xi' \|^2 v_1 + C^2(\xi') v_1 = 0.
\end{equation*}
For this we find the zero-homogeneous eigenvectors:
\begin{equation}
\label{eq:EigenvectorsI}
\begin{pmatrix}
0 \\ - \xi_3^* \\ \xi_2^* 
\end{pmatrix}, \quad
\begin{pmatrix}
\xi_3^* \\ 0 \\ -\xi_1^*
\end{pmatrix}, \quad
\begin{pmatrix}
- \xi_2^* \\ \xi_1^* \\ 0
\end{pmatrix}
.
\end{equation}
The system of equations from above yields
\begin{equation*}
v_2 = \frac{C(\xi')}{\| \xi' \|} v_1.
\end{equation*}
This gives for $v_2$:
\begin{equation*}
\begin{pmatrix}
{\xi_2^*}^2 + {\xi_3^*}^2 \\ - \xi_1^* \xi_2^* \\ -\xi_1^* \xi_3^*
\end{pmatrix}, \qquad
\begin{pmatrix}
-\xi_1^* \xi_2^* \\ {\xi_1^*}^2 + {\xi_3^*}^2 \\ - \xi_2^* \xi_3^*
\end{pmatrix}
, \qquad
\begin{pmatrix}
-\xi_1^* \xi_3^* \\ -\xi_2^* \xi_3^* \\ {\xi_1^*}^2 + {\xi_2^*}^2
\end{pmatrix}
.
\end{equation*}
\textbf{Eigenvectors of }$\xi_0 + \| \xi' \|$: Again, we use the block matrix structure of $p(x,\xi)$, and let $v = (v_1, v_2)^t$ denote an eigenvector. This yields the system of equations:
\begin{equation*}
\begin{pmatrix}
- \| \xi' \| & C(\xi') \\
- C(\xi') & - \| \xi' \|
\end{pmatrix}
\begin{pmatrix}
v_1 \\ v_2
\end{pmatrix}
= 0.
\end{equation*}
We find again for $v_1$
\begin{equation*}
C^2(\xi') v_1 + \| \xi' \|^2 v_1 = 0,
\end{equation*}
and for $v_2$
\begin{equation*}
v_2 = - \frac{C(\xi') v_1}{\| \xi' \|}.
\end{equation*}
\textbf{Conjugation matrices:} 
We choose conjugation matrices depending on a non-vanishing direction of $\xi$. In the following suppose that $|\xi_3^*| \gtrsim 1$.
One choice of conjugation matrices according to \eqref{eq:DiagonalizationMatrix} is given by choosing the first eigenvector in \eqref{eq:EigenvectorsI}:
\begin{equation}
\label{eq:ConjugationMatrixI}
m_3(x,\xi) =
\begin{pmatrix}
\xi_1^* & 0 		& 0 		& 0 		& \xi_3^* & \xi_3^* \\
\xi_2^* & 0		&- \xi_3^* &- \xi_3^*	&	0 	&	0	\\
\xi_3^* & 0 		& \xi_2^* & \xi_2^* &- \xi_1^*	& - \xi_1^* \\
0	   & \xi_1^*	& {\xi_2^*}^2 + {\xi_3^*}^2 & - ({\xi_2^*}^2 + {\xi_3^*}^2)  & -\xi_1^* \xi_2^* &  \xi_1^* \xi_2^* \\
0		& \xi_2^* & - \xi_1^* \xi_2^* & \xi_1^* \xi_2^* & ({\xi_1^*}^2 + {\xi_3^*}^2) & - ({\xi_1^*}^2 + {\xi_3^*}^2) \\
0		& \xi_3^* & - \xi_1^* \xi_3^* &  \xi_1^* \xi_3^* & - \xi_2^* \xi_3^* & \xi_2^* \xi_3^*
\end{pmatrix}
.
\end{equation}
We have the following:
\begin{lemma}
 Let $m_3$ be given as in \eqref{eq:ConjugationMatrixI}. Then,
 \begin{equation}
  \label{eq:DetConjugationMatrixI}
  \det m_3(x,\xi) = {\xi_3^*}^2.
 \end{equation}
\end{lemma}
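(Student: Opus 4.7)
The plan is to exploit the block structure of $m_3$ inherited from the eigenspace decomposition of $p(x,\xi)$. Write each column as a pair $(v_1,v_2)^{\mathsf t}\in\C^3\times\C^3$. Columns $1$ and $2$ are $(\xi^*,0)^{\mathsf t}$ and $(0,\xi^*)^{\mathsf t}$ with $\xi^*=(\xi_1^*,\xi_2^*,\xi_3^*)$; columns $3,4$ take the form $(w^{(1)},\pm\tilde w^{(1)})^{\mathsf t}$ with $w^{(1)}=(0,-\xi_3^*,\xi_2^*)^{\mathsf t}$ and $\tilde w^{(1)}=C(\xi^*)w^{(1)}$; columns $5,6$ are the analogous pair built from $w^{(2)}=(\xi_3^*,0,-\xi_1^*)^{\mathsf t}$ and $\tilde w^{(2)}=C(\xi^*)w^{(2)}$.

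First I would decouple the upper and lower blocks via the elementary column operations $\mathrm{col}_4\leftarrow \mathrm{col}_4+\mathrm{col}_3$ followed by $\mathrm{col}_3\leftarrow \mathrm{col}_3-\tfrac{1}{2}\mathrm{col}_4$, and the analogous pair on columns $5,6$. These preserve the determinant and produce columns $1,4,6$ supported only in the upper block, and columns $2,3,5$ supported only in the lower block. A column permutation of explicit sign then brings the matrix into block-diagonal form, giving
$$\det m_3 \;=\; c\cdot\det M_1 \cdot \det M_2,\qquad M_1=\bigl(\xi^*\ \big|\ w^{(1)}\ \big|\ w^{(2)}\bigr),\quad M_2=\bigl(\xi^*\ \big|\ \tilde w^{(1)}\ \big|\ \tilde w^{(2)}\bigr),$$
for an explicit nonzero constant $c$ coming from the factors of $2$ introduced by the column operations and from the sign of the permutation.

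Each of the $3\times 3$ determinants is a scalar triple product. A short direct computation, using the sphere identity ${\xi_1^*}^2+{\xi_2^*}^2+{\xi_3^*}^2 = 1$ to collapse all cross terms, yields $w^{(1)}\times w^{(2)} = \xi_3^*\,\xi^*$ and $\tilde w^{(1)}\times \tilde w^{(2)} = \xi_3^*\,\xi^*$, so that $\det M_1 = \xi^*\cdot(w^{(1)}\times w^{(2)}) = \xi_3^*$ and similarly $\det M_2 = \xi_3^*$. Multiplying produces the claimed dependence on ${\xi_3^*}^2$.

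There is no conceptual obstacle; the structural reason behind the formula is that $\{w^{(1)},w^{(2)}\}$ and $\{\tilde w^{(1)},\tilde w^{(2)}\}$ are bases of the orthogonal complement $\xi^{*\perp}$ precisely when $\xi_3^*\neq 0$, consistent with the microlocal assumption $|\xi_3^*|\gtrsim 1$ under which $m_3$ is constructed. The only care required is bookkeeping of the overall constant $c$ arising from the column operations and the permutation sign.
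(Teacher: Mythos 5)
Your plan is structurally identical to the paper's: decouple the upper and lower $3\times 3$ blocks by column operations, reduce to two $3\times 3$ determinants, and multiply. The scalar-triple-product shortcut $\det M_1 = \xi^*\cdot(w^{(1)}\times w^{(2)})$ together with $w^{(1)}\times w^{(2)} = \xi_3^*\,\xi^*$, and the identity $(a\times b)\times(a\times c) = (a\cdot(b\times c))\,a$ for the tilded block, is a genuinely cleaner way to evaluate the two $3\times 3$ determinants than the paper's explicit row/column elimination, so that part of your write-up is an improvement.

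There is, however, a gap exactly where you yourself flagged one: you observe that after block-decoupling one has $\det m_3 = c\cdot\det M_1\cdot\det M_2$ for an explicit constant $c$ coming from the factors of $2$ and the permutation sign, but you then conclude ``the claimed dependence on ${\xi_3^*}^2$'' without ever pinning $c$ down. With the operations you specify, columns $4$ and $6$ become $(2w^{(1)},0)^{\mathsf t}$ and $(2w^{(2)},0)^{\mathsf t}$, so the upper-block determinant picks up a factor $4$, and the column permutation grouping $(1,4,6)$ before $(2,3,5)$ has sign $-1$; hence $c=-4$ and the computation actually produces $\det m_3 = -4\,{\xi_3^*}^2$, not ${\xi_3^*}^2$. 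One can check this directly: at $\xi^* = e_3$ the matrix $m_3$ reduces to a sparse $\pm 1$ matrix whose determinant is $-4$, while ${\xi_3^*}^2 = 1$. The paper's own proof has the same oversight (it permutes and rescales columns without recording the resulting sign and scale factor), so the lemma as literally stated in \eqref{eq:DetConjugationMatrixI} is off by a constant. This is harmless for the way the lemma is used later --- all that is needed is that $\det m_3$ is bounded away from zero when $|\xi_3^*|\gtrsim 1$ --- but a careful verification of the stated identity requires tracking $c$, and neither your argument nor the paper's lands on the asserted constant.
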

\begin{proof}
 By elementary column operations, that is adding and subtracting the third and fourth and fifth and sixth eigenvector, we compute the determinant to be
 \begin{equation*}
 \begin{split}
  \det m_3 &= 
  \begin{vmatrix}
   \xi_1^* & 0 & 0 & 0 & \xi_3^* & 0 \\
   \xi_2^* & 0 & - \xi_3^* & 0 & 0 & 0 \\
   \xi_3^* & 0 & \xi_2^* & 0 & -\xi_1^* & 0 \\
   0 & \xi_1^* & 0 & {\xi_2^*}^2 + {\xi_3^*}^2 & 0 & \xi_1^* \xi_2^* \\
   0 & \xi_2^* & 0 & -\xi_1^* \xi_2^* & 0 & -({\xi_1^*}^2 + {\xi_3^*}^2) \\
   0 & \xi_3^* & 0 & -\xi_1^* \xi_3^* & 0 & \xi_2^* \xi_3^*
  \end{vmatrix}
\\
&= \begin{vmatrix}
    \xi_1^* & 0 & \xi_3^* \\
    \xi_2^* & -\xi_3^* & 0 \\
    \xi_3^* & \xi_2^* & -\xi_1^*
   \end{vmatrix}
  \begin{vmatrix}
   {\xi_2^*}^2 + {\xi_3^*}^2 & \xi_1^* & \xi_1^* \xi_2^* \\
   -\xi_1^* \xi_2^* & \xi_2^* & -({\xi_1^*}^2 + {\xi_3^*}^2) \\
   - \xi_1^* \xi_3^* & \xi_3^* & \xi_2^* \xi_3^*
  \end{vmatrix}
.
\end{split}
 \end{equation*}
The ultimate line follows from permuting the columns and using block matrix structure. Then, it is straight-forward
\begin{equation*}
 \begin{vmatrix}
  \xi_1^* & 0 & \xi_3^* \\
  \xi_2^* & -\xi_3^* & 0 \\
  \xi_3^* & \xi_2^* & -\xi_1^*
 \end{vmatrix}
 = {\xi_1^*}^2 \xi_3^* + {\xi_2^*}^2 \xi_3^* + {\xi_3^*}^2 \xi_3^* = \xi_3^*.
\end{equation*}
Again by ${\xi_1^*}^2 + {\xi_2^*}^2 + {\xi_3^*}^2 = 1$, we find
\begin{equation*}
\begin{split}
&\quad \;
 \begin{vmatrix}
  {\xi_2^*}^2 + {\xi_3^*}^2 & \xi_1^* & \xi_1^* \xi_2^* \\
  -\xi_1^* \xi_2^* & \xi_2^* & -({\xi_1^*}^2 + {\xi_3^*}^2) \\
  -\xi_1^* \xi_3^* & \xi_3^* & \xi_2^* \xi_3^*
 \end{vmatrix}
=
\begin{vmatrix}
 1-{\xi_1^*}^2 & \xi_1^* & \xi_1^* \xi_2^* \\
 -\xi_1^* \xi_2^* & \xi_2^* & -({\xi_1^*}^2 + {\xi_3^*}^2) \\
 -\xi_1^* \xi_3^* & \xi_3^* & \xi_2^* \xi_3^*
\end{vmatrix}
\\
&= 
\begin{vmatrix}
 1 & \xi_1^* & \xi_1^* \xi_2^* \\
 0 & \xi_2^* & -({\xi_1^*}^2 + {\xi_3^*}^2) \\
 0 & \xi_3^* & \xi_2^* \xi_3^*
\end{vmatrix}
- \xi_1^*
\begin{vmatrix}
 \xi_1^* & \xi_1^* & \xi_1^* \xi_2^* \\
 \xi_2^* & \xi_2^* & -({\xi_1^*}^2 + {\xi_3^*}^2) \\
 \xi_3^* & \xi_3^* & \xi_2^* \xi_3^*
\end{vmatrix}
\\
&= \xi_3^*. 
\end{split}
\end{equation*}
This finishes the proof.
\end{proof}
Likewise, we define $m_1$ and $m_2$ by choosing the non-trivial eigenvectors for $|\xi_i^*| \gtrsim 1$, which leads us to conjugation matrices with determinant
\begin{equation*}
 \det m_i(x,\xi) = {\xi_i^*}^2.
\end{equation*}

By elementary column operations, that is adding and subtracting the third and fourth and fifth and sixth eigenvector, the determinant is computed to be
\begin{equation*}
\det m_3(x,\xi) = {\xi_3^*}^2.
\end{equation*}
We shall see that for $|\xi_3^*| \gtrsim 1$, we can choose the eigenvectors as an orthonormalbasis through linear combinations of the above. Let
\begin{equation*}
w_1 =
\begin{pmatrix}
0 \\ - \xi_3^* \\ \xi_2^* \\ {\xi_2^*}^2 + {\xi_3^*}^2 \\ - \xi_1^* \xi_2^* \\ - \xi_1^* \xi_3^*
\end{pmatrix}
, \qquad w_3 = 
\begin{pmatrix}
\xi_3^* \\ 0 \\ - \xi_1^* \\ - \xi_1^* \xi_2^* \\ {\xi_1^*}^2 + {\xi_3^*}^2 \\ \xi_2^* \xi_3^*
\end{pmatrix}
.
\end{equation*}
We have $\| w_1 \|^2 = 2 ({\xi_2^*}^2 + {\xi_3^*}^2)$, $\| w_3 \|^2 = 2 ({\xi_1^*}^2 + {\xi_3^*}^2)$, and normalize $w_i' = w_i / \| w_i \|$. We compute
\begin{equation*}
\langle w_1', w_3' \rangle = \frac{- \xi_1^* \xi_2^* (1+(\xi_3^*)^2)}{2({\xi_2^*}^2 + {\xi_3^*}^2)^{\frac{1}{2}} ({\xi_1^*}^2 + {\xi_3^*}^2)^{\frac{1}{2}}}.
\end{equation*}
Now we consider $\tilde{w}_3 = w_3' - \langle w_1',w_3' \rangle w_1'$:
\begin{equation*}
\tilde{w}_3 = \frac{1}{\sqrt{2}({\xi_1^*}^2 + {\xi_3^*}^2)} \begin{pmatrix}
\xi_3^* \\ 0 \\ - \xi_1^* \\ - \xi_1^* \xi_2^* \\ {\xi_1^*}^2 + {\xi_3^*}^2 \\ \xi_2^* \xi_3^*
\end{pmatrix}
- 
\frac{\langle w_1', w_3' \rangle}{\sqrt{2} ({\xi_2^*}^2 + {\xi_3^*}^2)^{\frac{1}{2}}}
\begin{pmatrix}
0 \\ - \xi_3^* \\ \xi_2^* \\ {\xi_2^*}^2 + {\xi_3^*}^2 \\ -\xi_1^* \xi_2^* \\ - \xi_1^* \xi_3^*
\end{pmatrix}
.
\end{equation*}
Clearly, $\| \tilde{w}_3 \|_2 \gtrsim 1$ for $|\xi_3^*| \gtrsim 1$. Hence, by renormalizing (and not changing notations for sake of brevity), we let
\begin{equation*}
\tilde{w}_3 \rightarrow \frac{\tilde{w}_3}{\| \tilde{w}_3 \|_2}.
\end{equation*}
Similarly, consider
\begin{equation*}
w_2 =
\begin{pmatrix}
0 \\ -\xi_3^* \\ \xi_2^* \\ -({\xi_2^*}^2 + {\xi_3^*}^2) \\ \xi_1^* \xi_2^* \\ \xi_1^* \xi_3^*
\end{pmatrix},
\qquad
w_4 =
\begin{pmatrix}
\xi_3^* \\ 0 \\ -\xi_1^* \\ \xi_1^* \xi_2^* \\ - ({\xi_1^*}^2 + {\xi_3^*}^2) \\ - \xi_1^* \xi_3^*
\end{pmatrix}
.
\end{equation*}
We compute
\begin{equation*}
\| w_2 \|_2^2 = 2({\xi_2^*}^2 + {\xi_3^*}^2), \qquad \| w_4 \|_2^2 = 2({\xi_3^*}^2 + {\xi_1^*}^2),
\end{equation*}
which allows for renormalization $w_i' = w_i / \| w_i \|_2$. Now we consider $\tilde{w}_4 = w_4' - \langle w_2', w_4' \rangle \tilde{w}_2'$, which yields after an additional renormalization eigenvectors of $\xi_0+ \| \xi' \|$.
We conclude that the matrix
\begin{equation*}
\tilde{m}_3(x,\xi) = 
\begin{pmatrix}
u_1 & u_2 & \tilde{w}_1 & \tilde{w}_2 & \tilde{w}_3 & \tilde{w}_4
\end{pmatrix}
\end{equation*}
consists of orthonormal eigenvectors to $d$ as in \eqref{eq:DiagonalizationMatrix} for $|\xi_3'| \gtrsim 1$. We summarize the accomplished diagonalization:
\begin{equation*}
\begin{split}
p(x,\xi) &=
\begin{pmatrix}
A_{<\lambda} & 0 \\
0 & A_{<\lambda}
\end{pmatrix}
\begin{pmatrix}
\varepsilon_{<\lambda}^{\frac{1}{2}} & 0 \\
0 & \mu_{<\lambda}^{\frac{1}{2}}
\end{pmatrix}
\tilde{m}_i(x,\xi_0,\tilde{\xi}') d(x,\xi_0,\tilde{\xi}') \\
&\quad \times \tilde{m}_i^t(x,\xi_0,\tilde{\xi}') 
\begin{pmatrix}
\varepsilon_{<\lambda}^{\frac{1}{2}} & 0 \\
0 & \mu_{<\lambda}^{\frac{1}{2}}
\end{pmatrix}
\begin{pmatrix}
A_{<\lambda}^t & 0 \\
0 & A_{<\lambda}^t
\end{pmatrix}
\text{ with } \tilde{\xi}' = \frac{A_{<\lambda}^t \xi'}{(\varepsilon_{<\lambda} \mu_{<\lambda})^{\frac{1}{2}}}.
\end{split}
\end{equation*}
in the phase space region $| {\tilde{\xi}_3}^* | \gtrsim 1$. Note that there is always $i \in \{1,2,3\}$ such that
\begin{equation*}
| \tilde{\xi}_i^* | \gtrsim 1.
\end{equation*}
We define phase-space projection operators by the function
\begin{equation*}
\pi_3(x,\xi) =  \chi(\lambda^{-1} \xi) \tilde{\chi}(\lambda^{-1} (A_{<\lambda}^t \xi')_3),
\end{equation*}
with $\chi, \, \tilde{\chi} \in C^\infty_c$ suitable bump functions. The corresponding projections are denoted by $S_\lambda S_{\lambda 3}$.
We let
\begin{equation*}
\begin{split}
N^3(x,\xi) &= \tilde{m}_i^t(x,\xi_0,\tilde{\xi}') 
\begin{pmatrix}
\varepsilon_{<\lambda}^{\frac{1}{2}}(x) & 0 \\
0 & \mu_{<\lambda}^{\frac{1}{2}}(x)
\end{pmatrix}
\begin{pmatrix}
A_{<\lambda}^t(x) & 0 \\
0 & A_{<\lambda}^t(x)
\end{pmatrix}
\\
&\quad \quad \times \chi(\lambda^{-1} \xi') \tilde{\chi}(\lambda^{-1} (A^t_{<\lambda} \xi')_3),
\end{split}
\end{equation*}
and
\begin{equation*}
D^3(x,\xi) = d(x,\xi_0,\tilde{\xi}') \chi(\lambda^{-1} \xi') \tilde{\chi}(\lambda^{-1} (A^t_{<\lambda} \xi')_3),
\end{equation*}
and
\begin{equation*}
M^3(x,\xi) =
\begin{pmatrix}
A_{<\lambda}(x) & 0 \\
0 & A_{<\lambda}(x)
\end{pmatrix}
\begin{pmatrix}
\varepsilon_{<\lambda}^{\frac{1}{2}}(x) & 0 \\
0 & \mu_{<\lambda}^{\frac{1}{2}}
\end{pmatrix}
\tilde{m}_i(x,\xi) \chi(\lambda^{-1} \xi') \tilde{\chi}(\lambda^{-1} (A^t_{<\lambda} \xi')_3).
\end{equation*}
The corresponding operators are defined by
\begin{equation*}
\mathcal{M}^3_\lambda(x,D) = Op(M^3(x,\xi)), \; \mathcal{D}^3_\lambda(x,D) = Op(D^3(x,\xi)), \; \mathcal{N}^3_\lambda(x,D) = Op(N^3(x,\xi)).
\end{equation*}
 By symbol composition, we can harmlessly insert frequency projectors after every factor. This makes the single factors bounded with symbols in $\tilde{S}^i_{1,1}$, $i \in \{0,1\}$. By Lemma \ref{lem:PseudoBoundsFourierSeries}, the claim follows, and the proof of Proposition \ref{prop:Diagonalization} is complete. $\hfill \Box$

\subsection{Conclusion of frequency localized estimate}
\label{subsection:Conclusion}
We have shown in Subsection \ref{subsection:DiagonalizingLipschitz} that after appropriate localization in phase space, the Maxwell system can be diagonalized to two degenerate and four non-degenerate half-wave equations. The degenerate equations correspond to stationary solutions, possibly induced by charges.

However, we note that $\mathcal{P}_{<\lambda}$ defined in \eqref{eq:FrequencyTruncationA} for the purpose of proving commutator estimates and $\mathcal{P}_{<\lambda}$ defined in \eqref{eq:FrequencyTruncationB} for the diagonalization are defined slightly differently. We denote by $\mathcal{P}^{(1)}_{<\lambda}$ defined in \eqref{eq:FrequencyTruncationA} and $\mathcal{P}^{(2)}_{<\lambda}$ defined in \eqref{eq:FrequencyTruncationB}. Similarly, we denote $\rho^{(1)}_{e \mu} = \nabla \cdot (\varepsilon'_{<\nu} S'_{\nu} \mathcal{E})$ and $\rho^{(1)}_{m \nu} = \nabla \cdot (\mu'_{<\nu} S'_\nu \mathcal{H})$ and $\rho^{(2)}_{e \nu} = \nabla \cdot (h_{<\nu} A_{<\nu} A^t_{<\nu} \varepsilon_{<\nu} S'_\nu \mathcal{E})$, and $\rho^{(2)}_{m \nu} = \nabla \cdot (h_{<\nu} A_{<\nu} A^t_{<\nu} \mu_{<\nu} S'_\nu \mathcal{H})$. We have the following lemma, which shows that we can indeed use $\mathcal{P}_{<\lambda}^{(1)}$ for commutator estimates and $\mathcal{P}_{<\lambda}^{(2)}$ for the diagonalization:
\begin{lemma}
The following estimates hold:
\begin{align}
\label{eq:FrequencyTruncationTransferI}
\| \mathcal{P}_{<\lambda}^{(2)} S_\lambda^\tau S'_\lambda u \|_{L^2_x} &\lesssim \| S_\lambda^\tau S'_\lambda u \|_{L^2_x} + \| \mathcal{P}_{<\lambda}^{(1)} S_\lambda^\tau S'_\lambda u \|_{L^2_x}, \\
\label{eq:FrequencyTruncationTransferII}
\sum_{\substack{1 \leq \lambda \ll \nu, \\ \nu}} \nu^{\gamma - \frac{1}{2}} \| S'_\nu S^\tau_\lambda \mathcal{P}^{(2)}_{\nu} u \|_{L^2_x} &\lesssim \sum_{\substack{1 \leq \lambda \ll \nu, \\ \nu}} \nu^{\gamma - \frac{1}{2}} \| S'_\nu S^\tau_\lambda \mathcal{P}_{<\nu}^{(1)} u \|_{L^2_x} + \| \langle \partial_t \rangle^{\gamma - \frac{1}{2} + \varepsilon} u \|_{L^2_x}, \\
\label{eq:FrequencyTruncationTransferIII}
\| \rho^{(2)}_{e \nu} \|_{L^2_{x'}} &\lesssim \| \rho^{(1)}_{e \nu} \|_{L^2_{x'}} + \| S'_\nu \mathcal{E} \|_{L^2_{x'}}, \\
\label{eq:FrequencyTruncationTransferIV}
\| \rho^{(2)}_{m \nu} \|_{L^2_{x'}} &\lesssim \| \rho^{(1)}_{e \nu} \|_{L^2_{x'}} + \| S'_\nu \mathcal{H} \|_{L^2_{x'}}.
\end{align}
\end{lemma}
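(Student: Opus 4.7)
The plan is to exploit the Lipschitz regularity of $\varepsilon$, $\mu$, $A$, $h$ (after reflection) to show that both frequency truncations $\mathcal{P}^{(1)}_{<\lambda}$ and $\mathcal{P}^{(2)}_{<\lambda}$ agree up to an error of operator norm $O(1)$ on functions frequency-localized near $\lambda$. The key analytic input is the elementary fact that for Lipschitz $f$, the difference $f-f_{<\lambda}$ is $O(\lambda^{-1}\|\partial f\|_{L^\infty})$ in $L^\infty$, and consequently, by a telescoping-sum argument as in \eqref{eq:TelescopingSum1}--\eqref{eq:TelescopingSum2}, the symbol-level difference
\begin{equation*}
R_\lambda = h_{<\lambda} A_{<\lambda} A^t_{<\lambda} \varepsilon_{<\lambda} - (hAA^t\varepsilon)_{<\lambda}
\end{equation*}
decomposes into a finite sum of terms each containing a single factor of the form $f_{\geq \lambda}$ with $\|f_{\geq \lambda}\|_{L^\infty}\lesssim \lambda^{-1}$. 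The same applies with $\mu$ in place of $\varepsilon$.

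For \eqref{eq:FrequencyTruncationTransferI}, the curl entries of $\mathcal{P}^{(1)}_{<\lambda}$ and $\mathcal{P}^{(2)}_{<\lambda}$ coincide, so only the diagonal contribution
\begin{equation*}
R_\lambda \,\partial_t S_\lambda^\tau S'_\lambda u
\end{equation*}
must be controlled. Since $S_\lambda^\tau$ localizes temporal frequencies to size $\lesssim \lambda$, the $\partial_t$ factor contributes $O(\lambda)$, while $\|R_\lambda\|_{L^\infty}\lesssim \lambda^{-1}$ gives the remaining $\lambda^{-1}$. The net effect is an $O(1)$-bounded operator, which yields \eqref{eq:FrequencyTruncationTransferI}.

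For \eqref{eq:FrequencyTruncationTransferII}, I would apply the same comparison with $\nu$ in place of $\lambda$: on $S'_\nu S^\tau_\lambda u$ with $\lambda\ll\nu$, the $\partial_t$ factor contributes $O(\lambda)$ while $\|R_\nu\|_{L^\infty}\lesssim \nu^{-1}$. Consequently
\begin{equation*}
\nu^{\gamma-\frac{1}{2}}\|S'_\nu S^\tau_\lambda(\mathcal{P}^{(2)}_{<\nu}-\mathcal{P}^{(1)}_{<\nu}) u\|_{L^2_x} \lesssim \nu^{\gamma-\frac{3}{2}}\lambda\,\|S'_\nu S^\tau_\lambda u\|_{L^2_x}.
\end{equation*}
Writing $\nu^{\gamma-3/2}\lambda = \nu^{-1-\varepsilon}\lambda^{1-\varepsilon}\cdot \lambda^{\varepsilon}\nu^{\gamma-1/2+\varepsilon}\cdot (\lambda/\nu)^{-\varepsilon}$, a Schur-type summation in $\lambda\ll\nu$ (exploiting the gap $\lambda/\nu<1$) produces a geometrically convergent series whose total is controlled by $\|\langle\partial_t\rangle^{\gamma-\frac{1}{2}+\varepsilon}u\|_{L^2_x}$, as required.

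For \eqref{eq:FrequencyTruncationTransferIII}--\eqref{eq:FrequencyTruncationTransferIV}, the same telescoping yields
\begin{equation*}
\rho^{(2)}_{e\nu}-\nabla\cdot(\varepsilon'_{<\nu}S'_\nu\mathcal{E}) = \nabla\cdot(R_\nu S'_\nu \mathcal{E}),
\end{equation*}
and the divergence contributes $O(\nu)$ on a frequency-$\nu$ function, matched against $\|R_\nu\|_{L^\infty}\lesssim\nu^{-1}$, giving an $L^2$-bounded correction of size $\|S'_\nu\mathcal{E}\|_{L^2}$. The same analysis handles $\rho^{(2)}_{m\nu}$. The main technical point throughout is the symbol-level telescoping identity, after which all estimates reduce to fixed-frequency bounds; the only subtlety is the Schur summation in \eqref{eq:FrequencyTruncationTransferII}, which is the source of the $\varepsilon$-loss.
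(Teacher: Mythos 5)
Your approach is essentially the paper's: both compare the two truncations via the Lipschitz regularity of the (reflected) coefficients and the telescoping decomposition of the product, reducing each transfer estimate to a gain of $\lambda^{-1}$ or $\nu^{-1}$ on the truncation error against a factor of $\lambda$ or $\nu$ from the derivative. The paper routes the comparison through the full operator $\mathcal{P}$ rather than forming $R_\lambda = \mathcal{P}^{(2)}_{<\lambda} - \mathcal{P}^{(1)}_{<\lambda}$ directly, but this is cosmetic; the estimates you invoke for $R_\lambda$ are exactly the sum of the two telescoping bounds the paper uses.

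However, the explicit decomposition you write for \eqref{eq:FrequencyTruncationTransferII} is incorrect. You claim
\begin{equation*}
\nu^{\gamma-\frac32}\lambda \;=\; \nu^{-1-\varepsilon}\lambda^{1-\varepsilon}\cdot \lambda^{\varepsilon}\nu^{\gamma-\frac12+\varepsilon}\cdot \big(\tfrac{\lambda}{\nu}\big)^{-\varepsilon},
\end{equation*}
but the right-hand side equals $\nu^{\gamma-\frac32+\varepsilon}\lambda^{1-\varepsilon} = \nu^{\gamma-\frac32}\lambda\cdot(\nu/\lambda)^{\varepsilon}$, which is strictly larger than the left-hand side when $\lambda\ll\nu$ — the factor $(\lambda/\nu)^{-\varepsilon}$ is large, not small, so it cannot serve as a damping kernel for a Schur summation. (Flipping the sign of the exponent to $+\varepsilon$ does not fix the identity either.) The step as written would fail. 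The correct and simpler observation is that
\begin{equation*}
\nu^{\gamma-\frac32}\lambda = \big(\tfrac{\lambda}{\nu}\big)^{\frac32-\gamma}\lambda^{\gamma-\frac12},
\end{equation*}
with $\frac32-\gamma>0$; for fixed $\lambda$ the sum over $\nu\gg\lambda$ of $(\lambda/\nu)^{3/2-\gamma}$ is $O(1)$ by a geometric series, leaving $\sum_\lambda \lambda^{\gamma-\frac12}\|S_\lambda^\tau u\|_{L^2_x}$, which a single Cauchy--Schwarz with $\lambda^{-\varepsilon}$-weights bounds by $\|\langle\partial_t\rangle^{\gamma-\frac12+\varepsilon}u\|_{L^2_x}$. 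This is the only source of the $\varepsilon$-loss. Your treatment of \eqref{eq:FrequencyTruncationTransferI}, \eqref{eq:FrequencyTruncationTransferIII}, and \eqref{eq:FrequencyTruncationTransferIV} is fine, though for the charge estimates you should make the Leibniz split $\nabla\cdot(R_\nu S'_\nu\mathcal{E}) = (\nabla R_\nu)\cdot S'_\nu\mathcal{E} + R_\nu\,\nabla\cdot(S'_\nu\mathcal{E})$ explicit, noting that $\|\nabla R_\nu\|_{L^\infty}\lesssim 1$ (derivatives of frequency-truncated Lipschitz data are only bounded, not small) rather than invoking a uniform ``$O(\nu)$ on a frequency-$\nu$ function'' heuristic.
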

\begin{proof}
We begin with the proof of \eqref{eq:FrequencyTruncationTransferI}. By the triangle inequality, we find
\begin{equation*}
\| \mathcal{P}_{<\lambda}^{(2)} S_\lambda^\tau S'_\lambda u \|_{L^2_x} \lesssim \| (\mathcal{P} - \mathcal{P}_{<\lambda}^{(2)}) S_\lambda^\tau S'_\lambda u \|_{L^2_x} + \| (\mathcal{P} - \mathcal{P}_{<\lambda}^{(1)}) S_\lambda^\tau S'_\lambda u \|_{L^2_x} + \| S_\lambda^\tau S'_\lambda u \|_{L^2_x}.
\end{equation*}
For the error estimates we note that
\begin{equation*}
\begin{split}
\| (\mathcal{P} - \mathcal{P}^{(1)}_{<\lambda}) S^\tau_\lambda S'_\lambda u \|_{L^2_x} &\lesssim ( \| \varepsilon'_{\gtrsim \lambda} \|_{L^\infty_{x'}} + \| \mu'_{\gtrsim \lambda} \|_{L^\infty_{x'}} ) \lambda \| S_\lambda^\tau S'_\lambda u \|_{L^2_x} \\
&\lesssim ( \| \partial \varepsilon' \|_{L^\infty_{x'}} + \| \partial \mu' \|_{L^\infty_{x'}}) \| S_\lambda^\tau S'_\lambda u \|_{L^2_{x}}.
\end{split}
\end{equation*}
Similarly, we have by the telescoping sum argument \eqref{eq:TelescopingSum1} and \eqref{eq:TelescopingSum2}:
\begin{equation*}
\| (\mathcal{P} - \mathcal{P}^{(2)}_{<\lambda}) S^\tau_\lambda S'_\lambda u \|_{L^2_x} \lesssim \| S^\tau_\lambda S'_\lambda u \|_{L^2_x}.
\end{equation*}
This finishes the proof of \eqref{eq:FrequencyTruncationTransferI}. We turn to the proof of \eqref{eq:FrequencyTruncationTransferII}, for which we write again
\begin{equation*}
\| S'_\nu S^\tau_\lambda \mathcal{P}^{(2)}_{<\nu} u \|_{L^2_x} \lesssim \| S'_\nu S^\tau_\lambda (\mathcal{P}_{<\nu}^{(2)} - \mathcal{P}) u \|_{L^2_x} + \| S'_\nu S^\tau_\lambda (\mathcal{P}_{<\nu}^{(1)} - \mathcal{P}) u \|_{L^2_x} + \| S'_\nu S^\tau_\lambda \mathcal{P}^{(1)}_{<\nu} u \|_{L^2_x}.
\end{equation*}
We find
\begin{equation*}
\begin{split}
\sum_{\substack{1 \leq \lambda \ll \nu, \\ \nu}} \nu^{\gamma - \frac{1}{2}} \| S'_\nu S^\tau_\lambda (\mathcal{P}^{(1)}_{\nu} - \mathcal{P}) u \|_{L^2_x} &\lesssim \sum_{\substack{1 \leq \lambda \ll \nu, \\ \nu}} \lambda \nu^{\gamma - \frac{1}{2}} \| S^\tau_\lambda S'_\nu \kappa'_{\gtrsim \nu} u \|_{L^2_{x}} \\
&\lesssim \sum_{\substack{1 \leq \lambda \ll \nu, \\ \nu}} \lambda \nu^{\gamma - \frac{1}{2}} (\| \varepsilon'_{\gtrsim \nu} \|_{L^\infty_{x'}} + \| \mu'_{\gtrsim \nu} \|_{L^\infty_{x'}}) \| S_\lambda^\tau u \|_{L^2_x} \\
&\lesssim \sum_{\substack{1 \leq \lambda \ll \nu, \\ \nu}} \lambda \nu^{\gamma - \frac{3}{2}} (\| \partial \varepsilon' \|_{L_{x'}^\infty} + \| \partial \mu' \|_{L^\infty_{x'}}) \| S_\lambda^\tau u \|_{L^2_x} \\
 &\lesssim \| \langle \partial_t \rangle^{\gamma - \frac{1}{2} + \delta} u \|_{L^2_x}.
\end{split}
\end{equation*}
Similarly, by the telescoping sum argument, we obtain
\begin{equation*}
\begin{split}
&\quad \sum_{\substack{1 \leq \lambda \ll \nu, \\ \nu}} \nu^{\gamma - \frac{1}{2}} \| S'_\nu S^\tau_\lambda (\mathcal{P}^{(2)}_{\nu} - \mathcal{P}) u \|_{L^2_x} \\
 &\lesssim \sum_{\substack{1 \leq \lambda \ll \nu, \\ \nu}} \nu^{\gamma - \frac{1}{2}} \lambda \| (h A A^t \varepsilon - h_{<\nu} A_{<\nu} A^t_{<\nu} \varepsilon_{<\nu}) S_\lambda^\tau \mathcal{E} \|_{L^2_x} \lesssim \| \langle \partial_t \rangle^{\gamma - \frac{1}{2} + \delta} u \|_{L^2_x}.
\end{split}
\end{equation*}
We turn to the proof of \eqref{eq:FrequencyTruncationTransferIII}. We estimate
\begin{equation*}
\| \nabla \cdot (\varepsilon'_{\gtrsim \nu} S'_\nu \mathcal{E}) \|_{L^2_{x'}} \lesssim \| \partial \varepsilon' \|_{L^\infty_{x'}} \| S'_\nu \mathcal{E} \|_{L^2_{x'}} + \nu \| \varepsilon'_{\gtrsim \nu} \|_{L^\infty_{x'}} \| S'_\nu \mathcal{E} \|_{L^2_{x'}},
\end{equation*}
and by telescoping sum argument,
\begin{equation*}
\begin{split}
&\quad \| \nabla \cdot (h_{<\nu} A_{<\nu} A^t_{<\nu} \varepsilon_{<\nu} S'_\nu \mathcal{E}) - \nabla \cdot (h A A^t \varepsilon S'_\nu \mathcal{E}) \|_{L^2_{x'}} \\
 &= \| \nabla \cdot (C^{(1)}_{>\nu} C^{(2)} \ldots C^{(k)}) S'_\nu \mathcal{E} \|_{L^2_{x'}} \\
&\lesssim \| \partial C \|_{L^\infty_{x'}} \| S'_\nu \mathcal{E} \|_{L^2_{x'}} + \| C^{(1)}_{> \nu} \|_{L^\infty_{x'}} \| C^{(2)} \|_{L^\infty_{x'}} \ldots \| C^{(k)} \|_{L^\infty_{x'}} \nu \| S'_\nu \mathcal{E} \|_{L^2_{x'}} \lesssim \| S'_\nu \mathcal{E} \|_{L^2_{x'}}.
\end{split}
\end{equation*}
Hence, \eqref{eq:FrequencyTruncationTransferIII} is a consequence of the triangle inequality. The proof of \eqref{eq:FrequencyTruncationTransferIV} is carried out \emph{mutatis mutandis}.
\end{proof}

 We use this to finish the proof of Proposition \ref{prop:ExtendedMaxwell} by showing the following estimates:
\begin{align}
\label{eq:FrequencyLocalizedEstimateA}
\| S_\lambda^\tau S'_\lambda u \|_{L^p L^q} &\lesssim \lambda^\gamma (\| S_\lambda^\tau S'_\lambda u \|_{L_t^\infty L_x^2} + \| \mathcal{P}^{(2)}_{<\lambda} S^\tau_\lambda S'_\lambda u \|_{L^2_{t,x}} ), \\
\label{eq:FrequencyLocalizedEstimateB}
\| S'_\nu S^\tau_\lambda u \|_{L^p L^q} &\lesssim \nu^{\gamma - \frac{1}{2}} \| S'_\nu S_\lambda^\tau \mathcal{P}^{(2)}_{<\nu} u \|_{L^2_{t,x}} \\
&\quad + \nu^{\gamma - 1 + \frac{1}{p}} ( \| \rho_{e \nu}^{(2)} \|_{L_t^\infty L_x^2} + \| \rho^{(2)}_{m \nu} \|_{L_t^\infty L_x^2} ). \nonumber
\end{align}
To use the diagonalization, we need the following:
\begin{lemma}
\label{lem:Diagonalization}
For $i \in \{1,2,3\}$ and $\lambda \gg 1$, we find the following estimates to hold:
\begin{equation*}
\begin{split}
\| S_{\lambda i} u \|_{L^p L^q} &\lesssim \| \mathcal{N}_\lambda^i S_{\lambda i} u \|_{L^p L^q} + \lambda^{\gamma - \frac{1}{2}} \| S_{ \lambda i } u \|_{L^2}, \\
\| S_{\lambda i } u \|_{L^p L^q} &\lesssim \| \mathcal{M}_\lambda^i S_{\lambda i } u \|_{L^2}.
\end{split}
\end{equation*}
\end{lemma}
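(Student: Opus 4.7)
\emph{Proof plan.} The strategy is to build symbolic left parametrices for $\mathcal N^i_\lambda$ and $\mathcal M^i_\lambda$ and apply the composition calculus for $\tilde S^0_{1,1}$ recalled in Section~\ref{section:Preliminaries}. The key structural input is that, on the support of $\pi_i$ (i.e.\ on $\{|\tilde\xi_i^*|\gtrsim 1\}$), the matrix $\tilde m_i(x,\xi_0,\tilde\xi')$ constructed in Subsection~\ref{subsection:DiagonalizingLipschitz} is orthonormal, so $\tilde m_i\tilde m_i^{\,t}=I$; moreover, the truncated multipliers $\varepsilon^{1/2}_{<\lambda}$, $\mu^{1/2}_{<\lambda}$ and the Jacobian $A_{<\lambda}$ are uniformly positive-definite, Lipschitz functions with $\lambda$-independent bounds (by \eqref{eq:Ellipticity} and the fact that mollification of Lipschitz functions converges uniformly). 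Accordingly, I set
\begin{equation*}
\widetilde N^i(x,\xi) := \tilde\pi_i\,\mathrm{diag}(A^{-t}_{<\lambda},A^{-t}_{<\lambda})\,\mathrm{diag}(\varepsilon^{-1/2}_{<\lambda},\mu^{-1/2}_{<\lambda})\,\tilde m_i,
\end{equation*}
\begin{equation*}
\widetilde M^i(x,\xi) := \tilde\pi_i\,\tilde m_i^{\,t}\,\mathrm{diag}(\varepsilon^{-1/2}_{<\lambda},\mu^{-1/2}_{<\lambda})\,\mathrm{diag}(A^{-1}_{<\lambda},A^{-1}_{<\lambda}),
\end{equation*}
where $\tilde\pi_i\in\tilde S^0_{1,1}$ is a slight fattening of $\pi_i$ equal to $1$ on $\mathrm{supp}\,\pi_i$. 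Both symbols then lie in $\tilde S^0_{1,1}$ and satisfy $\widetilde N^i\cdot N^i=\widetilde M^i\cdot M^i=\tilde\pi_i\pi_i\cdot I$, whose quantization is $S_{\lambda i}$ modulo rapidly decaying tails.

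Next I would apply the $\tilde S^0_{1,1}$ composition lemma of Section~\ref{section:Preliminaries} to obtain
\begin{equation*}
\widetilde{\mathcal N}^i_\lambda\,\mathcal N^i_\lambda=S_{\lambda i}+E^{(1)}_{\lambda,i},\qquad \widetilde{\mathcal M}^i_\lambda\,\mathcal M^i_\lambda=S_{\lambda i}+E^{(2)}_{\lambda,i},
\end{equation*}
with remainders of order $-1$ in the sense that $\|E^{(j)}_{\lambda,i}v\|_{L^2}\lesssim\lambda^{-1}\|v\|_{L^2}$ on inputs $v$ with space-time frequencies of size $\lambda$. Rearranging,
\begin{equation*}
S_{\lambda i}u=\widetilde{\mathcal N}^i_\lambda\bigl(\mathcal N^i_\lambda S_{\lambda i}u\bigr)-E^{(1)}_{\lambda,i}S_{\lambda i}u,
\end{equation*}
and the analogous identity holds with $(\widetilde{\mathcal M}^i_\lambda,\mathcal M^i_\lambda)$ in place of $(\widetilde{\mathcal N}^i_\lambda,\mathcal N^i_\lambda)$.

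To conclude the first inequality, I would use the $L^pL^q$ boundedness of $\widetilde{\mathcal N}^i_\lambda\circ S_{\lambda i}$: since its symbol is compactly supported in $\xi$ at scale $\lambda$, Lemma~\ref{lem:PseudoBoundsFourierSeries} (after a standard dyadic rescaling) gives
\begin{equation*}
\|\widetilde{\mathcal N}^i_\lambda\,\mathcal N^i_\lambda S_{\lambda i}u\|_{L^pL^q}\lesssim\|\mathcal N^i_\lambda S_{\lambda i}u\|_{L^pL^q}.
\end{equation*}
For the residual, Bernstein in space and time applied to the $\lambda$-localized error yields
\begin{equation*}
\|E^{(1)}_{\lambda,i}S_{\lambda i}u\|_{L^pL^q}\lesssim\lambda^{3(1/2-1/q)+(1/2-1/p)}\cdot\lambda^{-1}\|S_{\lambda i}u\|_{L^2}=\lambda^{\gamma-1/2}\|S_{\lambda i}u\|_{L^2},
\end{equation*}
matching the stated tail. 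The second estimate is obtained by the entirely analogous left-invertibility argument with $\widetilde{\mathcal M}^i_\lambda$ and Bernstein on the order-$(-1)$ error.

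The main obstacle is that the symbol calculus is borderline: we are composing $\tilde S^0_{1,1}$ symbols, where only the Lipschitz control $|\partial_x a|\lesssim 1$ is available, so composition yields only an $O(\lambda^{-1})$ remainder rather than a smoothing one. This is precisely why in Section~\ref{section:MaxwellManifolds} the coefficients $\varepsilon,\mu,g^{ij}$ were reflected \emph{evenly} across $\partial\Omega$, preserving Lipschitz regularity. Two delicate checks are implicit: (i) the inverse factors $A^{-1}_{<\lambda}$, $\varepsilon^{-1/2}_{<\lambda}$, $\mu^{-1/2}_{<\lambda}$ lie in $\tilde S^0_{1,1}$ uniformly in $\lambda$, which reduces to a $\lambda$-independent ellipticity lower bound (granted by \eqref{eq:Ellipticity} together with uniform convergence of Lipschitz mollifications); and (ii) $\tilde m_i$ has $\lambda$-independent symbol bounds only upon restriction to $\mathrm{supp}\,\pi_i$, which is exactly why the cutoff $\tilde\pi_i$ is inserted. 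Everything else is routine symbolic manipulation.
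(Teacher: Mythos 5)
Your argument is correct and is, at its core, the same as the paper's: the key input is $\tilde m_i\tilde m_i^t=I$ on $\operatorname{supp}\pi_i$, the $\tilde S^0_{1,1}$ calculus produces an $O(\lambda^{-1})$ composition error, and Lemma~\ref{lem:PseudoBoundsFourierSeries} plus Bernstein/Sobolev at dyadic scale $\lambda$ handle, respectively, the main term in $L^pL^q$ and the residual. The one (purely cosmetic) difference is that you build explicit left parametrices $\widetilde{\mathcal N}^i_\lambda,\widetilde{\mathcal M}^i_\lambda$, which forces you to check that $A^{-1}_{<\lambda},\varepsilon^{-1/2}_{<\lambda},\mu^{-1/2}_{<\lambda}$ lie uniformly in $\tilde S^0_{1,1}$ (you flag this correctly); the paper instead reuses the already-constructed $\mathcal M^i_\lambda$ and observes that $\mathcal M^i_\lambda\mathcal N^i_\lambda S_{\lambda i}$ equals the elliptic Lipschitz multiplier $\operatorname{diag}(\varepsilon_{<\lambda}A_{<\lambda}A^t_{<\lambda},\ \mu_{<\lambda}A_{<\lambda}A^t_{<\lambda})S_{\lambda i}$ up to an $O(\lambda^{-1})$ operator, so that pointwise ellipticity of that multiplier and the $L^pL^q$ bound for $\mathcal M^i_\lambda$ give the first inequality without ever inverting $A_{<\lambda}$. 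One small caution: the paper's own proof of the second inequality only establishes the $L^2\to L^2$ bound $\|S_{\lambda i}u\|_{L^2}\lesssim\|\mathcal M^i_\lambda S_{\lambda i}u\|_{L^2}$ (via ellipticity of the symbol of $\mathcal N^i_\lambda\mathcal M^i_\lambda$ plus absorption), which is how the lemma is actually used in Section~\ref{subsection:Conclusion}; the $L^pL^q$ norm on the left-hand side of the lemma's second line appears to be a typo, and your ``analogous argument with Bernstein'' would, if taken literally with $L^pL^q$ on the left, introduce an extra $\lambda^{\gamma+1/p}$ factor that the stated inequality does not carry. Aiming at the $L^2$ version, your absorption argument matches the paper's.
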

\begin{proof}
For the proof of the first estimate, we observe for the composed symbols of $\mathcal{M}_\lambda^i$ and $\mathcal{N}_\lambda^i$:
\begin{equation*}
\begin{split}
&\; \begin{pmatrix}
A_{<\lambda} & 0 \\
0 & A_{<\lambda}
\end{pmatrix}
\begin{pmatrix}
\varepsilon_{<\lambda}^{\frac{1}{2}} & 0 \\
0 & \mu_{<\lambda^{\frac{1}{2}}}
\end{pmatrix}
\tilde{m}_i(x,\xi_0,\tilde{\xi}') \tilde{m}_i^t(x,\xi_0,\tilde{\xi}') \\
&\times \begin{pmatrix}
\varepsilon_{<\lambda}^{\frac{1}{2}} & 0 \\
0 & \mu_{<\lambda}^{\frac{1}{2}}
\end{pmatrix}
\begin{pmatrix}
A_{<\lambda}^t & 0 \\
0 & A_{<\lambda}^t
\end{pmatrix}
=
\begin{pmatrix}
\varepsilon_{<\lambda} A_{<\lambda} A_{<\lambda}^t & 0 \\
0 & \mu_{<\lambda} A_{<\lambda} A_{<\lambda}^t
\end{pmatrix}
.
\end{split}
\end{equation*}
Hence, we find
\begin{equation*}
\mathcal{M}_\lambda^i \mathcal{N}_\lambda^i S_{\lambda i} = 
\begin{pmatrix}
\varepsilon_{<\lambda} A_{<\lambda} A^t_{<\lambda} & 0 \\
0 & \mu_{<\lambda} A_{<\lambda} A_{<\lambda}^t
\end{pmatrix}
S_{\lambda i}
+R_i(x,D)
\end{equation*}
with $\| R_i(x,D) \|_{L^2 \to L^2} \lesssim \lambda^{-1}$. This allows us to estimate
\begin{equation*}
\begin{split}
\| S_{\lambda i} u \|_{L^p L^q} &\lesssim \big\|
\begin{pmatrix}
\varepsilon_{<\lambda} A_{<\lambda} A_{<\lambda}^t & 0 \\
0 & \mu_{<\lambda} A_{<\lambda} A_{<\lambda}^t
\end{pmatrix}
S_{\lambda i} u \|_{L^p L^q} \\
&\lesssim \| \mathcal{M}_\lambda^i \mathcal{N}_\lambda^i S_{\lambda i} u \|_{L^p L^q} + \| R^i(x,D) S_{\lambda i} u \|_{L^p L^q} \\
&\lesssim \| \mathcal{N}_\lambda^i S_{\lambda i} u \|_{L^p L^q} + \lambda^{\gamma - \frac{1}{2}} \| S_{\lambda i} u \|_{L^2}
\end{split}
\end{equation*}
by Minkowski's inequality and Sobolev embedding.\\
For the proof of the second estimate, we argue similarly
\begin{equation*}
\| S_{\lambda i} u \|_{L_{t,x}^2} \lesssim \| (1 + R_i) S_{\lambda i} u \|_{L_{t,x}^2} = \| \mathcal{N}_\lambda^i \mathcal{M}_\lambda^i S_{\lambda i} u \|_{L_{t,x}^2} \lesssim \| \mathcal{M}_\lambda^i S_{\lambda i} u \|_{L_{t,x}^2}.
\end{equation*}
The proof is complete.
\end{proof}

We can finally show \eqref{eq:FrequencyLocalizedEstimateA} and \eqref{eq:FrequencyLocalizedEstimateB}:

\begin{proof}[Proof~of~\eqref{eq:FrequencyLocalizedEstimateA}]
We split $S_\lambda^\tau S'_\lambda u = \sum_{i=1}^3 S_\lambda^\tau S_{\lambda i} u $ with $S_\lambda^\tau S_{\lambda i} u$ being amenable to the diagonalization of $\mathcal{P}$ provided by $\mathcal{M}_\lambda^i$ and $\mathcal{N}_\lambda^i$. We write
\begin{equation*}
\| S_\lambda^\tau S_{\lambda i} u \|_{L^p L^q} \lesssim \| S_\lambda^\tau \mathcal{M}_{\lambda}^i \mathcal{N}^i_\lambda S_{\lambda i } u \|_{L^p L^q} + \| S_\lambda^\tau R(x,D') S_\lambda' u \|_{L^p L^q}.
\end{equation*}
Since $R(x,D')$ is smoothing of order $-1$, we can use Sobolev embedding to find
\begin{equation*}
\begin{split}
\| S_\lambda^\tau R(x,D') S_\lambda' u \|_{L^p L^q} &\lesssim \lambda^{\frac{1}{2}-\frac{1}{p}} \lambda^{3 \big( \frac{1}{2}- \frac{1}{q} \big) - 1} \| S_\lambda^\tau S_\lambda' u \|_{L^2_{t,x}} \\
&\lesssim \lambda^{-\varepsilon} \| \langle D' \rangle^\gamma S_\lambda' u \|_{L^2_{t,x}},
\end{split}
\end{equation*}
which is acceptable. By Lemma \ref{lem:Diagonalization}, we have
\begin{equation*}
\| S_\lambda^\tau \mathcal{M}_\lambda^i \mathcal{N}_\lambda^i S_{\lambda i} u \|_{L^p L^q} \lesssim \| S_\lambda^\tau \mathcal{N}_\lambda^i S_{\lambda i} u \|_{L^p L^q}.
\end{equation*}
We estimate the components $\| [S_\lambda^\tau \mathcal{N}_\lambda^i S_{\lambda i} u]_j \|_{L^p L^q}$ separately. The degenerate components $[\mathcal{D}_\lambda]_{jj}$, $j=1,2$, are elliptic. This yields by Sobolev embedding the estimate:
\begin{equation*}
\begin{split}
\| [\mathcal{N}_\lambda^i S_\lambda^\tau S'_\lambda u]_j \|_{L_t^p L_x^q} &\lesssim \lambda^{\gamma - \frac{1}{p}} \| [ \mathcal{N}_\lambda^i S_\lambda^\tau S'_\lambda u]_j \|_{L^2_{t,x}} \\
&\lesssim \lambda^{\gamma - 1+ \frac{1}{p}} \| [\mathcal{D}_\lambda \mathcal{N}_\lambda^i S_\lambda^\tau S_{\lambda i} u]_j \|_{L^2_{t,x}} \\
&\lesssim \lambda^\gamma \| S_\lambda^\tau \mathcal{D}_\lambda \mathcal{N}_\lambda^i S_{\lambda i} u \|_{L^2_{t,x}}.
\end{split}
\end{equation*}
Another application of Lemma \ref{lem:Diagonalization} and Proposition \ref{prop:Diagonalization} yields
\begin{equation*}
\begin{split}
\lambda^\gamma \| S_\lambda^\tau \mathcal{D}_\lambda \mathcal{N}_\lambda^i S_{\lambda i} u \|_{L^2_{t,x}} &\lesssim \lambda^\gamma \| S_\lambda^\tau \mathcal{M}_\lambda^i \mathcal{D}_\lambda \mathcal{N}_\lambda^i S_{\lambda i } u \|_{L^2_{t,x}} \\
&\lesssim \lambda^\gamma \| S_\lambda^\tau \mathcal{P}_\lambda u \|_{L^2_{t,x}} + \lambda^\gamma \| S_\lambda^\tau u \|_{L^2_{t,x}}.
\end{split}
\end{equation*}
The non-degenerate components $j=3,\ldots,6$ are estimated by \cite[Eq.~(2.1)]{BlairSmithSogge2009}:
\begin{equation*}
\| S_\lambda^\tau \mathcal{N}_\lambda^i S_{\lambda i } u \|_{L^p L^q} \lesssim \lambda^\gamma ( \| S_\lambda^\tau \mathcal{N}_\lambda^i S_{\lambda i} u \|_{L_t^\infty L_x^2} + \| S_\lambda^\tau \mathcal{D}_\lambda \mathcal{N}_{\lambda i} S_{\lambda i} u \|_{L^2_{t,x}}).
\end{equation*}
By another application of Lemma \ref{lem:Diagonalization} and Proposition \ref{prop:Diagonalization}, we find
\begin{equation*}
\| S_\lambda^\tau \mathcal{N}_\lambda^i S_{\lambda i } u \|_{L^p L^q} \lesssim \lambda^\gamma (\| S_\lambda^\tau S'_\lambda u \|_{L_t^\infty L^2_x} + \| S_\lambda^\tau \mathcal{P}_\lambda S_\lambda u \|_{L^2_{t,x}}).
\end{equation*}
We passed from $S_{\lambda i}$ to $S_\lambda'$ above by first order symbol composition. This finishes the proof.
\end{proof}

\begin{proof}[Proof~of~\eqref{eq:FrequencyLocalizedEstimateB}]
If $\{|\tau| \ll |\xi'| \}$ and $\{ |\xi'| \gtrsim 1 \}$, we see that after diagonalization, the operator $\mathcal{P}$ is elliptic up to the charges. Let $\lambda \sim |\tau| \ll |\xi'| \sim \nu$. We make an additional localization in phase space: $S'_\nu u = \sum_{i=1}^3 S_{\nu i} u$.

\begin{equation*}
\| S_\lambda^\tau S_{\nu i} u \|_{L^p L^q} \lesssim \| S_\lambda^\tau \mathcal{M}^i_\nu \mathcal{N}_{\nu}^i S_{\nu i} u \|_{L^p L^q} + \| S_\lambda^\tau R(x,D') S_{\nu i} u \|_{L^p L^q}
\end{equation*}
with $R(x,D')$ being smoothing of order $-1$, we can use Sobolev embedding to find
\begin{equation*}
\| S_\lambda^\tau R(x,D') S_{\nu i} u \|_{L^p L^q} \lesssim \lambda^{\frac{1}{2}-\frac{1}{p}} \mu^{3 \big( \frac{1}{2}-\frac{1}{q} \big) - 1} \| S_\lambda^\tau S'_\nu u \|_{L^2_{t,x}} \lesssim \| \langle D' \rangle^\gamma S'_\nu u \|_{L^2_{t,x}}.
\end{equation*}
By Lemma \ref{lem:Diagonalization},
\begin{equation*}
\| S_\lambda^\tau \mathcal{M}_\nu^i \mathcal{N}_\nu^i S_{\nu i} u \|_{L^p L^q} \lesssim \| S_\lambda^\tau \mathcal{N}_\nu^i S_{\nu i} u \|_{L^p L^q}.
\end{equation*}
For $[\mathcal{N}_\nu^i S_{\nu i} u ]_j$ and $j=1,2$ we can use Sobolev embedding and definition of charges. For this purpose, recall the symbol of $\mathcal{N}_\nu^i$. With $\tilde{\xi}' = A^t_{<\nu} \xi'$, we find for $v \in \C^6$, $v=(v_1,v_2)^t$, with $v_i \in \C^3$:
\begin{equation*}
[ \tilde{m}_i^t(x,\xi_0,\tilde{\xi'}) \begin{pmatrix}
\varepsilon_{<\nu}^{\frac{1}{2}} & 0 \\
0 & \mu_{<\nu}^{\frac{1}{2}}
\end{pmatrix}
\begin{pmatrix}
A_{<\nu}^t & 0 \\
0 & A_{<\nu}^t
\end{pmatrix}
v]_1 = 
\frac{(\xi')^t}{\mu_{<\nu}^{\frac{1}{2}} \| \xi' \|} A_{<\nu} A_{<\nu}^t v_1.
\end{equation*}
Moreover,
\begin{equation*}
[ \tilde{m}_i^t(x,\xi_0,\tilde{\xi}') \begin{pmatrix}
\varepsilon_{<\nu}^{\frac{1}{2}} & 0 \\
0 & \mu_{<\nu}^{\frac{1}{2}}
\end{pmatrix}
\begin{pmatrix}
A_{<\nu}^t & 0 \\
0 & A_{<\nu}^t
\end{pmatrix}
v]_2 = 
\frac{(\xi')^t}{\varepsilon_{<\nu}^{\frac{1}{2}} \| \xi' \|} A_{<\nu} A_{<\nu}^t v_2.
\end{equation*}
Consequently, we can write
\begin{equation*}
[ \mathcal{N}_\nu^i S_\nu u ]_1 = \frac{1}{h_{<\nu} \varepsilon_{<\nu} \mu_{<\nu}^{\frac{1}{2}}} \frac{1}{|\nabla_{x'}|} \nabla \cdot ( h_{<\nu} \varepsilon_{<\nu} A_{<\nu} A_{<\nu}^t \mathcal{E}) + R_1(x,D) \mathcal{E}
\end{equation*}
with $\| R_1 \|_{L^2 \to L^2} \lesssim \nu^{-1}$. Therefore, the estimate for the first component follows from Sobolev embedding:
\begin{equation*}
\| [ \mathcal{N}_\nu^i S_{\nu i} u ]_1 \|_{L^p L^q} \lesssim \nu^{\gamma - 1+ \frac{1}{p}} \big( \| S'_\nu \rho_e \|_{L^\infty L^2} + \| S'_\nu u \|_{L^\infty L^2}).
\end{equation*}
Similarly,
\begin{equation*}
[ \mathcal{N}_\nu^i S_{\nu i} u ]_2 = \frac{1}{\varepsilon_{<\nu}^{\frac{1}{2}} h_{<\nu} \mu_{<\nu}^{\frac{1}{2}}} \frac{1}{|\nabla_{x'}|} \nabla \cdot ( h_{<\nu} \mu_{<\nu} A_{<\nu} A_{<\nu}^t \mathcal{H}) + R_2(x,D) \mathcal{H}
\end{equation*}
with $\| R_2 \|_{L^2 \to L^2} \lesssim \nu^{-1}$. We find by definition of $\rho'_{m \mu}$ and another Sobolev embedding yields
\begin{equation*}
\| [ \mathcal{N}_\nu^i S_{\nu i} u ]_2 \|_{L^p L^q} \lesssim \nu^{\gamma - \frac{1}{2}} \| S_\nu u \|_{L^2}.
\end{equation*}
For the components $i=3,\ldots,6$ $[\mathcal{D}_\nu]_{ii}$ is elliptic:
\begin{equation*}
\| S_\lambda^\tau \mathcal{N}_\nu S'_\nu u \|_{L^p L^q} \lesssim \nu^{3 \big( \frac{1}{2} - \frac{1}{q} \big)} \lambda^{\frac{1}{2}-\frac{1}{p}} \nu^{-1} \| S_\lambda^\tau \mathcal{D}_\nu^i S'_{\nu i} u \|_{L^2_{t,x}}.
\end{equation*}
Consequently, we obtain
\begin{equation*}
\| S_\lambda^\tau \mathcal{N}_\mu S'_\mu u \|_{L^p L^q} \lesssim \mu^{3 \big( \frac{1}{2} - \frac{1}{q} \big) - \frac{1}{p} - \frac{1}{2} + \varepsilon} \big( \frac{\lambda}{\mu} \big)^{\frac{1}{2}-\frac{1}{p}} \mu^{-\varepsilon} \| S_\lambda^\tau \mathcal{D}^i_{\mu} [\mathcal{N}_\mu S'_\mu u]_i \|_{L^2_{t,x}}.
\end{equation*}
By another application of Lemma \ref{lem:Diagonalization} and Proposition \ref{prop:Diagonalization}, we conclude the proof.

\end{proof}

\section{Diagonalizing reflected Maxwell equations in two dimensions}
\label{section:Diagonalization2d}

This section is devoted to the proof of Strichartz estimates in the two-dimensional case. We want to reduce to previously established results for half-wave equations with structured Lipschitz coefficients. We have already reduced to Proposition \ref{prop:ExtendedMaxwell2d} in Section \ref{section:MaxwellManifolds}, which states that it suffices to prove Strichartz estimates for the extended fields in geodesic coordinates $\tilde{u} = (\tilde{\mathcal{E}},\tilde{\mathcal{H}})$ close to the boundary:
\begin{equation*}
\| \tilde{u} \|_{L_T^p L_{x'}^q} \lesssim \| \tilde{u} \|_{L_T^\infty H^{\gamma + \delta}} + \| \tilde{\mathcal{J}}_e \|_{L_T^2 H^{\gamma + \delta}} + \| \tilde{\rho}_e \|_{L_T^\infty H^{\gamma - 1 + \frac{1}{p} + \delta}}
\end{equation*}
for $p,q \geq 2$, $q < \infty$ satisfying
\begin{equation*}
\frac{3}{p} + \frac{1}{q} \leq \frac{1}{2}, \quad \gamma = 2 \big( \frac{1}{2} - \frac{1}{q} \big) - \frac{1}{p}, \quad 0 <\delta < \frac{1}{2}.
\end{equation*}
We omit the $\tilde{\;}$ in the following for the extended quantities to lighten the notation.

For the diagonalization we can rely on results from \cite{SchippaSchnaubelt2022,Schippa2022ResolventEstimates}. Interestingly, in the two-dimensional case, there are no symmetry assumptions on the permittivity (the permeability is scalar anyway) required for a diagonalization with $L^p$-bounded multipliers to hold. Thus, we simply redenote the permittivity and permeability decorated with the cometric and $\sqrt{g}$ by $\varepsilon$ and $\mu$ to arrive at the Maxwell operator:
\begin{equation*}
\mathcal{P} = 
\begin{pmatrix}
\partial_t (\varepsilon^{11} \cdot) & 0 & -\partial_2 \\
0 & \partial_t (\varepsilon^{22} \cdot) & \partial_1 \\
-\partial_2 & \partial_1 & \partial_t (\mu \cdot)
\end{pmatrix}
.
\end{equation*}
The principal symbol of $\mathcal{P}$ with rough coefficients is given by
\begin{equation*}
\begin{split}
p(x,\xi) &= i
\begin{pmatrix}
\xi_0 \varepsilon^{11} & 0 & -\xi_2 \\
0 & \xi_0 \varepsilon^{22} & \xi_1 \\
-\xi_2 & \xi_1 & \xi_0 \mu
\end{pmatrix}
\\
&= i
\begin{pmatrix}
\xi_0 & 0 & -\xi_2/\mu \\
0 & \xi_0 & \xi_1/\mu \\
-\xi_2 \varepsilon_{11} & \xi_1 \varepsilon_{22} & \xi_0
\end{pmatrix}
\begin{pmatrix}
\varepsilon^{11} & 0 & 0 \\
0 & \varepsilon^{22} & 0 \\
0 & 0& \mu
\end{pmatrix}
.
\end{split}
\end{equation*}
On the level of the equation, the above factorization corresponds to rewriting the equation in terms of $(\mathcal{D},\mathcal{B})$ instead of $(\mathcal{E},\mathcal{H})$. It turns out that this facilitates to find conjugation matrices.
For the proof of Proposition \ref{prop:ExtendedMaxwell2d} it suffices to show the following estimate for frequency localized functions for $1 \ll \lambda \in 2^{\N_0}$:

\begin{proposition}
\label{prop:Microlocalization2d}
The following dyadic estimate holds:
\begin{equation}
\label{eq:DyadicEstimate2d}
\| S'_\lambda S_\lambda u \|_{L_T^p L_{x'}^q(\R^2)} \lesssim \lambda^\gamma ( \| S_\lambda S'_\lambda u \|_{ L^\infty_T L^2_{x'}} + \| \mathcal{P}_\lambda S_\lambda S'_\lambda u \|_{L^2_{T,x'}} ) + \lambda^{\gamma - 1 + \frac{1}{p}} \| \rho'_{e \lambda} \|_{L^\infty_T L^2_{x'}}
\end{equation}
with $\rho'_{e \lambda} = \nabla \cdot (\varepsilon_{< \lambda} S'_\lambda \mathcal{E})$.
\end{proposition}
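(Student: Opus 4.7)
The plan is to follow the three-dimensional strategy of Section \ref{section:Diagonalization}, substantially simplified by the $3\times 3$ structure in two dimensions and the absence of the partial block-anisotropy of the $3$d cometric. First I would replace $\varepsilon$, $\mu$ by their spatial frequency truncations $\varepsilon_{<\lambda}$, $\mu_{<\lambda}$ (frequencies $<\lambda/16$) to obtain $\mathcal{P}_\lambda$ with Lipschitz coefficients. The discrepancy $(\mathcal{P}-\mathcal{P}_\lambda)S'_\lambda S_\lambda u$ yields an $L^2_{t,x'}$-error bounded by $(\|\partial\varepsilon\|_{L^\infty}+\|\partial\mu\|_{L^\infty})\|S'_\lambda S_\lambda u\|_{L^2}$, absorbable into the right-hand side of \eqref{eq:DyadicEstimate2d} after dyadic summation. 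The analogous truncation inside $\rho'_{e\lambda}$ is handled by the telescoping argument \eqref{eq:TelescopingSum1}--\eqref{eq:TelescopingSum2}.

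The central step is the diagonalization of the principal symbol
\begin{equation*}
p_\lambda(x,\xi) = i \begin{pmatrix} \xi_0 \varepsilon_{<\lambda}^{11} & 0 & -\xi_2 \\ 0 & \xi_0 \varepsilon_{<\lambda}^{22} & \xi_1 \\ -\xi_2 & \xi_1 & \xi_0 \mu_{<\lambda} \end{pmatrix}.
\end{equation*}
A direct computation gives $\det(p_\lambda/i) = \xi_0\bigl[\xi_0^2\,\varepsilon_{<\lambda}^{11}\varepsilon_{<\lambda}^{22}\mu_{<\lambda} - \varepsilon_{<\lambda}^{11}\xi_1^2 - \varepsilon_{<\lambda}^{22}\xi_2^2\bigr]$, so the characteristic variety splits into a stationary sheet $\{\xi_0=0\}$ and two half-wave sheets $\{\xi_0=\pm h_\lambda(x,\xi')\}$ with
\begin{equation*}
h_\lambda(x,\xi') = \Bigl(\frac{\varepsilon_{<\lambda}^{11}\xi_1^2 + \varepsilon_{<\lambda}^{22}\xi_2^2}{\varepsilon_{<\lambda}^{11}\varepsilon_{<\lambda}^{22}\mu_{<\lambda}}\Bigr)^{1/2}.
\end{equation*}
Unlike the three-dimensional case, no phase-space partition is required: the degenerate right eigenvector $(\xi_1,\xi_2,0)^t/|\xi'|$ and the propagating eigenvectors $(-\xi_2/(h_\lambda\varepsilon_{<\lambda}^{11}),\,\xi_1/(h_\lambda\varepsilon_{<\lambda}^{22}),\,1)^t$ (suitably normalized, and for $\xi_0=\mp h_\lambda$ obtained by sign flips) are globally smooth on the support of $S_\lambda S'_\lambda$. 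These furnish conjugation matrices $\mathcal{M}_\lambda,\mathcal{N}_\lambda\in OP\tilde{S}^0_{1,1}$ and a diagonal part $\mathcal{D}_\lambda\in OP\tilde{S}^1_{1,1}$ such that $\mathcal{P}_\lambda S'_\lambda S_\lambda = \mathcal{M}_\lambda\mathcal{D}_\lambda\mathcal{N}_\lambda S'_\lambda S_\lambda + E_\lambda$, with $\|E_\lambda\|_{L^2\to L^2}\lesssim 1$ uniformly in $\lambda$, using the first-order symbol composition available in the class $\tilde{S}^\bullet_{1,1}$.

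With the diagonalization in place, the two nondegenerate components satisfy half-wave equations $(i\partial_t\mp D_{h_\lambda})v_\lambda^\pm = F_\lambda^\pm$ on $\R\times\R^2$ whose cometric arises by even reflection of $\varepsilon$, $\mu$ across $\{x'_2=0\}$ and is therefore Lipschitz with a codimension-$1$ singularity. The two-dimensional Blair--Smith--Sogge estimate (Theorem \ref{thm:LipschitzSingularHalfWave} in dimension $d=2$, under admissibility \eqref{eq:StrichartzAdmissibility2d}) then yields
\begin{equation*}
\|v_\lambda^\pm\|_{L_T^p L_{x'}^q} \lesssim \lambda^\gamma\bigl(\|v_\lambda^\pm\|_{L_T^\infty L^2}+\|F_\lambda^\pm\|_{L^2_{T,x'}}\bigr).
\end{equation*}
For the degenerate mode, the symbol of the first component of $\mathcal{N}_\lambda S'_\lambda S_\lambda u$ is, up to a smoothing remainder of order $-1$, proportional to $|\xi'|^{-1}$ times the divergence $\nabla\cdot(\varepsilon_{<\lambda}\,\cdot\,)$ acting on $\mathcal{E}$; together with Bernstein's inequality and the definition of $\rho'_{e\lambda}$, this gives $\|[\mathcal{N}_\lambda S'_\lambda S_\lambda u]_1\|_{L_T^p L_{x'}^q} \lesssim \lambda^{\gamma-1+1/p}\|\rho'_{e\lambda}\|_{L_T^\infty L^2}$ plus an acceptable $L^2$-error. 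Assembling the three components through the two-dimensional analog of Lemma \ref{lem:Diagonalization} and undoing the frequency truncation of $\mathcal{P}$ by a commutator argument identical to \eqref{eq:Com-Est}--\eqref{eq:ComEstimate-I} concludes \eqref{eq:DyadicEstimate2d}.

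The principal obstacle will be verifying that the conjugation matrices and the diagonal symbol remain in the borderline class $\tilde{S}^0_{1,1}$ (respectively $\tilde{S}^1_{1,1}$) uniformly in $\lambda$: the explicit eigenvector formulas involve $h_\lambda^{-1}$ and square roots of the truncated coefficients, and one must check that the Lipschitz bound \eqref{eq:LipschitzEstimate} is preserved under these algebraic manipulations. Since the reflected coefficients are genuinely only Lipschitz (not $C^1$), this is the same subtle point encountered in the three-dimensional case, and it is again exactly the first-order refinement of symbol composition in $\tilde{S}^\bullet_{1,1}$ that makes it manageable.
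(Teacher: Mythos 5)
Your proposal follows the paper's argument: paradifferential truncation to $\mathcal{P}_\lambda$, a pseudodifferential diagonalization $\mathcal{P}_\lambda S_\lambda S'_\lambda = \mathcal{M}_\lambda\mathcal{D}_\lambda\mathcal{N}_\lambda + E_\lambda$ with symbols in $\tilde{S}^\bullet_{1,1}$ and no phase-space partition (this is Proposition~\ref{prop:Diagonalization2d}), the Blair--Smith--Sogge bound of Theorem~\ref{thm:LipschitzSingularHalfWave} for the two propagating components, and the observation that the first component of $\mathcal{N}_\lambda$ is $\mu^{-1}|\nabla_{\varepsilon'}|^{-1}\nabla\cdot(\varepsilon_{<\lambda}\,\cdot\,)$ up to an $O(\lambda^{-1})$ remainder, so that the degenerate mode is controlled by $\rho'_{e\lambda}$ via Sobolev embedding. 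The paper arrives at the same conjugation matrices by first factoring $p = i\,L\,\mathrm{diag}(\varepsilon^{11},\varepsilon^{22},\mu)$ (a change to $(\mathcal{D},\mathcal{B})$-variables) and diagonalizing the inner factor $L$, which tidies the eigenvector formulas but is not a substantive departure from your direct diagonalization of $p_\lambda$.
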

\eqref{eq:DyadicEstimate2d} handles the contribution of the phase space region $\{|\tau| \lesssim |\xi'| \}$. The commutator arguments to remove the frequency localization are easier than in three dimensions because $\gamma < 1$ and thus, omitted. The estimate for $\{|\tau | \gg |\xi'| \}$ follows from ellipticity of $\mathcal{P}$ in this region in phase space and is carried out like in three dimensions. Secondly, note that we do not distinguish between definitions of $\mathcal{P}_\lambda^{(1)}$ or $\mathcal{P}_\lambda^{(2)}$ like in Section \ref{section:Diagonalization} because we actually do not use the internal structure of $\varepsilon' = \sqrt{g} g^{-1} \varepsilon$.

\subsection{Diagonalizing the principal symbol}
We use the diagonalization established in \cite{SchippaSchnaubelt2022} (see also \cite[Lemma~2.2]{Schippa2022ResolventEstimates}) to show the following:
\begin{proposition}
\label{prop:Diagonalization2d}
Let $2^{\N} \ni \lambda \gg \lambda_0$. There are operators $\mathcal{M}_\lambda \in OP \tilde{S}^0_{1,1}$, $\mathcal{N}_\lambda \in OP \tilde{S}^0_{1,1}$, and $\mathcal{D}_\lambda \in OP \tilde{S}^1_{1,1}$ such that
\begin{equation*}
\mathcal{P}_\lambda S_\lambda S'_\lambda = \mathcal{M}_\lambda \mathcal{D}_\lambda \mathcal{N}_\lambda + E_\lambda
\end{equation*}
with $\| E_\lambda \|_{L^2 \to L^2} \lesssim 1$ and implicit constant independent of $\lambda$. The principal symbols are given by
\begin{equation*}
\begin{split}
m(x,\xi) &=
\begin{pmatrix}
\varepsilon_{22} \xi_1^* & - \xi_2^* / \mu & \xi_2^* / \mu \\
\varepsilon_{11} \xi_2^* & \xi_1^* / \mu & - \xi_1^*/\mu \\
0 & - 1 & - 1
\end{pmatrix}, \\
n(x,\xi) &= 
\begin{pmatrix}
\mu^{-1} \xi_1^* & \mu^{-1} \xi_2^* & 0 \\
\frac{- \xi_2^* \varepsilon_{11}}{2} & \frac{\xi_1^* \varepsilon_{22}}{2} & -\frac{1}{2} \\
\frac{\xi_2^* \varepsilon_{11}}{2} & \frac{- \xi_1^* \varepsilon_{22}}{2} & - \frac{1}{2}
\end{pmatrix}
\begin{pmatrix}
\varepsilon^{11} & 0 & 0 \\
0 & \varepsilon^{22} & 0 \\
0 & 0 & \mu
\end{pmatrix},
\\
d(x,\xi) &= i \text{diag} (\xi_0, \xi_0 - \| \xi \|_{\varepsilon'}, \xi_0 + \| \xi \|_{\varepsilon'})
\end{split}
\end{equation*}
with $\| \xi \|^2_{\varepsilon'} = \langle \xi, \mu^{-1} \det(\varepsilon)^{-1} \varepsilon \xi \rangle$, $\xi^* = \xi / \| \xi \|_{\varepsilon'}$. All coefficients in the above definitions are frequency truncated at $\lambda$.
\end{proposition}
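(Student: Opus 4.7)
The plan is to verify the claimed factorization at the symbolic level and then lift it to operators using the composition calculus for the class $\tilde{S}^k_{1,1}$. First I check the pointwise identity $p(x,\xi) = m(x,\xi) d(x,\xi) n(x,\xi)$. Using the factorization
$$p(x,\xi) = i\begin{pmatrix} \xi_0 & 0 & -\xi_2/\mu \\ 0 & \xi_0 & \xi_1/\mu \\ -\xi_2 \varepsilon_{11} & \xi_1 \varepsilon_{22} & \xi_0 \end{pmatrix} \begin{pmatrix} \varepsilon^{11} & 0 & 0 \\ 0 & \varepsilon^{22} & 0 \\ 0 & 0 & \mu \end{pmatrix}$$
already displayed in the text, it suffices to diagonalize the first factor. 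Its characteristic polynomial factors as $(y-\xi_0)(y^2 - 2 y \xi_0 + \xi_0^2 - \mu^{-1}(\xi_2^2 \varepsilon_{11} + \xi_1^2 \varepsilon_{22}))$, giving the eigenvalues $\xi_0$, $\xi_0 \pm \|\xi\|_{\varepsilon'}$ with $\|\xi\|_{\varepsilon'}^2 = \mu^{-1} (\det \varepsilon)^{-1} \langle \xi', \varepsilon \xi' \rangle$, consistent with the claimed $d(x,\xi)$. A direct computation of the eigenvectors, essentially carried out in \cite{SchippaSchnaubelt2022} and \cite{Schippa2022ResolventEstimates}, produces the three columns of $m(x,\xi)$; its inverse composed with the right-most diagonal factor yields the stated formula for $n(x,\xi)$, which I will verify by multiplying $m \cdot n$ and checking it equals $\mathrm{diag}(\varepsilon^{11},\varepsilon^{22},\mu) \cdot (\text{the right factor's inverse}) = 1_{3\times 3}$, i.e.\ that $(m,n)$ is a dual pair in the appropriate sense.

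Next I verify the symbol classes. Since $\varepsilon, \mu$ are reflected evenly across $\{x'_2 = 0\}$, they are merely Lipschitz, and their frequency truncations $\kappa_{<\lambda}$ satisfy $|\partial_{x'}^\alpha \kappa_{<\lambda}| \lesssim \langle \lambda \rangle^{(|\alpha|-1)_+}$, which is precisely the defining bound of $\tilde{S}^0_{1,1}$. Because $m$ and $n$ are rational functions of $\xi^* = \xi/\|\xi\|_{\varepsilon'}$ (hence $0$-homogeneous and smooth in $\xi$ after a harmless cutoff to $|\xi'| \gtrsim 1$ inherited from $S'_\lambda$ with $\lambda \gg \lambda_0$), and depend on $x$ only through $\varepsilon_{<\lambda}, \mu_{<\lambda}$, the composition rules for $\tilde{S}^0_{1,1}$-functions of Lipschitz arguments place them in $\tilde{S}^0_{1,1}$; similarly $d \in \tilde{S}^1_{1,1}$ since $\|\xi\|_{\varepsilon'}$ is $1$-homogeneous in $\xi$ with the same Lipschitz dependence on $x$.

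The last step is to pass from symbols to operators. By the composition lemma for $\tilde{S}^k_{1,1}$ stated just before Section~\ref{subsection:DiagonalizingLipschitz}, iterated twice, we have
$$\mathcal{M}_\lambda(x,D) \circ \mathcal{D}_\lambda(x,D) \circ \mathcal{N}_\lambda(x,D) = (mdn)(x,D) + R_\lambda,$$
where each composition costs one order but the leading term is of order $1$, so $R_\lambda$ is of order $0$ and thus bounded on $L^2$ uniformly in $\lambda$. It then remains to compare $(mdn)(x,D) = p(x,D)$ with $\mathcal{P}_\lambda$ on the frequency support of $S_\lambda S'_\lambda$: here I will insert $S_\lambda S'_\lambda$ on the right and exploit the fixed-time commutator bound $\|[\kappa_{<\lambda}, S'_\lambda]\|_{L^2 \to L^2} \lesssim \lambda^{-1}$ together with $\|\partial \kappa_{<\lambda}\|_{L^\infty} \lesssim 1$ to absorb the high-frequency parts of the coefficients into an $O(1)$ error, in the same spirit as the telescoping sum estimate \eqref{eq:TelescopingSum1}--\eqref{eq:TelescopingSum2}.

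The main obstacle is that $\tilde{S}^0_{1,1}$ is a borderline symbol class, and the composition rule only holds thanks to the Lipschitz bound \eqref{eq:LipschitzEstimate}; without carefully tracking this, one would pick up a logarithmic loss that would be fatal since we need the error $E_\lambda$ to be uniformly $L^2$-bounded. The verification that the Lipschitz singularity along $\{x'_2 = 0\}$ introduced by the even reflections does not spoil the calculus — because the singularity is of codimension one and the relevant symbols depend smoothly on $\varepsilon$ and $\mu$ — is the delicate point, and is the reason we work with $\tilde{S}^k_{1,1}$ rather than standard $S^k_{1,0}$ throughout.
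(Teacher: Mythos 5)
Your overall plan is the same one the paper takes: verify the symbol identity $p=mdn$ by direct linear algebra (deferring the eigenvector computation to \cite{SchippaSchnaubelt2022,Schippa2022ResolventEstimates}), check membership in $\tilde{S}^0_{1,1}$ and $\tilde{S}^1_{1,1}$ via the Lipschitz bounds $|\partial^\alpha_x \kappa_{<\lambda}| \lesssim \lambda^{(|\alpha|-1)_+}$, and then lift to operators by the first--order composition lemma for $\tilde{S}^k_{1,1}$. The harmless cutoff $|\xi'| \gtrsim \lambda_0$ coming from $S'_\lambda$ is correctly identified as what makes $\xi^*$ a smooth $0$-homogeneous symbol.

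There is however one concrete slip in your symbolic check. You propose to verify that $m\cdot n = \mathrm{diag}(\varepsilon^{11},\varepsilon^{22},\mu)\cdot(\text{the right factor's inverse}) = 1_{3\times 3}$, but since $n = m^{-1}\cdot\mathrm{diag}(\varepsilon^{11},\varepsilon^{22},\mu)$ one gets
\begin{equation*}
m\cdot n \;=\; m\cdot m^{-1}\cdot\mathrm{diag}(\varepsilon^{11},\varepsilon^{22},\mu) \;=\; \mathrm{diag}(\varepsilon^{11},\varepsilon^{22},\mu),
\end{equation*}
not the identity. (You can check this directly: the $(1,1)$ entry of $m$ times the first factor of $n$ is $\mu^{-1}(\varepsilon_{22}{\xi_1^*}^2 + \varepsilon_{11}{\xi_2^*}^2) = 1$ by the definition of $\|\xi\|_{\varepsilon'}$, and similarly the cross terms vanish; the diagonal $\mathrm{diag}(\varepsilon^{11},\varepsilon^{22},\mu)$ is then appended on the right.) This is the two-dimensional analogue of what the paper records for $\mathcal{M}^i_\lambda\mathcal{N}^i_\lambda$ in Lemma~\ref{lem:Diagonalization}, where one gets a diagonal elliptic matrix of coefficients, not the identity. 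The downstream error estimates (Lemma~\ref{lem:DiagonalizationErrorEstimatesII}) only require $mn$ to be elliptic, so this does not invalidate the argument, but as written your verification target is wrong and would not close. A secondary, cosmetic point: the obstruction to working naively in $S^0_{1,1}$ is not a logarithmic loss but outright failure of $L^2$-boundedness; the Lipschitz bound \eqref{eq:LipschitzEstimate} is what restores boundedness and the first-order composition calculus, exactly as you otherwise say.
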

The diagonalization is substantially easier than in three dimensions because it does not require an additional localization in phase space.

\subsection{Conclusion of the proof}
To finish the proof of Theorem \ref{thm:StrichartzEstimatesMaxwell2d} like in Section~\ref{section:Diagonalization}, we have to check that the contribution of the charges is ameliorated like before:
\begin{proposition}
With the notations from Proposition \ref{prop:Diagonalization2d}, the following estimate holds:
\begin{equation}
\label{eq:DiagonalizedEstimate2d}
\| \mathcal{N}_\lambda S_\lambda u \|_{L_t^p L_{x'}^q} \lesssim \lambda^\gamma ( \| \mathcal{N}_\lambda S_\lambda u \|_{L_{x}^2} + \| \mathcal{D}_\lambda \mathcal{N}_\lambda S_\lambda u \|_{L_{x}^2} ) + \lambda^{\gamma - 1+ \frac{1}{p}} \| \rho_{e\lambda} \|_{L_t^\infty L^2_{x'}}.
\end{equation}
\end{proposition}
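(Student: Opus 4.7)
The plan is to estimate $\mathcal{N}_\lambda S_\lambda u$ componentwise, exploiting that the diagonalized operator $\mathcal{D}_\lambda = i\,\mathrm{diag}(\partial_t, \partial_t - i D_{\varepsilon'}, \partial_t + i D_{\varepsilon'})$ (with $D_{\varepsilon'} = \mathrm{Op}(\|\xi\|_{\varepsilon'_{<\lambda}})$) decouples into one degenerate (stationary) mode and two non-degenerate half-wave modes, exactly paralleling the conclusion Subsection \ref{subsection:Conclusion} in three dimensions. Write
\[
\mathcal{N}_\lambda S_\lambda u = \bigl([\mathcal{N}_\lambda S_\lambda u]_1,\, [\mathcal{N}_\lambda S_\lambda u]_2,\, [\mathcal{N}_\lambda S_\lambda u]_3\bigr)^t,
\]
and handle $j=1$ (degenerate) and $j=2,3$ (non-degenerate) separately.

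For the degenerate component, I would read off from the first row of $n(x,\xi)$ in Proposition \ref{prop:Diagonalization2d} that, up to a symbol error in $\tilde S^{-1}_{1,1}$,
\[
[\mathcal{N}_\lambda S_\lambda u]_1 \;=\; \frac{1}{\mu_{<\lambda}\,D_{\varepsilon'_{<\lambda}}}\,\nabla\!\cdot\!\bigl(\varepsilon_{<\lambda}\,S'_\lambda \mathcal{E}\bigr) + R_1(x,D)\,S'_\lambda\mathcal{E},
\]
with $\|R_1\|_{L^2\to L^2}\lesssim \lambda^{-1}$. Since $D_{\varepsilon'_{<\lambda}}$ is elliptic of order one on spatial frequencies of size $\lambda$, this gives $\|[\mathcal{N}_\lambda S_\lambda u]_1\|_{L^\infty_T L^2_{x'}}\lesssim \lambda^{-1}\|\rho'_{e\lambda}\|_{L^\infty_T L^2_{x'}} + \lambda^{-1}\|S'_\lambda\mathcal{E}\|_{L^\infty_T L^2_{x'}}$. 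The $L^p_T L^q_{x'}$ estimate then follows from Bernstein in the spatial variable and the trivial embedding $L^\infty_T\hookrightarrow L^p_T$ on the bounded time interval:
\[
\|[\mathcal{N}_\lambda S_\lambda u]_1\|_{L^p_T L^q_{x'}} \;\lesssim\; \lambda^{2(1/2-1/q)}\,\|[\mathcal{N}_\lambda S_\lambda u]_1\|_{L^\infty_T L^2_{x'}} \;\lesssim\; \lambda^{\gamma-1+1/p}\,\|\rho'_{e\lambda}\|_{L^\infty_T L^2_{x'}} + \lambda^\gamma\,\|S'_\lambda u\|_{L^\infty_T L^2_{x'}},
\]
using the admissibility relation $\gamma+1/p = 2(1/2-1/q)$. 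The second summand is absorbed into the first term on the right-hand side of \eqref{eq:DiagonalizedEstimate2d}.

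For $j=2,3$ the component $[\mathcal{N}_\lambda S_\lambda u]_j$ satisfies a half-wave equation with Lipschitz cometric, namely
\[
\bigl(\partial_t \mp i\,D_{\varepsilon'_{<\lambda}}\bigr)[\mathcal{N}_\lambda S_\lambda u]_j \;=\; [\mathcal{D}_\lambda \mathcal{N}_\lambda S_\lambda u]_j,
\]
where the symbol $\|\xi\|^2_{\varepsilon'_{<\lambda}} = \langle\xi,\mu^{-1}_{<\lambda}\det(\varepsilon_{<\lambda})^{-1}\varepsilon_{<\lambda}\,\xi\rangle$ is the frequency truncation of the cometric $g^{ij}/(\varepsilon\mu)$ extended evenly across $\{x'_2=0\}$. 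Applying the two-dimensional analogue of the Blair--Smith--Sogge estimate (Theorem \ref{thm:LipschitzSingularHalfWave} in dimension $d=2$, whose hypotheses $3/p+1/q\le 1/2$, $\gamma=2(1/2-1/q)-1/p$ match those in Proposition \ref{prop:ExtendedMaxwell2d}) yields
\[
\|[\mathcal{N}_\lambda S_\lambda u]_j\|_{L^p_T L^q_{x'}} \;\lesssim\; \lambda^\gamma\bigl(\|[\mathcal{N}_\lambda S_\lambda u]_j\|_{L^\infty_T L^2_{x'}} + \|[\mathcal{D}_\lambda\mathcal{N}_\lambda S_\lambda u]_j\|_{L^2_{T,x'}}\bigr).
\]
Summing the three components gives \eqref{eq:DiagonalizedEstimate2d}.

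The main point to verify carefully is the identification of $[\mathcal{N}_\lambda S_\lambda u]_1$ with the truncated charge $\rho'_{e\lambda}$. This requires the standard telescoping-sum argument of Subsection \ref{subsection:Conclusion} (see \eqref{eq:TelescopingSum1}--\eqref{eq:TelescopingSum2}) to pass between $\nabla\!\cdot(\varepsilon\,S'_\lambda\mathcal{E})$ and $\nabla\!\cdot(\varepsilon_{<\lambda}S'_\lambda\mathcal{E})$, together with first-order symbol composition within $\tilde S^{0}_{1,1}$ justified by the Lipschitz bound \eqref{eq:LipschitzEstimate} of the reflected coefficients. Both are direct but technical, and the absence of the phase-space partitioning needed in three dimensions makes the two-dimensional case strictly simpler; the principal obstacle was already handled in the three-dimensional argument of Section \ref{section:Diagonalization}.
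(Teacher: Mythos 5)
Your proof takes essentially the same approach as the paper: estimate componentwise, handle the degenerate first component via the explicit form of the first row of $n(x,\xi)$ (which exposes the divergence $\nabla\cdot(\varepsilon\,S'_\lambda\mathcal{E})$ divided by an elliptic order-one operator, hence a gain of one derivative converted to $\lambda^{-1}$) combined with Bernstein/Sobolev embedding, and apply the Blair--Smith--Sogge estimate to the two non-degenerate half-wave components. The paper's proof is terser but identical in structure, and you correctly flag the implicit point that Theorem \ref{thm:LipschitzSingularHalfWave} must be invoked with the two-dimensional admissibility exponents.
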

\begin{proof}
We show \eqref{eq:DiagonalizedEstimate2d} componentwise. For the first component we have to use the divergence condition: We have
\begin{equation*}
\begin{split}
[n(x,\xi)]_{11} &= \frac{\xi_1 \varepsilon^{11}}{\mu \| \xi \|_{\varepsilon'}}, \quad [n(x,\xi)]_{12} = \frac{\xi_2 \varepsilon^{22}}{\mu \| \xi \|_{\varepsilon'}}, \\
[n(x,\xi)]_{13} &= 0.
\end{split}
\end{equation*}
This gives
\begin{equation*}
[\mathcal{N}_\lambda S_\lambda u]_1 = \frac{1}{\mu |\nabla_{\varepsilon'}|} [\nabla \cdot (\varepsilon S_\lambda u )] + R_1(x,D) \mathcal{E}
\end{equation*}
with $\| R_1 \|_{2 \to 2} \lesssim \lambda^{-1}$. This yields the estimate for the first component by Sobolev embedding:
\begin{equation*}
\| [\mathcal{N}_\lambda S_\lambda u]_1 \|_{L_t^p L_{x'}^q} \lesssim \lambda^{\gamma - 1 + \frac{1}{p}} ( \| S_\lambda \rho_e \|_{L_t^\infty L_{x'}^2} + \| S_\lambda u \|_{L_t^\infty L_{x'}^2} ).
\end{equation*}
The non-degenerate components $i=2,3$ are estimated by Theorem \ref{thm:LipschitzSingularHalfWave}:
\begin{equation*}
\| [\mathcal{N}_\lambda S_\lambda u]_i \|_{L_t^p L_{x'}^q} \lesssim \lambda^\gamma ( \| S_\lambda u \|_{L_t^\infty L_{x'}^2} + \| \mathcal{D}^i_\lambda [\mathcal{N}_\lambda S_\lambda u]_i \|_{L_x^2} ).
\end{equation*}
The proof is complete.
\end{proof}
We record the corresponding result of Lemma \ref{lem:Diagonalization} to complete the proof of Theorem \ref{thm:StrichartzEstimatesMaxwell2d}.
\begin{lemma}
\label{lem:DiagonalizationErrorEstimatesII}
For $\lambda \gg 1$, the following estimates hold:
\begin{equation*}
\begin{split}
\| S_{\lambda} u \|_{L_t^p L_x^q} &\lesssim \| \mathcal{N}_\lambda S_{\lambda} u \|_{L_t^p L_x^q} + \lambda^{\gamma - \frac{1}{2}} \| S_{\lambda} u \|_{L_{t,x}^2}, \\
\| S_{\lambda} u \|_{L_{t,x}^2} &\lesssim \| \mathcal{M}_\lambda S_{\lambda } u \|_{L_{t,x}^2}.
\end{split}
\end{equation*}
\end{lemma}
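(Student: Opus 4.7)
My plan is to follow, almost verbatim, the template set out in the proof of Lemma \ref{lem:Diagonalization}, only now there is no additional microlocal partition $\{S_{\lambda i}\}$ to manage since the two-dimensional diagonalization in Proposition \ref{prop:Diagonalization2d} is global in phase space. The starting point is symbol composition for the class $\tilde{S}^0_{1,1}$: the products $\mathcal{M}_\lambda \mathcal{N}_\lambda$ and $\mathcal{N}_\lambda \mathcal{M}_\lambda$ admit expansions whose principal parts are, by a direct algebraic check using the explicit $m$ and $n$ in Proposition \ref{prop:Diagonalization2d}, uniformly elliptic multiplication operators $q(x)$, $q'(x)$ in $x$ alone (mutually inverse, diagonal, built from $\varepsilon^{11}_{<\lambda}$, $\varepsilon^{22}_{<\lambda}$, $\mu_{<\lambda}$). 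The error terms $R_\lambda = \mathcal{M}_\lambda \mathcal{N}_\lambda - q(x,D)$ and $R'_\lambda = \mathcal{N}_\lambda \mathcal{M}_\lambda - q'(x,D)$ lie in $OP\tilde{S}^{-1}_{1,1}$ and hence satisfy $\|R_\lambda\|_{L^2 \to L^2}+\|R'_\lambda\|_{L^2 \to L^2}\lesssim \lambda^{-1}$, after inserting a harmless cutoff to the frequency band of size $\lambda$.

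For the first estimate, uniform ellipticity of $q$ lets us multiply by $q^{-1}$, giving
\begin{equation*}
\|S_\lambda u\|_{L_t^p L_x^q} \lesssim \|\mathcal{M}_\lambda \mathcal{N}_\lambda S_\lambda u\|_{L_t^p L_x^q} + \|R_\lambda S_\lambda u\|_{L_t^p L_x^q}.
\end{equation*}
The first term is dominated by $\|\mathcal{N}_\lambda S_\lambda u\|_{L_t^p L_x^q}$ because $\mathcal{M}_\lambda$, after frequency localization to size $\lambda$, is $L^q$-bounded by Lemma \ref{lem:PseudoBoundsFourierSeries}. For the remainder, Bernstein's inequality in space (going from $L^q$ to $L^2$) and in time (from $L^p$ to $L^2$) together with the $\lambda^{-1}$ gain of $R_\lambda$ yield
\begin{equation*}
\|R_\lambda S_\lambda u\|_{L_t^p L_x^q} \lesssim \lambda^{2(\frac12-\frac1q)+\frac12-\frac1p}\,\|R_\lambda S_\lambda u\|_{L^2_{t,x}} \lesssim \lambda^{2(\frac12-\frac1q)-\frac1p-\frac12}\,\|S_\lambda u\|_{L^2_{t,x}} = \lambda^{\gamma-\frac12}\,\|S_\lambda u\|_{L^2_{t,x}},
\end{equation*}
which is precisely the acceptable error since $\gamma = 2(\tfrac12-\tfrac1q)-\tfrac1p$.

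For the second estimate, the same device applied to $\mathcal{N}_\lambda \mathcal{M}_\lambda$ gives
\begin{equation*}
\|S_\lambda u\|_{L^2_{t,x}} \lesssim \|\mathcal{N}_\lambda \mathcal{M}_\lambda S_\lambda u\|_{L^2_{t,x}} + \lambda^{-1}\,\|S_\lambda u\|_{L^2_{t,x}},
\end{equation*}
and the remainder is absorbed on the left for $\lambda \geq \lambda_0$ sufficiently large. The $L^2$-boundedness of $\mathcal{N}_\lambda$ (again from Lemma \ref{lem:PseudoBoundsFourierSeries}) then delivers the conclusion $\|S_\lambda u\|_{L^2_{t,x}} \lesssim \|\mathcal{M}_\lambda S_\lambda u\|_{L^2_{t,x}}$.

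The main obstacle, as in the three-dimensional case, is to justify that the symbol composition of $\mathcal{M}_\lambda$ and $\mathcal{N}_\lambda$ produces an error in $OP\tilde{S}^{-1}_{1,1}$ despite the fact that the coefficients of $m$ and $n$ are only Lipschitz after reflection across the boundary. This is exactly the point of the enlarged class $\tilde{S}^0_{1,1}$, for which first-order symbol expansion remains valid by virtue of the estimate $|\partial_x a|\lesssim 1$ from \eqref{eq:LipschitzEstimate}; higher-order symbol terms, which would require more derivatives of the coefficients, are not needed.
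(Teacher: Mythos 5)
Your proposal follows the same template as the paper's proof of Lemma~\ref{lem:Diagonalization}, to which the paper's own one-line proof of Lemma~\ref{lem:DiagonalizationErrorEstimatesII} refers, and the Bernstein numerology is correct for the two-dimensional scaling: $\lambda^{2(\frac12-\frac1q)+\frac12-\frac1p}\cdot\lambda^{-1}=\lambda^{\gamma-\frac12}$ with $\gamma = 2(\tfrac12-\tfrac1q)-\tfrac1p$. There is, however, a concrete error in the algebraic setup. Write $n(x,\xi)=n^{(0)}(x,\xi)\,D(x)$ with $D=\mathrm{diag}(\varepsilon^{11}_{<\lambda},\varepsilon^{22}_{<\lambda},\mu_{<\lambda})$ as in Proposition~\ref{prop:Diagonalization2d}. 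A direct check (using $\|\xi'\|^2_{\varepsilon'}=\mu^{-1}(\varepsilon_{22}\xi_1^2+\varepsilon_{11}\xi_2^2)$) gives $n^{(0)}m=I$, hence $n^{(0)}=m^{-1}$ and indeed $mn=D$, a diagonal multiplication operator — but then $nm=m^{-1}Dm$, which depends on $\xi$ through $m(x,\xi)$ and is \emph{not} a multiplication operator in $x$, nor is it the inverse of $mn$. So the assertion ``$q(x)$, $q'(x)$ in $x$ alone, mutually inverse'' is false for $q'$. (The same imprecision appears in the paper's own proof of Lemma~\ref{lem:Diagonalization}, where $\mathcal{N}^i_\lambda\mathcal{M}^i_\lambda S_{\lambda i}$ is written as $(1+R_i)S_{\lambda i}$, i.e.\ as if the composed principal symbol were the identity.) Your second inequality nevertheless survives, because what is actually needed is not that $nm$ be a multiplication operator, only that it be uniformly elliptic as a matrix symbol: $nm=m^{-1}Dm$ is similar to the elliptic diagonal $D$, so $(nm)^{-1}=m^{-1}D^{-1}m\in\tilde{S}^0_{1,1}$, and the first-order composition in $\tilde{S}^0_{1,1}$ gives $Op((nm)^{-1})Op(nm)S_\lambda=S_\lambda+O_{L^2\to L^2}(\lambda^{-1})$, from which $\|S_\lambda u\|_{L^2}\lesssim\|\mathcal{N}_\lambda\mathcal{M}_\lambda S_\lambda u\|_{L^2}+\lambda^{-1}\|S_\lambda u\|_{L^2}$ and then $\lesssim\|\mathcal{M}_\lambda S_\lambda u\|_{L^2}$ as you conclude. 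A cleaner route to the second estimate is to bypass $\mathcal{N}_\lambda$ entirely and observe that $\det m\equiv -2$, so $m^{-1}\in\tilde{S}^0_{1,1}$ is an explicit left parametrix for $\mathcal{M}_\lambda$ on the $\lambda$-band. In short, the lemma and the overall scheme are correct; only the characterization of $nm$ should be amended.
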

The lemma is proved like in the previous section.

\section{Improved local well-posedness for the Kerr system in two dimensions}
\label{section:ImprovedLWP}
This section is devoted to the proof of the following theorem.
\begin{theorem}
\label{thm:ImprovedLWPKerr2d}
Let $\Omega \subseteq \R^2$ be a smooth domain with compact boundary and $s \in (11/6,2]$. Then the Kerr system in two dimensions
\begin{equation}
\label{eq:KerrSystem2d}
\left\{ \begin{array}{cllcl}
\partial_t (\varepsilon \mathcal{E}) &= \nabla_\perp \mathcal{H}, &\quad [ \mathcal{E} \wedge \nu ]_{x' \in \partial \Omega} &=& 0, \quad (t,x') \in \R \times \Omega, \\
\partial_t \mathcal{H} &= -( \partial_1 \mathcal{E}_2 - \partial_2 \mathcal{E}_1), &\quad \text{tr}_{\partial \Omega}(\rho_e) &=& 0
\end{array} \right.
\end{equation}
with $\varepsilon(\mathcal{E}) = 1 + |\mathcal{E}|^2$ and $(\mathcal{E},\mathcal{H})(0) = (\mathcal{E}_0,\mathcal{H}_0) \in \mathcal{H}_0^s(\Omega)$ is locally well-posed provided that $\| (\mathcal{E}_0,\mathcal{H}_0) \|_{H^s} \leq \delta \ll 1$ and $\| \rho_e(0) \|_{H^{\tilde{s}}} \leq D < \infty$ for some $\tilde{s} > \frac{13}{12}$. This means there is $T=T(\|(\mathcal{E}_0,\mathcal{H}_0) \|_{H^s},D)$ such that the solution $(\mathcal{E},\mathcal{H})$ to \eqref{eq:KerrSystem2d} exists for $0 \leq t \leq T$ and for initial data $(\mathcal{E}^{(i)},\mathcal{H}^{(i)}) \in \mathcal{H}_0^s(\Omega)$, $i=1,2$ with $\|(\mathcal{E}^i,\mathcal{H}^i) \|_{H^s} \leq \delta$ and $\| \rho_e^i(0) \|_{H^{\tilde{s}}} \leq D$ we have
\begin{equation*}
\sup_{t \in [0,T]} \| (\mathcal{E}^1,\mathcal{H}^1)(t) - (\mathcal{E}^2,\mathcal{H}^2)(t) \|_{H^s(\Omega)} \to 0
\end{equation*}
for $\| (\mathcal{E}^1, \mathcal{H}^1) - (\mathcal{E}^2,\mathcal{H}^2) \|_{H^s} \to 0$.
\end{theorem}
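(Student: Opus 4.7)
The plan is to construct solutions by regularizing the initial data, establishing uniform a priori estimates on a common time interval, and then passing to the limit. Given $(\mathcal{E}_0, \mathcal{H}_0) \in \mathcal{H}_0^s(\Omega)$ satisfying the smallness and charge hypotheses, approximate by a sequence of $\mathcal{H}^3(\Omega)$ data with comparable $H^s$-norm and uniformly bounded $H^{\tilde s}$-charges. Spitz's local well-posedness (Appendix \ref{appendix:LWPHighRegularity}), together with the high-regularity energy estimates of Proposition \ref{prop:EnergyEstimatesHighKerr}, yields local $\mathcal{H}^3$-solutions whose lifespans are controlled only by the $H^{7/4}$-smallness hypothesis. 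The central task is thus to establish a uniform $H^s$-bound on some interval $[0,T]$ with $T = T(\delta, D)$.

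The $H^s$-bound is obtained from Proposition \ref{prop:APrioriMaxwell2Kerr}, which reduces the task to controlling
\begin{equation*}
\int_0^T \|\partial_{x'} \mathcal{E}(t)\|_{L^\infty_{x'}} \, dt \leq T^{1/2} \|\partial_{x'} \mathcal{E}\|_{L^2_T L^\infty_{x'}}.
\end{equation*}
This is a Strichartz-type bound, obtained from the quasilinear variant of Theorem \ref{thm:StrichartzEstimatesMaxwell2d}. As indicated after the introduction of the Kerr system, the diagonalization of Section \ref{section:Diagonalization2d} still applies provided $\|\partial \varepsilon\|_{L^2_t L^\infty_{x'}} + \|\partial \mu\|_{L^2_t L^\infty_{x'}} \lesssim 1$, with the resulting half-wave components treated by Tataru's Strichartz \cite{Tataru2002} in place of \cite{BlairSmithSogge2009}. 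For a Strichartz pair $(p,q)$ with $3/p + 1/q = 1/2$, $p$ slightly above $6$ and $q$ large, the Strichartz inequality applied at regularity $1+\alpha$, combined with Sobolev embedding $W^{\alpha,q}(\Omega) \hookrightarrow L^\infty(\Omega)$ for $\alpha > 2/q$ and H\"older in time, yields
\begin{equation*}
\|\partial_{x'} \mathcal{E}\|_{L^2_T L^\infty_{x'}} \lesssim T^{1/2 - 1/p} \bigl( \|(\mathcal{E}_0,\mathcal{H}_0)\|_{H^s} + \|\rho_e(0)\|_{H^{\tilde s}} \bigr),
\end{equation*}
precisely when $s > 1 + \alpha + \gamma + \delta \to 11/6$ and $\tilde s > \alpha + \gamma + 1/p + \delta \to 13/12$ as $(p,q,\alpha,\delta) \to (6,\infty,0,0)$, with $\gamma = 1 - 2/q - 1/p$. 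The thresholds $11/6$ and $13/12$ in the statement are sharp for this approach.

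A continuity argument closes the quasilinear hypothesis on $\partial \varepsilon$. Since $\varepsilon = 1 + |\mathcal{E}|^2$, one has $\partial \varepsilon = O(\mathcal{E} \otimes \partial \mathcal{E})$, so $\|\partial \varepsilon\|_{L^2_t L^\infty_{x'}} \lesssim \|\mathcal{E}\|_{L^\infty_t L^\infty_{x'}} \|\partial_{x'} \mathcal{E}\|_{L^2_T L^\infty_{x'}}$, which is $O(\delta)$ by Sobolev embedding of $H^s$ into $L^\infty$, smallness of the data, and the preceding Strichartz bound, thereby vindicating the bootstrap hypothesis. Uniqueness and continuous dependence in $H^s$ then follow from an energy estimate for the difference of two solutions, which satisfies a linear Maxwell-type system with coefficients determined by $(\mathcal{E}^1, \mathcal{E}^2)$; the $L^2$-energy estimate applied at the linearized level, interpolated against the uniform $H^s$-bound, yields convergence in $C([0,T]; H^s)$. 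The principal obstacle is the delicate interplay between the quasilinear Strichartz estimate and the bootstrap, since the diagonalization underlying Strichartz presupposes $\|\partial \varepsilon\|_{L^2_t L^\infty_{x'}}$ control while this bound is itself a Strichartz-type estimate for $\partial \mathcal{E}$; careful use of Lemma \ref{lem:PseudoBoundsQuasilinear} on the mapping properties of the diagonalization operators with coefficients merely in $L_t^2 L_{x'}^\infty$ is required to make the scheme rigorous.
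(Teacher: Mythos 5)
Your outline of the a priori estimate, the bootstrap closing the quasilinear hypothesis $\|\partial \varepsilon\|_{L_t^2 L_{x'}^\infty} \lesssim 1$, and the $L^2$-Lipschitz bound for differences matches the paper's Steps 1 and 2 (Propositions \ref{prop:APrioriEstimate2dKerr} and \ref{prop:L2Lipschitz}). However, the final step contains a genuine gap: you assert that ``the $L^2$-energy estimate applied at the linearized level, interpolated against the uniform $H^s$-bound, yields convergence in $C([0,T];H^s)$.'' This is false at the endpoint regularity. Interpolating an $L^2$-Lipschitz bound for the difference $u^1 - u^2$ against uniform $H^s$-bounds on the individual solutions only yields convergence in $H^{s'}$ for every $s' < s$; it cannot give $H^s$-convergence, because the high-frequency tails of $u^1$ and $u^2$ need not be small simultaneously. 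This is the standard obstruction to continuous dependence at the top regularity in quasilinear problems, and the paper resolves it by a frequency envelope argument (Subsection \ref{subsection:FrequencyEnvelopes}): one introduces sharp frequency envelopes $(c_N)$ for $u_0$ in $H^s$, constructs regularized data $u_0^N$, derives exponentially off-diagonal bounds $\|P_M(u^{2N}-u^N)\|_{C_T H^s} \lesssim [\frac{M}{N}]^\kappa c_N$ by combining the $L^2$-Lipschitz bound at low frequencies with high-regularity energy estimates (Proposition \ref{prop:EnergyEstimatesHighKerr}) at high frequencies, and then uses Schur's test together with the $\ell^2$-summability and slow-variation of the envelope to obtain $\|u - u^N\|_{C_T H^s} \lesssim (\sum_{P \geq N} c_P^2)^{1/2} \to 0$. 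Combined with the compactness property of envelopes under $H^s$-convergence of the data (Lemma \ref{fecomp}), this yields continuous (though not uniformly continuous) dependence in $H^s$.

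A second, related omission: the regularization $u_0 \mapsto u_0^N$ must respect the compatibility conditions \eqref{eq:NonlinearCompatibility0}--\eqref{eq:NonlinearCompatibility2} in order to invoke the $\mathcal{H}^3$-local theory of Theorem \ref{thm:LocalWellposednessH3Maxwell2d}. This is precisely why the statement is formulated for the smaller space $\mathcal{H}_0^s(\Omega)$ (zero Dirichlet conditions on $\mathcal{E}_0$) rather than the full $\mathcal{H}^s(\Omega)$: with $\mathcal{E}_0$ compactly supported away from $\partial \Omega$, the even/odd reflections and symmetric mollification used to define $P_{\leq N}$ preserve the required trace conditions. Your proposal says only ``approximate by a sequence of $\mathcal{H}^3(\Omega)$ data with comparable $H^s$-norm'' without specifying how the approximation is to be constructed so that each $u_0^N$ is admissible data for the $\mathcal{H}^3$-theory; without the restriction to $\mathcal{H}_0^s$ this step would be genuinely delicate.
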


Spitz proved continuous dependence in $\mathcal{H}^3(\Omega)$ in three dimensions: Recall that this refers to data in $H^3(\Omega)$ satisfying the compatibility conditions up to second order derived from the boundary condition.
In Appendix \ref{appendix:LWPHighRegularity} we establish local well-posedness in $\mathcal{H}^3(\Omega)$:
\begin{theorem}
\label{thm:LocalWellposednessH3Maxwell2d}
Let $\Omega \subseteq \R^2$ be a smooth domain with compact boundary. Then \eqref{eq:KerrSystem2d} is locally well-posed in $\mathcal{H}^3(\Omega)$. This means there is $T=T(\| (\mathcal{E}_0,\mathcal{H}_0) \|_{H^3(\Omega)})$ such that solutions $(\mathcal{E}^i,\mathcal{H}^i)$, $i=1,2$ exists for $0 \leq t \leq T$ and depend continuously on the initial data: We have
\begin{equation*}
\sup_{t \in [0,T]} \| (\mathcal{E}^1,\mathcal{H}^1)(t) - (\mathcal{E}^2,\mathcal{H}^2)(t) \|_{H^3(\Omega)} \to 0
\end{equation*}
for $\| (\mathcal{E}^1,\mathcal{H}^1)(0) - (\mathcal{E}^2,\mathcal{H}^2)(0) \|_{H^3(\Omega)} \to 0$.
\end{theorem}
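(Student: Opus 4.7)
The plan is to reduce the two-dimensional problem to the three-dimensional quasilinear Maxwell system and invoke Spitz's $\mathcal{H}^3$-local well-posedness. Given $(\mathcal{E}_0,\mathcal{H}_0)\in\mathcal{H}^3(\Omega)$, I would work on the three-dimensional cylinder $\tilde{\Omega}:=\Omega\times\mathbb{T}$ (a torus in $x_3$ preserves compactness and smoothness of the boundary while respecting cylindrical symmetry) and take the cylindrical extension
\begin{equation*}
\tilde{\mathcal{E}}_0(x_1,x_2,x_3):=(\mathcal{E}_{01}(x_1,x_2),\mathcal{E}_{02}(x_1,x_2),0),\quad \tilde{\mathcal{H}}_0(x_1,x_2,x_3):=(0,0,\mathcal{H}_0(x_1,x_2)).
\end{equation*}
The outer unit normal on $\partial\tilde{\Omega}$ is $\tilde{\nu}=(\nu_1,\nu_2,0)$. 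A direct computation gives $\tilde{\mathcal{E}}\times\tilde{\nu}=(0,0,\mathcal{E}_1\nu_2-\mathcal{E}_2\nu_1)$ and $\tilde{\mathcal{H}}\cdot\tilde{\nu}=0$ automatically, so the 2D condition \eqref{eq:PerfectlyConductingBoundaryCondition2d} lifts to the 3D perfectly conducting conditions \eqref{eq:PerfectConductor}. Moreover, one checks that cylindrical fields solve the 3D system \eqref{eq:MaxwellDomainsIntroduction} with $\mu\equiv 1$ and Kerr nonlinearity $\varepsilon(\tilde{\mathcal{E}})=1+|\tilde{\mathcal{E}}|^2$ if and only if their 2D traces solve \eqref{eq:KerrSystem2d}, since $\nabla\times\tilde{\mathcal{H}}=(\nabla_\perp\mathcal{H},0)$ and $\nabla\times\tilde{\mathcal{E}}=(0,0,\partial_1\mathcal{E}_2-\partial_2\mathcal{E}_1)$ on such fields, while $|\tilde{\mathcal{E}}|^2=|\mathcal{E}|^2$.

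Next, I would verify that $(\tilde{\mathcal{E}}_0,\tilde{\mathcal{H}}_0)\in\mathcal{H}^3(\tilde{\Omega})$ by matching the two-dimensional compatibility conditions \eqref{eq:0thCompCondition2d}--\eqref{eq:2ndCompCondition2d} with the three-dimensional ones \eqref{eq:CompatibilityI}--\eqref{eq:CompatibilityIII}. Most of the 3D conditions trivialize on cylindrical data since $\partial_3$ annihilates the fields and the odd components are set to zero; the remaining ones are precisely the 2D conditions. Spitz's $\mathcal{H}^3$-local well-posedness then yields $T=T(\|(\tilde{\mathcal{E}}_0,\tilde{\mathcal{H}}_0)\|_{H^3(\tilde{\Omega})})=T(\|(\mathcal{E}_0,\mathcal{H}_0)\|_{H^3(\Omega)})$ and a unique solution $(\tilde{\mathcal{E}},\tilde{\mathcal{H}})\in C([0,T],\mathcal{H}^3(\tilde{\Omega}))$ depending continuously on the data.

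The key structural step is to propagate cylindrical symmetry. For $x_3$-independence I would use uniqueness: for any $h\in\mathbb{T}$ the translate $\tau_h(\tilde{\mathcal{E}},\tilde{\mathcal{H}})$ solves the same quasilinear problem with the same ($x_3$-invariant) data, hence equals $(\tilde{\mathcal{E}},\tilde{\mathcal{H}})$. For vanishing of $\tilde{\mathcal{E}}_3$ and $\tilde{\mathcal{H}}_1,\tilde{\mathcal{H}}_2$ I would use a parity argument under $R(x_1,x_2,x_3):=(x_1,x_2,-x_3)$, with $\tilde{\mathcal{E}}_i\circ R$ for $i=1,2$, $-\tilde{\mathcal{E}}_3\circ R$, $-\tilde{\mathcal{H}}_j\circ R$ for $j=1,2$, and $\tilde{\mathcal{H}}_3\circ R$: one checks that this involution maps solutions of the Kerr system to solutions (the Kerr coefficient $1+|\tilde{\mathcal{E}}|^2$ is invariant, and the signs in $\nabla\times$ match), and it fixes the cylindrical data, so by uniqueness it fixes the solution. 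Combined with $x_3$-independence, oddness of $\tilde{\mathcal{E}}_3,\tilde{\mathcal{H}}_1,\tilde{\mathcal{H}}_2$ forces them to vanish. The restriction $(\mathcal{E},\mathcal{H})(t,x_1,x_2):=(\tilde{\mathcal{E}}_1,\tilde{\mathcal{E}}_2,\tilde{\mathcal{H}}_3)(t,x_1,x_2,0)$ is then the desired $\mathcal{H}^3(\Omega)$-solution of \eqref{eq:KerrSystem2d}. Uniqueness in 2D follows from uniqueness in 3D via the same extension, and continuous dependence follows from Spitz's 3D continuous dependence together with boundedness of the cylindrical extension $\mathcal{H}^3(\Omega)\hookrightarrow\mathcal{H}^3(\tilde{\Omega})$ and the restriction $\mathcal{H}^3(\tilde{\Omega})\to\mathcal{H}^3(\Omega)$ on cylindrical functions.

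The main obstacle is the bookkeeping for the compatibility conditions: one must match \eqref{eq:CompatibilityI}--\eqref{eq:CompatibilityIII} on the cylindrical ansatz to \eqref{eq:0thCompCondition2d}--\eqref{eq:2ndCompCondition2d}, in particular verifying that the second-order condition \eqref{eq:CompatibilityIII} reduces (after all $x_3$-derivatives and odd components are set to zero, using that the metric on $\tilde{\Omega}$ is the product of the 2D flat metric and $dx_3^2$) to precisely \eqref{eq:2ndCompCondition2d}. A secondary technical point is that Spitz's result is formulated for smooth domains with compact boundary, so the choice $\mathbb{T}$ rather than $\mathbb{R}$ in the third variable is essential; once that is in place, the transfer is automatic.
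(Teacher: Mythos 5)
Your proposal is correct in outline but takes a genuinely different route from the paper on two key points, and it is worth comparing them.

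The paper works on the non-compact cylinder $\tilde{\Omega}=\Omega\times\mathbb{R}$ with a smooth cutoff $\varphi(x_3')$ applied to the extended initial data, then invokes finite speed of propagation to argue that in a slab $|x_3'|\le R-ct$ the solution coincides with what a genuine $x_3'$-invariant evolution would produce; on that slab the components $\tilde{\mathcal{E}}_3,\tilde{\mathcal{H}}_1,\tilde{\mathcal{H}}_2$ and the $x_3'$-dependence are shown to vanish by reading the equations component by component (using $\partial_t\tilde{\mathcal{H}}_1=\partial_t\tilde{\mathcal{H}}_2=0$ and the divergence constraint $\nabla\cdot\tilde{\mathcal{H}}=0$). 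Your approach replaces both of these mechanisms: the torus $\Omega\times\mathbb{T}$ removes the need for a cutoff entirely (the $x_3$-invariant data is already in $H^3$), and the translation-plus-parity uniqueness argument replaces the component-wise manipulation. The symmetry argument is cleaner and more robust, and the sign bookkeeping for the involution $(\tilde{\mathcal{E}}_1,\tilde{\mathcal{E}}_2,\tilde{\mathcal{E}}_3,\tilde{\mathcal{H}}_1,\tilde{\mathcal{H}}_2,\tilde{\mathcal{H}}_3)\mapsto(\tilde{\mathcal{E}}_1,\tilde{\mathcal{E}}_2,-\tilde{\mathcal{E}}_3,-\tilde{\mathcal{H}}_1,-\tilde{\mathcal{H}}_2,\tilde{\mathcal{H}}_3)\circ R$ with $R(x_1,x_2,x_3)=(x_1,x_2,-x_3)$ works out; it maps Kerr solutions to Kerr solutions and fixes the extended data, so uniqueness gives the oddness you need, and translation invariance gives $x_3$-independence.

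One point you should make explicit: the remark that the torus is ``essential'' for compactness cuts both ways. $\Omega\times\mathbb{T}$ is not a domain in $\mathbb{R}^3$; it is a smooth $3$-manifold with boundary, so Spitz's theorem does not apply verbatim any more than it applies verbatim to the paper's $\Omega\times\mathbb{R}$ (whose boundary $\partial\Omega\times\mathbb{R}$ is non-compact). Both choices require the observation that Spitz's energy method is local, using finitely many boundary charts and interior estimates, so it carries over to either setting. The paper says this in one sentence for $\Omega\times\mathbb{R}$; your write-up should say the analogous thing for $\Omega\times\mathbb{T}$ rather than presenting the torus as unambiguously within the scope of the cited theorem. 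Once that caveat is in place, the compatibility-condition matching you describe (the 3D conditions \eqref{eq:CompatibilityI}--\eqref{eq:CompatibilityIII} collapsing on cylindrical data to \eqref{eq:0thCompCondition2d}--\eqref{eq:2ndCompCondition2d}) is exactly what the paper checks, and your argument closes the same way.
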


In the following we take the local existence of these (sufficiently smooth) solutions for granted and want to examine behavior in rougher topologies. The argument to show local well-posedness for $11/6 < s \leq 2$ proceeds in three steps:
\begin{itemize}
\item[1.] We require estimates for solutions
\begin{equation}
\label{eq:EnergyEstimatesLWP}
\| u(t) \|_{H^s(\Omega)} \lesssim_{\delta,T} e^{\int_0^T \| \partial_x \mathcal{E}(\tau) \|_{L^\infty(\Omega)} d\tau} \| u(0) \|_{H^s(\Omega)}
\end{equation}
for $s \in [0,2]$. These were already proved in Section \ref{section:EnergyEstimates}. By using Strichartz estimates, these give a priori estimates for $s \in ( \frac{11}{6},2]$ (see Proposition \ref{prop:APrioriEstimate2dKerr}).
\item[2.] We prove Lipschitz-continuous dependence in $L^2$ for initial data in $\mathcal{H}^s(\Omega)$, $s \in ( \frac{11}{6} , 2]$ (see Proposition \ref{prop:L2Lipschitz}).
\item[3.] We show continuous dependence, but no uniform continuous dependence via frequency envelopes (Subsection \ref{subsection:FrequencyEnvelopes}). Here we use a regularization, which respects the compatibility conditions. This is facilitated by working in $\mathcal{H}_0^s(\Omega)$ and would be more delicate in $\mathcal{H}^s(\Omega)$. This is the only step in the proof, which uses that the initial data are in the smaller space $\mathcal{H}_0^s(\Omega)$.
\end{itemize}
We note that for $s>2$ we have $\| \partial_x \mathcal{E} \|_{L^\infty(\Omega)} \lesssim \| \mathcal{E} \|_{H^s(\Omega)}$ by Sobolev embedding, and Strichartz estimates are not required.
 For this reason, we shall only prove Theorem \ref{thm:ImprovedLWPKerr2d} as formulated for $s \in (11/6,2]$.

\medskip

\subsection{A priori control of solutions via Strichartz estimates}

In this subsection we show the following:
\begin{proposition}
\label{prop:APrioriEstimate2dKerr}
Let $11/6 < s \leq 2$ and $\tilde{s} > \frac{13}{12}$. Then there is $\delta>0$ such that for an $\mathcal{H}^3(\Omega)$-solution to \eqref{eq:KerrSystem2d} with $\| u(0) \|_{H^s(\Omega)} \leq \delta$ we have
\begin{equation}
\label{eq:APrioriEstimate2dKerr}
\sup_{t \in [0,T]} \| u(t) \|_{H^s(\Omega)} \lesssim \| u(0) \|_{H^s(\Omega)}
\end{equation}
with $T=T(\| u(0) \|_{H^s(\Omega)}, \| \rho_e(0) \|_{H^{\tilde{s}}})$.
\end{proposition}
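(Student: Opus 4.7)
I would run a two-tier continuity argument in $T$ that keeps simultaneous control of two norms: the high-regularity energy $\|u\|_{L_T^\infty H^s}$, controlled via \eqref{eq:EnergyEstimateMaxwell2Kerr} in Proposition~\ref{prop:APrioriMaxwell2Kerr}, and the coefficient-regularity norm $\|\partial_x\varepsilon\|_{L_T^2 L^\infty_{x'}}$, needed to license the quasilinear Strichartz estimate. The smallness $\|u(0)\|_{H^s}\leq\delta$ provides the contractive prefactor that breaks the self-referential loop $\varepsilon\leftrightarrow\mathcal{E}$, while the charge bound $\|\rho_e(0)\|_{H^{\tilde s}}\leq D$ enters through the divergence-side term in Theorem~\ref{thm:StrichartzEstimatesMaxwell2d}.

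First, under the bootstrap assumption $\|\partial_x\varepsilon\|_{L_T^2 L^\infty_{x'}}\leq 1$ (and similarly for $\mu$), the diagonalization of Section~\ref{section:Diagonalization2d} still applies with Tataru's quasilinear Strichartz estimates for the resulting half-wave equations, as indicated in the introduction. I would choose Strichartz-admissible exponents $(p,q)$ with $3/p+1/q=1/2$, $p$ slightly above $6$ and $q$ large, apply the estimate to $\langle D\rangle^{1+\alpha}\mathcal{E}$ with $\alpha$ slightly above $2/q$, and invoke the two-dimensional Sobolev embedding $W^{\alpha,q}(\R^2)\hookrightarrow L^\infty_{x'}$ to obtain
\begin{equation*}
\|\partial_x\mathcal{E}\|_{L_T^p L^\infty_{x'}}\lesssim \|u(0)\|_{H^{s}}+\|\rho_e(0)\|_{H^{\tilde s}}\leq \delta+D.
\end{equation*}
The thresholds $s>11/6$ and $\tilde s>13/12$ arise from the optimized scaling indices $\gamma+(1+\alpha)+\delta=2-1/p+(\alpha-2/q)+\delta$ and $(\gamma-1+1/p)+(1+\alpha)+\delta=1+(\alpha-2/q)+\delta$ after accounting for the loss inherent to Tataru's quasilinear estimate at the corner $(p,q)=(6^+,\infty^-)$.

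Second, I close the coefficient bootstrap: since $\partial_x\varepsilon=2\mathcal{E}\,\partial_x\mathcal{E}$, the embedding $H^s\hookrightarrow L^\infty$ (valid for $s>1$), and Hölder in time give
\begin{equation*}
\|\partial_x\varepsilon\|_{L_T^2 L^\infty_{x'}}\lesssim \|\mathcal{E}\|_{L_T^\infty L^\infty_{x'}}\, T^{1/2-1/p}\|\partial_x\mathcal{E}\|_{L_T^p L^\infty_{x'}}\lesssim \delta\, T^{1/2-1/p}(\delta+D),
\end{equation*}
which is $\leq 1/2$ once $T=T(\delta,D)$ is small enough. Third, the energy bootstrap closes by plugging $\int_0^T\|\partial_x\mathcal{E}(\tau)\|_{L^\infty_{x'}}\,d\tau\leq T^{1-1/p}\|\partial_x\mathcal{E}\|_{L_T^p L^\infty_{x'}}\leq T^{1-1/p}(\delta+D)$ into \eqref{eq:EnergyEstimateMaxwell2Kerr}, yielding $\|u\|_{L_T^\infty H^s}\lesssim e^{CT^{1-1/p}(\delta+D)}\|u(0)\|_{H^s}\leq 2\|u(0)\|_{H^s}$ for $T$ small, which is \eqref{eq:APrioriEstimate2dKerr} with constants independent of the (smooth) $\mathcal{H}^3$-regularization of the data.

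The main obstacle is precisely the self-referential nature of the loop: Tataru's Strichartz estimate requires $\partial_x\varepsilon\in L_T^2 L^\infty_{x'}$, but this quantity is itself controllable only through a Strichartz estimate applied to $\partial_x\mathcal{E}$. The smallness hypothesis $\delta\ll 1$ is exactly what breaks this cycle through the factor $\|\mathcal{E}\|_{L^\infty_{x'}}\lesssim\delta$ in the product rule for $\partial_x\varepsilon$, after which both bootstraps close by taking $T$ small. The numerical bounds $s>11/6$, $\tilde s>13/12$ then reflect the optimization of the Strichartz admissibility $3/p+1/q\leq 1/2$ near $(p,q)=(6,\infty)$, combined with the $W^{\alpha,q}\hookrightarrow L^\infty$ loss of $2/q+$ derivatives and the loss intrinsic to Tataru's quasilinear estimate.
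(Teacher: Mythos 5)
Your high-level architecture matches the paper's: a continuity argument in $T$ simultaneously propagating $\|u\|_{L_T^\infty H^s}$ (via the Gr\o nwall energy bound \eqref{eq:EnergyEstimateMaxwell2Kerr}) and the coefficient norm $\|\partial_x\varepsilon_1\|_{L_T^2 L^\infty_{x'}}$ (to license the quasilinear Strichartz estimate), with $\|u(0)\|_{H^s}\leq\delta$ breaking the self-referential loop and the $H^3$ energy estimate \eqref{eq:EnergyEstimateKerrHigh} guaranteeing continuation. This is precisely what the paper does.

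The gap is in the choice of Strichartz parameters. You correctly name Tataru's quasilinear Strichartz estimate as the tool, but you then use the Blair--Smith--Sogge boundary admissibility $\frac{3}{p}+\frac{1}{q}=\frac{1}{2}$ with $(p,q)=(6^+,\infty^-)$. These are two different estimates: the BSS estimate (Theorem \ref{thm:LipschitzSingularHalfWave}, or its $2$d analog entering Theorem \ref{thm:StrichartzEstimatesMaxwell2d}) applies to a \emph{fixed} codimension-$1$ Lipschitz metric, while the Kerr coefficients $\varepsilon_1=\varepsilon_1(\mathcal{E})$ are time-dependent with only $\partial_x\varepsilon_1\in L_T^2L^\infty_{x'}$ under control, so it is Proposition \ref{prop:Strichartz2dAnisotropic} (Tataru via paradifferential truncation at $\lambda^{1/3}$) that applies, with the Euclidean admissibility $\frac{2}{p}+\frac{1}{q}\leq\frac{1}{2}$ and an \emph{additional} loss of $\frac{1}{3p}$ derivatives. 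The paper optimizes this at $(p,q)=(4,\infty)$, where the Euclidean exponent is $\gamma=2\bigl(\tfrac12-\tfrac1q\bigr)-\tfrac1p=\tfrac34$ and the Tataru loss is $\tfrac1{12}$, so $\alpha>\tfrac34+\tfrac1{12}=\tfrac56$ and hence $s=\alpha+1>\tfrac{11}{6}$. Your $(p,q)=(6^+,\infty^-)$ with BSS admissibility gives $\gamma\approx\tfrac56$ \emph{without} a Tataru loss, which coincidentally reproduces $\tfrac{11}{6}$; but if you correctly combine $p=6^+$ with Tataru's admissibility and loss, you obtain $1-\tfrac{2}{3p}\approx\tfrac89>\tfrac56$, hence $s>\tfrac{17}{9}$, a strictly worse threshold. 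The two errors cancel numerically for $s$ but do not cancel for the charge exponent: the paper obtains $\tilde s>\alpha+\tfrac1p>\tfrac56+\tfrac14=\tfrac{13}{12}$ using the charge term $\langle D'\rangle^{-1+1/p}\rho'_e$ from Proposition \ref{prop:Strichartz2dAnisotropic}, whereas your formula $(\gamma-1+\tfrac1p)+(1+\alpha)+\delta$ (which uses the domain-version charge exponent $\gamma-1+\tfrac1p$ from Theorem \ref{thm:StrichartzEstimatesMaxwell2d}) evaluates to $0+1+\alpha+\delta\to 1^+$ at your endpoint, not $\tfrac{13}{12}$. So the Strichartz step needs to be replaced: use $p=4$, Euclidean admissibility, and keep track of the $\tfrac1{3p}$ Tataru loss as the paper does.
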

We first argue how the proof is finished with Strichartz estimates at hand:
\begin{proof}[Conclusion of proof with Strichartz estimates]
By finite speed of propagation, it suffices to prove the claim in charts. The interior of $\Omega$ can be handled like in $\R^2$ (see \cite{SchippaSchnaubelt2022}). It suffices to prove \eqref{eq:APrioriEstimate2dKerr} in a chart and written in geodesic coordinates. To this end, let $\sup_{t \in [0,T]} \| u(t) \|_{H^s(\Omega)} \leq \tilde{\delta} \ll 1$. We use a bootstrap argument based on the estimates:
\begin{align}
\label{eq:StrichartzEstimatesKerr2d}
\| \partial_x u \|_{L_T^4 L_{x'}^\infty} &\leq C_1(\tilde{\delta},T,s_1) ( \| u \|_{L_T^\infty H^{s_1}} + \| \rho_e(0) \|_{H^{\tilde{s}}}), \quad \frac{11}{6} < s_1 \leq 2, \\
\label{eq:EnergyEstimateKerr2d}
\sup_{t \in [0,T]} \| u(t) \|_{H^s(\Omega)} &\leq C(\tilde{\delta},T) e^{C_2(\tilde{\delta}) \int_0^T \| \partial_x u(t') \|_{L^\infty(\Omega)} dt'} \| u(0) \|_{H^s(\Omega)}, \quad s \in [0,2], \\
\label{eq:EnergyEstimateKerrHigh}
\sup_{t \in [0,T]} \| u(t) \|_{H^3(\Omega)} &\leq C(\tilde{\delta},T) e^{C_3(\tilde{\delta},T) e^{\int_0^T \| \partial_x u(t') \|_{L^\infty(\Omega)} dt'} (\| u(0) \|_{H^2(\Omega)} + 1) } \| u(0) \|_{H^3(\Omega)}.
\end{align}
The crucial Strichartz estimate \eqref{eq:StrichartzEstimatesKerr2d} will be proved below. The energy estimate \eqref{eq:EnergyEstimateKerr2d} was established in the proof of Proposition \ref{prop:APrioriMaxwell2Kerr}. We need \eqref{eq:EnergyEstimateKerrHigh} to argue that the $H^3$-solutions exist for the same time like the solutions at lower regularity.

Recall that \eqref{eq:EnergyEstimateKerr2d} and \eqref{eq:EnergyEstimateKerrHigh} require a smallness condition, which is ensured by $\tilde{\delta} \ll 1$. Once we control the $H^s$-norm via a continuity argument and keep it small, the assumptions on the $H^s$-norm will be satisfied (this requires smallness of the initial data). Moreover, we can choose the constants uniform in $\tilde{\delta}$ and $T$, if these parameters are bounded. Taking \eqref{eq:StrichartzEstimatesKerr2d} and \eqref{eq:EnergyEstimateKerr2d} together, we find
\begin{equation}
\label{eq:BootstrapA}
\| \partial_x u \|_{L_T^4 L_{x'}^\infty(\Omega)} \leq C_1 C e^{C_2 \| \partial_x u \|_{L_T^1 L_{x'}^\infty}} \big( \| u(0) \|_{H^s(\Omega)} + \| \rho_e(0) \|_{H^{\tilde{s}}} \big)
\end{equation}
for $\frac{11}{6} < s \leq 2$ and $\tilde{s} > \frac{13}{12}$. We argue as follows: Suppose that the $H^3$-solution exists on $[0,\tau]$.
By continuity of $\partial_x u$ in $L^\infty(\Omega)$ (recall that $u \in C([0,\tau],H^3(\Omega))$ and use Sobolev embedding), there is $T_0$ such that $\| \partial_x u \|_{L_{T_0}^4 L_{x'}^\infty(\Omega)} \leq e C_1 C ( \| u(0) \|_{H^s(\Omega)} + \| \rho_e(0) \|_{H^{\tilde{s}}} )$. We want to extend this badly quantified time interval to $[0,T_{\max}]$, $T_{\max} = T(\| u(0) \|_{H^s(\Omega)}, \| \rho_e(0) \|_{H^{\tilde{s}}})$\footnote{The time moreover depends on $\Omega$, but this is suppressed in the following.} such that
\begin{equation*}
T_{\max}^{3/4} C_2 C_1 C ( \| u(0) \|_{H^s(\Omega)} + \| \rho_e(0) \|_{H^{\tilde{s}}}) \leq \frac{1}{8}.
\end{equation*}
Suppose that $T_0 < T_{\max}$ (otherwise we are done). Then we have for $T_0$ actually the improved estimate
\begin{equation*}
\begin{split}
\| \partial_x u \|_{L_{T_0}^4 L_{x'}^\infty} &\leq C_1 C e^{C_2 T_0^{\frac{3}{4}} C C_1 ( \| u_0 \|_{H^s(\Omega)} + \| \rho_e(0) \|_{H^{\tilde{s}}}) } (\| u(0) \|_{H^s(\Omega)} + \| \rho_e(0) \|_{H^{\tilde{s}}}) \\
&\leq C_1 C e^{\frac{1}{8}} (\| u(0) \|_{H^s(\Omega)} + \| \rho_e(0) \|_{H^{\tilde{s}}}). 
\end{split}
\end{equation*}
Moreover, by finiteness of $\| \partial_x u \|_{L_{T_0}^1 L^\infty_{x'}}$ we have that $\tau \geq T_0$.

This allows us to continue up to a time $T_1$ such that
\begin{equation*}
\| \partial_x u \|_{L_{T_1}^4 L_{x'}^\infty} \leq 2 C_1 C (\| u(0) \|_{H^s(\Omega)} + \| \rho_e(0) \|_{H^{\tilde{s}}}).
\end{equation*}
This can be bootstrapped up to a time $T_{\max} = T(\| u_0 \|_{H^s(\Omega)}, \| \rho_e(0) \|_{H^{\tilde{s}}})$ and gives the estimate
\begin{equation*}
\| \partial_x u \|_{L_{T_{\max}}^4 L_{x'}^\infty(\Omega)} \lesssim ( \| u(0) \|_{H^s(\Omega)} + \| \rho_e(0) \|_{H^{\tilde{s}}}).
\end{equation*}
By \eqref{eq:EnergyEstimateKerr2d} for $s=3$, we also infer existence of solutions in $H^3$ for the same time. The a priori estimate is immediate from \eqref{eq:EnergyEstimateKerr2d}.
\end{proof}

The Strichartz estimates for $(\mathcal{E},\mathcal{H})$ in \eqref{eq:StrichartzEstimatesKerr2d} play the key role in the argument. We resolve the Kerr system
\begin{equation*}
\left\{ \begin{array}{clclcl}
\partial_t (\varepsilon \mathcal{E}) &= \nabla_\perp \mathcal{H}, &\quad \nabla \cdot (\varepsilon \mathcal{E}) &= \rho_e, &\quad (t,x') &\in \R \times \Omega, \\
\partial_t \mathcal{H} &= - (\nabla \times \mathcal{E})_3, &\quad [\mathcal{E}_{||} ]_{x' \in \partial \Omega} &= 0, &\quad (\mathcal{E},\mathcal{H})(0) &= (\mathcal{E}_0,\mathcal{H}_0) \in \mathcal{H}^s(\Omega)
\end{array} \right.
\end{equation*}
in geodesic coordinates as follows: Let $\varphi: \Omega \to \R^2_{>0}$ denote the change of coordinates $x' = \varphi(x)$ and $J(x') = \frac{\partial \varphi}{\partial x}(\varphi^{-1}(x'))$ the Jacobian. The change of coordinates from Section \ref{section:MaxwellManifolds} reads
\begin{equation*}
\mathcal{E}'(x') = (J^t)^{-1} \mathcal{E}(\varphi^{-1}(x')), \quad \mathcal{H}'(x') = \mathcal{H}(\varphi^{-1}(x')), \quad \varepsilon' = \frac{J \varepsilon J^t}{\det J}.
\end{equation*}
The cometric is given by
\begin{equation*}
g^{-1} =  JJ^t = 
\begin{pmatrix}
g^1 & 0 \\
0 & g^2
\end{pmatrix}.
\end{equation*}
The change of coordinates leads us to
\small
\begin{equation*}
\left\{ \begin{array}{clclcl}
\partial_t (\varepsilon' \mathcal{E}') &= \nabla_\perp \mathcal{H}', &\quad \frac{1}{\sqrt{g}} \nabla \cdot (\varepsilon' \mathcal{E}') &= \rho_e', &\quad (t,x') &\in \R \times \R^2_{>0}, \\
\partial_t (\mu_1 \mathcal{H}')= &= - (\nabla \times \mathcal{E}')_3, &\quad [\mathcal{E}_1']_{x_2'= 0} &= 0, &\quad (\mathcal{E}',\mathcal{H}')(0) &= (\mathcal{E}_0',\mathcal{H}_0').
\end{array} \right.
\end{equation*}
\normalsize
We compute
\begin{equation*}
\mu_1 = \sqrt{g}, \quad \varepsilon' = \frac{J (1+|\mathcal{E}|^2) J^t}{\det J} = \frac{g^{-1} ( 1 + \langle \mathcal{E}', JJ^t \mathcal{E}' \rangle)}{\det J}= \frac{g^{-1} (1 + \langle \mathcal{E}', g^{-1} \mathcal{E}' \rangle)}{\det J}.
\end{equation*}
We extend the system to $\R^2$ by reflecting $\mathcal{E}'_1$ oddly and $\mathcal{E}_2'$ and $\mathcal{H}'$ evenly (according to the boundary conditions). Moreover, the coefficients of $g$ are extended evenly. Consequently, $\varepsilon'$ is extended evenly.

It will be important to work in non-divergence form, to which end we compute
\begin{equation*}
\partial_t (\varepsilon' \mathcal{E}') = \varepsilon_1 \partial_t \mathcal{E}' \text{ with } \varepsilon_1 = \sqrt{g}( \varepsilon' + 2 (g^{-1} \mathcal{E}') \otimes (g^{-1} \mathcal{E}')).
\end{equation*}
Note that $\varepsilon_1$ is still diagonal at $x_2 = 0$. The diagonal components are reflected evenly, the off-diagonal ones oddly. Importantly, we observe that for $(\mathcal{E},\mathcal{H}) \in C_t H^3$ we have $\partial_x \varepsilon_1 \in L_t^2 L_{x'}^\infty$ by Sobolev embedding. Using Strichartz estimates for coefficients $\partial_x \varepsilon_1 \in L_t^2 L_{x'}^\infty$ and energy estimates, we will prove bounds depending on $\| u(0) \|_{H^s}$. It is non-trivial to verify
\begin{equation*}
\| \partial_x \varepsilon_1 \|_{L_T^2 L_{x'}^\infty} \leq C(\| (\mathcal{E},\mathcal{H}) \|_{H^s(\Omega)}),
\end{equation*}
which is carried out below.

We let
\begin{equation}
\label{eq:DefinitionMaxwell2dAniso}
P_1 = \begin{pmatrix}
\partial_t (\varepsilon_1 \cdot) & - \nabla_\perp \\
(\nabla \times \cdot)_3 & \partial_t (\mu \cdot)
\end{pmatrix}
\text{ with } \partial_x (\varepsilon_1,\mu) \in L_t^2 L_{x'}^\infty, \text{ and } \rho_e' = \nabla \cdot (\varepsilon_1 \mathcal{E}').
\end{equation}

The analysis of \cite{SchippaSchnaubelt2022} provides us with the following Strichartz estimates; see Corollary \ref{cor:Strichartz2dL2Lipschitz} in Appendix \ref{appendix:StrichartzRevisited}:
\begin{proposition}[Strichartz~estimates~for~anisotropic~permittivity~in~two~dimensions]
\label{prop:Strichartz2dAnisotropic}
Let $2 \leq p,q \leq \infty$, $\frac{2}{p} + \frac{1}{q} \leq \frac{1}{2}$, $\rho = 2 \big( \frac{1}{2} - \frac{1}{q} \big) - \frac{1}{2}$, $P_1$ like in \eqref{eq:DefinitionMaxwell2dAniso}, and $\delta > 0$. Then the following estimate holds:
\begin{equation*}
\begin{split}
\| \langle D' \rangle^{-\rho - \frac{1}{3p} - \delta} (\mathcal{E}',\mathcal{H}') \|_{L_T^p L_x^q} &\lesssim_{T,\delta, \| \partial (\varepsilon_1,\mu_1) \|_{L_T^2 L_x^\infty}} \| (\mathcal{E}',\mathcal{H}') \|_{L_T^\infty L_x^2} + \| P_1(\mathcal{E}',\mathcal{H}') \|_{L_T^1 L_x^2} \\
&\quad + \| \langle D' \rangle^{-1+\frac{1}{p}} \rho'_e \|_{L_T^\infty L_{x'}^2} + \| \langle D' \rangle^{-1+\frac{1}{p}} \partial_t \rho'_e \|_{L_T^1 L_{x'}^2}.
\end{split}
\end{equation*}
\end{proposition}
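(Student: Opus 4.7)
The plan is to follow the diagonalization strategy of Section~\ref{section:Diagonalization2d}, now adapted to quasilinear coefficients with $\partial(\varepsilon_1,\mu_1) \in L^2_t L^\infty_{x'}$. As in the time-independent case, I first perform a spatial Littlewood--Paley decomposition and reduce everything to the dyadic estimate
\begin{equation*}
\| S'_\lambda u \|_{L_T^p L_{x'}^q} \lesssim \lambda^{\rho + \frac{1}{3p}+\delta} \bigl( \| S'_\lambda u \|_{L_T^\infty L^2_{x'}} + \| P_{1,<\lambda} S'_\lambda u \|_{L^1_T L^2_{x'}} + \lambda^{-1+\frac{1}{p}} \| \rho'_{e,\lambda} \|_{L_T^\infty L^2_{x'}} \bigr)
\end{equation*}
for $\lambda \gg 1$, with $P_{1,<\lambda}$ denoting $P_1$ after truncation of the coefficients $\varepsilon_1,\mu_1$ to spatial frequencies $\leq \lambda/8$. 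The commutator estimates replacing $P_1$ by $P_{1,<\lambda}$ now rely on Lemma~\ref{lem:PseudoBoundsQuasilinear} instead of Lemma~\ref{lem:PseudoBoundsFourierSeries}: at the dyadic shell of frequency $\lambda$ the error is of size $\lambda \| (\varepsilon_1,\mu_1)_{\gtrsim\lambda} \|_{L^2_t L^\infty_{x'}} \lesssim \| \partial(\varepsilon_1,\mu_1) \|_{L^2_t L^\infty_{x'}}$, which is acceptable since it enters in $L^2_{t,x'}$ rather than $L^1_t L^2_{x'}$.

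Next, I would apply the symbol diagonalization of Proposition~\ref{prop:Diagonalization2d} with the truncated coefficients, producing $\mathcal{M}_\lambda, \mathcal{N}_\lambda \in OP\tilde{S}^0_{1,1}$ and $\mathcal{D}_\lambda \in OP\tilde{S}^1_{1,1}$ with principal symbols built from $\varepsilon_{1,<\lambda}$ and $\mu_{1,<\lambda}$. Since the derivatives of these symbols now lie only in $L^2_t L^\infty_{x'}$, the relevant analogue of the symbol calculus estimates must be measured in space-time norms: $L^p L^q \to L^p L^q$ bounds require only fixed-time $L^q_{x'}$ boundedness, whereas composition remainders that produce one derivative on a coefficient pick up a factor $\| \partial \varepsilon_1 \|_{L^2_t L^\infty_{x'}}$ and are controlled in $L^2_{t,x'}$. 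The structure of the conjugation matrices is unchanged: the first row of $\mathcal{N}_\lambda$ reproduces $|\nabla_{\varepsilon'}|^{-1} \nabla \cdot (\varepsilon_{1,<\lambda} \mathcal{E}')$ modulo a smoothing error, while the other two rows are non-degenerate.

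For the non-degenerate components I invoke Tataru's Strichartz estimates for the half-wave equation with coefficients whose first derivatives lie in $L^2_t L^\infty_{x'}$ (see \cite{Tataru2002} and the half-wave elaboration in \cite{SchippaSchnaubelt2022}); these produce the admissible estimate
\begin{equation*}
\| S'_\lambda v \|_{L_T^p L_{x'}^q} \lesssim \lambda^{\rho + \frac{1}{3p}+\delta} \bigl( \| S'_\lambda v \|_{L_T^\infty L^2_{x'}} + \| (i\partial_t + D_{\tilde g}) S'_\lambda v \|_{L^1_T L^2_{x'}} \bigr),
\end{equation*}
the extra loss $\tfrac{1}{3p}$ being the price paid for the reduced regularity of the metric. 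For the degenerate row I use the divergence structure: after frequency truncation the first component of $\mathcal{N}_\lambda S'_\lambda u$ equals $(\mu_1|\nabla_{\varepsilon'}|)^{-1} \nabla \cdot (\varepsilon_{1,<\lambda} S'_\lambda \mathcal{E}')$ plus an $O(\lambda^{-1})$ error, and Sobolev embedding produces the charge term $\lambda^{\rho - \tfrac{1}{2} + \tfrac{1}{p}} \| \rho'_{e,\lambda} \|_{L_T^\infty L^2_{x'}}$; the extra time derivative on $\rho'_e$ in the statement comes from the commutator that arises when moving the divergence past the truncation $\varepsilon_{1,<\lambda}$, as $\partial_t \varepsilon_1$ is controlled in $L^2_t L^\infty_{x'}$ via Sobolev embedding and Maxwell's equations. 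The analogue of Lemma~\ref{lem:DiagonalizationErrorEstimatesII} then lets me transfer the estimates from $\mathcal{N}_\lambda S'_\lambda u$ back to $S'_\lambda u$.

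Summing the dyadic pieces almost orthogonally in $\ell^2$ and absorbing the $\delta$-loss into $\langle D' \rangle^{-\rho - \tfrac{1}{3p}-\delta}$ gives the claimed estimate. The main technical obstacle will be the rigorous justification of the pseudodifferential calculus with symbols whose $x$-derivatives only lie in $L^2_t L^\infty_{x'}$: every commutator that in the smooth-coefficient case gained one order of regularity now gains only half an order in the time direction, and the bookkeeping has to exploit that the Strichartz bound places the $P_1 u$ term in $L^1_T L^2_{x'}$ while commutator errors naturally land in $L^2_{T,x'}$. Verifying that this mismatch of norms is compatible via Cauchy--Schwarz in time (using the boundedness of $T$) is the delicate point, and it is precisely here that the hypothesis $\| \partial(\varepsilon_1,\mu_1) \|_{L^2_t L^\infty_{x'}} \lesssim 1$ is consumed.
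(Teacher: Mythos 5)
Your proposal takes a genuinely different route from the paper. The paper's proof of this proposition does \emph{not} directly diagonalize $P_1$. Instead it exploits the algebraic factorization of the principal symbol $\tilde p(x,\xi) = p(x,\xi)\,(\varepsilon\otimes\mu)$ to write
$P_1^\lambda = P^\lambda\,(\varepsilon_{\le\lambda^{1/2}}\otimes\mu_{\le\lambda^{1/2}}) + A$ with $\|A\|_{L^2\to L^2}\lesssim 1$,
and then performs the change of variables $S_\lambda(\mathcal{D},\mathcal{H}) := (\varepsilon_{\le\lambda^{1/2}}\otimes\mu_{\le\lambda^{1/2}}) S_\lambda(\mathcal{E},\mathcal{H})$. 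This reduces the claim to the already-established Theorem~\ref{thm:2dStrichartzFull}, which handles coefficients with $\partial_x^2\varepsilon\in L^1_t L^\infty_{x'}$ — precisely the regularity of $\varepsilon_{\le\lambda^{1/2}}$. In other words: all the roughness is pushed into a \emph{multiplication} operator, and the pseudo-differential diagonalization happens inside Theorem~\ref{thm:2dStrichartzFull}, with coefficients smoothed to the point where the symbol calculus of Section~\ref{section:Preliminaries} applies cleanly. Crucially, the paradifferential truncation is at scale $\lambda^{1/2}$ (Tataru's scale for $L^2_t L^\infty_{x'}$ first derivatives), not $\lambda/8$.

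You instead propose to directly diagonalize $P_{1,<\lambda}$ with coefficients truncated at $\lambda/8$ and then invoke as a black box a half-wave Strichartz estimate for $\partial g \in L^2_t L^\infty_{x'}$. This could plausibly be made to work — the black-box estimate does exist in the literature you cite — but there is a concrete gap that is more serious than you acknowledge. With truncation at $\lambda/8$ and $\partial\varepsilon_1$ only in $L^2_t L^\infty_{x'}$, the conjugation symbols are \emph{not} in the class $\tilde{S}^0_{1,1}$ (which demands a pointwise Lipschitz bound $|\partial_x a|\lesssim 1$), so Proposition~\ref{prop:Diagonalization2d} and the first-order symbol composition lemma are simply not available. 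You correctly flag that composition remainders now land in $L^2_{t,x}$ with a factor $\|\partial\varepsilon_1\|_{L^2_t L^\infty_{x'}}$, but turning this into an actual quantization calculus (the analogue of $\|E_\lambda\|_{L^2\to L^2}\lesssim 1$, the $L^pL^q$-boundedness of $\mathcal{M}_\lambda,\mathcal{N}_\lambda$, and the identity $\mathcal{M}_\lambda\mathcal{N}_\lambda = 1 + O(\lambda^{-1})$) requires a systematic development that the paper sidesteps entirely via the change of variables. Moreover your explanation for the $\partial_t\rho_e'$ term — that it comes from a commutator with the coefficient truncation — does not match the paper: that term is already present in Theorem~\ref{thm:2dStrichartzFull} (it arises from the Duhamel/Christ--Kiselev step there) and is simply inherited.

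So: the conclusion is right and the skeleton (Littlewood--Paley reduction, degenerate vs.\ non-degenerate components, control of the degenerate row by the charge) is sound, but the mechanism is different from the paper's, the truncation scale is off, and the symbol-calculus gap you flag as ``the delicate point'' is precisely what the paper's change-of-variables trick is designed to avoid.
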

Since the argument is essentially contained in \cite{SchippaSchnaubelt2022} and Appendix \ref{appendix:StrichartzRevisited} up to the change of variables $\mathcal{D} = \varepsilon_1 \mathcal{E}$, $\mathcal{B} = \mu_1 \mathcal{H}$, we shall be brief. 
\begin{proof}[Sketch~of~Proof~of~Proposition~\ref{prop:Strichartz2dAnisotropic}]
The idea is to show the estimates first for coefficients $\varepsilon$ with $\partial_x^2 \varepsilon \in L_t^1 L_x^\infty$ and then use paradifferential truncation. In the following we omit $'$ to lighten the notations. After standard reductions, which are detailed in \cite{SchippaSchnaubelt2022}, we find that it suffices to prove:
\begin{equation}
\label{eq:DyadicEstimate2dAnisotropic}
\lambda^{-\rho} \| S_\lambda u \|_{L_T^p L_{x'}^q} \lesssim \| S_\lambda u \|_{L_T^\infty L_{x'}^2} + \| P_1^\lambda S_\lambda u \|_{L^2_x} + \lambda^{-1+\frac{1}{p}} \| S'_\lambda \rho_e \|_{L_t^\infty L^2_{x'}}.
\end{equation}
Above we require $\lambda \gtrsim 1$ with Fourier support of $\varepsilon$ contained in $\{ |\xi| \leq \lambda^{\frac{1}{2}} \}$ and $u$ essentially supported in the unit cube. Moreover, we can suppose that $\| \partial_x^2 \varepsilon \|_{L_T^1 L_x^\infty} \lesssim 1$, that the coefficients of $P_1^\lambda$ are truncated at frequencies $\lambda^{\frac{1}{2}}$, and we suppose by an elliptic estimate away from the characteristic surface that the support of the space-time Fourier transform of $u$ is in $\{ |\xi_0| \lesssim |(\xi_1,\xi_2)| \}$. The estimate \eqref{eq:DyadicEstimate2dAnisotropic} is then proved by diagonalization with pseudo-differential operators:
We factorize the principal symbol
\begin{equation*}
\begin{split}
\tilde{p}(x,\xi) &= 
\begin{pmatrix}
i \xi_0 \varepsilon^{11} & i \xi_0 \varepsilon^{12} & - i \xi_2 \\
i \xi_0 \varepsilon^{21} & i \xi_0 \varepsilon^{22} & i \xi_1 \\
i \xi_2 & - i \xi_1 & i \xi_0 \mu
\end{pmatrix}
\\
&=
\begin{pmatrix}
i \xi_0 & 0 & - i \frac{\xi_2}{\mu} \\
0 & i \xi_0 & i \frac{\xi_1}{\mu} \\
i (\varepsilon_{11} \xi_2 - \varepsilon_{12} \xi_1) & i (\varepsilon_{21} \xi_2 - \varepsilon_{22} \xi_1) & i \xi_0
\end{pmatrix}
\begin{pmatrix}
\varepsilon^{11} & \varepsilon^{12} & 0 \\
\varepsilon^{21} & \varepsilon^{22} & 0 \\
0 & 0 & \mu
\end{pmatrix}
\\
 &=: p(x,\xi) (\varepsilon \otimes \mu).
 \end{split}
\end{equation*}
The symbol $p(x,\xi)$ was diagonalized in \cite{SchippaSchnaubelt2022} (see also Appendix \ref{appendix:StrichartzRevisited}) as
\begin{equation*}
p(x,\xi) = m(x,\xi) \text{diag} (i \xi_0, i (\xi_0 + \| \xi' \|_{\varepsilon'}), i(\xi_0 - \| \xi' \|_{\varepsilon'})) m^{-1}(x,\xi)
\end{equation*}
with quantizations of $m$ and $m^{-1}$ giving bounded operators in $L^p L^q$. We can write
\begin{equation}
\label{eq:DecompositionPEps}
P_1^\lambda = P^\lambda (\varepsilon_{\leq \lambda^{\frac{1}{2}}} \otimes \mu_{\leq \lambda^{\frac{1}{2}}}) + A, \quad \| A \|_{L^2 \to L^2} \lesssim 1.
\end{equation}
We define new variables $S_\lambda (\mathcal{D},\mathcal{H}) = (\varepsilon_{\leq \lambda^{\frac{1}{2}}} \otimes \mu_{\leq \lambda^{\frac{1}{2}}}) S_\lambda (\mathcal{E},\mathcal{H})$, to which we apply Theorem \ref{thm:2dStrichartzFull}:
\begin{equation*}
\begin{split}
\lambda^{-\rho} \| S_\lambda (\mathcal{D},\mathcal{H}) \|_{L^p L^q} &\lesssim \| S_\lambda (\mathcal{D},\mathcal{H}) \|_{L^\infty_T L_{x'}^2} + \| P^{\lambda} S_\lambda (\mathcal{D},\mathcal{H}) \|_{L^2_x} \\
&\quad + \lambda^{-1+\frac{1}{p}} \| S'_\lambda S_\lambda (\partial_1 \mathcal{D}_1 + \partial_2 \mathcal{D}_2) \|_{L^\infty_t L_{x'}^2}.
\end{split}
\end{equation*}
By \eqref{eq:DecompositionPEps} and straight-forward error estimates, we find
\begin{equation*}
\lambda^{-\rho} \| S_\lambda (\mathcal{E},\mathcal{H}) \|_{L^p L^q} \lesssim \| S_\lambda (\mathcal{E},\mathcal{H}) \|_{L^\infty_T L_{x'}^2} + \| \tilde{P}_{\leq \lambda^{\frac{1}{2}}} S_\lambda (\mathcal{E},\mathcal{H}) \|_{L^2_x} + \lambda^{-1+\frac{1}{p}} \| S'_\lambda \rho_e \|_{L^\infty_t L_{x'}^2}.
\end{equation*}
\end{proof}
We are ready to conclude the proof of Proposition \ref{prop:APrioriEstimate2dKerr} by applying the Strichartz estimates.
\begin{proof}[Proof~of~\eqref{eq:StrichartzEstimatesKerr2d}~via~Proposition~\ref{prop:Strichartz2dAnisotropic}]
It suffices to prove \eqref{eq:StrichartzEstimatesKerr2d} in geodesic coordinates. Using Maxwell equations, we have 
\begin{equation*}
\| \partial_t (\mathcal{E},\mathcal{H}) \|_{L_T^4 L_{x'}^\infty} \lesssim_{\| \mathcal{E} \|_{L_T^\infty L^\infty_{x'}}} \| \partial_{x'} (\mathcal{E},\mathcal{H}) \|_{L_T^4 L_{x'}^\infty}.
\end{equation*}

Since we can control the $L^\infty_{x'}$-norm of $(\mathcal{E},\mathcal{H})$ by Sobolev embedding, it suffices to show an estimate for the spatial derivatives of $(\mathcal{E},\mathcal{H})$. We apply Strichartz estimates due to Proposition \ref{prop:Strichartz2dAnisotropic} to $(\mathcal{E},\mathcal{H})$ resolved in geodesic normal coordinates and extended appropriately to the full space (see above). Recall that the extended fields are denoted with $(\mathcal{E}',\mathcal{H}')$. We aim for the estimate:
\begin{equation}
\label{eq:StrichartzEstimateSpatialDerivative}
\begin{split}
\| \langle D' \rangle (\mathcal{E}',\mathcal{H}') \|_{L_T^4 L_{x'}^\infty(\R^2)} &\lesssim_{T,\alpha} (1 + T^\kappa C(\| (\mathcal{E}',\mathcal{H}') \|_{L_T^\infty H^{\alpha + 1}}, \| \partial_{x'} ( \mathcal{E}',\mathcal{H}') \|_{L_T^4 L_{x'}^\infty} )) \\
&\quad \times \| (\mathcal{E}',\mathcal{H}') \|_{L_T^\infty H^{\alpha+1}} + T^{\frac{1}{4}} \| \rho_e(0) \|_{H^{\tilde{s}}}.
\end{split}
\end{equation}

Applying Proposition \ref{prop:Strichartz2dAnisotropic} requires uniform ellipticity of $\varepsilon_1$ and an estimate of $\| \partial_x \varepsilon_1 \|_{L_T^2 L_{x'}^\infty}$ with $T=T(\| (\mathcal{E}_0,\mathcal{H}_0) \|_{\mathcal{H}^s(\Omega)})$.

\medskip

\emph{Control of $\| \partial_x \varepsilon_1 \|_{L_T^2 L_{x'}^\infty(\R^2)}$ and uniform ellipticity of $\varepsilon_1$:} We require an estimate $\| \partial_x \varepsilon_1 \|_{L_T^2 L_{x'}^\infty(\R^2)} \leq C(\| (\mathcal{E}_0,\mathcal{H}_0) \|_{H^s(\Omega)})$ for $\frac{11}{6} < s \leq 2$ and $T=T(\| (\mathcal{E}_0,\mathcal{H}_0) \|_{H^s(\Omega)})$. By the bootstrap assumption it suffices to prove that there is $\kappa > 0$ such that
\begin{equation*}
\| \partial_x \varepsilon_1 \|_{L_T^2 L_{x'}^\infty} \lesssim T^\kappa C( \| \partial_{x'} (\mathcal{E}',\mathcal{H}') \|_{L_T^4 L_{x'}^\infty(\R^2)}, \; \| (\mathcal{E}',\mathcal{H}') \|_{L_T^\infty H^s}).
\end{equation*}
The time derivative of $\varepsilon_1$ is handled by Sobolev embedding, H\"older's inequality, and Maxwell equations:
\begin{equation*}
\| \partial_t \varepsilon_1 \|_{L_T^2 L_{x'}^\infty} \lesssim T^{\frac{1}{4}} \| \mathcal{E}' \|_{L_T^\infty L_{x'}^\infty} \| \partial_t \mathcal{E}' \|_{L_T^4 L_{x'}^\infty} \lesssim T^{\frac{1}{4}} \| (\mathcal{E}',\mathcal{H}') \|_{L_T^\infty H^s} \| \partial_{x'} (\mathcal{E}',\mathcal{H}') \|_{L_T^4 L_{x'}^\infty}.
\end{equation*}
Similarly, we find for the spatial derivatives:
\begin{equation*}
\begin{split}
\| \partial_{x'} \varepsilon_1 \|_{L_T^2 L_{x'}^\infty(\R^2)} &\lesssim T^{\frac{1}{2}} \| \mathcal{E}' \|_{L_T^\infty L_{x'}^\infty}^2 + \| \partial_{x'} \mathcal{E}' \|_{L_T^2 L_{x'}^\infty(\R^2)} \| \mathcal{E}' \|_{L_T^\infty L_{x'}^\infty(\R^2)} \\
&\lesssim T^{\frac{1}{2}} \| (\mathcal{E}',\mathcal{H}') \|_{L_T^\infty H^s}^2 + T^{\frac{1}{4}} \| \partial_{x'} \mathcal{E}' \|_{L_T^4 L_{x'}^\infty(\R^2)} \| (\mathcal{E}',\mathcal{H}') \|_{L_T^\infty H^s}.
\end{split}
\end{equation*}

Ellipticity of $\varepsilon_1$ follows from recalling $g^{-1} = JJ^t$ and writing
\begin{equation*}
\begin{split}
\varepsilon_1 &= \sqrt{g}( J (1 + |\mathcal{E}'|^2_{g^{-1}}) J^t + 2 J (J^t \mathcal{E}') (\mathcal{E}')^t J J^t )\\
&= \sqrt{g} ( J (( 1+ \langle J^t \mathcal{E}', J^t \mathcal{E}' \rangle) + 2 (J^t \mathcal{E}') (J^t \mathcal{E}')^t) J^t).
\end{split}
\end{equation*}
Hence, uniform ellipticity follows by uniform invertibility of $J$, and uniform ellipticity of $1 + |\mathcal{E}|^2 + 2 \mathcal{E} \otimes \mathcal{E}$. The uniform invertibility of $J$ can clearly be assumed in one chart and uniform ellipticity of $1 + |\mathcal{E}|^2 + 2 \mathcal{E} \otimes \mathcal{E}$ is given since the eigenvalues are $1 + 3 |\mathcal{E}|^2$ and $1+|\mathcal{E}|^2$, and $\| \mathcal{E} \|_{L_T^\infty L_{x'}^\infty} \lesssim \| \mathcal{E} \|_{L_T^\infty H^s}$.

\medskip

\emph{Applying Strichartz estimates and switching to non-divergence form:}

\noindent Let 
\begin{equation*}
\tilde{P}_1 = 
\begin{pmatrix}
\varepsilon_1 \partial_t & - \nabla_\perp \\
(\nabla \times \cdot)_3 & \mu \partial_t
\end{pmatrix}
, \quad \eta(x,D) \mathcal{E}' = \sum_{i,j=1}^2 (\varepsilon_1)_{ij} \partial_i \mathcal{E}_j'.
\end{equation*}
We apply Strichartz estimates from Proposition \ref{prop:Strichartz2dAnisotropic} with $P_1$ to find
\begin{equation*}
\begin{split}
\| \langle D' \rangle^{-\alpha} (\mathcal{E}',\mathcal{H}') \|_{L_T^4 L_{x'}^\infty} &\lesssim_{\alpha, T} \| (\mathcal{E}',\mathcal{H}') \|_{L_T^\infty L_{x'}^2} + \| P_1 (\mathcal{E}',\mathcal{H}') \|_{L_T^2 L_{x'}^2} \\
&\quad + \| \langle D' \rangle^{-\frac{3}{4}} \nabla \cdot (\varepsilon_1 \mathcal{E}') \|_{L_T^2 L_{x'}^2}
\end{split}
\end{equation*}
for $\alpha > \frac{5}{6}$, $\| \partial_x \varepsilon_1 \|_{L_T^2 L_{x'}^\infty} \leq C < \infty$.

\medskip

Noting that $\nabla \cdot (\varepsilon_1 \mathcal{E}') = (\partial_{x'} \varepsilon_1) \mathcal{E}' + \eta(x,D) \mathcal{E}'$ and applying H\"older's inequality gives
\begin{equation}
\label{eq:StrichartzEstimatesNegativeDerivatives}
\begin{split}
\| \langle D' \rangle^{-\alpha} (\mathcal{E}',\mathcal{H}') \|_{L_T^4 L_{x'}^\infty(\R^2)} &\lesssim_{T, \alpha} \| (\mathcal{E}',\mathcal{H}') \|_{L_T^\infty L_{x'}^2} + \| \tilde{P}_1 (\mathcal{E}',\mathcal{H}') \|_{L_T^2 L_{x'}^2} \\
&\quad + \| \langle D' \rangle^{-\frac{3}{4}} \eta(x,D) \mathcal{E}' \|_{L_T^2 L_{x'}^2}.
\end{split}
\end{equation}

\medskip

\emph{Commutator estimates:}

\noindent Finally, we use commutator estimates to find an estimate for $\| \langle D' \rangle (\mathcal{E}',\mathcal{H}') \|_{L_T^4 L_{x'}^\infty}$. We apply \eqref{eq:StrichartzEstimatesNegativeDerivatives} to $\langle D' \rangle^{\alpha + 1} (\mathcal{E}',\mathcal{H}')$ to find
\begin{equation*}
\begin{split}
\| \langle D' \rangle (\mathcal{E}',\mathcal{H}') \|_{L_T^4 L_{x'}^\infty(\R^2)} &\lesssim_{T, \alpha} \| (\mathcal{E}',\mathcal{H}') \|_{L_T^\infty H^{\alpha + 1}} + \| P_1 ( \langle D' \rangle^{\alpha + 1} (\mathcal{E}',\mathcal{H}')) \|_{L_T^2 L_{x'}^2} \\
 &\quad + \| \langle D' \rangle^{-\frac{3}{4}} \eta(x,D) \langle D' \rangle^{\alpha + 1} \mathcal{E}' \|_{L_T^2 L_{x'}^2}.
 \end{split}
\end{equation*}
We note that
\begin{equation*}
\begin{split}
&\quad \mu (\partial_t \langle D' \rangle^{\alpha + 1} \mathcal{H}' + (\nabla \times \langle D' \rangle^{\alpha + 1} \mathcal{E}')_3 \\
&= \mu (\partial_t \langle D' \rangle^{\alpha + 1} \mathcal{H}' + \mu^{-1} (\nabla \times \langle D' \rangle^{\alpha + 1} \mathcal{E}')_3 ) \\
&= \mu (\langle D' \rangle^{\alpha + 1} (\partial_t \mathcal{H}' + \mu^{-1} (\nabla \times \mathcal{E}')_3)) + \mu [\mu^{-1}, \langle D' \rangle^{\alpha +1 }] (\nabla \times \mathcal{E}')_3 \\
&= \mu [\mu^{-1}, \langle D' \rangle^{\alpha +1 }] (\nabla \times \mathcal{E}')_3.
\end{split}
\end{equation*}
The ultimate estimate follows from $P_1 (\mathcal{E}',\mathcal{H}') = 0$. For the second term we write
\begin{equation*}
\varepsilon_1 \partial_t \langle D' \rangle^{\alpha + 1} \mathcal{E}' - \nabla_\perp \langle D' \rangle^{\alpha + 1} \mathcal{H}' = \varepsilon_1 \langle D' \rangle^{\alpha +  1} (\partial_t \mathcal{E}' - \varepsilon_1^{-1} \nabla_\perp \mathcal{H}') - \varepsilon_1 [\langle D' \rangle^{\alpha + 1}, \varepsilon_1^{-1}] \nabla_\perp \mathcal{H}. 
\end{equation*}
We infer from the Kato-Ponce commutator estimate (see \cite{KatoPonce1988}) and $\| (\varepsilon_1, \mu) \|_{L^\infty_{x'}} \lesssim_{\| \mathcal{E}' \|_{L^\infty_{x'}}} 1$,
\begin{equation*}
\begin{split}
&\quad \| P_1 (\langle D' \rangle^{\alpha + 1} (\mathcal{E}',\mathcal{H}')) \|_{L_{x'}^2} \\
 &\lesssim \| [\mu^{-1}, \langle D' \rangle^{\alpha +1 }] (\nabla \times \mathcal{E}')_3 \|_{L^2_{x'}} + \| [\langle D' \rangle^{\alpha + 1}, \varepsilon_1^{-1}] \nabla_\perp \mathcal{H}' \|_{L^2_{x'}} \\
&\lesssim \| \partial_{x'} \mu^{-1} \|_{L^\infty_{x'}} \| \langle D' \rangle^{\alpha + 1} \mathcal{E}' \|_{L^2_{x'}} + 
\| \langle D' \rangle^{\alpha + 1} \mu^{-1} \|_{L^2_{x'}} \| (\nabla \times \mathcal{E}')_3 \|_{L^\infty_{x'}} \\
&\quad + \| \partial_{x'} \varepsilon_1^{-1} \|_{L^\infty_{x'}} \| \langle D' \rangle^{\alpha + 1} \mathcal{H}' \|_{L^2_{x'}} + \| \langle D' \rangle^{\alpha + 1} \varepsilon_1^{-1} \|_{L^2_{x'}} \| \nabla_\perp \mathcal{H}' \|_{L^\infty_{x'}}.
\end{split}
\end{equation*}
We have $\| \partial_{x'} \varepsilon_1^{-1} \|_{L^\infty_{x'}} \lesssim C ( \| \mathcal{E} \|_{L^\infty_{x'}}, \| \partial_{x'} \mathcal{E} \|_{L^\infty_{x'}} )$ and therefore by Sobolev embedding and H\"older's inequality, there is $\kappa > 0$ such that
\begin{equation*}
\| \partial_{x'} \varepsilon_1^{-1} \|_{L_T^2 L_{x'}^\infty} \lesssim T^\kappa C ( \| \mathcal{E}' \|_{L_T^\infty H^s}, \| \partial_{x'} \mathcal{E}' \|_{L_T^4 L_{x'}^\infty}).
\end{equation*}
Secondly, by Moser estimates, ellipticity of $\varepsilon_1$, and the fractional Leibniz rule we have
\begin{equation*}
\| \langle D' \rangle^{\alpha + 1} (\varepsilon_1^{-1}) \|_{L^2_{x'}} \lesssim_{ \| \mathcal{E}' \|_{L^\infty_{x'}}} \| \langle D' \rangle^{\alpha + 1} \varepsilon_1 \|_{L^2_{x'}}.
\end{equation*}
We write the components of $\varepsilon_1$ as product of $g_1$, $\sqrt{g_1}$, and $\mathcal{E}_1'$, $\mathcal{E}_2'$. Recall that $(\varepsilon_1)_{11}$, $(\varepsilon_1)_{22}$ are reflected evenly, whereas $(\varepsilon_1)_{21}$, $(\varepsilon_1)_{12}$ are reflected oddly due to their internal structure. We use continuity of $\text{ext}_N$ and $\text{ext}_D$ to estimate for $\alpha + 1 \leq 2$:
\begin{equation*}
\| (\varepsilon_1)_{ij} \|_{H^{\alpha + 1}(\R^2)} \lesssim \| (\varepsilon_1)_{ij} \|_{H^{\alpha + 1}(\R^2_{>0})}.
\end{equation*}
On the half-space, we can use smoothness of $g$ and invariance of Sobolev spaces under multiplication with smooth functions to find 
\begin{equation*}
\begin{split}
\| (\varepsilon_1)_{ij} \|_{H^{\alpha + 1}(\R^2_{>0})} &\lesssim \sum_{m,n} \| \mathcal{E}'_m \mathcal{E}'_n \|_{H^{\alpha + 1}(\R^2_{>0})} \\
 &\lesssim \sum_{m,n} \| \mathcal{E}'_m \mathcal{E}'_n \|_{H^{\alpha + 1}(\R^2)} \lesssim \sum_{m,n} \| \mathcal{E}_m' \|_{H^{\alpha + 1}(\R^2)} \| \mathcal{E}_n' \|_{H^{\alpha + 1}(\R^2)}.
 \end{split}
\end{equation*}
In the penultimate estimate, we switched back to the full space by appropriate extension of the components of $\mathcal{E}'$. In the ultimate estimate, we used that $H^s(\R^2)$ is a Banach algebra for $s>1$. 

The second estimate
\begin{equation*}
\| \langle D' \rangle^{\alpha + 1} \mu^{-1} \|_{L^2_{x'}} \lesssim \| \langle D' \rangle^{\alpha + 1} g \|_{L^2_{x_2' > 0 }} \lesssim 1
\end{equation*}
follows likewise by switching back to the half space and assuming that $g \in C^\infty_c(\R^2_{>0})$. Here we use that we are dealing with a compact boundary.

Consequently,
\begin{equation*}
\begin{split}
\| \| \langle D' \rangle^{\alpha + 1} \mu^{-1} \|_{L^2_{x'}} \| (\nabla \times \mathcal{E}')_3 \|_{L^\infty_{x'}} \|_{L_T^2} &\lesssim T^{\frac{1}{4}} \| \partial_{x'} \mathcal{E}' \|_{L_T^4 L_{x'}^\infty}, \\
\| \| \langle D' \rangle^{\alpha + 1} (\varepsilon_1^{-1}) \|_{L^2_{x'}} \| \nabla_\perp \mathcal{H}' \|_{L^\infty_{x'}} \|_{L^2_T} &\lesssim_{\| \mathcal{E}' \|_{L_T^\infty L^\infty_{x'}}} T^{\frac{1}{4}} \| \mathcal{E}' \|^2_{L_T^\infty H^{\alpha+1}} \| \partial_{x'} \mathcal{H}' \|_{L_T^4 L_{x'}^\infty}.
\end{split}
\end{equation*}
Hence, for $T$ small enough we can absorb this into the left-hand side to find
\begin{equation*}
\| \langle D' \rangle (\mathcal{E}',\mathcal{H}') \|_{L_T^4 L_{x'}^\infty} \lesssim_{T,\alpha} \| (\mathcal{E}',\mathcal{H}') \|_{L_T^\infty H^{\alpha + 1}} + \| \langle D' \rangle^{-\frac{3}{4}} \eta(x,D) \langle D' \rangle^{\alpha + 1} \mathcal{E}' \|_{L^2_T L^2_{x'}}.
\end{equation*}

It remains to find an estimate of $\eta(x,D) \langle D' \rangle^{\alpha + 1} \mathcal{E}'$ in terms of the charges and lower order terms $\| \langle D' \rangle^{\alpha + 1} (\mathcal{E}',\mathcal{H}') \|_{L_T^\infty L^2_{x'}}$. We write
\begin{equation*}
\eta(x,D) \langle D' \rangle^{\alpha + 1} \mathcal{E}' = \langle D' \rangle^{\alpha + 1} \eta(x,D) \mathcal{E}' + [\varepsilon_1 , \langle D' \rangle^{\alpha + 1}] \partial_{x'} \mathcal{E}'.
\end{equation*}
The commutator
\begin{equation*}
\| \langle D' \rangle^{-\frac{3}{4}} [ \varepsilon_1, \langle D' \rangle^{\alpha + 1}] \partial_{x'} \mathcal{E}' \|_{L_T^2 L_{x'}^2} \lesssim \| [\varepsilon_1, \langle D' \rangle^{\alpha + 1}] \partial_{x'} \mathcal{E}' \|_{L^2_T L^2_{x'}}
\end{equation*}
can be handled like above with the Kato-Ponce commutator estimate. We need to relate $\eta(x,D) \mathcal{E}'$ to the charges $\nabla \cdot ( \varepsilon \mathcal{E}')= \frac{1}{\sqrt{g}} \nabla \cdot (\sqrt{g} g^{-1} (1 + |\mathcal{E}'|^2_{g^{-1}}) \mathcal{E}')$. A straight-forward computation yields
\begin{equation*}
\nabla \cdot (\sqrt{g} g^{-1} (1+|\mathcal{E}'|^2_{g^{-1}}) \mathcal{E}') = \eta(x,D) \mathcal{E}' + O(\partial g (\mathcal{E}')^3).
\end{equation*}
Therefore,
\begin{equation*}
\| \langle D' \rangle^{\alpha + \frac{1}{4}} \eta(x,D) \mathcal{E}' \|_{L_T^\infty L_{x'}^2} \lesssim \| \rho_e(0) \|_{H^{\alpha + \frac{1}{4}}} + \| \langle D' \rangle^{\alpha + \frac{1}{4}} ( \partial g (\mathcal{E}')^3) \|_{L_T^\infty L^2_{x'}}.
\end{equation*}

We split the second term into monomials of $\partial g$ and $\mathcal{E}'$
\begin{equation*}
\partial g (\mathcal{E}')^3 = \sum_{k,i_1,i_2,i_3} (\partial g)_k \mathcal{E}_{i_1}' \mathcal{E}_{i_2}' \mathcal{E}_{i_3}',
\end{equation*}
which gives a decomposition into even or odd functions. By continuity of the respective extension for $\alpha + \frac{1}{2} \leq 2$, we find
\begin{equation*}
\begin{split}
\| (\partial g)_k \mathcal{E}'_{i_1} \mathcal{E}'_{i_2} \mathcal{E}'_{i_3} \|_{L_T^\infty H^{\alpha + \frac{1}{4}}(\R^2)} &\lesssim \| (\partial g)_k \mathcal{E}'_{i_1} \mathcal{E}'_{i_2} \mathcal{E}'_{i_3} \|_{L_T^\infty H^{\alpha + \frac{1}{4}}(\R^2_{>0})} \\
&\lesssim \| \mathcal{E}'_{i_1} \mathcal{E}'_{i_2} \mathcal{E}'_{i_3} \|_{L_T^\infty H^{\alpha + \frac{1}{4}}(\R^2_{>0})}.
\end{split}
\end{equation*}
Here we used again invariance of Sobolev spaces under multiplication with smooth functions. Like above, we extend again to the full space to apply the fractional Leibniz rule and Sobolev embedding
\begin{equation*}
\| \mathcal{E}'_{i_1} \mathcal{E}'_{i_2} \mathcal{E}'_{i_3} \|_{L_T^\infty H^{\alpha + \frac{1}{4}}(\R^2_{>0})} \lesssim \prod_{j=1}^3 \| \mathcal{E}_{i_j}' \|_{L_T^\infty H^{\alpha + \frac{1}{4}}(\R^2)} \lesssim \| \mathcal{E}' \|_{L_T^\infty H^{\alpha  +1}}^3.
\end{equation*}
We conclude for $\frac{5}{6} < \alpha \leq 1$ the existence of $\kappa > 0$ such that
\begin{equation*}
\begin{split}
&\quad \| \langle D' \rangle (\mathcal{E}',\mathcal{H}') \|_{L_T^4 L_{x'}^\infty(\R^2)} \\
 &\lesssim_{T,\alpha} (1 + T^\kappa C(\| (\mathcal{E}',\mathcal{H}') \|_{L_T^\infty H^{\alpha + 1}}, \| \partial_{x'} ( \mathcal{E}',\mathcal{H}') \|_{L_T^4 L_{x'}^\infty} ) \| (\mathcal{E}', \mathcal{H}') \|_{L_T^\infty H^{\alpha + 1}} \\
&\quad + T^{\frac{1}{4}} \| \rho_e(0) \|_{H^{\alpha + \frac{1}{4}}}.
\end{split}
\end{equation*}
This finishes the proof letting $\alpha \to \frac{5}{6}$ and setting $s=\alpha + 1$.

\end{proof}

\subsection{$L^2$-Lipschitz continuous dependence}
We turn to $L^2$-Lipschitz continuous dependence for initial data at higher regularities.
\begin{proposition}
\label{prop:L2Lipschitz}
Let $s \in (\frac{11}{6},2]$, $u_0^1$, $u_0^2 \in \mathcal{H}^3(\Omega)$, with $\| u_0^i \|_{H^s(\Omega)} \leq \delta \ll 1$, $i=1,2$, and for some $\tilde{s} > \frac{13}{12}$ and $D > 0$, $\| \rho_e^i(0) \|_{H^{\tilde{s}}} \leq D$, and denote the corresponding solutions to \eqref{eq:KerrSystem2d} with $u^i$. Then the following estimate holds:
\begin{equation*}
\| u^1 - u^2 \|_{L_T^\infty L^2(\Omega)} \leq C(\| u_0^i \|_{H^s(\Omega)},D) \| u_0^1 - u_0^2 \|_{L^2(\Omega)}.
\end{equation*}
\end{proposition}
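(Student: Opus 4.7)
The plan is a standard linearization/energy argument of the type used in Proposition \ref{prop:APrioriMaxwell2Kerr}, with the caveat that the coefficient controlling the low-regularity evolution is no longer Lipschitz in time by pure Sobolev embedding; we have to invoke the Strichartz bound \eqref{eq:StrichartzEstimatesKerr2d} coming from Proposition \ref{prop:APrioriEstimate2dKerr} to close the estimate. Set $v = (\delta\mathcal{E}, \delta\mathcal{H}) := u^1 - u^2$. Using the mean value theorem in the form
\begin{equation*}
\varepsilon(\mathcal{E}^1)\mathcal{E}^1 - \varepsilon(\mathcal{E}^2)\mathcal{E}^2 = \tilde{\varepsilon}_1 \delta\mathcal{E}, \qquad \tilde{\varepsilon}_1 := \int_0^1 \varepsilon_1(\mathcal{E}^2 + \theta\,\delta\mathcal{E})\,d\theta,
\end{equation*}
with $\varepsilon_1(\mathcal{E}) = (1+|\mathcal{E}|^2)I + 2\mathcal{E}\otimes \mathcal{E}$ (the linearization already used in Section \ref{subsection:TwoDimensionalEnergy}), the difference $v$ satisfies the linear non-autonomous system
\begin{equation*}
\left\{ \begin{array}{cl}
\partial_t(\tilde{\varepsilon}_1 \delta\mathcal{E}) &= \nabla_\perp \delta\mathcal{H}, \\
\partial_t \delta\mathcal{H} &= -(\nabla\times \delta\mathcal{E})_3,
\end{array}\right.
\end{equation*}
together with the boundary condition $[\delta\mathcal{E}\wedge\nu]_{x'\in\partial\Omega}=0$, which is inherited from the fact that both $u^i$ satisfy it.

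The first step is to define the energy
\begin{equation*}
M(t) = \int_\Omega \tilde{\varepsilon}_1 \delta\mathcal{E}\cdot \delta\mathcal{E} + |\delta\mathcal{H}|^2 \, dx'.
\end{equation*}
Smallness $\|\mathcal{E}^i\|_{L^\infty_{t,x'}} \lesssim \delta$ (from Sobolev embedding for $s>11/6$) together with the eigenvalue computation $\mathrm{spec}(\varepsilon_1(\mathcal{E})) = \{1+|\mathcal{E}|^2,\,1+3|\mathcal{E}|^2\}$ implies uniform ellipticity of $\tilde{\varepsilon}_1$, hence $M(t) \sim \|v(t)\|_{L^2}^2$. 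Differentiating $M$ and using the equations for $v$ gives, after the usual integration by parts (which is licit because $\delta\mathcal{E}$ satisfies the perfectly conducting boundary condition so the boundary terms from $\int \nabla_\perp \delta\mathcal{H}\cdot\delta\mathcal{E} = \int \delta\mathcal{H}(\nabla\times\delta\mathcal{E})_3$ vanish exactly as in the proof of Proposition \ref{prop:APrioriMaxwell2Kerr}), the identity
\begin{equation*}
\frac{d}{dt} M(t) = -\int_\Omega \delta\mathcal{E}\cdot (\partial_t \tilde{\varepsilon}_1) \delta\mathcal{E}\, dx',
\end{equation*}
so that $|\frac{d}{dt} M(t)| \lesssim \|\partial_t \tilde{\varepsilon}_1\|_{L^\infty_{x'}}\, M(t)$.

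The main obstacle, and the only place where the Strichartz theory is used, is to control $\|\partial_t \tilde{\varepsilon}_1\|_{L^1_T L^\infty_{x'}}$, because at regularity $s\leq 2$ Sobolev embedding does not give $\partial_t\mathcal{E}^i \in L^1_T L^\infty_{x'}$. From the pointwise bound
\begin{equation*}
|\partial_t \tilde{\varepsilon}_1(t,x')| \lesssim \big(|\mathcal{E}^1| + |\mathcal{E}^2|\big)\big(|\partial_t\mathcal{E}^1| + |\partial_t\mathcal{E}^2|\big)
\end{equation*}
and the Maxwell equation $\partial_t\mathcal{E}^i = \varepsilon_1(\mathcal{E}^i)^{-1}\nabla_\perp \mathcal{H}^i$, H\"older in time gives
\begin{equation*}
\|\partial_t \tilde{\varepsilon}_1\|_{L^1_T L^\infty_{x'}} \lesssim \delta\,T^{3/4}\sum_{i=1,2}\|\partial_{x'}(\mathcal{E}^i,\mathcal{H}^i)\|_{L^4_T L^\infty_{x'}}.
\end{equation*}
The right-hand side is finite and controlled by $C(\|u_0^i\|_{H^s(\Omega)},D)$ thanks to the Strichartz bound \eqref{eq:StrichartzEstimatesKerr2d} established in the course of proving Proposition \ref{prop:APrioriEstimate2dKerr}; this is precisely the step that requires $s>11/6$ and $\tilde{s}>13/12$. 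Gr\o nwall's inequality then yields
\begin{equation*}
M(t) \leq e^{C(\|u_0^i\|_{H^s},D)}\, M(0),
\end{equation*}
and comparing $M$ with the $L^2$-norm on both endpoints gives the claimed Lipschitz bound $\|v\|_{L^\infty_T L^2(\Omega)} \leq C(\|u_0^i\|_{H^s},D)\,\|v(0)\|_{L^2(\Omega)}$.
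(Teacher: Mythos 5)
Your proof is correct and follows essentially the same route as the paper: write the difference equation as a linear non-autonomous Maxwell system with a coefficient matrix $\tilde{\varepsilon}_1$ built from the two solutions, run the energy/Gr\o nwall argument from Proposition \ref{prop:APrioriMaxwell2Kerr}, and close by controlling the Gr\o nwall exponent through the $L^4_T L^\infty_{x'}$ Strichartz bound from Proposition \ref{prop:APrioriEstimate2dKerr}. The only cosmetic differences are that you package the linearized coefficient via the mean-value (integral) formula $\tilde{\varepsilon}_1 = \int_0^1 \varepsilon_1(\mathcal{E}^2+\theta\,\delta\mathcal{E})\,d\theta$ where the paper writes it out explicitly, and that you state the Gr\o nwall exponent (correctly) in terms of $\|\partial_t\tilde{\varepsilon}_1\|_{L^\infty_{x'}}$ rather than $\|\partial_t(\mathcal{E}_\Delta,\mathcal{H}_\Delta)\|_{L^\infty_{x'}}$, though both end up being controlled by $\|\dot u^1\|_{L^p_T L^\infty_{x'}} + \|\dot u^2\|_{L^p_T L^\infty_{x'}}$ in the same way.
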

\begin{proof}
We analyze the equation satisfied by the differences of solutions $v = u^1 - u^2 = (\mathcal{E}_\Delta, \mathcal{H}_\Delta)$:
\begin{equation*}
\left\{ \begin{array}{cl}
\partial_t (\tilde{\varepsilon} \mathcal{E}_\Delta) &= \nabla_\perp \mathcal{H}_\Delta, \qquad \qquad (t,x') \in \R \times \Omega, \\
\partial_t \mathcal{H}_\Delta &= - \nabla \times \mathcal{E}_\Delta, \quad \qquad [(\mathcal{E}_\Delta)_{||}]_{x' \in \partial \Omega} = 0
\end{array} \right.
\end{equation*}
with $\tilde{\varepsilon} = 1 + \mathcal{E}_2^t \mathcal{E}_1 + |\mathcal{E}_1 - \mathcal{E}_2|^2 + \mathcal{E}_1 \otimes \mathcal{E}_2 + \mathcal{E}_2 \otimes \mathcal{E}_1$. Hence, the integration-by-parts-argument from the proof of Proposition \ref{prop:APrioriMaxwell2Kerr}\footnote{We proved Proposition \ref{prop:APrioriMaxwell2Kerr} for the diagonal Kerr permittivity, but it is straight-forward to obtain the below estimate for possibly off-diagonal $\tilde{\varepsilon}$.} yields
\begin{equation*}
\| v(t) \|_{L^2(\Omega)} \leq e^{C(\tilde{\delta}) \int_0^t \| \partial_t (\mathcal{E}_\Delta,\mathcal{H}_\Delta)(s) \|_{L^\infty(\Omega)} ds} \| v(0) \|_{L^2(\Omega)}
\end{equation*}
with $\tilde{\delta} = \| u^1 \|_{L_T^\infty L_{x'}^\infty(\Omega)} + \| u^2 \|_{L_T^\infty L_{x'}^\infty(\Omega)}$.\\
By the proof of Proposition \ref{prop:APrioriEstimate2dKerr} and Minkowski's inequality, we obtain for $T=T(\| u_0^i \|_{H^s(\Omega)})$, $\frac{11}{6} < s \leq 2$, and provided that $\tilde{\delta}$ is small enough, the following for some $\kappa > 0$:
\begin{equation*}
\begin{split}
\| \partial_t (\mathcal{E}_\Delta,\mathcal{H}_\Delta) \|_{L_T^2 L_{x'}^\infty(\Omega)} &\leq \| \dot{u}_1 \|_{L_T^2 L_{x'}^\infty(\Omega)} + \| \dot{u}_2 \|_{L_T^2 L_{x'}^\infty(\Omega)} \\
&\lesssim T^\kappa ( \| u_1(0) \|_{H^s(\Omega)} + \| u_2(0) \|_{H^s(\Omega)} + D).
\end{split}
\end{equation*}
The proof is complete.
\end{proof}

\subsection{Proof of continuous dependence via frequency envelopes}
\label{subsection:FrequencyEnvelopes}
In this section we want to extend the data-to-solution mapping for \eqref{eq:KerrSystem2d} from $\mathcal{H}^3(\Omega)$ to $\mathcal{H}^s(\Omega)$ for $\frac{11}{6} < s \leq 2$ under a boundedness condition on the charges. We shall use frequency envelopes, for which a regularization is required, which is consistent with the compatibility conditions (to use the local existence provided in $\mathcal{H}^3(\Omega)$). The  compatibility conditions for the Kerr nonlinearity are computed in the proof of Theorem \ref{thm:LocalWellposednessH3Maxwell2dAppendix}. In geodesic coordinates the boundary conditions are given by
\begin{align}
\label{eq:NonlinearCompatibility0}
 [\mathcal{E}_1]_{x'_2 = 0} &= 0, \\
 \label{eq:NonlinearCompatibility1}
 [\partial_2 \mathcal{H}]_{x'_2 = 0} &= 0, \\
 \label{eq:NonlinearCompatibility2}
  [\partial_2 (\sqrt{g}^{-1} (\partial_2 \mathcal{E}_1 - \partial_1 \mathcal{E}_2))]_{x'_2 = 0} &= 0.
\end{align}

For this reason \eqref{eq:NonlinearCompatibility0} and \eqref{eq:NonlinearCompatibility2} are automatically satisfied for $\mathcal{E}_0 \in \overline{C^\infty_c(\Omega)^2}^{\| \cdot \|_{H^s}}$, and $\mathcal{H}$ satisfies Neumann boundary conditions in $H^3(\Omega)$. We suppose in the following by density and compactness of $\partial \Omega$ that in geodesic coordinates 
\begin{equation*}
\text{dist}( \text{supp}(\mathcal{E}_0), \{x_2' = 0 \}) > 0.
\end{equation*}

The components are extended to the full space by reflection: $\mathcal{E}_1$ is reflected oddly, and $\mathcal{E}_2$, $\mathcal{H}$ is reflected evenly. We regularize the components as follows:
\begin{equation*}
f_n(x_1',x_2') = \int_{\R^2} f(x_1'-y_1,x_2'-y_2) n^2 \psi(n y_1) \psi(n y_2) dy_1 dy_2
\end{equation*}
with $\psi \in C^\infty_c(\R)$, which is symmetric at the origin, $\psi \geq 0$, and $\int_\R \psi(y) dy = 1$.

This regularization preserves the boundary conditions (for $\mathcal{E}_{0n}$ we have to choose $n$ large enough) and clearly $(\mathcal{E}_{0n},\mathcal{H}_{0n}) \in H^k(\R^2)^3$ for any $k \in \N$. The regularization with parameter $N \in 2^{\N}$ corresponds to a frequency truncation at frequencies $N$. However, presently this is not sharp in frequency space, but with a Schwartz tail. Sharp frequency truncation does not preserve the boundary conditions.

Now, for $N \in 2^{\mathbb{N}_0}$, we let 
\begin{equation*}
 P_{\leq N} \mathcal{E}_i = \mathcal{E}_{iN}, \quad P_{\leq N} \mathcal{H} = \mathcal{H}_N, \quad P_N = P_{\leq 2 N} - P_{\leq N/2}.
\end{equation*}
We introduce the notation for $M,N \in 2^{\mathbb{N}_0}$:
\begin{equation*}
\big[ \frac{M}{N} \big] = \min( \frac{M}{N}, \frac{N}{M} ).
\end{equation*}

We can now define frequency envelopes for the problem at hand:
\begin{definition}
 $(c_N)_{N \in 2^{\N_0}} \in \ell^2$ is called a frequency envelope for a function $u \in H^s(\Omega)$, if it has the following properties:
 \begin{itemize}
  \item[a)] Energy bound:
  \begin{equation}\label{feenergy}
   \| P_N u \|_{H^s} \leq c_N.
  \end{equation}
  \item[b)] Slowly varying: For a suitable choice of  $\delta > 0$ we have for all $J,K \in 2^{\N_0}$:
  \begin{equation}\label{feslow}
   \frac{c_K}{c_J} \lesssim \max\big( \frac{J}{K}, \frac{K}{J} \big)^\delta.
  \end{equation}
 \end{itemize}
The envelopes are called sharp, if they also satisfy
\begin{equation*}
 \| u \|^2_{H^s} \approx \sum_N c_N^2.
\end{equation*}
\end{definition}
Sharp frequency envelopes exist for any $\delta >0$. Indeed, we can start taking  $\widetilde {c}_N = N^{2s} \| P_N f \|^2_{L^2}$ as an ansatz and define (cf. \cite{IfrimTataru2020})
$$ c_N = \sup_{M} \min \big[ \frac{N}{M} \big]^\delta \widetilde{c}_M.$$ 
Indeed, \eqref{feenergy} is clearly satisfied while ~\eqref{feslow} follows from 
\begin{equation}\label{fe}
c_K =  \sup_{M} \big[ \frac{M}{N} \big]^\delta \widetilde{c}_M \\
\leq \sup_{M} \big[ \frac{M}{N} \big]^\delta \widetilde{c}_M  \times \max \big( \frac{K}{N}, \frac{N}{K} \big)^\delta .
\end{equation}
Finally, we have 
$$ c_N^2= \sup_{M} \big[ \frac{M}{N} \big]^{2\delta} \widetilde{c}^2_M\leq \sum_{M} \big[ \frac{M}{N} \big]^{2\delta} \widetilde{c}^2_M,$$ 
which implies
$$\sum_{N} c_N^2 \leq C_\delta \sum_N \widetilde{c}_N^2 = \| u\|_{H^s}^2.
$$
Lastly, the definition of the frequency envelope~\eqref{fe} satisfies the additional compactness property:
\begin{lemma}\label{fecomp}
Assume that the sequence $(u_n)_{n \in \N_0}$ converges to $u$ in $H^s$. Then for any $\delta>0$ the family of frequency envelopes defined by
 \begin{equation*}
c_{K,n} =  \sup_{M} \big[ \frac{M}{N} \big]^\delta \widetilde{c}_{M,n} ,\qquad  \widetilde{c}_{M,n} = \| P_M u_n\|_{H^s}
\end{equation*}
converge in $\ell ^2$ to $(c_K)$.
\end{lemma}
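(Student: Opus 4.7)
The plan is to show $\ell^2$-convergence by a head/tail split: uniform pointwise convergence in $K$ on any finite range, combined with a uniform tail bound that is small for $K$ beyond a threshold depending only on $\varepsilon$ (and not on $n$). First, observe that the map $f\mapsto (\|P_Mf\|_{H^s})_M$ is linear and continuous from $H^s$ to $\ell^2$: indeed $\sum_M \|P_Mf\|_{H^s}^2 \sim \|f\|_{H^s}^2$ by the Littlewood-Paley characterization. Hence $\widetilde c_{\cdot,n}\to \widetilde c$ in $\ell^2$ (and a fortiori in $\ell^\infty$), and the sequence $\|u_n\|_{H^s}$ is uniformly bounded.

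Second, I will establish uniform pointwise convergence. Since $[M/K]^\delta\leq 1$, the elementary inequality
\[
\bigl|\sup_M [M/K]^\delta a_M - \sup_M [M/K]^\delta b_M\bigr|\leq \sup_M|a_M-b_M|
\]
applied with $a_M=\widetilde c_{M,n}$, $b_M=\widetilde c_M$ gives
\[
\sup_K |c_{K,n}-c_K|\leq \|\widetilde c_{\cdot,n}-\widetilde c\|_{\ell^\infty}\leq \|\widetilde c_{\cdot,n}-\widetilde c\|_{\ell^2}\xrightarrow[n\to\infty]{}0.
\]

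Third, and this is the main technical point, I will prove uniform tail smallness. Starting from $c_{K,n}^2\leq \sum_M [M/K]^{2\delta}\widetilde c_{M,n}^2$ and summing over dyadic $K>K_0$, Fubini yields
\[
\sum_{K>K_0}c_{K,n}^2 \leq \sum_M \widetilde c_{M,n}^2 \sum_{K>K_0}[M/K]^{2\delta}.
\]
A direct geometric-series computation (over dyadic $K$) shows that $\sum_{K>K_0}[M/K]^{2\delta}\lesssim_\delta 1$ when $M>K_0$, and $\sum_{K>K_0}[M/K]^{2\delta}\lesssim_\delta (M/K_0)^{2\delta}$ when $M\leq K_0$. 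Splitting the $M$-sum further at $M\sim K_0^{1/2}$ bounds the low-$M$ contribution by $K_0^{-\delta}\|u_n\|_{H^s}^2$ and the remaining contribution by $\sum_{M>K_0^{1/2}}\widetilde c_{M,n}^2$. Given $\varepsilon>0$, the $\ell^2$-convergence $\widetilde c_{\cdot,n}\to\widetilde c$ provides $K_0=K_0(\varepsilon)$ (independent of $n$) so that both pieces, as well as the analogous tails of $c_K^2$, are each $\leq \varepsilon$.

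Finally, combining: for arbitrary $\varepsilon>0$, fix $K_0$ as above and estimate
\[
\sum_K |c_{K,n}-c_K|^2\leq \sum_{K\leq K_0}|c_{K,n}-c_K|^2+2\sum_{K>K_0}\bigl(c_{K,n}^2+c_K^2\bigr)\leq N(K_0)\,\|\widetilde c_{\cdot,n}-\widetilde c\|_{\ell^\infty}^2+C\varepsilon,
\]
where $N(K_0)\sim\log K_0$ is the number of dyadic values below $K_0$. Letting $n\to\infty$ with $K_0$ fixed kills the first term by the second step, and $\varepsilon$ was arbitrary. I do not anticipate any substantial obstacle beyond bookkeeping in the geometric sums for the tail estimate; the key input everywhere is the Littlewood-Paley equivalence $\sum_M\widetilde c_{M,n}^2\sim \|u_n\|_{H^s}^2$, which converts $H^s$-convergence into $\ell^2$-convergence of the model envelopes $\widetilde c_{\cdot,n}$.
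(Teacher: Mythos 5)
Your proof is correct, but it takes a genuinely different route from the paper. The paper's own proof is a one-liner: it observes that the sup is $1$-Lipschitz in the sense that
\[
|c_{K,n} - c_K| \leq \sup_M \big[\tfrac{M}{K}\big]^\delta |\widetilde c_{M,n} - \widetilde c_M|,
\]
i.e.\ the difference $c_{\cdot,n} - c_\cdot$ is pointwise dominated by the $\delta$-envelope of the difference $\widetilde c_{\cdot,n} - \widetilde c$. Combined with the Schur estimate already established just before the lemma ($\sum_K c_K^2 \leq C_\delta \sum_K \widetilde c_K^2$), this immediately gives $\|c_{\cdot,n}-c\|_{\ell^2} \lesssim_\delta \|\widetilde c_{\cdot,n}-\widetilde c\|_{\ell^2} \sim \|u_n-u\|_{H^s} \to 0$ with a quantitative Lipschitz bound. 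You instead run a soft head/tail argument: $\ell^\infty$ convergence from the same Lipschitz observation but with the cruder bound $[M/K]^\delta\leq 1$, then a uniform-tail estimate obtained by Fubini, a geometric-series bound on $\sum_{K>K_0}[M/K]^{2\delta}$, and a further split of the $M$-sum at $K_0^{1/2}$, which relies on uniform boundedness of $\|u_n\|_{H^s}$ and uniform smallness of the $\ell^2$-tails of $\widetilde c_{\cdot,n}$. Both arguments are valid; the paper's is shorter and gives a Lipschitz estimate for the map $u \mapsto c$ in $\ell^2$, while yours is longer but requires no input beyond the Littlewood--Paley equivalence and elementary tail manipulations. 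It is worth noting that your tail estimate is essentially a by-hand re-derivation of the Schur bound you could have invoked directly, applied to the difference sequence; recognizing that would collapse your Steps 3 and 4 to the paper's one line.
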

\begin{proof}
We observe that
\begin{equation*}
 |c_{K_n} - c_K| = | \sup_{M} \big[ \frac{M}{N} \big]^\delta \widetilde{c}_{M,n} - \sup_{M} \big[ \frac{M}{N} \big]^\delta \widetilde{c}_M |  \leq \sup_{M} \big[ \frac{M}{N} \big]^\delta |\widetilde{c}_{M,n} - \widetilde{c}_M|.
\end{equation*}
\end{proof}

We recall the following:

\medskip

\emph{Regularization:} Let $u_0 \in H^s(\tilde{\Omega})$ and $(c_N)_{N \in 2^{\N_0}}$ be a sharp frequency envelope for $u_0$ in $H^s$. Then, we have for $u_0^N = P_{\leq N} u_0$ the following bounds:
\begin{itemize}
 \item [i)] Uniform bounds: 
 \begin{equation}\label{unif}
 \| P_K u_0^N \|_{H^s} \lesssim c_K.
 \end{equation}
 \item [ii)] High frequency bounds: If $0< \delta <j$, we have  
 \begin{equation}
\label{HF}
\| u_0^N \|_{H^{s+j}} \lesssim N^j c_N \quad (j \geq 0).
\end{equation}
  By the slow varying property of frequency envelopes ~\eqref{feslow}: 
 $$ \| u_0^N \|^2_{H^{s+j}}= \sum_{K\leq N} K^{2j} \|P_K u_0\|_{H^s}^2 \leq \sum_{K\leq N} K^{2j}  c^2_K \leq \sum_{K\leq N} K^{2(j- \delta)} N^{2\delta} c^2_N\sim N^{2j} c^2_N.$$
 \item [iii)] Difference bounds: 
\begin{equation}\label{diff}
\| u_0^{2N} - u_0^N \|_{L^2} \lesssim N^{-s} c_N \quad (s \geq 0).
\end{equation}
 \item [iv)] Limit as $n \to \infty$: $u_0 = \lim_{N \to \infty} u_0^N$ in $H^s$.
\end{itemize}

Presently, the above properties are relevant for $\frac{11}{6} < s < s + j \leq 3$ (for a proper choice of $\delta >0$). The regularized initial data give rise to a family of solutions in $H^3$ by the local existence result in Theorem \ref{thm:LocalWellposednessH3Maxwell2d}.

\medskip

\emph{Uniform bounds:} Proposition \ref{prop:APrioriEstimate2dKerr} yields a time interval of length 
$$T = T(\| u_0 \|_{H^s}, \| \rho_e \|_{H^{\frac{13}{12}+\epsilon}}),$$
 on which the solution exists, with $T$ independent of the regularization parameter $N$, provided that we can prove a uniform bound for the regularized charges. We show the following:
\begin{lemma}
 Let $\tilde{s}=\frac{13}{12} + \epsilon$ with $\epsilon$ chosen small enough. We have the following estimate:
 \begin{equation*}
 \| \rho_{e N} \|_{H^{\tilde{s}}} \lesssim \| \mathcal{E} \|_{H^{\tilde{s}+\frac{1}{2}+\epsilon}}^3 + \| \rho_e \|_{H^{\tilde{s}}}.
 \end{equation*}
\end{lemma}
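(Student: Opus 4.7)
The plan is to reduce $\|\rho_{eN}\|_{H^{\tilde{s}}}$ to a trilinear Sobolev estimate and then invoke the fractional Leibniz rule together with $2$D Sobolev embedding. First, decompose
\[
\rho_{eN} = \nabla\cdot\mathcal{E}^N + \nabla\cdot(|\mathcal{E}^N|^2 \mathcal{E}^N).
\]
Because the mollifier is symmetric and the reflections are chosen so that $\mathcal{E}_1$ and $\rho_e$ extend oddly while $\mathcal{E}_2$ extends evenly, convolution commutes with $\partial$ on the extended domain and $\nabla\cdot\mathcal{E}^N = P_{\leq N}(\nabla\cdot\mathcal{E})$. Substituting the charge relation $\nabla\cdot\mathcal{E} = \rho_e - \nabla\cdot(|\mathcal{E}|^2\mathcal{E})$ yields
\[
\|\nabla\cdot\mathcal{E}^N\|_{H^{\tilde{s}}} \leq \|\rho_e\|_{H^{\tilde{s}}} + \|\nabla\cdot(|\mathcal{E}|^2 \mathcal{E})\|_{H^{\tilde{s}}}.
\]
Since $\|\mathcal{E}^N\|_{H^\sigma}\leq\|\mathcal{E}\|_{H^\sigma}$ for every $\sigma$, the other cubic piece $\|\nabla\cdot(|\mathcal{E}^N|^2\mathcal{E}^N)\|_{H^{\tilde{s}}}$ is controlled by the same type of estimate, and everything reduces to the single trilinear inequality
\[
\|\nabla\cdot(|u|^2 u)\|_{H^{\tilde{s}}(\mathbb{R}^2)} \lesssim \|u\|_{H^{\tilde{s}+\frac{1}{2}+\epsilon}}^{3}. \qquad (*)
\]

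For $(*)$ I would expand via the Leibniz rule into summands of the form $u_j u_k\,\partial_i u_\ell$, pass to the reflected extension on $\mathbb{R}^2$, and apply the fractional Leibniz rule of Lemma \ref{lem:FractionalLeibnizDomain} with the split $(f,g)=(u_j u_k,\partial_i u_\ell)$ and the balanced exponents $p_1=p_2=4$. The essential inputs are the $2$D Sobolev embeddings available at the regularity $s_0 := \tilde{s}+\tfrac{1}{2}+\epsilon$: since $s_0>1$ we have $H^{s_0}\hookrightarrow L^\infty$; since the $L^2\to L^4$ embedding in $\mathbb{R}^2$ costs exactly $\tfrac{1}{2}$ derivative, $H^{s_0}\hookrightarrow W^{\tilde{s},4}$; and since $s_0>\tfrac{3}{2}$, also $H^{s_0}\hookrightarrow W^{1,4}$. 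Combined with the Moser bound $\|u_j u_k\|_{W^{\tilde{s},4}}\lesssim\|u\|_{L^\infty}\|u\|_{W^{\tilde{s},4}}$, this estimates the ``derivative-on-quadratic'' half of Kato--Ponce by $\|u\|_{H^{s_0}}^{3}$.

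The main obstacle is the symmetric half of Kato--Ponce, namely $\|u_j u_k\|_{L^4}\|\partial u\|_{W^{\tilde{s},4}}$, where a direct Sobolev embedding would require $u\in H^{\tilde{s}+1}$, strictly above the available regularity $H^{s_0}$. To bypass this I would exploit the algebraic consequence of the charge equation: combining the Leibniz identity $\rho_e = (1+|u|^2)\nabla\cdot u + (u\cdot\nabla)|u|^2$ with $\nabla\cdot(|u|^2 u) = |u|^2\nabla\cdot u + (u\cdot\nabla)|u|^2$ and eliminating $\nabla\cdot u$ gives
\[
\nabla\cdot(|u|^2 u) = (1+|u|^2)^{-1}\bigl[(u\cdot\nabla)|u|^2 + |u|^2 \rho_e\bigr].
\]
The composition Moser estimate for the smooth, uniformly bounded map $z\mapsto(1+|z|^2)^{-1}$ controls the prefactor in $H^{\tilde{s}}$ by $C(\|u\|_{L^\infty})(1+\|u\|_{H^{s_0}}^{2})$; the contribution $|u|^2\rho_e$ is bounded by $\|u\|_{L^\infty}^{2}\|\rho_e\|_{H^{\tilde{s}}}$ and thus absorbed into the second summand of the lemma; and the remaining $(u\cdot\nabla)|u|^2$, rewritten as the bilinear product $u\cdot\nabla v$ with $v:=|u|^2\in H^{s_0}$ (by the $2$D algebra property, since $s_0>1$), now carries the extra derivative on the already-squared factor~$v$. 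The corresponding bilinear Kato--Ponce estimate with $p_1=p_2=4$ closes using only $\|u\|_{H^{s_0}}$ on the lone factor of $u$ together with the Sobolev controls above, the $\epsilon$-margin in $\tilde{s}+\tfrac{1}{2}+\epsilon$ absorbing the sharp loss. This reorganization through $\rho_e$ is the technical heart of the argument.
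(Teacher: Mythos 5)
There is a genuine gap, and it sits exactly where you flagged the ``main obstacle.'' The target trilinear inequality $(*)$ fails as a generic statement: for the low-high paraproduct of $u_ju_k\,\partial_i u_\ell$, one is forced to estimate $\|\partial u\|_{H^{\tilde{s}}}$, i.e.\ $\|u\|_{H^{\tilde{s}+1}}$, which is $\tfrac12-\epsilon$ derivatives above the available regularity $s_0=\tilde{s}+\tfrac12+\epsilon$; no choice of $(r_1,r_2)$ in Kato--Ponce with $r_1,r_2\geq 2$ helps, since $W^{\tilde s,r_2}\hookleftarrow H^{s_0-1}$ would need $r_2\leq 4/(3-2\epsilon)<2$. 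Your bypass via the identity $\nabla\cdot(|u|^2u)=(1+|u|^2)^{-1}\bigl[(u\cdot\nabla)|u|^2+|u|^2\rho_e(u)\bigr]$ trades the divergence part for $\rho_e$, but the bilinear remainder $(u\cdot\nabla)|u|^2=u\cdot\nabla v$ with $v=|u|^2\in H^{s_0}$ has exactly the same low-high difficulty: the piece with the fractional derivative landing on $\nabla v$ needs $v\in H^{\tilde s+1}$ (or $H^{\tilde s+3/2}\hookrightarrow W^{\tilde s+1,4}$), and $s_0<\tilde s+1$ with an $O(1)$ gap the $\epsilon$-margin cannot absorb. The ``extra derivative on the already-squared factor'' does not help, because squaring does not raise $v$ above $H^{s_0}$. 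Moreover, applying the identity to $u=\mathcal{E}^N$ to control $\nabla\cdot(|\mathcal{E}^N|^2\mathcal{E}^N)$ produces $\rho_e(\mathcal{E}^N)=\rho_{eN}$ on the right-hand side, i.e.\ the quantity being estimated; without invoking a smallness absorption (which you do not mention), the reduction is circular.

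What the paper does instead is keep the paradifferential/regularization structure visible rather than reduce to a generic multilinear inequality. It writes $\rho_{eN}=\tilde\varepsilon_{ijN}\partial_j\mathcal{E}_{iN}+\mathrm{l.o.t.}$, splits $\tilde\varepsilon_{ijN}=\tilde\varepsilon_{ij}-(1-P_{\leq N})\tilde\varepsilon_{ij}$, and then compares with the un-regularized expression through $P_{\leq N}(\tilde\varepsilon_{ij}\partial_j\langle D'\rangle^{\tilde s}\mathcal{E}_i)+[P_{\leq N},\tilde\varepsilon_{ij}]\partial_j\langle D'\rangle^{\tilde s}\mathcal{E}_i$. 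The first piece is controlled by $\rho_e$; the commutator is decomposed in Littlewood--Paley and a kernel estimate shows $[P_{\leq N},P'_{M_1}\tilde\varepsilon]$ gains $N^{-1}$, and likewise $\|(1-P_{\leq N})\tilde\varepsilon\|_{L^4}\lesssim N^{-1}\|\partial\tilde\varepsilon\|_{L^4}$; since $\mathcal{E}_{iN}=P_{\leq N}\mathcal{E}_i$ has its frequency content effectively cut at $N$, the factor $N^{-1}$ exactly cancels the full derivative $\partial_j$ applied to the regularized field. This compensating $N^{-1}$ is the mechanism your argument lacks, and without it the derivative-loss in the low-high interaction cannot be closed at the regularity $\tilde s+\tfrac12+\epsilon$.
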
 
 \begin{proof}
To carry out commutator estimates, we change to non-divergence form:
\begin{equation*}
\begin{split}
\rho_{eN} &= \frac{1}{\sqrt{g}} \partial_1 (\sqrt{g} g_1 \varepsilon_{N} \mathcal{E}_{1N}) + \frac{1}{\sqrt{g}} \partial_2 ( \sqrt{g} g_2 \varepsilon_N \mathcal{E}_{2N}) \\
&= \frac{1}{\sqrt{g}} \partial_1 (\sqrt{g} g_1 (1+g_1 |\mathcal{E}_{1N}|^2+ g_2|\mathcal{E}_{2N}|^2) \mathcal{E}_{1N}) \\
&\quad + \frac{1}{\sqrt{g}} \partial_2 ( \sqrt{g}g_2 (1+ g_1 |\mathcal{E}_{1N}|^2 + g_2 |\mathcal{E}_{2N}|^2) \mathcal{E}_{2N}) \\
&= \tilde{\varepsilon}_{ij N} \partial_j \mathcal{E}_{i N} + O(\partial g \mathcal{E}^3).
\end{split}
\end{equation*} 
The second term is clearly lower order.
By the fractional Leibniz rule we find
\begin{equation*}
\| \langle D' \rangle^{\tilde{s}} ( \tilde{\varepsilon}_{ij N} \partial_j \mathcal{E}_{i N} ) \|_{L^2(\R^2)} \lesssim \| \langle D' \rangle^{\tilde{s}} \tilde{\varepsilon}_{ij N} \partial_j \mathcal{E}_{ij N} \|_{L^2(\R^2)} + \| \tilde{\varepsilon}_{ij N} \langle D' \rangle^{\tilde{s}} \partial_j \mathcal{E}_{i N} \|_{L^2(\R^2)}.
\end{equation*}
The first term is acceptable by H\"older's inequality and Sobolev embedding as
\begin{equation*}
\| \langle D' \rangle^{\tilde{s}} \tilde{\varepsilon}_{ij N} \partial_j \mathcal{E}_{ij N} \|_{L^2(\R^2)} \lesssim \| \langle D' \rangle^{\tilde{s}+\frac{1}{2}} \mathcal{E} \|^2_{L^2} \| \langle D' \rangle^{\frac{3}{2}} \mathcal{E} \|_{L^2}.
\end{equation*}
We turn to the second term:
\begin{equation*}
\| \tilde{\varepsilon}_{ij N} \langle D' \rangle^{\tilde{s}} \partial_j \mathcal{E}_{iN} \|_{L^2} \leq \| (1-P_{ \leq N}) \tilde{\varepsilon}_{ij} \partial_j \langle D' \rangle^{\tilde{s}} \mathcal{E}_{i N} \|_{L^2} + \| \tilde{\varepsilon}_{ij} \partial_j \langle D' \rangle^{\tilde{s}} \mathcal{E}_{i N} \|_{L^2}.
\end{equation*}
We compute for the first term
\begin{equation*}
\begin{split}
\| (1-P_{\leq N}) \tilde{\varepsilon}_{ij} \partial_j \langle D' \rangle^{\tilde{s}} \mathcal{E}_{iN} \|_{L^2} &\lesssim \| (1-P_{\leq N}) \tilde{\varepsilon}_{ij} \|_{L^4} N \| \langle D' \rangle^{\tilde{s}} \mathcal{E}_{i N} \|_{L^4} \\
&\lesssim \| \partial_x (1-P_{\leq N}) \tilde{\varepsilon}_{ij} \|_{L^4} \| \langle D' \rangle^{\tilde{s}+\frac{1}{2}} \mathcal{E}_i \|_{L^2} \\
&\lesssim \| \langle D' \rangle^{\tilde{s}+ \frac{1}{2}} \mathcal{E} \|_{L^2}^3.
\end{split}
\end{equation*}

Regarding the second term, we find
\begin{equation*}
\| \tilde{\varepsilon}_{ij} \partial_j \langle D' \rangle^{\tilde{s}} \mathcal{E}_{i N} \|_{L^2} \leq \| P_{\leq N} ( \tilde{\varepsilon}_{ij} \partial_j \langle D' \rangle^{\tilde{s}} \mathcal{E}_i ) \|_{L^2} + \| [P_{\leq N}, \tilde{\varepsilon}_{ij}] \partial_j \langle D' \rangle^{\tilde{s}} \mathcal{E}_i \|_{L^2}.
\end{equation*}
The first term is estimated by H\"older's inequality and Sobolev embedding as
\begin{equation*}
\begin{split}
\| P_{\leq N} (\tilde{\varepsilon}_{ij} \partial_j \langle D' \rangle^{\tilde{s}} \mathcal{E}_i) \|_{L^2} &\leq \| \tilde{\varepsilon}_{ij} \partial_j \langle D' \rangle^{\tilde{s}} \mathcal{E}_i \|_{L^2} \\
&\leq \| \langle D' \rangle^{\tilde{s}} (\tilde{\varepsilon}_{ij} \partial_j \mathcal{E}_i) \|_{L^2} + \| \langle D' \rangle^{\tilde{s}} \tilde{\varepsilon}_{ij} \partial_j \mathcal{E}_i \|_{L^2} \\
&\lesssim \| \rho_e \|_{H^{\tilde{s}}} + \| \langle D' \rangle^{\tilde{s}+\frac{1}{2}} \mathcal{E} \|_{L^2}^3.
\end{split}
\end{equation*}
For the second term we introduce an additional Littlewood-Paley decomposition. Here $P_K'$ denotes the usual frequency localization in $\R^2$. We find
\begin{equation*}
\begin{split}
[P_{\leq N}, \tilde{\varepsilon}_{ij} ] \partial_j \langle D' \rangle^{\tilde{s}} \mathcal{E}_i &= \sum_{M_1 \ll M_2} [P_{\leq N}, P'_{M_1} \tilde{\varepsilon}_{ij} ] \partial_j \langle D' \rangle^{\tilde{s}} P'_{M_2} \mathcal{E}_i \\
&\qquad + \sum_{M_1 \gtrsim M_2} [P_{\leq N}, P'_{M_1} \tilde{\varepsilon}_{ij} ] \partial_j \langle D' \rangle^{\tilde{s}} P'_{M_2} \mathcal{E}_i.
\end{split}
\end{equation*}
The contribution of the second term is estimated like above by H\"older's inequality and distributing derivatives:
\begin{equation*}
\begin{split}
\big\| \sum_{M_1 \gtrsim M_2} [ P_{\leq N}, P'_{M_1} \tilde{\varepsilon}_{ij} ] \partial_j \langle D' \rangle^{\tilde{s}} P'_{M_2} \mathcal{E}_i \big\|_{L^2} &\lesssim \sum_{M_1 \gtrsim M_2} \| P'_{M_1} \tilde{\varepsilon}_{ij} \|_{L^4} M_2 \| \langle D' \rangle^{\tilde{s}} \mathcal{E}_i \|_{L^4} \\
&\lesssim \| \langle D' \rangle^{\frac{3}{2}+\epsilon} \tilde{\varepsilon}_{ij} \|_{L^2} \| \langle D' \rangle^{\tilde{s}+\frac{1}{2}} \mathcal{E}_i \|_{L^2} \\
&\lesssim \| \langle D' \rangle^{\tilde{s}+\frac{1}{2}} \mathcal{E} \|_{L^2}^3.
\end{split}
\end{equation*}
We turn to the first term, for which we can suppose that $M_2 \lesssim N$ since $P_{\leq N} P'_{M_2}$ is smoothing for $M_2 \gg N$. By a standard kernel estimate, we find
\begin{equation*}
\begin{split}
&\quad \| [P_{\leq N}, P'_{M_1} \tilde{\varepsilon}_{ij} ] \partial_j \langle D' \rangle^{\tilde{s}} P'_{M_2} \mathcal{E}_i \|_{L^2} \\
&\lesssim N^{-1} M_2^{\frac{1}{2}} \| \partial_x P'_{M_1} \tilde{\varepsilon}_{ij} \|_{L^\infty} \| \langle D' \rangle^{\tilde{s}+\frac{1}{2}} \mathcal{E} \|_{L^2} \\
&\lesssim N^{-1} M_2^{\frac{1}{2}} \| \partial_x P_{M_1}' \tilde{\varepsilon}_{ij} \|_{L^\infty} \| \langle D' \rangle^{\tilde{s}+\frac{1}{2}} \mathcal{E} \|_{L^2} \\
&\lesssim \| \langle D' \rangle^{\tilde{s}+\frac{1}{2}} \mathcal{E} \|^3_{L^2}.
\end{split}
\end{equation*}
The proof is complete.
\end{proof}

We can now come back to the proof of the continuity of the flow. Proposition~\ref{prop:L2Lipschitz} yields $L^2$-bounds on the difference:
\begin{itemize}
 \item[i)] High frequency bounds for solutions (from~\eqref{HF}):
 \begin{equation*}
  \| u^N \|_{C([0,T],H^{s+j})} \lesssim N^j c_N \qquad \big( \frac{11}{6} < s < s + j \leq 3 \big),
 \end{equation*}
  \item[ii)] Difference bounds (from~\eqref{diff})
  \begin{equation*}
   \| u^{2N} - u^N \|_{C([0,T],L^2)} \lesssim N^{-s} c_N.
  \end{equation*}
\end{itemize}
Interpolation gives
\begin{equation*}
 \| u^{2N} - u^N \|_{C([0,T],H^m)} \lesssim c_N N^{-(s-m)}
\end{equation*}
for $0 \leq m \leq 3$. By the $L^2$-bound and telescoping sum
\begin{equation*}
 u - u^N = \sum_{M \geq N} u_{2M} - u_M,
\end{equation*}
we obtain the estimates
\begin{equation*}
 \| u - u^N \|_{C_T L^2} \lesssim N^{-s}.
\end{equation*}
This yields convergence in $L^2$. Moreover, the frequencies of $u^{2N} - u^N$ are decaying exponentially off $N$. Indeed, let $M \ll N$. Then, we find by the $L^2$-estimate
\begin{equation*}
 \| P_M (u^{2N} - u^N) \|_{C_T H^s} \lesssim M^s \| u^{2N} - u^N \|_{L_T^\infty L^2} \lesssim \big( \frac{M}{N} \big)^s c_N.
\end{equation*}
For $M \gg N$, we can use the high frequency bounds for the solutions:
\begin{equation*}
 M^j \| P_M ( u^{2N} - u^N) \|_{C_T H^s} \sim \| P_M (u^{2N} - u^N) \|_{C_T H^{s+j}} \lesssim N^j c_N.
\end{equation*}
Therefore,
\begin{equation*}
 \| P_M (u^{2N} - u^N) \|_{C_T H^s} \lesssim \big( \frac{N}{M} \big)^j c_N.
\end{equation*}
and as a consequence, 
$$ \| P_M ( u^{2N} - u^N) \|_{C_T H^s}  \lesssim \big[ \frac{M}{N} \big]^\kappa c_N,\qquad \kappa = \min (j,s),
$$
which implies 
\begin{equation*}
\begin{split}
 \| P_M ( u - u^N) \|_{C_T H^s}  &\lesssim \sum_{P\geq N}  \| P_M ( u^{2P} - u^P) \|_{C_T H^s} \\
 &\lesssim \sum_{P\geq N} \big[ \frac PM  \big]^\kappa c_P =  \sum_{P} \big[ \frac PM  \big]^\kappa c_P1_{P\geq N}
 \end{split}
\end{equation*}
By Schur's Lemma, the operator with kernel $K(M,P)= \big[ \frac{M}{P} \big]^\kappa$ is bounded on $\ell^2(2^{\mathbb{N}_0})$, which implies 
\begin{equation}\label{feest}
\|( u - u^N) \|^2_{C_T H^s} \leq C \sum_{M} \| P_M ( u - u^N) \|^2_{C_T H^s}  \leq C \sum_{P\geq N} c_P^2 \rightarrow _{N\rightarrow + \infty} 0.
\end{equation}
 This also proves convergence in $H^s$ to $u \in C_T H^s$.
  The proof of continuous dependence follows from this estimate and Lemma~\ref{fecomp}. Let $u_{0,n}\rightarrow u_0$ in $H^s$. Denote by $c_{N,n}$ a family of frequency envelopes satisfying the conclusions of Lemma~\ref{fecomp}. Then, since the sequence $u_{0,n} $ is bounded in $H^s$, we get for $T$ fixed 
  $$ \| u_n - u_n^N\|_{C_T H^s} \leq C \sum_{P\geq N} c_{P,n}^2 , \qquad  \| u - u^N\|_{C_T H^s} \leq C \sum_{P\geq N} c_{P}^2.$$
  From Lemma~\ref{fecomp} for any $\epsilon >0$, we can choose $N>0$ and $n_0$  such that 
  $$ \forall n\geq n_0: \, \| u_n - u_n^N\|_{C_T H^s} \leq \epsilon,  \| u - u^N\|_{C_T H^s} \leq \epsilon.$$
  Now, writing
  \begin{equation*}
  \begin{split}
   \| u_n - u \|_{C_T H^s} &\leq  \| u_n - u_n^N\|_{C_T H^s} +  \| u_n^N - u^N\|_{C_T H^s} +  \| u^N - u\|_{C_T H^s} \\
   &\leq  2 \epsilon +  \| u_n^N - u^N\|_{C_T H^s},
  \end{split}
  \end{equation*}
  since the initial data $u_{0,n}^N$ and $u_0^N$  are bounded in $H^{s+j}$ ($N$ fixed, $n\rightarrow + \infty$), we get from interpolation between the a priori estimate in $H^{s+j}$ and contraction in $L^2$ that 
  $$ \lim_{n\rightarrow + \infty } \| u_n^N - u^N\|_{C_T H^s}=0.$$
   The proof of Theorem \ref{thm:ImprovedLWPKerr2d} is complete.

$\hfill \Box$

\newpage

\appendix

\section{Local well-posedness for Maxwell equations in two dimensions at high regularity}
\label{appendix:LWPHighRegularity}
In the following we prove local well-posedness of the Maxwell system with Kerr nonlinearity in two dimensions. Let $\Omega \subseteq \R^2$ be a smooth domain with compact boundary. We consider the Kerr system
\begin{equation}
\label{eq:Maxwell2dAppendix}
\left\{
\begin{array}{cl}
\partial_t (\varepsilon \mathcal{E}) &= \nabla_{\perp} \mathcal{H}, \quad [ \mathcal{E} \wedge \nu ]_{x' \in \partial \Omega} = 0, \\
\partial_t (\mu \mathcal{H}) &= -( \partial_1 \mathcal{E}_2 - \partial_2 \mathcal{E}_1), \quad (t,x') \in \R \times \Omega
\end{array}
\right.
\end{equation}
with $\nabla_\perp = (\partial_2, - \partial_1)$ and $\varepsilon = 1+|\mathcal{E}|^2$. Moreover, we shall prove that the solution satisfies finite speed of propagation. To this end, we perceive the two-dimensional case as projection of the three-dimensional case. This allows us to apply the local well-posedness theory established by Spitz \cite{Spitz2019} and transfer the finite speed of propagation from the three-dimensional to the two-dimensional case. We remark that a simpler version of the detailed argument likewise yields local well-posedness in $\mathcal{H}^3(\Omega)$ in the autonomous case:
\begin{equation}
\left\{
\begin{array}{cl}
\partial_t (\varepsilon \mathcal{E}) &= \nabla_{\perp} \mathcal{H}, \quad (t,x') \in \R \times \Omega, \quad [ \mathcal{E} \wedge \nu ]_{x' \in \partial \Omega} = 0, \\
\partial_t \mathcal{H} &= -( \partial_1 \mathcal{E}_2 - \partial_2 \mathcal{E}_1)
\end{array}
\right.
\end{equation}
with $\varepsilon, \mu \in C^\infty(\Omega;\R_{>0})$, which satisfy ellipticity conditions:
\begin{equation*}
\exists \lambda, \Lambda > 0: \forall x' \in \Omega: \lambda \leq \kappa(x') \leq \Lambda, \quad \kappa \in \{ \varepsilon; \mu \}.
\end{equation*}

We show the following:
\begin{theorem}
\label{thm:LocalWellposednessH3Maxwell2dAppendix}
There is $\delta > 0$ such that \eqref{eq:Maxwell2dAppendix} is locally well-posed in $\mathcal{H}^3(\Omega)$ for initial data $\| (\mathcal{E},\mathcal{H})_0 \|_{H^3(\Omega)} \leq \delta $. This means there is $T=T(\| (\mathcal{E},\mathcal{H})_0 \|_{H^3(\Omega)}, \Omega)$ such that solutions $(\mathcal{E}^i,\mathcal{H}^i)$, $i=1,2$ exist for $0<t \leq T$ and depend continuously on the initial data: We have
\begin{equation*}
\| (\mathcal{E}^1,\mathcal{H}^1)(t) - (\mathcal{E}^2,\mathcal{H}^2)(t) \|_{H^s(\Omega)} \to 0
\end{equation*}
for $\| (\mathcal{E}^1,\mathcal{H}^1)(0) - (\mathcal{E}^2,\mathcal{H}^2)(0) \|_{H^s(\Omega)} \to 0$.
\end{theorem}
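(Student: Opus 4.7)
The strategy is to realize the 2D Kerr Maxwell system as a symmetry reduction (the transverse-magnetic mode) of the 3D Kerr Maxwell system on a cylindrical extension of $\Omega$, and then invoke Spitz's three-dimensional local well-posedness theory \cite{Spitz2019,Spitz2022}. Given 2D data $(\mathcal{E}_0, \mathcal{H}_0) \in \mathcal{H}^3(\Omega)$, I would set $\tilde{\Omega} = \Omega \times \mathbb{T}_L$ (a smooth 3D manifold with compact boundary $\partial \Omega \times \mathbb{T}_L$) and define the cylindrical lift
\begin{equation*}
\tilde{\mathcal{E}}_0(x_1,x_2,x_3) = (\mathcal{E}_{01},\mathcal{E}_{02},0)(x_1,x_2), \qquad \tilde{\mathcal{H}}_0(x_1,x_2,x_3) = (0,0,\mathcal{H}_0)(x_1,x_2),
\end{equation*}
with permittivity $\tilde{\varepsilon}(\tilde{\mathcal{E}}) = 1 + |\tilde{\mathcal{E}}|^2$ and with $\tilde{\mu}=\mu$ lifted to be $x_3$-independent. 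A direct computation shows that the 3D curls of cylindrical TM fields reduce to the 2D operators $\nabla_\perp \mathcal{H}$ and $(\nabla\times \mathcal{E})_3$, and that with the outer normal $\tilde{\nu} = (\nu,0)$ on $\partial \tilde{\Omega}$ one has $[\tilde{\mathcal{E}}_0 \times \tilde{\nu}] = 0 \Leftrightarrow [\mathcal{E}_0 \wedge \nu] = 0$, while $[\tilde{\mathcal{H}}_0 \cdot \tilde{\nu}] = 0$ is automatic since $\tilde{\mathcal{H}}_0$ points along the $x_3$-axis.

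Next I would check that the 3D compatibility conditions of Proposition \ref{prop:Characterization} for the lifted data are implied by the 2D compatibility conditions \eqref{eq:0thCompCondition2d}--\eqref{eq:2ndCompCondition2d} built into $\mathcal{H}^3(\Omega)$ via Proposition \ref{prop:Characterization2d}. Because $\partial_3 \tilde{\mathcal{E}}_0 \equiv 0$ and the metric on $\tilde{\Omega}$ is block-diagonal, the first-order condition $[\partial_{\tilde{\nu}} (\tilde{\mathcal{H}}_0)_{\mathrm{tang}}] = 0$ collapses to its $x_3$-component $[\partial_\nu \mathcal{H}_0] = 0$, and the second-order condition similarly reduces to \eqref{eq:2ndCompCondition2d}. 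Thus data in $\mathcal{H}^3(\Omega)$ lifts to data in $\mathcal{H}^3(\tilde{\Omega})$, and Spitz's theorem furnishes a unique 3D solution $(\tilde{\mathcal{E}}, \tilde{\mathcal{H}}) \in C([0,T]; \mathcal{H}^3(\tilde{\Omega}))$ for some $T = T(\|(\mathcal{E}_0,\mathcal{H}_0)\|_{H^3}, \Omega)$; the smallness of the data is used only to guarantee uniform ellipticity of the non-divergence form matrix $\tilde{\varepsilon}_1 = \tilde{\varepsilon} \mathrm{Id} + 2 \tilde{\mathcal{E}} \otimes \tilde{\mathcal{E}}$.

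The crucial step is to show the solution retains its TM-cylindrical structure, which I would extract from uniqueness together with two symmetries of the quasilinear Maxwell--Kerr system. Translation invariance in $x_3$ forces $\partial_3 \tilde{\mathcal{E}} = \partial_3 \tilde{\mathcal{H}} = 0$ on $[0,T]$. A discrete reflection $R$ defined by
\begin{equation*}
(R \tilde{\mathcal{E}})(x_3) = (\tilde{\mathcal{E}}_1, \tilde{\mathcal{E}}_2, -\tilde{\mathcal{E}}_3)(-x_3), \qquad (R \tilde{\mathcal{H}})(x_3) = (-\tilde{\mathcal{H}}_1, -\tilde{\mathcal{H}}_2, \tilde{\mathcal{H}}_3)(-x_3)
\end{equation*}
intertwines $\nabla\times$ with the appropriate sign flips and therefore preserves the Kerr Maxwell operator together with the perfectly conducting boundary conditions on $\partial \tilde{\Omega}$. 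The TM lift is $R$-invariant, so by uniqueness the 3D solution remains $R$-invariant, which forces $\tilde{\mathcal{E}}_3 \equiv 0$ and $\tilde{\mathcal{H}}_1 \equiv \tilde{\mathcal{H}}_2 \equiv 0$. Setting $(\mathcal{E},\mathcal{H}) := ((\tilde{\mathcal{E}}_1, \tilde{\mathcal{E}}_2), \tilde{\mathcal{H}}_3)$ then yields a solution to \eqref{eq:Maxwell2dAppendix} in $C([0,T]; \mathcal{H}^3(\Omega))$.

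Continuous dependence follows by the same transfer mechanism: if $(\mathcal{E}_0^i,\mathcal{H}_0^i) \to (\mathcal{E}_0,\mathcal{H}_0)$ in $\mathcal{H}^3(\Omega)$, the cylindrical lifts converge in $\mathcal{H}^3(\tilde{\Omega})$, Spitz's continuous dependence gives convergence of 3D solutions in $C_T \mathcal{H}^3(\tilde{\Omega})$, and projection onto the TM components gives convergence in $C_T \mathcal{H}^3(\Omega)$. Finite speed of propagation descends from the 3D result \cite[Chapter~6]{SpitzPhdThesis2017}, since the cylindrical characteristics project to the 2D ones. The main obstacle I anticipate is the second step, namely the careful bookkeeping of compatibility conditions up to order two under the cylindrical reduction: the block structure of the lifted metric and the interaction with the quasilinear $\tilde{\varepsilon}_1$ must be unwound so as to match \eqref{eq:2ndCompCondition2d} exactly. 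A secondary technical issue is that Spitz's results are stated for domains in $\R^3$ rather than on the manifold $\Omega \times \mathbb{T}_L$; this can be handled either by localizing to boundary charts (as the constructions in Section \ref{section:MaxwellManifolds} are local in nature) or alternatively by working on $\Omega \times \R$ with compactly supported $x_3$-independent data and invoking finite speed of propagation in $x_3$.
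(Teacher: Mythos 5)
Your proposal follows the same overall route as the paper's proof: lift the 2D Kerr system to a 3D cylinder, invoke Spitz's local well-posedness theory, and project back. The genuine point of departure is how you recover the transverse-magnetic structure of the 3D solution. You derive it abstractly from two exact symmetries of the quasilinear Maxwell--Kerr system — $x_3$-translation invariance and the discrete reflection $R$ — together with uniqueness, which immediately forces $\tilde{\mathcal{E}}_3$, $\tilde{\mathcal{H}}_1$, $\tilde{\mathcal{H}}_2$ to be $x_3$-independent and odd in $x_3$, hence zero. The paper instead truncates the lift with a compactly supported cut-off $\varphi(x'_3)$ on $\Omega \times \R$, applies Spitz's theory to that cylinder, and uses finite speed of propagation to restrict to the slice $x'_3 = 0$ for small times, where the TM structure is read off directly from the Maxwell equations and the divergence constraint. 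Your argument is cleaner conceptually; the paper's is more hands-on but already tuned to the non-compact setting.

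The soft spot in your write-up is precisely the handling of the non-compactness, and it needs fixing. On $\Omega \times \mathbb{T}_L$ you cannot simply cite Spitz's theorem, which is proved for domains in $\R^3$ (as you yourself flag). Your proposed fall-back — "$\Omega \times \R$ with compactly supported $x_3$-independent data" — is internally contradictory: the only $x_3$-independent function on $\R$ with compact support is zero, so neither translation invariance nor reflection invariance can be applied to such data globally. The correct fix is the paper's: use the cut-off lift \eqref{eq:InitialData3d} on $\Omega \times \R$, apply Spitz's theory there, and combine finite speed of propagation with a \emph{localized} version of your two symmetries. Concretely, for $|x'_3| \le R - ct$ the solution only sees data in the region where $\varphi \equiv 1$; by choosing $\varphi$ even, the reflection symmetry $R$ and shifts of the cut-off both preserve the data restricted to that region, and a uniqueness/causality argument then yields the $R$-invariance and approximate translation invariance of the solution on that spacetime cone. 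This is essentially what the paper's ``by finite speed of propagation'' claim is implicitly carrying, and your reflection symmetry supplies the missing algebraic content of that claim. So your approach goes through, but the non-compactness bookkeeping must be done the way the paper does it rather than waved off.
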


The proof is carried out in the following steps:
\begin{itemize}
\item We extend the two-dimensional system to three dimensions by introducing a cylindrical tangent direction.
\item We find the compatibility conditions for the Kerr nonlinearity in two and three dimensions and see that the cylindrical extensions satisfies the compatibility conditions, if the compatibility conditions in two dimensions are satisfied.
\item Finally, we recover the solutions to the two-dimensional system by restricting the solutions to the cylindrical extension and check the regularity.
\end{itemize}

\subsubsection*{Extension to the three-dimensional case}

We consider the non-compact cylinder $\tilde{\Omega} = \Omega \times \R$, which is still smooth and its boundary can be covered by finitely many charts. This makes the extended Maxwell system on $\tilde{\Omega}$ still amenable to Spitz's local well-posedness theory:
\begin{equation}
\label{eq:Maxwell3dAppendix}
\left\{
\begin{array}{cl}
\partial_t (\varepsilon \tilde{\mathcal{E}}) &= \nabla \times \tilde{\mathcal{H}}, \quad [\tilde{\mathcal{E}} \wedge \nu]_{x' \in \partial \tilde{\Omega}} = 0, \quad [\tilde{\mathcal{H}} . \nu]_{x' \in \partial \tilde{\Omega}} =0, \\
\partial_t \tilde{\mathcal{H}} &= - \nabla \times \tilde{\mathcal{E}}, \quad (t,x') \in \R \times \tilde{\Omega}.
\end{array}
\right.
\end{equation}
Let the two-dimensional initial data be given by $(\mathcal{E}_0,\mathcal{H}_0) \in \mathcal{H}^3(\Omega)$. We extend the initial data using a smooth cut-off in the cylindrical direction. Let $\varphi \in C^\infty_c(\R)$ with $\varphi(x') = 1$ for $|x'| \leq R$ and $\varphi \in C^\infty_c(B(0,2R))$. The extended data is given by
\begin{equation}
\label{eq:InitialData3d}
\tilde{\mathcal{E}}_0(x'_1,x'_2,x'_3) = \varphi(x'_3)
\begin{pmatrix}
\mathcal{E}_{01}(x'_1,x'_2) \\
\mathcal{E}_{02}(x'_1,x'_2) \\
0
\end{pmatrix}, \qquad
\tilde{\mathcal{H}}_0(x'_1,x'_2,x'_3) = \varphi(x'_3)
\begin{pmatrix}
0 \\
0 \\
\mathcal{H}_0(x'_1,x'_2)
\end{pmatrix}
.
\end{equation}
For $(\mathcal{E}_0,\mathcal{H}_0) \in H^3(\Omega)$ we clearly have $(\tilde{\mathcal{E}}_0,\tilde{\mathcal{H}}_0) \in H^3(\tilde{\Omega})$. 
\subsubsection*{Compatibility conditions in two and three dimensions}

We turn to compatibility conditions. First, we record the compatibility conditions in two dimensions by changing to geodesic coordinates:
\begin{equation}
\label{eq:Maxwell2dGeodesic}
\left\{ \begin{array}{cl}
\partial_t (\tilde{\varepsilon} \mathcal{E}) &= \sqrt{g}^{-1} g \nabla_\perp \mathcal{H}, \quad \tilde{\varepsilon} = 1+ \langle \mathcal{E}, g^{-1} \mathcal{E} \rangle, \quad g^{-1} = 
\begin{pmatrix}
g^{1} & 0 \\
0 & 1
\end{pmatrix}, \\
\partial_t \mathcal{H} &= - \sqrt{g}^{-1} (\partial_1 \mathcal{E}_2 - \partial_2 \mathcal{E}_1), \quad (t,x') \in \R \times \R^2_{>0}.
\end{array} \right.
\end{equation}
The tangential direction in $\R^2_{>0} = \{ (x'_1,x'_2) \in \R^2 : x'_2 > 0 \}$ is $e_1$. The boundary condition is given by $[ \mathcal{E}_1 ]_{x'_2 = 0} = 0$. 
Since $\partial_t$ is a tangential derivative, we obtain
\begin{equation*}
\partial_t (\tilde{\varepsilon} \mathcal{E}_1) = \sqrt{g}^{-1} g^{1} \partial_2 \mathcal{H} \Rightarrow [\partial_2 \mathcal{H}]_{x'_2 = 0} = 0.
\end{equation*}
This means that first order compatibility conditions are Neumann boundary conditions for $\mathcal{H}$.
We take an additional time derivative in geodesic coordinates to find the second order compatibility condition:
\begin{equation*}
0 = [\partial_t^2 (\tilde{\varepsilon} \mathcal{E})_1]_{x_2' = 0} = [\partial_2 (\sqrt{g}^{-1} (\partial_1 \mathcal{E}_2 - \partial_2 \mathcal{E}_1)]_{x_2' =0}.
\end{equation*}

\medskip

 We turn to the nonlinear compatibility conditions for the Kerr nonlinearity in three dimensions.
The conditions are computed again in geodesic normal coordinates with cometric given by
\begin{equation*}
g^{-1} = 
\begin{pmatrix}
g^{11} & g^{12} & 0 \\
g^{21} & g^{22} & 0 \\
0 & 0 & 1
\end{pmatrix}
.
\end{equation*}

The equations are given by
\begin{equation}
 \label{eq:KerrLocalized}
\left\{ \begin{array}{cl}
 \partial_t ((1 + \langle \mathcal{E}, g^{-1} \mathcal{E} \rangle) \mathcal{E}) &= \sqrt{g}^{-1} g \nabla \times \mathcal{H},
\\
\partial_t \mathcal{H} &= - \sqrt{g}^{-1} g \nabla \times \mathcal{E}.
\end{array} \right.
\end{equation}
The boundary conditions (zeroth order compatibility conditions) read
\begin{equation}
\label{eq:NonlinearCompatibility0Appendix}
 [\mathcal{E}_1]_{x'_3 = 0} = [\mathcal{E}_2]_{x'_3 = 0} = [\mathcal{H}_3]_{x'_3 = 0} = 0.
\end{equation}
We compute the compatibility conditions of first order by taking \eqref{eq:KerrLocalized} and \eqref{eq:NonlinearCompatibility0} together:
\begin{equation*}
 [\partial_2 \mathcal{H}_3 - \partial_3 \mathcal{H}_2] = 0 , \quad [\partial_3 \mathcal{H}_1 - \partial_1 \mathcal{H}_3] = 0.
\end{equation*}
Since $\partial_1$ and $\partial_2$ are tangential derivatives, it follows that we have Neumann boundary conditions for $\mathcal{H}_1$ and $\mathcal{H}_2$:
\begin{equation}
\label{eq:NonlinearCompatibility1Appendix}
 [\partial_3 \mathcal{H}_1] = [\partial_3 \mathcal{H}_2] = 0.
\end{equation}
The time derivative of the third component of $\mathcal{H}_3$ yields no additional condition on $\mathcal{E}_1$ and $\mathcal{E}_2$.

\begin{equation*}
\partial_t^2 (\sqrt{g} g^{-1} \varepsilon \mathcal{E}) = \nabla \times (\sqrt{g}^{-1} g (\nabla \times \mathcal{E})).
\end{equation*}
Expanding gives
\begin{equation*}
\begin{split}
\partial_t^2 (\sqrt{g} g^{-1} \varepsilon \mathcal{E})_1 &= \partial_2 (\sqrt{g}^{-1} (\partial_1 \mathcal{E}_2 - \partial_2 \mathcal{E}_1)) \\
&\quad - \partial_3 (\sqrt{g}^{-1} (g_{21} (\partial_2 \mathcal{E}_3 - \partial_3 \mathcal{E}_2) + g_{22} (\partial_3 \mathcal{E}_1 -\partial_1 \mathcal{E}_3))), \\
\partial_t^2 (\sqrt{g} g^{-1} \varepsilon \mathcal{E})_2 &= \partial_3 (\sqrt{g}^{-1} (g_{11}(\partial_2 \mathcal{E}_3 - \partial_3 \mathcal{E}_2) + g_{12} (\partial_3 \mathcal{E}_1 - \partial_1 \mathcal{E}_3))) \\
&\quad - \partial_1 (\sqrt{g}^{-1} (\partial_1 \mathcal{E}_2 - \partial_2 \mathcal{E}_1)).
\end{split}
\end{equation*}
The left hand-side is vanishing at the boundary. Moreover, $\partial_1$ and $\partial_2$ are tangential derivatives, which means that for $i=1,2$
\begin{equation*}
[\partial_i (\sqrt{g}^{-1} (\partial_1 \mathcal{E}_2 - \partial_2 \mathcal{E}_1))]_{x_3' = 0} = 0.
\end{equation*}
We find that the second order compatibility conditions are given by
\begin{equation*}
\begin{split}
&\quad [\partial_3 (\sqrt{g}^{-1} (g_{21} (\partial_2 \mathcal{E}_3 - \partial_3 \mathcal{E}_2) + g_{22} (\partial_3 \mathcal{E}_1 -\partial_1 \mathcal{E}_3)))]_{x_3' = 0} = 0, \\
&\quad [\partial_3 (\sqrt{g}^{-1} (g_{11}(\partial_2 \mathcal{E}_3 - \partial_3 \mathcal{E}_2) + g_{12} (\partial_3 \mathcal{E}_1 - \partial_1 \mathcal{E}_3)))]_{x_3' = 0} = 0.
\end{split}
\end{equation*}

\begin{equation*}
 [\partial_3 \partial_t \mathcal{H}_1] = [\partial_3^2 \mathcal{E}_1] - [\partial_1 \partial_3 \mathcal{E}_3] = [\partial_3^2 \mathcal{E}_1] = 0.
\end{equation*}

\medskip

\subsubsection*{Relating the solutions in two and three dimensions}

To relate the boundary conditions in two and three dimensions after cylindrical extension, we note that we can extend the geodesic coordinates in two dimensions
\begin{equation*}
g^{-1} = 
\begin{pmatrix}
g^{1} & 0 \\
0 & 1
\end{pmatrix}
\end{equation*}
such that $e_1$ denotes the tangential and $e_2$ the normal direction trivially as
\begin{equation*}
\tilde{g}^{-1} = 
\begin{pmatrix}
g^{1} & 0 & 0 \\
0 & 1 & 0 \\
0 & 0 & 1
\end{pmatrix}
\end{equation*}
such that $e_3$ denotes the second tangential direction. The boundary conditions in geodesic coordinates in two dimensions are given by
\begin{equation}
\label{eq:2dBoundaryConditionsAppendix}
\begin{split}
[ \mathcal{E}_1]_{x'_2 = 0} &= 0, \\
 [\partial_2 \mathcal{H}]_{x'_2 = 0} &= 0, \\
[\partial_2 (\sqrt{g}^{-1}(\partial_1 \mathcal{E}_2 - \partial_2 \mathcal{E}_1)]_{x'_2 = 0} &= 0.
  \end{split}
\end{equation}
In three dimensions we find
\begin{equation}
\label{eq:3dBoundaryConditionsAppendix}
\begin{split}
[ \tilde{\mathcal{E}}_1]_{x'_2 = 0} &= [\tilde{\mathcal{E}}_3]_{x'_2 = 0} = [\tilde{\mathcal{H}}_2]_{x'_2 = 0} = 0, \\
[\partial_2 \tilde{\mathcal{H}}_1]_{x'_2 = 0} &= [\partial_2 \tilde{\mathcal{H}}_3]_{x'_2 = 0} = 0, \\
 [\partial_2 (\sqrt{g}^{-1} (\partial_1 \tilde{\mathcal{E}}_2 - \partial_2 \tilde{\mathcal{E}}_1))]_{x_2'= 0} &= 0, \quad [\partial_2(\sqrt{g}^{-1} g_1 (\partial_2 \tilde{\mathcal{E}}_3 - \partial_3 \tilde{\mathcal{E}}_2))]_{x_2' = 0} = 0.
\end{split}
\end{equation}
It turns out that the conditions in three dimensions either follow from the conditions in two dimensions or from cylindrical extension. Indeed, 
\begin{equation*}
[\tilde{\mathcal{E}}_1]_{x'_2 = 0} = [\partial_2 (\sqrt{g}^{-1} (\partial_1 \tilde{\mathcal{E}}_2 - \partial_2 \tilde{\mathcal{E}}_1))]_{x_2'= 0} = 0
\end{equation*}
is immediate from \eqref{eq:2dBoundaryConditionsAppendix}. Since $e_3$ is the cylindrical direction, we find by the definition of the extended fields
\begin{equation*}
\begin{split}
[\tilde{\mathcal{E}}_3]_{x'_2 = 0 } = [\tilde{\mathcal{H}}_2]_{x'_2 = 0} =  [\partial_2 \tilde{\mathcal{H}}_1]_{x'_2 = 0} &= [\partial_2 \tilde{\mathcal{H}}_3]_{x'_2 = 0} = 0, \\
[\partial_2(\sqrt{g}^{-1} g_1 (\partial_2 \tilde{\mathcal{E}}_3 - \partial_3 \tilde{\mathcal{E}}_2))]_{x_2' = 0} &= 0.
\end{split}
\end{equation*}

This yields local-in-time solutions to \eqref{eq:Maxwell3dAppendix} by applying \cite[Theorem~5.3]{Spitz2019}. Next, we argue that the solutions $(\tilde{\mathcal{E}},\tilde{\mathcal{H}})$ in $\Omega \times \R$ to \eqref{eq:Maxwell3dAppendix} yield solutions to \eqref{eq:Maxwell2dAppendix}. 

By finite speed of propagation there is $T = T(\tilde{\Omega}) = T(\Omega)$ such that for $0 \leq t \leq T$
\begin{equation*}
\tilde{\mathcal{E}}_3(t,x'_1,x'_2,0) = \tilde{\mathcal{H}}_1(t,x'_1,x'_2,0) = \tilde{\mathcal{H}}_2(t,x'_1,x'_2,0) = 0.
\end{equation*}
Secondly, we argue that $\tilde{\mathcal{E}}_1$, $\tilde{\mathcal{E}}_2$, and $\tilde{\mathcal{H}}_3$ do not depend on the cylindrical coordinate $x'_3$ for $0 \leq t \leq T$. For the components $i=1,2$ of $\tilde{\mathcal{E}}$ this follows from \eqref{eq:Maxwell3dAppendix}:
\begin{equation*}
\begin{split}
\partial_t \tilde{\mathcal{H}}_1 &= 0 = \partial_2 \tilde{\mathcal{E}}_3 - \partial_3 \tilde{\mathcal{E}}_2 \; \Rightarrow \; \partial_3 \tilde{\mathcal{E}}_2 = 0, \\
\partial_t \tilde{\mathcal{H}}_2 &= 0 = \partial_1 \tilde{\mathcal{E}}_3 - \partial_3 \tilde{\mathcal{E}}_1 \; \Rightarrow \; \partial_3 \tilde{\mathcal{E}}_1 = 0.
\end{split}
\end{equation*}
For $\tilde{\mathcal{H}}$, we observe that $\nabla \cdot \tilde{\mathcal{H}}_0(x'_1,x'_2,0) = 0$. It follows from the time evolution that $\nabla \cdot \tilde{\mathcal{H}}(t,x'_1,x'_2,0) = 0$, and consequently, for $0 < t \leq T$ we find
\begin{equation*}
\partial_1 \tilde{\mathcal{H}}_1 + \partial_2 \tilde{\mathcal{H}}_2 + \partial_3 \tilde{\mathcal{H}}_3 = 0 \Rightarrow \partial_3 \tilde{\mathcal{H}}_3 = 0.
\end{equation*}
Hence, we retrieve local-in-time solutions to \eqref{eq:Maxwell2dAppendix} by restricting solutions to \eqref{eq:Maxwell3dAppendix}:
\begin{equation*}
(\mathcal{E}(t,x'_1,x'_2),\mathcal{H}(t,x'_1,x'_2)) = (\tilde{\mathcal{E}}_1(t,x'_1,x'_2,0),\tilde{\mathcal{E}}_2(t,x'_1,x'_2,0),\tilde{\mathcal{H}}_3(t,x'_1,x'_2,0)).
\end{equation*}
The restriction is well-defined by Sobolev embedding, which yields that $(\tilde{\mathcal{E}},\tilde{\mathcal{H}})(t) \in C^{1,\frac{1}{2}}(\tilde{\Omega})$. It remains to check that the solutions obtained from restriction are in $H^3(\Omega)$. To this end, we note that by independence of $x'_3$ for $0 \leq t \leq T$ and $x'_3 \in [-\varepsilon,\varepsilon]$:
\begin{equation*}
(\tilde{\mathcal{E}}_1,\tilde{\mathcal{E}}_2,\tilde{\mathcal{H}}_3)(t,x'_1,x'_2,x'_3) = 
(\tilde{\mathcal{E}}_1,\tilde{\mathcal{E}}_2,\tilde{\mathcal{H}}_3)(t,x'_1,x'_2,0).
\end{equation*}
Hence, $(\tilde{\mathcal{E}},\tilde{\mathcal{H}})(t) \in H^3(\tilde{\Omega})$ implies that $(\mathcal{E},\mathcal{H})(t) \in H^3(\Omega)$.

$\hfill \Box$

\section{Strichartz estimates for Maxwell equations in the full space revisited}
\label{appendix:StrichartzRevisited}
\subsection{Strichartz estimates for Maxwell equations in three dimensions}

Here we show Strichartz estimates away from the boundary based on Strichartz estimates in the full space. Recall that we can find $T$ small enough such that $(\mathcal{E},\mathcal{H})(t)$ within $\Omega^{\text{int}} = \{ x \in \Omega : d(x) > \varepsilon /2 \}$ only depends on initial data in the interior $\tilde{\Omega}^{\text{int}} = \{ x \in \Omega : d(x) > \varepsilon /4 \}$, and the solution does not reach the boundary for times $t \leq T$. We prove that
\begin{equation}
\label{eq:StrichartzInteriorAppendix}
 \| (\mathcal{E},\mathcal{H}) \|_{L_T^p L^q(\Omega^{\text{int}})} \lesssim \| (\mathcal{E}_0,\mathcal{H}_0) \|_{H^s(\Omega)} + \| \rho_e(0) \|_{H^{s-1+\frac{1}{p}}}.
\end{equation}

A difficulty in applying the results from \cite{Schippa2021Maxwell3d} arises as these are formulated in terms of $\mathcal{D} = \varepsilon \mathcal{E}$ and $\mathcal{B}= \mu \mathcal{H}$ and with negative derivatives on the left-hand side, which requires the use of commutator estimates to find Strichartz estimates like in \eqref{eq:StrichartzInteriorAppendix}.
\begin{theorem}[{\cite[Theorem~1.3]{Schippa2021Maxwell3d}}]
\label{thm:3dStrichartzFullSpace}
Let $\varepsilon_1,\mu_1 \in C^1(\R \times \R^3;\R)$ and define $\varepsilon = \text{diag}(\varepsilon_1,\varepsilon_1,\varepsilon_1): \R \times \R^3 \to \R^{3 \times 3}$, $\mu = \text{diag}(\mu_1,\mu_1,\mu_1): \R \times \R^3 \to \R^{3 \times 3}$ be matrix-valued functions, which satisfy \eqref{eq:Ellipticity} and $\partial_x^2 \varepsilon \in L_t^1 L_{x'}^\infty$ and $\partial_x^2 \mu \in L_t^1 L_{x'}^\infty$. Let $(s,p,q)$ be wave Strichartz admissible in three dimensions, i.e.,
\begin{equation*}
2 \leq p \leq \infty, \; 2 \leq q < \infty\footnote{For $q=\infty$, one has to consider the Besov refinement $B_{\infty,2}$ in \eqref{eq:3dStrichartzFullSpace} on the left-hand side.}, \quad \frac{2}{p} + \frac{2}{q} \leq 1, \quad (p,q) \neq (2, \infty), \quad s = 3 \big( \frac{1}{2} - \frac{1}{q} \big) - \frac{1}{p}.
\end{equation*}
Let $u = (u_1,\ldots,u_6)= (u^{(1)},u^{(2)}) : \R \times \R^3 \to \R^3 \times \R^3$, and
\begin{equation*}
\tilde{P} = 
\begin{pmatrix}
\partial_t & - \nabla \times (\mu^{-1} \cdot) \\
\nabla \times (\varepsilon^{-1} \cdot) & \partial_t
\end{pmatrix}
.
\end{equation*}
The following estimate holds:
\begin{equation}
\label{eq:3dStrichartzFullSpace}
\begin{split}
\| |D'|^{-s} u \|_{L_t^p(0,T;L_{x'}^q)} &\lesssim \nu^{\frac{1}{p}} \| u \|_{L_t^\infty L_{x'}^2} + \nu^{-\frac{1}{p'}} \| \tilde{P}(x,D) u \|_{L_t^1 L_{x'}^2} \\
&\quad + T^{\frac{1}{p}} \big( \| |D'|^{-1+\frac{1}{p}} (\nabla \cdot u^{(1)}, \nabla \cdot u^{(2)}) \|_{L_T^\infty L_{x'}^2} \\
&\quad + \| |D'|^{-1+\frac{1}{p}} \partial_t (\nabla \cdot u^{(1)}, \nabla \cdot u^{(2)}) \|_{L_T^1 L_{x'}^2}),
\end{split}
\end{equation}
whenever the right hand-side is finite, provided that $\nu \geq 1$, and $T \| \partial_x^2 \varepsilon \|_{L_t^1 L_{x'}^\infty} + T \| \partial_x^2 \mu \|_{L_t^1 L_{x'}^\infty} \leq \nu^2$.
\end{theorem}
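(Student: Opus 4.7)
My plan is to reduce the statement to a frequency-localized estimate and then combine a paradifferential diagonalization of the Maxwell operator with known Strichartz estimates for half-wave equations with $C^2$ coefficients. First, I would use Littlewood--Paley to decompose $u = \sum_\lambda S_\lambda u$, and show that it suffices to prove, for each $\lambda \in 2^{\N_0}$ with $\lambda \gtrsim \nu$,
\begin{equation*}
\lambda^{-s} \| S_\lambda u \|_{L_t^p L_{x'}^q} \lesssim \| S_\lambda u \|_{L_t^\infty L_{x'}^2} + \| \tilde P_{\leq \lambda^{1/2}} S_\lambda u \|_{L^2_{t,x'}} + \lambda^{-1+\frac{1}{p}} \bigl( \| \operatorname{div} u^{(1)}_\lambda \|_{L_t^\infty L_{x'}^2} + \| \operatorname{div} u^{(2)}_\lambda \|_{L_t^\infty L_{x'}^2} \bigr),
\end{equation*}
where the coefficients in $\tilde P_{\leq \lambda^{1/2}}$ are truncated at frequencies $\lesssim \lambda^{1/2}$. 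The reduction from $L^1_t L^2_{x'}$ to $L^2_{t,x'}$ on the right-hand side is standard and yields the factors $\nu^{1/p}$ and $\nu^{-1/p'}$ after an appropriate partitioning of $[0,T]$ into $\nu$-many subintervals; this is essentially the Christ--Kiselev lemma together with the assumption $T \|\partial_x^2(\varepsilon,\mu)\|_{L^1_t L^\infty_{x'}} \leq \nu^2$, which guarantees that the coefficients behave like smooth functions on each subinterval.

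Second, after the coefficient truncation (by Bernstein and Littlewood--Paley telescoping, the error $\tilde P - \tilde P_{\leq \lambda^{1/2}}$ acting on $S_\lambda u$ is bounded in $L^2_{t,x'}$ using $\partial_x \varepsilon \in L^\infty_{t,x'}$), I would diagonalize the principal symbol of $\tilde P_{\leq \lambda^{1/2}}$ following Section~\ref{section:Diagonalization}. Writing $p(x,\xi) = m(x,\xi) d(x,\xi) n(x,\xi)$ with $d = i\,\mathrm{diag}(\xi_0,\xi_0,\xi_0-\|\xi'\|_{(\varepsilon\mu)^{-1/2}},\xi_0+\|\xi'\|_{(\varepsilon\mu)^{-1/2}},\ldots)$, the two degenerate factors $\xi_0$ correspond to the charge/divergence components, and the four non-degenerate factors are half-wave operators with metric $(\varepsilon\mu)^{-1}\delta^{ij}$. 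By $L^p$-boundedness of the conjugation operators (Lemma~\ref{lem:PseudoBoundsFourierSeries}) and a diagonalization lemma analogous to Lemma~\ref{lem:Diagonalization}, the estimate reduces to half-wave Strichartz for coefficients of regularity $C^2$, which is classical (Smith, Tataru).

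Third, for the degenerate components I would use the fact that $[\mathcal{N}_\lambda u]_{1,2}$ are, up to an $O(\lambda^{-1})$ error, of the form $|D'|^{-1} \operatorname{div}(\varepsilon \mathcal{E})$ respectively $|D'|^{-1} \operatorname{div}(\mu \mathcal{H})$; Sobolev embedding then gives the $\lambda^{-1+1/p}$ factor in front of the divergence terms on the right-hand side, which accounts for the charge contribution. Assembling the non-degenerate and degenerate pieces, and passing back from the truncated to the original Maxwell operator via commutator estimates (which cost one derivative, absorbed into the $\lambda^{1/2}$ truncation parameter), yields the dyadic estimate.

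Finally, Minkowski in $\ell^2$ over dyadic $\lambda$ (together with the almost-orthogonality of the Littlewood--Paley projections in $L^p L^q$ for $p,q \geq 2$) gives the claimed estimate. The main obstacle is the low regularity of the coefficients: $\partial_x^2 \varepsilon \in L^1_t L^\infty_{x'}$ is precisely the threshold at which symbol composition and the diagonalization are still admissible but require careful control of the time-integrated pseudo-differential calculus; this is exactly why the estimate must be stated on a time interval of length compatible with the coefficient-norm (captured by the parameter $\nu$), and why the right-hand side carries the asymmetric $\nu^{1/p}$ and $\nu^{-1/p'}$ weights.
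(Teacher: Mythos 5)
This theorem is cited from \cite[Theorem~1.3]{Schippa2021Maxwell3d} and is not proved in the present paper; the closest argument available here is the proof of the two-dimensional analogue, Theorem~\ref{thm:2dStrichartzFull}, later in Appendix~\ref{appendix:StrichartzRevisited}. Your outline matches the strategy of both sources closely: Littlewood--Paley reduction, truncation of the coefficients at frequency $\lambda^{1/2}$, diagonalization of the truncated Maxwell symbol into two degenerate and four non-degenerate half-wave factors, Strichartz estimates for $C^2$-type coefficients on the non-degenerate pieces, Sobolev embedding plus identification of the degenerate components with $|D'|^{-1}\nabla\cdot u^{(i)}$ for the charge contribution, and $\ell^2$-summation over dyadic $\lambda$. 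Your emphasis on $L^pL^q$-boundedness of the conjugation operators (not merely diagonalizing the symbol) is also the correct point to stress, and it matches the role of Lemma~\ref{lem:Diagonalization}.

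One inaccuracy worth flagging: the $\nu^{1/p}$ and $\nu^{-1/p'}$ factors do not come from the Christ--Kiselev lemma. They arise from the standard Tataru/Bahouri--Chemin interval-partitioning trick: split $[0,T]$ into $O(\nu^2)$ subintervals on each of which $\|\partial_x^2(\varepsilon,\mu)\|_{L_t^1L_{x'}^\infty}\lesssim 1$, apply the unit-scale estimate on each, and then sum the $L_t^p$ norms over the subintervals; this produces the asymmetric powers of $\nu$ in front of the $L_t^\infty L_{x'}^2$ term and the forcing term respectively. Christ--Kiselev is a separate ingredient used to pass from homogeneous to retarded inhomogeneous estimates, and should not be conflated with the $\nu$-bookkeeping. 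With this correction your sketch is essentially the intended argument.
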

For the present application, we can fix $T$ and $\nu$ such that the solutions stay away from the boundary (using finite speed of propagation). We have by Theorem \ref{thm:3dStrichartzFullSpace}
\begin{equation}
\label{eq:StrichartzEstimate3dFullSpaceI}
\begin{split}
\| \langle D' \rangle^{-s} u \|_{L_t^p(0,T;L_{x'}^q(\R^3))} &\lesssim_{T, \varepsilon, \mu} \| u \|_{L_t^\infty L_{x'}^2} + \| \tilde{P}_1(x,D)u \|_{L_t^1 L_{x'}^2} \\
&\quad + \| |D'|^{-1+\frac{1}{p}} (\nabla \cdot (u^{(1)}, \nabla \cdot u^{(2)}) \|_{L_t^\infty L_{x'}^2} \\
&\quad + \| |D'|^{-1+\frac{1}{p}} \partial_t (\nabla \cdot u^{(1)}, \nabla \cdot u^{(2)}) \|_{L_t^1 L_{x'}^2}
\end{split}
\end{equation}
with
\begin{equation*}
\tilde{P}_1 = 
\begin{pmatrix}
\partial_t & - \mu^{-1} \nabla \times \\
\varepsilon^{-1} \nabla \times & \partial_t
\end{pmatrix}
\end{equation*}
by H\"older's inequality. We apply \eqref{eq:StrichartzEstimate3dFullSpaceI} to $u = \langle D' \rangle^s (\varepsilon \mathcal{E}, \mu \mathcal{H})$ to find
\begin{equation*}
\begin{split}
\| (\varepsilon \mathcal{E},\mu \mathcal{H}) \|_{L_t^p (0,T; L_{x'}^q(\R^3))} &\lesssim_{T,\varepsilon,\mu} \| \langle D' \rangle^s (\varepsilon \mathcal{E},\mu \mathcal{H}) \|_{L_t^\infty L_{x'}^2} + \| \tilde{P}_1(\langle D' \rangle^s (\varepsilon \mathcal{E},\mu \mathcal{H})) \|_{L_t^1 L_{x'}^2} \\
&\; + \| \langle D' \rangle^{s} |D'|^{-1+\frac{1}{p}} \rho_{em} \|_{L_t^\infty L_{x'}^2} + \| \langle D' \rangle^{s} |D'|^{-1+\frac{1}{p}} \partial_t \rho_{em} \|_{L_t^1 L_{x'}^2}
\end{split}
\end{equation*}
with $\rho_{em} = (\nabla \cdot (\varepsilon \mathcal{E}), \nabla \cdot (\mu \mathcal{H}))$. By uniform ellipticity of $\varepsilon$ and $\mu$, we find
\begin{equation*}
\| (\varepsilon \mathcal{E},\mu \mathcal{H}) \|_{L_t^p(0,T;L_{x'}^q)} \gtrsim \| (\mathcal{E},\mathcal{H}) \|_{L_t^p(0,T;L_{x'}^q)},
\end{equation*}
and by the fractional Leibniz rule, we obtain
\begin{equation*}
\| \langle D' \rangle^s (\varepsilon \mathcal{E},\mu \mathcal{H}) \|_{L_{x'}^2} \lesssim_{\| (\varepsilon,\mu) \|_{C_x^{\lceil s \rceil +1}}} \| \langle D' \rangle^s (\mathcal{E},\mathcal{H}) \|_{L_{x'}^2}.
\end{equation*}
Moreover, the charge terms are already in suitable form. By commutator estimates we shall argue that
\begin{equation}
\label{eq:CommutatorInterior}
\| \tilde{P}_1 \langle D' \rangle^s (\varepsilon \mathcal{E},\mu \mathcal{H}) \|_{L_{x'}^2} \lesssim \| \langle D' \rangle^s P (\varepsilon \mathcal{E},\mu \mathcal{H}) \|_{L_{x'}^2} + \| \langle D' \rangle^s (\mathcal{E},\mathcal{H}) \|_{L_{x'}^2}.
\end{equation}
Above $P$ denotes the Maxwell operator for $(\mathcal{E},\mathcal{H})$:
\begin{equation*}
P = 
\begin{pmatrix}
\partial_t (\varepsilon \cdot) & - \nabla \times \\
\nabla \times & \partial_t (\mu \cdot)
\end{pmatrix}
.
\end{equation*}
To prove \eqref{eq:CommutatorInterior}, we use the fractional Leibniz rule (cf. \cite{GrafakosOh2014})
\begin{equation*}
\| \langle D' \rangle^{\rho} (fg) \|_{L_{x'}^2} \lesssim \| \langle D' \rangle^{\rho} f \|_{L^2_{x'}} \| g \|_{L^\infty_{x'}} + \| \langle D' \rangle^{\rho} g \|_{L^2_{x'}} \| f \|_{L^\infty_{x'}} \quad (\rho \geq 0)
\end{equation*}
 and the following elementary commutator estimate:
\begin{lemma}
\label{lem:CommutatorLemmaFractionalLeibniz}
Let $X = B^1_{\infty,2}(\R^d) \cap C^{0,1}(\R^d)$.
The following estimate holds:
\begin{align}
\label{eq:EstRhoSmall}
\| [\varepsilon, \langle D' \rangle^\rho ] \partial_{x'} u \|_{L^2_{x'}(\R^d)} &\lesssim \| \varepsilon \|_{X} \| \langle D' \rangle^\rho u \|_{L_{x'}^2(\R^d)} \text{ for } 0 < \rho \leq 1, \\
\label{eq:EstRhoLarge}
\| [\varepsilon, \langle D' \rangle^\rho] \partial_{x'} u \|_{L^2_{x'}(\R^d)} &\lesssim \| \varepsilon \|_{B^\rho_{\infty,2}(\R^d)} \| \langle D' \rangle^\rho u \|_{L^2(\R^d)} \text{ for } \rho > 1.
\end{align}
\end{lemma}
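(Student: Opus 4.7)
The plan is to attack the commutator via Bony's paraproduct calculus. I would write $\varepsilon v = T_\varepsilon v + T_v \varepsilon + R(\varepsilon,v)$ (with $T$ the paraproduct and $R$ the resonant remainder) and apply this decomposition both to $\varepsilon \cdot \langle D'\rangle^\rho\partial_{x'}u$ and to $\langle D'\rangle^\rho(\varepsilon\partial_{x'}u)$. After cancellation the commutator splits as
\begin{equation*}
[\varepsilon,\langle D'\rangle^\rho]\partial_{x'}u \;=\; [T_\varepsilon,\langle D'\rangle^\rho]\partial_{x'}u \;+\; \bigl(T_{\langle D'\rangle^\rho\partial_{x'}u}\varepsilon - \langle D'\rangle^\rho T_{\partial_{x'}u}\varepsilon\bigr) \;+\; \bigl(R(\varepsilon,\langle D'\rangle^\rho\partial_{x'}u) - \langle D'\rangle^\rho R(\varepsilon,\partial_{x'}u)\bigr).
\end{equation*}
The first piece is a genuine paradifferential commutator: since $T_\varepsilon$ has a symbol smoothed in $\xi$, symbolic calculus yields $\|[T_\varepsilon,\langle D'\rangle^\rho]\partial_{x'}u\|_{L^2}\lesssim \|\nabla\varepsilon\|_{L^\infty}\|\langle D'\rangle^\rho u\|_{L^2}$ for every $\rho\in\mathbb{R}$, which is admissible in both regimes.

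The remaining two pieces are where the dichotomy $\rho\le 1$ versus $\rho>1$ enters. I would decompose $\varepsilon = \sum_j\Delta_j\varepsilon$ dyadically. For the dual paraproduct $T_{\partial_{x'}u}\varepsilon$, the generic block is $\Delta_j\varepsilon\cdot S_{j-3}\partial_{x'}u$, localized at frequency $\sim 2^j$; applying $\langle D'\rangle^\rho$ then costs a factor $2^{j\rho}$, and Cauchy--Schwarz in $j$ uses the identifications
\begin{equation*}
\Bigl(\sum_j\bigl(2^j\|\Delta_j\varepsilon\|_{L^\infty}\bigr)^2\Bigr)^{1/2}\!\sim \|\varepsilon\|_{B^1_{\infty,2}}, \qquad \Bigl(\sum_j\bigl(2^{j\rho}\|\Delta_j\varepsilon\|_{L^\infty}\bigr)^2\Bigr)^{1/2}\!\sim\|\varepsilon\|_{B^\rho_{\infty,2}}.
\end{equation*}
For $0<\rho\le 1$ the spare factor $2^{j(\rho-1)}\le 1$ lets me trade $B^\rho_{\infty,2}$ for $B^1_{\infty,2}$, matching the $X$-norm; for $\rho>1$ I must keep the full $B^\rho_{\infty,2}$ norm. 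The difference $T_{\langle D'\rangle^\rho\partial_{x'}u}\varepsilon - \langle D'\rangle^\rho T_{\partial_{x'}u}\varepsilon$ is treated by moving $\langle D'\rangle^\rho$ through the low-frequency multiplier $S_{j-3}$ modulo a commutator of Schwartz-tail type, reducing again to the same dyadic estimate. The resonant terms $R(\varepsilon,\cdot)$ are handled analogously using the almost-orthogonality of frequency-comparable blocks together with Bernstein's inequality to absorb the resulting $\ell^1$-versus-$\ell^2$ gap.

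The main technical obstacle I anticipate is verifying that the dual paraproduct difference does not hide a full derivative loss on $\varepsilon$: superficially both $\langle D'\rangle^\rho T_{\partial_{x'}u}\varepsilon$ and $T_{\langle D'\rangle^\rho\partial_{x'}u}\varepsilon$ look like they pour $\rho+1$ derivatives onto $\varepsilon$, and the cancellation must be extracted explicitly at the level of the Littlewood--Paley supports. A convenient way to carry this out is to rewrite the difference as $\sum_j [\langle D'\rangle^\rho,S_{j-3}]\partial_{x'}u\cdot \Delta_j\varepsilon$ plus a manifestly lower-order piece; the commutator in the first factor then obeys an $L^2$ kernel bound that recovers a derivative, after which the remaining sum collapses onto the Besov square-function of $\varepsilon$. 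Once this step is in place, summing and distinguishing the two regimes of $\rho$ gives exactly the bounds \eqref{eq:EstRhoSmall} and \eqref{eq:EstRhoLarge}.
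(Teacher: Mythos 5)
Your Bony paraproduct decomposition is in essence the same trichotomy (low--high, high--low, high--high in $\varepsilon$ vs.\ $u$) that the paper performs by hand with Littlewood--Paley blocks $\varepsilon_{<N/8}$, $\varepsilon_{\sim N}$, $\varepsilon_{\gtrsim N}$, and your treatment of the paradifferential commutator $[T_\varepsilon,\langle D'\rangle^\rho]$ via symbolic calculus is a legitimate substitute for the paper's hand-rolled kernel estimate. However, two of your key reduction steps are off. First, the proposed rewriting of the dual-paraproduct difference as $\sum_j [\langle D'\rangle^\rho, S_{j-3}]\partial_{x'}u\cdot\Delta_j\varepsilon$ is vacuous: both $\langle D'\rangle^\rho$ and $S_{j-3}$ are Fourier multipliers, so $[\langle D'\rangle^\rho,S_{j-3}]=0$. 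The genuine commutator lurking in a block of $T_{\langle D'\rangle^\rho\partial_{x'}u}\varepsilon - \langle D'\rangle^\rho T_{\partial_{x'}u}\varepsilon$ is $-[\langle D'\rangle^\rho,\Delta_j\varepsilon]\,S_{j-3}\partial_{x'}u$ (a commutator with multiplication by $\Delta_j\varepsilon$, not with $S_{j-3}$), and that commutator does not recover a full derivative on $\varepsilon$. The correct observation is that no cancellation is needed: each of $T_{\langle D'\rangle^\rho\partial_{x'}u}\varepsilon$ and $\langle D'\rangle^\rho T_{\partial_{x'}u}\varepsilon$ is bounded \emph{separately} once the derivatives are distributed between $u$ and $\varepsilon$ correctly. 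The block $j$ of the former has size $\lesssim 2^j\|\Delta_j\varepsilon\|_{L^\infty}\|\langle D'\rangle^\rho u\|_{L^2}$, while that of the latter has size $\lesssim 2^{j\rho}\|S_{j-3}\partial_{x'}u\|_{L^2}\|\Delta_j\varepsilon\|_{L^\infty}$, and since the symbol $|\xi|/\langle\xi\rangle^\rho$ restricted to $|\xi|<2^{j-3}$ is bounded by $\max(1,2^{j(1-\rho)})$, this is $\lesssim 2^{j\max(1,\rho)}\|\Delta_j\varepsilon\|_{L^\infty}\|\langle D'\rangle^\rho u\|_{L^2}$; square-summing gives exactly $B^1_{\infty,2}$ for $\rho\le 1$ and $B^\rho_{\infty,2}$ for $\rho>1$, which is the paper's estimate for the corresponding term.

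Second, your plan for the resonant piece via ``almost-orthogonality of frequency-comparable blocks'' and Bernstein does not go through. The blocks $\Delta_j\varepsilon\,\Delta_k v$ with $|j-k|\le 2$ are precisely the ones whose outputs spread over \emph{all} frequencies $\lesssim 2^j$, so they are not almost-orthogonal as functions; and Bernstein on the output frequency ($\|P_N f\|_{L^2}\lesssim N^{d/2}\|f\|_{L^1}$) would require $\|\Delta_j\varepsilon\|_{L^2}$, which is not controlled by any of the hypotheses on $\varepsilon$ (the coefficient is only bounded, not square-integrable). What the paper uses for the high--high interaction is: first distribute the $\partial_{x'}$ (giving a factor of the output frequency $N$ in one term and $\|\partial_{x'}\varepsilon\|_{L^\infty}$ in the other), then a pointwise Cauchy--Schwarz in the dyadic index followed by H\"older in $L^\infty\times L^2$, and finally the Littlewood--Paley square-function characterization of $\mathrm{BMO}$ together with $L^\infty\hookrightarrow\mathrm{BMO}$ to control $\big\|\big(\sum_M|\Delta_M\partial_{x'}\varepsilon|^2\big)^{1/2}\big\|_{L^\infty}\lesssim\|\partial_{x'}\varepsilon\|_{L^\infty}$, plus a Schur-type resummation in $N$ for the $B^1_{\infty,2}$ piece. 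If you replace your Bernstein step with that chain of estimates, the resonant piece closes and your paraproduct argument matches the paper's.
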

\begin{proof}
We use a Littlewood-Paley decomposition:
\begin{equation*}
\| [ \varepsilon, \langle D' \rangle^\rho ] \partial_{x'} u \|_{L^2_{x'}}^2 = \sum_{N \geq 1} \| P_N [\varepsilon, \langle D' \rangle^\rho] \partial_{x'} u \|_{L^2_{x'}}^2.
\end{equation*}
We write
\begin{equation}
\label{eq:DecompositionA}
\begin{split}
P_N [\varepsilon, \langle D' \rangle^\rho] \partial_{x'} u &= P_N [\varepsilon_{< N/8}, \langle D' \rangle^\rho] \partial_{x'} \tilde{P}_N u + P_N [\varepsilon_{\sim N}, \langle D' \rangle^\rho] \partial_{x'} P_{<N/8} u \\
&\quad + P_N [\varepsilon_{\gtrsim N}, \langle D' \rangle^\rho] \partial_{x'} P_{\gtrsim N} u.
\end{split}
\end{equation}
We estimate the first term in \eqref{eq:DecompositionA}: Rewrite
\begin{equation}
\label{eq:DecA1}
P_N [\varepsilon_{<N/8}, \langle D' \rangle^\rho] \tilde{P}_N \partial_{x'} u = P_N \partial_{x'} [\varepsilon_{<N/8}, \langle D' \rangle^\rho] \tilde{P}_N u - P_N [\partial_{x'} \varepsilon_{<N/8}, \langle D' \rangle^\rho] \tilde{P}_N u.
\end{equation}
The second term in \eqref{eq:DecA1} is directly estimated by
\begin{equation*}
\| P_N [\partial_{x'} \varepsilon_{<N/8}, \langle D' \rangle^\rho] \tilde{P}_N u \|_{L^2_{x'}} \lesssim N^\rho \| \partial_{x'} \varepsilon \|_{L^\infty_{x'}} \| \tilde{P}_N u \|_{L^2_{x'}}.
\end{equation*}
For the first term in \eqref{eq:DecA1} we write
\begin{equation*}
\| P_N \partial_{x'} [ \varepsilon_{<N/8}, \langle D' \rangle^\rho ] \tilde{P}_N u \|_{L^2_{x'}} \lesssim N \| P_N [\varepsilon_{<N/8}, \langle D' \rangle^\rho \tilde{\tilde{P}}_N ] \tilde{P}_N u \|_{L^2_{x'}}.
\end{equation*}
Let $K_N$ denote the kernel of $\langle D' \rangle^\rho \tilde{\tilde{P}}_N$:
\begin{equation*}
\langle D' \rangle^\rho \tilde{\tilde{P}}_N f(x) = \int_{\R^d} K_N(x-y) f(y) dy.
\end{equation*}
We have the pointwise kernel estimate:
\begin{equation}
\label{eq:KernelEstimate}
|K_N(x)| \lesssim N^\rho N^d (1+N|x|)^{-M} \text{ for any } M \geq 1. 
\end{equation}
This follows from $K_N(x) = \int e^{ix.\xi} a(N^{-1} \xi) \langle \xi \rangle^\rho d\xi$ with $a \in C^\infty_c(B(0,4) \backslash B(0,1/4))$, rescaling, and non-stationary phase.

Therefore, by the mean-value theorem,
\begin{equation*}
|\varepsilon_{<N/8}(x) K_N(x-y) - K_N(x-y) \varepsilon_{<N/8}(y)| \lesssim |x-y| |K_N(x-y)| \| \partial_{x'} \varepsilon \|_{L_{x'}^\infty}.
\end{equation*}
An application of Young's inequality gives
\begin{equation*}
\| P_N [\varepsilon_{<N/8}, \langle D' \rangle^\rho] \tilde{P}_N u \|_{L^2_{x'}} \lesssim N^{\rho - 1} \| \partial_{x'} \varepsilon \|_{L^\infty_{x'}} \| \tilde{P}_N u \|_{L^2_{x'}}.
\end{equation*}
This shows \eqref{eq:EstRhoSmall} and \eqref{eq:EstRhoLarge} in the considered cases by square summation. We turn to the second term in \eqref{eq:DecompositionA}, where we distinguish $0<\rho \leq 1$ and $\rho > 1$. We have for $0<\rho \leq 1$
\begin{equation*}
\begin{split}
\| P_N [\varepsilon_{\sim N}, \langle D' \rangle^\rho] \partial_{x'} P_{<N/8} u \|_{L^2_{x'}} &\lesssim N \| \varepsilon_{\sim N} \|_{L^\infty_{x'}} \| \langle D' \rangle^\rho P_{<N/8} u \|_{L^2_{x'}} \\
&\lesssim N \| \varepsilon_{\sim N} \|_{L^\infty} \| \langle D' \rangle^\rho u \|_{L^2_{x'}}
\end{split}
\end{equation*}
with straight-forward square summation.

For $\rho > 1$, we obtain
\begin{equation*}
\| P_N [\varepsilon_{\sim N}, \langle D' \rangle^\rho] \partial_{x'} P_{< N/8} u \|_{L^2_{x'}} \lesssim N^\rho \| \varepsilon_{\sim N} \|_{L^\infty_{x'}} \| \partial_{x'} P_{\leq N/8} u \|_{L^2_{x'}}.
\end{equation*}

It remains to estimate the third term in \eqref{eq:DecompositionA}, which is rewritten as
\begin{equation}
\label{eq:DecompositionA2}
P_N [\varepsilon_{\gtrsim N}, \langle D' \rangle^\rho] \partial_{x'} P_{\gtrsim N} u = P_N \partial_{x'} [\varepsilon_{\gtrsim N}, \langle D' \rangle^\rho] P_{\gtrsim N} u - P_N [\partial_{x'} \varepsilon_{\gtrsim N}, \langle D' \rangle^\rho] P_{\gtrsim N} u.
\end{equation}
We write the first term in \eqref{eq:DecompositionA2} as
\begin{equation}
\label{eq:DecompositionA21}
P_N \partial_{x'} [\varepsilon_{\gtrsim N}, \langle D' \rangle^\rho] P_{\gtrsim N} u = P_N \partial_{x'} (\varepsilon_{\gtrsim N} \langle D' \rangle^\rho  P_{\gtrsim N} u) - P_N \partial_{x'} \langle D' \rangle^\rho ( \varepsilon_{\gtrsim N} P_{\gtrsim N} u).
\end{equation}
For the first term in \eqref{eq:DecompositionA21} we find by the Cauchy-Schwarz inequality
\begin{equation}
\label{eq:AuxComAppendix}
\sum_{N \geq 1} \| P_N \partial_{x'} (\varepsilon_{\gtrsim N} \langle D' \rangle^\rho P_{\gtrsim N} u) \|_{L^2}^2 \lesssim \sum_{N \geq 1} N^2 \sum_{M \gtrsim N} \| \varepsilon_M \|_{L^\infty}^2 \| \langle D' \rangle^\rho u \|_{L^2}^2 .
\end{equation}
Indeed, we have
\begin{equation*}
\begin{split}
\| \sum_{M \gtrsim N} \varepsilon_M \langle D' \rangle^\rho P_M u \|_{L^2}^2 &\lesssim \big\| \big( \sum_{M \gtrsim N} |\varepsilon_M |^2 \big)^{\frac{1}{2}} \big( \sum_{M \gtrsim N} |P_M \langle D' \rangle^\rho u |^2 \big)^{\frac{1}{2}} \big\|_{L^2}^2 \\
&\lesssim \big\| \big( \sum_{M \gtrsim N} | \varepsilon_M |^2 \big)^{\frac{1}{2}} \big\|_{L^\infty}^2 \big\| \big( \sum_{M \gtrsim N} |P_M \langle D' \rangle^\rho u |^2 \big)^{\frac{1}{2}} \big\|_{L^2}^2 \\
&\lesssim \big( \sum_{M \gtrsim N} \| \varepsilon_M \|_{L^\infty}^2 \big) \| P_{\gtrsim N} \langle D' \rangle^\rho u \|_{L^2}^2.
\end{split}
\end{equation*}
We conclude the above estimate by changing the summation as
\begin{equation*}
\begin{split}
\eqref{eq:AuxComAppendix} &\lesssim \sum_{M \gtrsim 1} \| \varepsilon_M \|_{L^\infty}^2 \sum_{1 \lesssim N \lesssim M} N^2 \| \langle D' \rangle^\rho u \|_{L^2}^2 \lesssim \sum_{M \gtrsim 1} M^2 \| \varepsilon_M \|_{L^\infty}^2 \| \langle D' \rangle^\rho u \|_{L^2}^2 \\
 &\lesssim \| \varepsilon \|_{B^1_{\infty,2}}^2 \| u \|_{H^\rho}^2.
\end{split}
\end{equation*}
For the second term in \eqref{eq:DecompositionA21} we find
\begin{equation*}
\begin{split}
\| P_N \partial_{x'} \langle D' \rangle^\rho (\varepsilon_{\gtrsim N} P_{\gtrsim N} u) \|_{L^2} &\lesssim N^{\rho + 1} \| P_N (\sum_{M \gtrsim N} \varepsilon_M P_M u) \|_{L^2} \\
&\lesssim N^{\rho + 1} \| P_N \big( \sum_{M \gtrsim N} |\varepsilon_M|^2 \big)^{\frac{1}{2}} \big( \sum_{M \gtrsim N} |P_M u |^2 \big)^{\frac{1}{2}} \|_{L^2} \\
&\lesssim N^{\rho + 1} \big( \sum_{M \gtrsim N} \| \varepsilon_M \|_{L^\infty}^2 \big)^{\frac{1}{2}} \| P_{\gtrsim N} u \|_{L^2}.
\end{split}
\end{equation*}
For this reason we have
\begin{equation*}
\begin{split}
\sum_{N \geq 1} \| P_N \partial_{x'} \langle D' \rangle^\rho ( \varepsilon_{\gtrsim N} P_{\gtrsim N} u) \|_{L^2}^2 
&\lesssim \sum_{N \geq 1} N^{2 \rho} N^2 \sum_{M \gtrsim N} \| \varepsilon_M \|_{L^\infty}^2 \| P_{\gtrsim N} u \|_{L^2}^2 \\
&\lesssim \sum_{N \geq 1} N^2 \sum_{M \gtrsim N} \| \varepsilon_M \|_{L^\infty}^2 \| \langle D' \rangle^\rho u \|_{L^2}^2 \\
&\lesssim \| \varepsilon \|_{B^1_{\infty,2}}^2 \| \langle D' \rangle^\rho u \|_{L^2}^2.
\end{split}
\end{equation*}
We turn to the second term in the High-High-interaction \eqref{eq:DecompositionA2}, which is written as
\begin{equation}
\label{eq:DecompositionA22}
P_N [ \partial_{x'} \varepsilon_{\gtrsim N}, \langle D' \rangle^\rho] P_{\gtrsim N} u = P_N (\partial_{x'} \varepsilon_{\gtrsim N})(\langle D' \rangle^\rho P_{\gtrsim N} u) - P_N \langle D' \rangle^\rho (\partial_{x'} \varepsilon_{\gtrsim N}) P_{\gtrsim N} u.
\end{equation}
For the first term in \eqref{eq:DecompositionA22} we have by Plancherel's theorem and the Cauchy-Schwarz inequality
\begin{equation*}
\begin{split}
\sum_N \| P_N (\partial_{x'} \varepsilon_{\gtrsim N}) (\langle D' \rangle^\rho P_{\gtrsim N} u) \|_{L^2}^2 
&= \| \sum_{M \geq 1} (\partial_{x'} \varepsilon_M) (\langle D' \rangle^\rho P_M u) \|_{L^2}^2 \\
&\leq \big\| \big( \sum_M |\partial_{x'} \varepsilon_M |^2 \big)^{\frac{1}{2}} \big( \sum_M |\langle D' \rangle^\rho P_M u |^2 \big)^{\frac{1}{2}} \big\|_{L^2}^2 \\
&\leq \big\| \big( \sum_M |\partial_{x'} \varepsilon_M |^2 \big)^{\frac{1}{2}} \|_{L^\infty}^2 \big\| \big( \sum_M |\langle D' \rangle^\rho P_M u |^2 \big)^{\frac{1}{2}} \big\|_{L^2}^2.
\end{split}
\end{equation*}
We use the characterization of BMO by the Littlewood-Paley square function and the embedding $L^\infty \hookrightarrow BMO$ to find
\begin{equation*}
\lesssim \| \partial_{x'} \varepsilon \|^2_{L^\infty} \| \langle D' \rangle^\rho u \|_{L^2}^2.
\end{equation*}
The second term in \eqref{eq:DecompositionA22} is better behaved than the first term and can be estimated similarly. This finishes the proof.

\end{proof}
We now turn to the proof of \eqref{eq:CommutatorInterior}: For the time derivatives there is no commutator, but we have
\begin{equation*}
\mu^{-1} \nabla \times (\langle D' \rangle^s (\mu \mathcal{H})) = [\mu_1^{-1}, \langle D' \rangle^s] \nabla \times (\mu \mathcal{H}) + \langle D' \rangle^s (\mu^{-1} \nabla \times (\mu \mathcal{H})).
\end{equation*}
Furthermore, $\mu^{-1} \nabla \times (\mu \mathcal{H}) = O(\mu^{-1} (\partial \mu) \mathcal{H}) + \nabla \times \mathcal{H}$. In both steps, we use that $\mu = \mu_1 1_{3 \times 3}$. By Lemma \ref{lem:CommutatorLemmaFractionalLeibniz} and the fractional Leibniz rule, we find
\begin{equation*}
\| \mu^{-1} \nabla \times (\langle D' \rangle^s (\mu \mathcal{H})) - \langle D' \rangle^s \nabla \times \mathcal{H} \|_{L_{x'}^2} \lesssim_{\| \mu \|_{C_{x'}^{\lceil s \rceil + 1}}} \| \langle D' \rangle^s \mathcal{H} \|_{L^2_{x'}}.
\end{equation*}
This also shows the corresponding estimate
\begin{equation*}
\| \varepsilon^{-1} \nabla \times (\langle D' \rangle^s (\varepsilon \mathcal{E})) - \langle D' \rangle^s \nabla \times \mathcal{E} \|_{L^2_{x'}} \lesssim_{\| \varepsilon \|_{C_{x'}^{\lceil s \rceil + 1}}} \| \langle D' \rangle^s \mathcal{E} \|_{L^2_{x'}}.
\end{equation*}
We conclude \eqref{eq:StrichartzEstimate3dFullSpaceI}, which implies
\begin{equation}
\label{eq:StrichartzEstimate3dFullSpaceII}
\begin{split}
\| (\mathcal{E},\mathcal{H}) \|_{L_t^p(0,T;L_{x'}^q)} &\lesssim_{T,\varepsilon,\mu} \| \langle D' \rangle^s (\mathcal{E},\mathcal{H}) \|_{L_t^\infty L_{x'}^2} + \| \langle D' \rangle^s P (\mathcal{E},\mathcal{H}) \|_{L_t^1 L_{x'}^2} \\
&\quad + \| \langle D' \rangle^{s} |D'|^{-1 + \frac{1}{p}} \rho_{em} \|_{L_t^\infty L_{x'}^2} + \| \langle D' \rangle^s |D'|^{-1 + \frac{1}{p}} \partial_t \rho_{em} \|_{L_t^1 L_{x'}^2}.
\end{split}
\end{equation}
This we apply to the homogeneous solution $(\mathcal{E},\mathcal{H})$, which remains in the interior up to time $T>0$ to find with $P(\mathcal{E},\mathcal{H}) = 0$ and $\partial_t \rho_{em}(0) = 0$:
\begin{equation*}
\| (\mathcal{E}, \mathcal{H}) \|_{L_t^p(0,T;L_{x'}^q)} \lesssim \| \langle D' \rangle^s(\mathcal{E},\mathcal{H})(0) \|_{L_{x'}^2} + \| \langle D' \rangle^{s - 1 + \frac{1}{p}} \rho_{em}(0) \|_{L_{x'}^2}.
\end{equation*}
Note that we used the trivial estimate 
\begin{equation*}
\| |D'|^{-1+\frac{1}{p}} \langle D' \rangle^s P_{\lesssim 1} \nabla \cdot \mathcal{E}(0) \|_{L^2_{x'}} \lesssim \| \mathcal{E}(0) \|_{L^2_{x'}}
\end{equation*}
to recast the homogeneous derivatives in \eqref{eq:StrichartzEstimate3dFullSpaceII} for low frequencies as inhomogeneous derivatives. This finishes the proof of Strichartz estimates for the interior part.

\subsection{Strichartz estimates for Maxwell equations in two dimensions}

The purpose of this section is to show Strichartz estimates for Maxwell equations with rough coefficients in the full space in two dimensions, which are suitable for the arguments of this paper. 

The following is the analog of Theorem \ref{thm:3dStrichartzFullSpace} in two dimensions:
\begin{theorem}
\label{thm:2dStrichartzFull}
Let $\varepsilon_{ij}$, $\mu_1 \in C^1(\R \times \R^2;\R)$ for $i=1,2$ such that $(\varepsilon_{ij})_{i,j = 1,2}$ satisfies \eqref{eq:Ellipticity2d} and $\partial^2_{x} \varepsilon_{ij} \in L_t^1 L_{x'}^\infty$ and $\partial^2_x \mu_1 \in L_t^1 L_{x'}^\infty$. Let $(s,p,q)$ be wave Strichartz admissible in two dimensions, i.e.,
\begin{equation*}
2 \leq p \leq \infty, \; 2 \leq q <\infty \footnote{For $q=\infty$, one has to consider the Besov refinement $B_{\infty,2}$ in \eqref{eq:Strichartz2dFullSpace} on the left-hand side.}, \quad \frac{2}{p} + \frac{1}{q} \leq \frac{1}{2}, \quad s = 2 \big( \frac{1}{2} - \frac{1}{q} \big) - \frac{1}{p}.
\end{equation*}
Let $u=(u_1,u_2,u_3) = (u^{(1)},u^{(2)}): \R \times \R^2 \to \R^2 \times \R$, and
\begin{equation*}
\tilde{P} = 
\begin{pmatrix}
\partial_t & 0 & -\partial_2 (\mu_1 \cdot) \\
0 & \partial_t & \partial_1 (\mu_1 \cdot) \\
\partial_1 (\varepsilon_{21} \cdot) - \partial_2 (\varepsilon_{11} \cdot) & \partial_1( \varepsilon_{22} \cdot) - \partial_2(\varepsilon_{12} \cdot) & \partial_t
\end{pmatrix}
.
\end{equation*}
The following estimate holds:
\begin{equation}
\label{eq:Strichartz2dFullSpace}
\begin{split}
\| |D'|^{-s} u \|_{L_t^p(0,T;L^q_{x'})} &\lesssim \nu^{\frac{1}{p}} \| u \|_{L_t^\infty L_{x'}^2} + \nu^{-\frac{1}{p'}} \| \tilde{P}(x,D) u \|_{L_t^1 L_{x'}^2} \\
&\quad + T^{\frac{1}{p}} \big( \| |D'|^{-1 + \frac{1}{p}} \nabla \cdot u^{(1)} \|_{L_t^\infty L_{x'}^2} + \| |D'|^{-1+\frac{1}{p}} \partial_t \nabla \cdot u^{(1)} \|_{L_t^1 L_{x'}^2} \big),
\end{split}
\end{equation}
whenever the right hand-side is finite, provided that $\nu \geq 1$, and $T \sum_{i,j} \| \partial_x^2 \varepsilon_{ij} \|_{L_t^1 L_{x'}^\infty} + T \| \partial^2_x \mu_1 \|_{L_t^1 L_{x'}^\infty} \leq \nu^2$.
\end{theorem}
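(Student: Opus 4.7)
The proof parallels the three-dimensional analog established as Theorem \ref{thm:3dStrichartzFullSpace} in \cite{Schippa2021Maxwell3d}, with the two-dimensional diagonalization of Proposition \ref{prop:Diagonalization2d} (originally from \cite{SchippaSchnaubelt2022}) replacing the three-dimensional one. After a rescaling in time one may take $T=1$. A Littlewood-Paley decomposition $u = \sum_\lambda S_\lambda u$ combined with Minkowski's inequality (here is where $q<\infty$ enters, so that the $L^q$-norm dominates the $\ell^2$-sum of dyadic pieces) reduces the estimate to a frequency-localized dyadic inequality of the form
\begin{equation*}
\lambda^{-s}\|S_\lambda u\|_{L^p_t L^q_{x'}} \lesssim \nu^{1/p}\|S_\lambda u\|_{L^\infty_t L^2_{x'}} + \nu^{-1/p'}\|\tilde{P}^\lambda S_\lambda u\|_{L^1_t L^2_{x'}} + \lambda^{-1+\frac{1}{p}} \|S'_\lambda \nabla\cdot u^{(1)}\|_{L^\infty_t L^2_{x'}},
\end{equation*}
where $\tilde{P}^\lambda$ denotes the operator with coefficients paradifferentially truncated to frequencies $\lesssim \lambda^{1/2}$. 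The truncation errors $\tilde{P} - \tilde{P}^\lambda$ are absorbed by the hypothesis $T\sum_{i,j}\|\partial_x^2\varepsilon_{ij}\|_{L^1_t L^\infty_{x'}} + T\|\partial_x^2 \mu_1\|_{L^1_t L^\infty_{x'}} \leq \nu^2$, which is precisely the quantitative role of the parameter $\nu$.

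To prove the dyadic estimate, I would first dispense with the elliptic region $\{|\xi_0|\gg |\xi'|\}$ by noting that $\tilde{P}^\lambda$ is invertible there modulo bounded commutators, which produces a gain of one spatial derivative. It remains to treat the characteristic region $\{|\xi_0|\lesssim |\xi'|\}$. Following the sketch of Proposition \ref{prop:Strichartz2dAnisotropic}, I would change dependent variables to $(\mathcal{D},\mathcal{B}) = (\varepsilon u^{(1)},\mu_1 u^{(2)})$ so that the principal symbol factorizes as $\tilde{p}(x,\xi) = p(x,\xi)\cdot(\varepsilon\otimes\mu_1)$ with $p$ being the symbol diagonalized in Section \ref{section:Diagonalization2d}. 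Writing $p = m\,d\,n$ with conjugation symbols $m,n\in \tilde{S}^0_{1,1}$ and diagonal $d = i\,\mathrm{diag}(\xi_0,\xi_0-\|\xi\|_{\varepsilon'},\xi_0+\|\xi\|_{\varepsilon'})$, the corresponding quantizations $\mathcal{M}_\lambda,\mathcal{N}_\lambda$ are bounded on $L^p_t L^q_{x'}$ by Lemma \ref{lem:PseudoBoundsFourierSeries}. The two non-degenerate diagonal components then satisfy half-wave equations $(\partial_t \pm i D_{\varepsilon'})v = g$ with $C^2$-metric after truncation, for which the required Strichartz estimates follow from \cite{Tataru2002}.

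The degenerate component (corresponding to the eigenvalue $i\xi_0$) must be handled via the divergence condition: inspection of the first row of $n(x,\xi)$ shows, as in the proof of \eqref{eq:DiagonalizedEstimate2d}, that modulo an operator smoothing of order $-1$ this component equals $(\mu_1|\nabla_{\varepsilon'}|)^{-1}\nabla\cdot u^{(1)}$. Sobolev embedding then yields the charge term $\lambda^{-1+1/p}\|S'_\lambda\nabla\cdot u^{(1)}\|_{L^\infty_t L^2_{x'}}$, while the companion term $\|S'_\lambda\partial_t\nabla\cdot u^{(1)}\|_{L^1_t L^2_{x'}}$ enters via Duhamel's formula applied to the inhomogeneity $\tilde{P}u$ (one writes $u(t) = U(t,0)u(0) + \int_0^t U(t,s)\tilde{P}u(s)\,ds$ and applies the homogeneous charge estimate to both contributions, time-integrating the second).

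The main obstacle is the bookkeeping of commutator and truncation errors across the three reductions: (i) replacing $\tilde{P}$ by $\tilde{P}^\lambda$, (ii) the symbol composition $\tilde{p} = p\cdot(\varepsilon\otimes\mu_1)$ with only $C^1$ coefficients, which lies in the borderline class $\tilde{S}^\ast_{1,1}$ as already encountered in Section \ref{section:Diagonalization}, and (iii) the Littlewood-Paley commutators $[S_\lambda,\tilde{P}]$. Each contributes an error of the schematic form $\lambda^{-1}\|\partial_x^2\kappa\|_{L^1_t L^\infty_{x'}}\lambda\|S_\lambda u\|_{L^\infty_t L^2_{x'}}$ which, after the crucial rebalancing using $\nu^2$ on one side and the dyadic dual weight $\nu^{-1/p'}$ on the other, is precisely controlled by the right-hand side of \eqref{eq:Strichartz2dFullSpace}. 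Once these errors are absorbed, the admissibility condition $\frac{2}{p}+\frac{1}{q}\leq\frac{1}{2}$ and the scaling exponent $s = 2(\frac{1}{2}-\frac{1}{q})-\frac{1}{p}$ emerge directly from the 2D wave Strichartz range applied to the half-wave components.
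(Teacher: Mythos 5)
Your proposal captures the paper's overall strategy: Littlewood--Paley decomposition to a dyadic estimate, paradifferential truncation of coefficients at frequency $\lambda^{1/2}$, an elliptic estimate outside the characteristic set, symbol diagonalization, a divergence-condition argument for the degenerate component, and half-wave Strichartz estimates (Tataru) for the nondegenerate components. This matches the paper's proof, which reduces to the truncated dyadic estimate \eqref{eq:Maxwell2dFullTruncated}, diagonalizes $\tilde{P}^\lambda$ via $p = m\,d\,m^{-1}$ (citing \cite{SchippaSchnaubelt2022,Schippa2022ResolventEstimates}), identifies $[m^{-1}(x,D) S_\lambda u]_1$ as essentially $|\nabla_{\varepsilon'}|^{-1}\nabla\cdot u^{(1)}$ modulo a smoothing operator, and treats the remaining components by half-wave Strichartz estimates.

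One step in your proposal is misplaced. You ``change dependent variables to $(\mathcal{D},\mathcal{B}) = (\varepsilon u^{(1)},\mu_1 u^{(2)})$'' so that ``the principal symbol factorizes as $\tilde{p} = p\cdot(\varepsilon\otimes\mu_1)$.'' That factorization belongs to the \emph{reduction} performed in the sketch of Proposition~\ref{prop:Strichartz2dAnisotropic}, where one starts from the non-divergence operator $\tilde{P}_1$ (acting on $(\mathcal{E},\mathcal{H})$) and \emph{applies} Theorem~\ref{thm:2dStrichartzFull} after passing to conservative variables. In the statement of Theorem~\ref{thm:2dStrichartzFull} itself, the operator $\tilde{P}$ already has plain $\partial_t$ on the diagonal, so $u$ already plays the role of the conservative variables $(\mathcal{D},\mathcal{B})$, and its principal symbol \emph{is} the symbol $p(x,\xi)$ that the paper diagonalizes directly; there is no further factorization to perform. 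Carried out literally, multiplying $u$ by $\varepsilon\otimes\mu_1$ would produce the symbol $p\cdot(\varepsilon^{-1}\otimes\mu_1^{-1})$ rather than $p$, i.e.\ it would move in the wrong direction. If you simply delete this step and diagonalize the symbol of $\tilde{P}^\lambda$ directly (which is what the paper does, citing \cite[Lemma~2.2]{Schippa2022ResolventEstimates} for the exact conjugation matrices), the rest of the argument — the divergence estimate controlling the degenerate eigenvalue $\xi_0$, Sobolev embedding producing $\lambda^{-1+1/p}$, the Tataru estimates for the eigenvalues $\xi_0\mp\|\xi'\|_{\varepsilon'}$, and the bookkeeping of the truncation and composition errors against $\nu$ — is correct and coincides with the paper.
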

A simplified variant of the estimate \eqref{eq:Strichartz2dFullSpace} was proved in \cite[Theorem~1.3]{SchippaSchnaubelt2022} with $\mu_1 \equiv 1$ and an inferior estimate for the charges. For the proof, we revisit the analysis of \cite{SchippaSchnaubelt2022} and improve and generalize it using arguments from \cite{Schippa2021Maxwell3d,Schippa2022ResolventEstimates}, but shall be brief to avoid repitition.
\begin{proof}
In the first step we reduce to the frequency localized estimate
\begin{equation*}
\lambda^{-s} \| S_\lambda u \|_{L^p L^q} \lesssim \| S_\lambda u \|_{L^\infty L^2} + \| \tilde{P} S_\lambda u \|_{L^2} + \lambda^{-1+\frac{1}{p}} \| S_\lambda \tilde{\rho}_e \|_{L^2_x}
\end{equation*}
for $\lambda \gtrsim 1$, where $u$ is essentially supported in the unit cube and its space-time Fourier transform is supported in $\{ |\xi_0| \lesssim |(\xi_1,\xi_2)| \}$. Moreover, we denote $\tilde{\rho}_e = \nabla \cdot u^{(1)}$. This is carried out like in \cite[Section~3.4]{SchippaSchnaubelt2022}. Now we frequency truncate the operator $\tilde{P}$ to coefficients, which have space-time Fourier transform $B(0,\lambda^{\frac{1}{2}})$. We let
\begin{equation*}
\tilde{P}^\lambda = 
\begin{pmatrix}
\partial_t & 0 & - \partial_2 (\mu_1^{\lambda^{\frac{1}{2}}} \cdot) \\
0 & \partial_t & \partial_1 (\mu_1^{\lambda^{\frac{1}{2}}} \cdot) \\
- \partial_2 (\varepsilon_{11}^{\lambda^{\frac{1}{2}}} \cdot) + \partial_1 (\varepsilon_{21}^{\lambda^{\frac{1}{2}}} \cdot) & \partial_1 (\varepsilon^{\lambda^{\frac{1}{2}}}_{22} \cdot) -  \partial_2(\varepsilon_{12}^{\lambda^{\frac{1}{2}}} \cdot) & \partial_t
\end{pmatrix}
.
\end{equation*}
It suffices to show
\begin{equation}
\label{eq:Maxwell2dFullTruncated}
\lambda^{-s} \| S_\lambda u \|_{L^p L^q} \lesssim \| S_\lambda u \|_{L^\infty L^2} + \| \tilde{P}^\lambda S_\lambda u \|_{L^2} + \lambda^{-1+\frac{1}{p}} \| S_\lambda \tilde{\rho}_e \|_{L_t^\infty L^2_{x'}}.
\end{equation}

$\tilde{P}^\lambda$ was diagonalized with pseudo-differential operators for $\mu_1 \equiv 1$ in \cite[Section~3.1]{SchippaSchnaubelt2022} as
\begin{equation}
\label{eq:DiagonalizationMaxwell2d}
\tilde{P}^\lambda = \mathcal{M}_\lambda \mathcal{D}_\lambda \mathcal{N}_\lambda + E_\lambda \text{ with } \| E_\lambda \|_{L^2_x \to L^2_x} \lesssim 1.
\end{equation}
In \cite{Schippa2022ResolventEstimates} the diagonalization was carried out in the constant-coefficient case for $\mu_1 = \mu^{-1} \neq 0$. This will determine the principal symbols of the operators in \eqref{eq:DiagonalizationMaxwell2d}. The principal symbol of $\tilde{P}^\lambda$ reads (omitting the frequency truncation and $x$-dependence to lighten notations):
\begin{equation*}
p(x,\xi) = 
i 
\begin{pmatrix}
\xi_0 & 0 & -\xi_2 \mu_1 \\
0 & \xi_0 & \xi_1 \mu_1 \\
\xi_1 \varepsilon_{12} - \xi_2 \varepsilon_{11} & \xi_1 \varepsilon_{22} - \xi_2 \varepsilon_{12} & \xi_0
\end{pmatrix}
.
\end{equation*}
We let
\begin{equation*}
\| \xi' \|^2_{\varepsilon'} = \langle \xi', \mu_1 \det(\varepsilon)^{-1} \varepsilon \xi' \rangle, \quad \varepsilon = ((\varepsilon_{ij})_{i,j})^{-1}, \quad \xi^* = \xi' / \| \xi' \|_{\varepsilon'}.
\end{equation*}
The following diagonalization holds for almost all $\xi' \in \R^2$ by \cite[Lemma~2.2]{Schippa2022ResolventEstimates}: $p(x,\xi) = m(x,\xi') d(x,\xi) m^{-1}(x,\xi')$ with
\begin{equation*}
\begin{split}
m(x,\xi') &= 
\begin{pmatrix}
\varepsilon_{22} \xi_1^* - \varepsilon_{12} \xi_2^* & - \xi_2^* \mu_1 & \xi_2^* \mu_1 \\
\varepsilon_{11} \xi_2^* - \varepsilon_{21} \xi_1^* & \xi_1^* \mu_1 & - \xi_1^* \mu_1 \\
0 & - 1 & -1
\end{pmatrix}, \\
m^{-1}(x,\xi') &=
\begin{pmatrix}
\mu_1 \xi_1^* & \mu_1 \xi_2^* & 0 \\
\frac{\xi_1^* \varepsilon_{21} - \xi_2^* \varepsilon_{11}}{2} & \frac{\varepsilon_{22} \xi_1^* - \varepsilon_{21} \xi_2^*}{2} & - \frac{1}{2} \\
\frac{\xi_2^* \varepsilon_{11} - \xi_1^* \varepsilon_{12}}{2} & \frac{\xi_2^* \varepsilon_{12} - \xi_1^* \varepsilon_{22}}{2} &  - \frac{1}{2}
\end{pmatrix},
\end{split}
\end{equation*}
and $d(x,\xi) = i \text{diag}(\xi_0, \xi_0 - \| \xi' \|_{\varepsilon'}, \xi_0 + \| \xi' \|_{\varepsilon'} )$. The error estimates for the diagonalization with the standard quantization can be proved like in \cite[Section~3.3]{SchippaSchnaubelt2022}; see also \cite[Section~3.2]{Schippa2021Maxwell3d} for a simplification of arguments.

In the following we sketch the conclusion of the proof with the diagonalization at hand. By $\mathcal{M}_\lambda \mathcal{N}_\lambda S_\lambda S'_\lambda = S_\lambda S'_\lambda + O_{L^2_x \to L^2_x}(\lambda^{-1})$ and Sobolev embedding, we obtain
\begin{equation}
\label{eq:Diagonalization2dFullI}
\lambda^{-\rho} \| S_\lambda u \|_{L^p_t L^q_{x'}} \lesssim \lambda^{-\rho} \| \mathcal{N}_\lambda S_\lambda u \|_{L_t^p L_{x'}^q} + \| S_\lambda u \|_{L^2_x}.
\end{equation}
Moreover, by Fourier support of $u$, we have $S_\lambda u = S_\lambda \tilde{S}'_\lambda u $ and straight-forward estimates for pseudo-differential operators (cf. Section \ref{section:Preliminaries})
\begin{equation}
\label{eq:Charges2dFull}
\begin{split}
\lambda^{-\rho} \| [\mathcal{N}_\lambda S_\lambda u]_1 \|_{L_t^p L_{x'}^q} &\lesssim \lambda^{-(\rho + 1)} \| \nabla \cdot (\tilde{S}'_\lambda u ) \|_{L^p_t L_{x'}^q} \\
&\lesssim \lambda^{-1 + \frac{1}{p}} \| \nabla \cdot \tilde{S}'_\lambda u \|_{L_t^p L_{x'}^q} \\
&\lesssim \lambda^{-1 + \frac{1}{p}} \| \nabla \cdot \tilde{S}'_\lambda u \|_{L_t^\infty L_{x'}^2}.
\end{split}
\end{equation}
This controls $[\mathcal{N}_\lambda S_\lambda u]_1$ in terms of the charges. $[\mathcal{N}_\lambda S_\lambda u]_i$ are estimated like in \cite{SchippaSchnaubelt2022} with the estimates for (rough) half-wave equations. This yields
\begin{equation*}
\lambda^{-s} \| S_\lambda u \|_{L_t^p L_{x'}^q} \lesssim \| S_\lambda u \|_{L_t^\infty L_{x'}^2} + \| \mathcal{D}_\lambda \mathcal{N}_\lambda S_\lambda u \|_{L^2_x} + \lambda^{-1 + \frac{1}{p}} \| S_\lambda \tilde{\rho}_e \|_{L_t^\infty L_{x'}^2}.
\end{equation*}
The proof of \eqref{eq:Maxwell2dFullTruncated} can be concluded now by another error estimate
\begin{equation*}
\mathcal{N}_\lambda \mathcal{M}_\lambda S_\lambda S'_\lambda = S_\lambda S'_\lambda + O_{L^2_x \to L^2_x}(\lambda^{-1}),
\end{equation*}
and invoking \eqref{eq:DiagonalizationMaxwell2d}.
\end{proof}

We obtain the following corollary by paradifferential truncation (cf. \cite[Corollary~1.7]{SchippaSchnaubelt2022}):
\begin{corollary}
\label{cor:Strichartz2dL2Lipschitz}
Let notations be like in Theorem \ref{thm:2dStrichartzFull}. Assume that $\| \partial_x \varepsilon \|_{L_T^2 L_{x'}^\infty} \lesssim 1$. Then the solution $u$ to
\begin{equation*}
\left\{ \begin{array}{cl}
\tilde{P}(x,D) u &= f, \qquad \quad \nabla \cdot u^{(1)} = \tilde{\rho}_e, \\
u(0) &= u_0
\end{array} \right.
\end{equation*}
satisfies
\begin{equation*}
\begin{split}
\| \langle D' \rangle^{-\alpha} u \|_{L^p(0,T;L^q(\R^2)} &\lesssim_{T,\alpha} \| u_0 \|_{L^2(\R^2)} + \| f \|_{L^1(0,T;L^2(\R^2))} \\
&\quad + \| \langle D' \rangle^{-1+\frac{2}{3p}} \tilde{\rho}_e(0) \|_{L^2_{x'}} + \| \langle D' \rangle^{-1 + \frac{2}{3p}} \partial_t \rho_e \|_{L^1(0,T:L^2)}.
\end{split}
\end{equation*}
\end{corollary}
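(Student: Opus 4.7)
The proof of Corollary~\ref{cor:Strichartz2dL2Lipschitz} would proceed by paradifferential truncation of the coefficients and reduction to the frequency-localized Strichartz estimate of Theorem~\ref{thm:2dStrichartzFull}, in the spirit of \cite[Corollary~1.7]{SchippaSchnaubelt2022}. The first step is a Littlewood--Paley decomposition $u = \sum_{\lambda \in 2^{\N_0}} S_\lambda u$ and, for each $\lambda \gtrsim 1$, a spatial frequency truncation of $\varepsilon_{ij}$ and $\mu_1$ to frequencies $\lesssim \lambda^{2/3}$; denote the resulting regularized operator by $\tilde{P}^{(\lambda)}$. By Bernstein's inequality combined with the hypothesis $\|\partial_x \varepsilon\|_{L_T^2 L_{x'}^\infty} \lesssim 1$, one obtains $\|\partial_x^2 \varepsilon^{(\lambda)}\|_{L_T^1 L_{x'}^\infty} \lesssim T^{1/2} \lambda^{2/3}$, so the admissibility condition $T \|\partial_x^2 \varepsilon^{(\lambda)}\|_{L_T^1 L_{x'}^\infty} \leq \nu^2$ of Theorem~\ref{thm:2dStrichartzFull} holds with $\nu \sim T^{3/4} \lambda^{1/3}$.

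The second step is to apply Theorem~\ref{thm:2dStrichartzFull} to the frequency-localized piece $S_\lambda u$ with operator $\tilde{P}^{(\lambda)}$, yielding an energy loss of $\nu^{1/p} \sim \lambda^{1/(3p)}$ (up to $T$-dependent factors), a neutral or favorable contribution from the forcing through the $\nu^{-1/p'}$ factor, and a charge weight $\lambda^{-1+1/p}$. The residual terms $(\tilde{P} - \tilde{P}^{(\lambda)}) S_\lambda u$ and $[S_\lambda, \tilde{P}] u$ must then be absorbed; these are handled via the commutator bounds of Lemma~\ref{lem:CommutatorLemmaFractionalLeibniz} together with the Littlewood--Paley characterization of the high-frequency part $\varepsilon_{\gtrsim \lambda^{2/3}}$ of the coefficients, exploiting the $L_T^2$-time integrability of $\partial_x \varepsilon$ rather than the Lipschitz regularity assumed in the lemma.

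Finally, one sums the dyadic estimates with $\alpha > s + 1/(3p) + \delta$ for some $\delta > 0$: on the charge term the effective pointwise weight becomes $\lambda^{-\alpha+s-1+1/p} = \lambda^{-1+2/(3p) - \delta}$, and a Cauchy--Schwarz argument in $\lambda$ converts this into the $H^{-1+2/(3p)}$ bound on the right-hand side. The shift from $\lambda^{-1+1/p}$ in Theorem~\ref{thm:2dStrichartzFull} to $\lambda^{-1+2/(3p)}$ in the corollary is precisely the $\lambda^{1/(3p)}$ loss from the paradifferential truncation at scale $\lambda^{2/3}$ being absorbed into the charges. The time derivative of the charges appears via the conservation identity $\partial_t \tilde{\rho}_e = \nabla \cdot f^{(1)}$ combined with Duhamel's formula, reducing the $L_t^\infty$ charge norm to initial data plus the $L_t^1$ norm of $\partial_t \tilde{\rho}_e$.

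The main obstacle is the careful control of the coefficient truncation error $(\tilde{P} - \tilde{P}^{(\lambda)}) S_\lambda u$ when the coefficients only satisfy $\partial_x \varepsilon \in L_T^2 L_{x'}^\infty$: Lemma~\ref{lem:CommutatorLemmaFractionalLeibniz} as stated requires a Lipschitz or Besov hypothesis, so one must either decompose the error paradifferentially in terms of $L_T^2$-valued Besov norms or run a fixed-time commutator argument and integrate in time against $\|\partial_x \varepsilon(t,\cdot)\|_{L^\infty}$. Choosing the truncation exponent $\beta = 2/3$ is what balances the energy loss $\lambda^{\beta/(2p)}$ against the pointwise decay of the truncation error, and changing $\beta$ would degrade one of the two bounds.
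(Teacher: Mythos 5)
Your proposal is correct and follows the same paradifferential-truncation strategy the paper itself points to (it proves the corollary only by citing \cite[Cor.~1.7]{SchippaSchnaubelt2022}): truncation of the coefficients at scale $\lambda^{2/3}$, the resulting admissibility parameter $\nu \sim T^{3/4}\lambda^{1/3}$, and the distribution of the $\lambda^{1/(3p)}$ loss onto both the energy and charge weights are exactly right, as is the balance argument singling out $\beta = 2/3$. One small caveat: Lemma~\ref{lem:CommutatorLemmaFractionalLeibniz} (a fixed-order $[\varepsilon, \langle D'\rangle^\rho]$ bound) is not quite the tool one wants for the truncation errors $(\tilde{P}-\tilde{P}^{(\lambda)})S_\lambda u$ and $[S_\lambda,\tilde{P}]u$; the relevant estimates are the elementary kernel bounds $\|[\kappa, S_\lambda]\|_{L^2\to L^2}\lesssim \lambda^{-1}\|\partial\kappa\|_{L^\infty}$ and $\|\kappa_{\gtrsim\lambda^{2/3}}\|_{L^\infty}\lesssim \lambda^{-2/3}\|\partial\kappa\|_{L^\infty}$, integrated in time via H\"older against $\|\partial_x\varepsilon\|_{L_T^2 L_{x'}^\infty}$ and fed into the forcing term with its $\nu^{-1/p'}$ gain, which is precisely what makes $\beta=2/3$ the balancing exponent — you flag this tension yourself, so it is a matter of routing rather than a gap.
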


\section{Helmholtz decompositions}
\label{appendix:Helmholtz}
In this Appendix we collect facts on Helmholtz decompositions. Let $\mathcal{E}: \R^d \supseteq \Omega \to \R^d$ denote a sufficiently smooth vector field with $d \in \{2,3\}$. The question is under which assumptions on the domain $\Omega \subseteq \R^d$, the vector field $\mathcal{E}$, and $s \geq 0$ we find the equivalence of norms to hold:
\begin{equation*}
\| \mathcal{E} \|_{H^{s+1}(\Omega)} \sim \| \nabla \times \mathcal{E} \|_{H^s(\Omega)} + \| \nabla \cdot \mathcal{E} \|_{H^s(\Omega)} + \| \mathcal{E} \|_{L^2(\Omega)}.
\end{equation*}
Suitable results for connected bounded domains with smooth boundary were proved in \cite[Chapter~IX,~Section~§1]{DautrayLions1990}. We shall see how these results extend to domains with compact boundary.

We have the following for $d=2$:
\begin{proposition}[{\cite[Proposition~6',~p.~237]{DautrayLions1990}}]
\label{prop:Helmholtz2dBounded}
Let $k \in \N_0$, and $\Omega$ be a connected bounded open set in $\R^2$ with smooth boundary. Then,
\begin{equation*}
H^{k+1}(\Omega)^2 = \{ \mathcal{E} \in L^2(\Omega), \; (\nabla \times \mathcal{E})_3 \in H^k(\Omega), \; \nabla \cdot \mathcal{E} \in H^k(\Omega), \; u \wedge \nu \big|_{\partial \Omega} \in H^{k+\frac{1}{2}}(\partial \Omega)^2 \}
\end{equation*}
and correspondingly,
\begin{equation}
\label{eq:HelmholtzRegularCase}
\| \mathcal{E} \|_{H^{k+1}(\Omega)} \sim \| (\nabla \times \mathcal{E})_3 \|_{H^k(\Omega)} + \| \nabla \cdot \mathcal{E} \|_{H^k(\Omega)} + \| \mathcal{E} \|_{L^2(\Omega)}.
\end{equation}
\end{proposition}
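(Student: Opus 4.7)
The plan is to prove the proposition via a Hodge-type decomposition $\mathcal{E} = \nabla \phi + \nabla^\perp \psi$ with $\nabla^\perp = (\partial_2, -\partial_1)$, which reduces the regularity question to standard elliptic regularity for two scalar Poisson problems. A direct computation yields
\begin{equation*}
\nabla \cdot \mathcal{E} = \Delta \phi, \qquad (\nabla \times \mathcal{E})_3 = -\Delta \psi, \qquad \mathcal{E} \wedge \nu \big|_{\partial \Omega} = -\partial_\tau \phi + \partial_\nu \psi,
\end{equation*}
where $\tau = (-\nu_2, \nu_1)$ is the unit tangent. Thus the data appearing in the proposition prescribe the Laplacians of $\phi$ and $\psi$ together with enough boundary information to recover them; the $\| \mathcal{E} \|_{L^2}$-term on the right-hand side of \eqref{eq:HelmholtzRegularCase} absorbs the finite-dimensional space of harmonic vector fields with vanishing tangential trace.

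For the base case $k = 0$, I would first solve the Dirichlet problem $-\Delta \psi = (\nabla \times \mathcal{E})_3$ with $\psi|_{\partial \Omega_j} = c_j$ constant on each connected component $\partial \Omega_j$ of $\partial \Omega$. The constants $(c_j)$ are chosen so that the prescribed tangential derivative $\partial_\tau \phi = \partial_\nu \psi - \mathcal{E} \wedge \nu$ integrates to zero around each loop $\partial \Omega_j$, which is exactly the solvability condition ensuring that $\phi|_{\partial \Omega}$ is single-valued. The resulting linear system in $(c_j)$ has one fewer independent constraint than unknowns (the sum of the constraints vanishes by the divergence theorem, mirroring the gauge of shifting all $c_j$ by a common constant), and its non-degeneracy follows from the maximum principle. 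Integrating $\partial_\tau \phi$ then produces $\phi|_{\partial \Omega} \in H^{3/2}(\partial \Omega)$, and one solves $\Delta \phi = \nabla \cdot \mathcal{E}$ with this Dirichlet datum. Uniqueness of the decomposition (the difference is divergence- and curl-free with vanishing tangential trace, hence lies in the harmonic kernel controlled by $\| \mathcal{E} \|_{L^2}$) identifies $\mathcal{E} = \nabla \phi + \nabla^\perp \psi$; combining the $H^2$-bounds for $\phi$ and $\psi$ yields \eqref{eq:HelmholtzRegularCase} at $k = 0$, and the reverse inequality is the trace theorem.

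For general $k \in \N_0$ I would bootstrap: if $\nabla \cdot \mathcal{E}, (\nabla \times \mathcal{E})_3 \in H^k(\Omega)$ and $\mathcal{E} \wedge \nu \in H^{k+1/2}(\partial \Omega)$, then iterated elliptic regularity for the Dirichlet Laplacian promotes $\phi, \psi \in H^{k+2}(\Omega)$, using the trace bound $\| \partial_\nu \psi \|_{H^{k+1/2}(\partial \Omega)} \lesssim \| \psi \|_{H^{k+2}(\Omega)}$ to transfer regularity of $\psi$ into the Dirichlet data for $\phi$. This gives $\mathcal{E} \in H^{k+1}(\Omega)$ with the claimed estimate.

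The main technical obstacle is the bookkeeping for the constants $(c_j)$ when $\partial \Omega$ has several connected components, together with the non-degeneracy of the associated finite-dimensional system. An alternative route sidesteps this by viewing $(\nabla \cdot, \nabla \times)$ as an Agmon--Douglis--Nirenberg elliptic system of first order, with $\mathcal{E} \mapsto \mathcal{E} \wedge \nu$ as a boundary operator satisfying the Shapiro--Lopatinski condition; the general regularity theory for such boundary value problems then yields \eqref{eq:HelmholtzRegularCase} directly.
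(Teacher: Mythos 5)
The paper does not give a proof of this proposition; it is a verbatim quotation from Dautray--Lions \cite[Proposition~6', p.~237]{DautrayLions1990} and is used as a black box, so there is no internal proof to compare against. Your scalar-potential argument is a correct, self-contained proof of the cited fact and is close to the classical treatment: the identities $\nabla\cdot\mathcal{E} = \Delta\phi$, $(\nabla\times\mathcal{E})_3 = -\Delta\psi$, $\mathcal{E}\wedge\nu|_{\partial\Omega} = -\partial_\tau\phi + \partial_\nu\psi$ are right, the period-matching via the boundary constants $c_j$ is the correct way to handle a multiply connected $\Omega$, and the bootstrap to general $k$ is standard. Two small remarks. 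The nondegeneracy of the capacity matrix $M_{ji} = \oint_{\partial\Omega_j}\partial_\nu\psi_i\,ds$ is seen most directly from the energy identity $c^t M c = \int_\Omega |\nabla(\sum_i c_i\psi_i)|^2\,dx \geq 0$, whose kernel is exactly the constant vectors; the maximum-principle route you invoke works but is less immediate, and together with the automatic vanishing of $\sum_j b_j$ (Stokes plus the divergence theorem, as you note) this pins down solvability of the period system cleanly. Also, the phrase ``uniqueness of the decomposition \dots identifies $\mathcal{E} = \nabla\phi + \nabla^\perp\psi$'' reads as if the harmonic remainder vanishes, which it need not: when $\partial\Omega$ has $m+1$ components, the space of $L^2$ fields that are divergence- and curl-free with vanishing tangential trace has dimension $m$. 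You already treat this correctly a few lines earlier by observing that the $\|\mathcal{E}\|_{L^2}$-term absorbs it, so the logic is sound, but it is cleaner to keep a remainder $h$ explicit in the decomposition and close the estimate by equivalence of norms on that finite-dimensional space. The ADN/Shapiro--Lopatinski alternative you mention at the end is also valid and in some ways more economical, since it avoids the finite-dimensional bookkeeping entirely.
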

From this we deduce the following for domains with compact boundary:
\begin{proposition}
\label{prop:Helmholtz2d}
Let $\Omega \subseteq \R^2$ be an open set with compact smooth boundary and $s \geq 0$. Then, for $\mathcal{E} \in H^{s+1}(\Omega)^2$ with $[\mathcal{E} \times \nu ]_{x' \in \partial \Omega} = 0$ we have the equivalence of norms:
\begin{equation}
\label{eq:HelmholtzDecompositionConnected}
\| \mathcal{E} \|_{H^{s+1}(\Omega)} \sim \| (\nabla \times \mathcal{E})_3 \|_{H^s(\Omega)} + \| \nabla \cdot \mathcal{E} \|_{H^s(\Omega)} + \| \mathcal{E} \|_{L^2(\Omega)}.
\end{equation}
\end{proposition}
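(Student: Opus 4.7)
The plan is to reduce to Proposition \ref{prop:Helmholtz2dBounded} by a partition of unity argument that separates the compact boundary from the rest of $\Omega$. By compactness of $\partial \Omega$, I would first choose a bounded open set $U \subset \R^2$ with smooth boundary and finitely many connected components such that $\partial \Omega \subset U$ and $\tilde{\Omega} := U \cap \Omega$ has smooth compact boundary $\partial \tilde{\Omega} = \partial \Omega \cup (\Omega \cap \partial U)$. Then I would pick $\chi_1, \chi_2 \in C^\infty(\overline{\Omega})$ forming a partition of unity, with $\chi_1 \equiv 1$ on a neighbourhood of $\partial \Omega$, $\mathrm{supp}(\chi_1) \subset U$, and $\chi_2 = 1 - \chi_1$ vanishing near $\partial \Omega$.

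First I would treat the case $s = k \in \N_0$ in two pieces. For the interior piece $\chi_2 \mathcal{E}$, extension by zero to $\R^2$ is legitimate (its support stays away from $\partial \Omega$), and the full-space Helmholtz identity $-\Delta v = \nabla \times (\nabla \times v) - \nabla(\nabla \cdot v)$ gives the equivalence of norms on $\R^2$. For the boundary piece $\chi_1 \mathcal{E}$, supported in $\tilde{\Omega}$, the trace condition $(\chi_1 \mathcal{E}) \wedge \nu = 0$ holds on all of $\partial \tilde{\Omega}$: on $\partial \Omega$ by the hypothesis on $\mathcal{E}$, and on $\Omega \cap \partial U$ because $\chi_1$ vanishes there. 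Hence Proposition \ref{prop:Helmholtz2dBounded} applies on each connected component of $\tilde{\Omega}$. In both steps, the Leibniz rule converts $\nabla \cdot (\chi_j \mathcal{E})$ and $(\nabla \times (\chi_j \mathcal{E}))_3$ into $\chi_j$ times the corresponding quantities for $\mathcal{E}$ plus commutator terms of the form $\nabla \chi_j \cdot \mathcal{E}$ and $\nabla \chi_j \times \mathcal{E}$, which cost only $\|\mathcal{E}\|_{H^k(\Omega)}$.

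Summing the two pieces yields
\begin{equation*}
\| \mathcal{E} \|_{H^{k+1}(\Omega)} \lesssim \| (\nabla \times \mathcal{E})_3 \|_{H^k(\Omega)} + \| \nabla \cdot \mathcal{E} \|_{H^k(\Omega)} + \| \mathcal{E} \|_{H^k(\Omega)},
\end{equation*}
and an induction on $k$, anchored at the trivial $L^2$ baseline, closes the estimate for every integer $k \geq 0$. For non-integer $s$, I would then interpolate between the adjacent integer cases, viewing both sides as norms on the closed subspace cut out by $\mathcal{E} \wedge \nu|_{\partial \Omega} = 0$ (a continuous linear constraint once the trace is defined). The reverse inequality ``$\gtrsim$'' in \eqref{eq:HelmholtzDecompositionConnected} is immediate since $\nabla$ is first order.

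The main technical obstacle is the geometric preparation: constructing $U$ so that $\tilde{\Omega}$ has smooth boundary and only finitely many components, which is where compactness of $\partial \Omega$ enters via a tubular neighbourhood argument. The remaining subtlety is the Leibniz bookkeeping, but each commutator is strictly lower order than the term it emerges from, so an induction on $k$ absorbs them; the interpolation step for non-integer $s$ is then standard.
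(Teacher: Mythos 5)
Your proposal is correct and uses essentially the same argument as the paper: localize with a partition of unity separating the compact boundary from the interior, apply the Dautray--Lions equivalence on the bounded boundary piece, use the full-space Helmholtz identity on the interior piece, and remove commutator errors from the Leibniz rule. The only cosmetic differences are that you use a single collar cutoff where the paper uses finitely many boundary charts, and you close the lower-order term $\|\mathcal{E}\|_{H^k}$ by induction on $k$ where the paper uses an $\varepsilon$-absorption estimate; both are standard and interchangeable.
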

\begin{proof}
It suffices to show the claim for $s \in \N_0$ and connected $\Omega$ as the norms disentangle by disjointness of the supports. Indeed, for $\Omega = \bigcup_i \Omega_i$ denoting the decomposition into connected components, we have $H^s(\Omega) = \bigoplus_i H^s(\Omega_i)$. Moreover, the estimate
\begin{equation*}
\| (\nabla \times \mathcal{E})_3 \|_{H^s(\Omega)} + \| \nabla \cdot \mathcal{E} \|_{H^s(\Omega)} + \| \mathcal{E} \|_{L^2(\Omega)} \lesssim \| \mathcal{E} \|_{H^{s+1}(\Omega)}
\end{equation*}
is immediate. We turn to the reverse inequality. This will follow from a localization argument and Proposition \ref{prop:Helmholtz2dBounded}. Let $(U_i,\varphi_i)_{i=1,\ldots,n}$ be bounded charts, which cover a neighbourhood of the boundary and $(U_0,\varphi_0 = id)$ be the trivial chart for the interior. Let $\sum_{i=0}^n \psi_i = 1_{\Omega}$ be a partition of unity of $\Omega$ with $\text{supp}(\psi_i) \subseteq U_i$. We have for $i=1,\ldots,n$ by Proposition \ref{prop:Helmholtz2dBounded}:
\begin{equation*}
\| \mathcal{E} \psi_i \|_{H^{s+1}} \sim \| (\nabla \times (\psi_i \mathcal{E}))_3 \|_{H^s(\Omega)} + \| \nabla \cdot (\psi_i \mathcal{E}) \|_{H^s} + \| \psi_i \mathcal{E} \|_{L^2},
\end{equation*}
and for $i=0$:
\begin{equation*}
\begin{split}
\| \mathcal{E} \psi_0 \|_{H^{s+1}(\Omega)} &= \| \mathcal{E} \psi_0 \|_{H^{s+1}(\R^2)} \\
 &\sim \| (\nabla \times (\mathcal{E} \psi_0))_3 \|_{H^s(\R^2)} + \| \nabla \cdot (\mathcal{E} \psi_0) \|_{H^s(\R^2)} + \| \mathcal{E} \psi_0 \|_{L^2(\R^2)} \\
&=  \| (\nabla \times (\mathcal{E} \psi_0))_3 \|_{H^s(\Omega)} + \| \nabla \cdot (\mathcal{E} \psi_0) \|_{H^s(\Omega)} + \| \mathcal{E} \psi_0 \|_{L^2(\Omega)}.
 \end{split}
\end{equation*}
Here we used that \eqref{eq:HelmholtzDecompositionConnected} holds for $\Omega = \R^d$ as can readily be verified by changing to Fourier space and $\text{dist}(\text{supp}(\mathcal{E} \psi_0), \partial \Omega) > 0$, which allows us to change back and forth between $H^s(\Omega)$ and $H^s(\R^2)$ for the involved functions.

Furthermore, for $i=0,\ldots,n$ we have
\begin{equation*}
\| (\nabla \times (\mathcal{E} \psi_i))_3 \|_{H^s(\Omega)} \leq \| \psi_i (\nabla \times \mathcal{E})_3 \|_{H^s(\Omega)} + \| \partial \psi_i \mathcal{E} \|_{H^s(\Omega)}.
\end{equation*}
Moreover,
\begin{equation*}
\| \psi_i (\nabla \times \mathcal{E})_3 \|_{H^s(\Omega)} \leq C \| (\nabla \times \mathcal{E})_3 \|_{H^s(\Omega)}
\end{equation*}
follows from $\partial_{x'}^\alpha \psi_i \in C^\infty_c(\Omega)$ for $|\alpha| \geq 1$ and $\| \psi_i (\nabla \times \mathcal{E})_3 \|_{L^2(\Omega)} \leq \| (\nabla \times \mathcal{E})_3 \|_{L^2(\Omega)}$ because $|\psi_i(x)| \leq 1$ for any $x \in \Omega$. Since $\partial_{x'} \psi \in C^\infty_c(\Omega)$, we have
\begin{equation*}
\| ( \partial_{x'} \psi ) \mathcal{E} \|_{H^s(\Omega)} \lesssim \| \mathcal{E} \|_{H^s(\Omega)}.
\end{equation*}
We proved
\begin{equation*}
\begin{split}
\| ( \nabla \times (\mathcal{E} \psi_i))_3 \|_{H^s(\Omega)} &\lesssim \| (\nabla \times \mathcal{E})_3 \|_{H^s(\Omega)} + \| \mathcal{E} \|_{H^s(\Omega)} \\
&\leq C \| (\nabla \times \mathcal{E})_3 \|_{H^s(\Omega)} + \varepsilon \| \mathcal{E} \|_{H^{s+1}(\Omega)} + C_\varepsilon \| \mathcal{E} \|_{L^2(\Omega)}.
\end{split}
\end{equation*}
By the same arguments it holds
\begin{equation*}
\| \nabla \cdot (\mathcal{E} \psi_i) \|_{H^s(\Omega)} \leq C \| \nabla \cdot \mathcal{E} \|_{H^s(\Omega)} + \varepsilon \| \mathcal{E} \|_{H^{s+1}(\Omega)} + C_\varepsilon \| \mathcal{E} \|_{L^2(\Omega)}.
\end{equation*}
Hence, we can conclude
\begin{equation*}
\begin{split}
\| \mathcal{E} \|_{H^{s+1}(\Omega)} &\leq \sum_{i=0}^n \| \mathcal{E} \psi_i \|_{H^{s+1}(\Omega)} \\
&\leq C \sum_{i=0}^n \big(  \| (\nabla \times (\mathcal{E} \psi_i))_3 \|_{H^s(\Omega)} + \| \nabla \cdot (\mathcal{E} \psi_i) \|_{H^s(\Omega)} + \| \psi_i \mathcal{E} \|_{L^2(\Omega)} \big) \\
&\leq C (n+1) \big( \| (\nabla \times \mathcal{E})_3 \|_{H^s(\Omega)} + \| \nabla \cdot \mathcal{E} \|_{H^s(\Omega)} + \| \mathcal{E} \|_{L^2(\Omega)} \big) \\
&\quad + C (n+1) \varepsilon \| \mathcal{E} \|_{H^{s+1}(\Omega)} + C_\varepsilon \| \mathcal{E} \|_{L^2(\Omega)}.
\end{split}
\end{equation*}
Choosing $\varepsilon = \frac{1}{2(n+1)C}$ we have proved that
\begin{equation*}
\| \mathcal{E} \|_{H^{s+1}(\Omega)} \lesssim \| (\nabla \times \mathcal{E})_3 \|_{H^s(\Omega)} + \| \nabla \cdot \mathcal{E} \|_{H^s(\Omega)} + \| \mathcal{E} \|_{L^2(\Omega)}.
\end{equation*}
\end{proof}

For $d=3$, one can argue like in Proposition \ref{prop:Helmholtz2d} to extend the results due to Dautray--Lions \cite[Proposition~6',~p.~237]{DautrayLions1990} for connected bounded domains with smooth boundary likewise in the three-dimensional case to domains with compact and smooth boundary. We record the following, which suffices for the purposes of this paper:

\begin{proposition}
\label{prop:Helmholtz3d}
Let $\Omega \subseteq \R^3$ be a smooth domain with compact boundary. Let $\mathcal{E} \in H^3(\Omega;\R^3)$ be a vector field. Suppose that either the tangential components satisfy Dirichlet boundary conditions and the normal component satisfies Neumann boundary conditions or vice versa. Then the following estimate holds:
\begin{equation}
\label{eq:HelmholtzDecomposition3dAppendix}
\| \mathcal{E} \|_{H^1(\Omega)} \sim \| \mathcal{E} \|_{H_{curl}(\Omega)} + \| \mathcal{E} \|_{H_{div}(\Omega)} + \| \mathcal{E} \|_{L^2(\Omega)}.
\end{equation}
\end{proposition}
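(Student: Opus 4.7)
The plan is to mirror the proof of Proposition~\ref{prop:Helmholtz2d}, reducing from a domain with compact boundary to the case of connected bounded open sets with smooth boundary, for which the analogous statement is available in Dautray--Lions \cite[Chapter~IX,~\S1]{DautrayLions1990}. The easy direction
\[
\| \mathcal{E} \|_{H_{curl}(\Omega)} + \| \mathcal{E} \|_{H_{div}(\Omega)} + \| \mathcal{E} \|_{L^2(\Omega)} \lesssim \| \mathcal{E} \|_{H^1(\Omega)}
\]
is immediate from the definition of the relevant norms. The nontrivial estimate is the reverse inequality, which is localized via a partition of unity.

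First I would cover a neighbourhood of $\partial \Omega$ by finitely many bounded coordinate charts $(U_i,\varphi_i)_{i=1,\ldots,n}$ (this uses compactness of the boundary) and let $(U_0, \varphi_0 = \mathrm{id})$ be the trivial chart of the interior. Choose a subordinate partition of unity $\sum_{i=0}^n \psi_i = 1_\Omega$ with $\psi_i \in C^\infty_c(U_i)$ and, for $i=1,\ldots,n$, $\psi_i$ identically one near the portion of $\partial \Omega \cap U_i$. Since $\Omega$ can be split into at most countably many connected components on which the norms are additive, it suffices to treat each connected piece, and I would then shrink $\Omega$ to a single connected component with smooth compact boundary. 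For $i=1,\ldots,n$ the localized field $\psi_i \mathcal{E}$ lives on $U_i \cap \Omega$, which is a connected bounded set with smooth boundary, and still satisfies the same mixed Dirichlet/Neumann boundary conditions (tangential Dirichlet / normal Neumann or the converse). Hence Dautray--Lions applies and yields
\[
\| \psi_i \mathcal{E} \|_{H^1(\Omega)} \lesssim \| \nabla \times (\psi_i \mathcal{E}) \|_{L^2(\Omega)} + \| \nabla \cdot (\psi_i \mathcal{E}) \|_{L^2(\Omega)} + \| \psi_i \mathcal{E} \|_{L^2(\Omega)}.
\]
For the interior piece $i=0$ one notes that $\psi_0 \mathcal{E}$ has compact support in $\Omega$, so its zero-extension to $\R^3$ lies in $H^1(\R^3)$ and the corresponding identity on the whole space (verified directly on the Fourier side) gives the same bound.

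Next I would distribute the derivatives in the commutators: expanding $\nabla \times (\psi_i \mathcal{E}) = \psi_i \nabla \times \mathcal{E} + (\nabla \psi_i) \times \mathcal{E}$ and $\nabla \cdot (\psi_i \mathcal{E}) = \psi_i \nabla \cdot \mathcal{E} + (\nabla \psi_i) \cdot \mathcal{E}$, and using $|\psi_i|, |\nabla \psi_i| \lesssim 1$, one obtains
\[
\| \psi_i \mathcal{E} \|_{H^1(\Omega)} \lesssim \| \nabla \times \mathcal{E} \|_{L^2(\Omega)} + \| \nabla \cdot \mathcal{E} \|_{L^2(\Omega)} + \| \mathcal{E} \|_{L^2(\Omega)}.
\]
Summing over $i=0,\ldots,n$ and using the triangle inequality $\| \mathcal{E} \|_{H^1} \leq \sum_i \| \psi_i \mathcal{E} \|_{H^1}$ yields the required estimate. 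Since only the $L^2$-norm of $\mathcal{E}$ appears on the right (the lower-order commutator terms $(\nabla \psi_i) \times \mathcal{E}$ and $(\nabla \psi_i) \cdot \mathcal{E}$ are controlled in $L^2$ by $\|\mathcal{E}\|_{L^2}$), no absorption step is needed at this regularity level.

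The main subtle point will be verifying that the cutoff $\psi_i \mathcal{E}$ inherits the boundary conditions on $\partial(U_i \cap \Omega)$ that are required for the Dautray--Lions estimate: on $\partial \Omega \cap U_i$ the conditions are inherited from $\mathcal{E}$ (since $\psi_i$ is scalar and thus preserves tangential/normal decompositions), while on $\partial U_i \cap \Omega$ the function $\psi_i \mathcal{E}$ vanishes to any order, so trivially satisfies both Dirichlet and Neumann conditions. Choosing $U_i$ as geodesic half-balls adapted to $\partial \Omega$ ensures that $\partial(U_i \cap \Omega)$ is smooth, which makes the application of Dautray--Lions legitimate.
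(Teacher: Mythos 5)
Your argument follows exactly the route the paper indicates: its proof of this proposition is merely a pointer to the proof of Proposition~\ref{prop:Helmholtz2d}, which is the same localization via a partition of unity subordinate to a finite boundary atlas followed by the Dautray--Lions estimate on bounded connected pieces. You correctly notice that at the $H^1$-level no $\varepsilon$-absorption is needed, since the commutator errors $(\nabla\psi_i)\times\mathcal{E}$ and $(\nabla\psi_i)\cdot\mathcal{E}$ are zeroth order in $\mathcal{E}$ and already controlled by $\|\mathcal{E}\|_{L^2}$; the absorption step in the proof of Proposition~\ref{prop:Helmholtz2d} is only there because that statement is proved for general $H^{s+1}$. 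The one imprecision worth flagging is the assertion that geodesic half-balls make $\partial(U_i\cap\Omega)$ smooth: they do not, since there is a corner along $\partial U_i\cap\partial\Omega$. However, because $\psi_i$ vanishes on a neighborhood of $\partial U_i$, one may apply the bounded-domain estimate on a smooth domain $V_i$ with $\text{supp}(\psi_i)\cap\Omega \subseteq V_i \subseteq U_i\cap\Omega$ obtained by rounding that corner, and the estimate is unaffected; the paper's proof of Proposition~\ref{prop:Helmholtz2d} glosses over the same point.
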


\section*{Acknowledgements}
R.S. acknowledges financial support by the German Research Foundation (DFG) -- Project-Id 258734477 -- SFB 1173.  The second author would like to thank the Institut de Mathématique d'Orsay for kind hospitality, where much of this research was carried out in spring 2022, and Roland Schnaubelt (KIT) for helpful discussions on Maxwell equations on domains and putting the results into context.

\bibliographystyle{plain}

\end{document}